\newtheorem{thm}{Theorem}[section]
\newtheorem{cor}[thm]{Corollary}
\newtheorem{lem}[thm]{Lemma}
\newtheorem{prop}[thm]{Proposition}
\theoremstyle{definition}
\newtheorem{defn}[thm]{Definition}
\theoremstyle{remark}
\newtheorem{rem}[thm]{Remark}
\newtheorem{ex}{Example}
\begin{document}
\frontmatter
\title[Symmetric matrices]{Symmetric Matrices: Theory \& Applications}
\author[Helmut Kahl]{Helmut Kahl}
\email{kahl@hm.edu}
\address{%
Munich University of Applied Sciences\\
%Lothstr. 34\\
%D-80335 M\"unchen\\
Germany}
\date{January 2014+}
\maketitle
\tableofcontents

\mainmatter
\section{Introduction}
Matrices are of the most important objects of mathematical applications. This text restricts to the symmetric ones for moderating the scope of this text, for aesthetic reasons, because of a long history and because there are still many applications. Symmetric matrices $(\alpha_{i j}) \in \mathbb{O}^{n \times n}$ ($\alpha_{i j} = \alpha_{j i}$) over an integral domain $\mathbb{O}$ with $1+1 \ne 0$ are nothing else but quadratic forms $q:M \to \mathbb{O}$ with respect to a basis $e_1,...,e_n$ of the $\mathbb{O}$-module $M$ (cf. section \ref{sec_quadraticforms}); the bijective correspondence given by the equations $2 \alpha_{i j} = q(e_i+e_j)-q(e_i)-q(e_j)$.\footnote{Don't worry if you don't understand this yet. It will be explained by Proposition \ref{prop_quadraticforms}.} Quadratic forms are used to define quadrics (i.e. conics; cf. section \ref{sec_quadrics}) which represent the most simple non-linear algebraic varieties. In dimension two and three quadrics were investigated already in the Greek-Hellenistic antiquity (s. \cite{Scriba}, sect.2.2.2, p.42 and sect.2.5.10, p.92). For instance, the notions "ellipse, parabola, hyperbola" were already used by Apollonios of Perge (262?-190 B.C.) in his extensive examinations of conics \cite{Apollonios}. In his "Recherches d'Arithm\'etique" of 1773-1775 (s. \cite{Lagrange}, p.695-758) J.L. Lagrange investigated binary quadratic forms $a x^2 + b x y + c y^2$ with arbitrary integral coefficients $a,b,c$. In 1795-1800 C.F. Gauss revealed much deeper results on binary quadratic forms\footnote{and also on ternary quadratic forms} in his "Disquisitiones Arithmeticae" \cite{Gauss}. Analytic aspects of binary quadratic forms were founded by P.G.L. Dirichlet in the 1830s and 1840s (s. \cite{DD}). In order to get a rather\footnote{See the comment at the end of subsection \ref{subsec_rational}!} complete theory of quadratic forms, H. Hasse in the 1920s and E. Witt in 1936, restricted the category of underlying rings to the category of fields. Since then an immense amount of literature on quadratic forms has appeared. (See e.g. the references of \cite{Cassels} to receive an impression!) This text deals with non-symmetric quadratic matrices, too. But essentially, it describes basic theory of symmetric matrices and yields applications to various fields like Numerical Analysis, Geometry, Statistics and Cryptography. For moderation of the scope of this text, the theory of hermitian matrices, though also very important for applications, will be omitted. For the reader's convenience there is an appendix about basic analytical and algebraical facts. Nevertheless, knowledge of the real numbers and rudiments of linear algebra would be helpful. The examples are - not only but also - meant to be (implicit or explicit) exercises; The reader should verify their assertions or solve the problems posed there. The first draft of this paper was written for a summer school in Sao Joao Del Rei, Brazil, 2014. In the meantime the author has used revised versions of it as a lecture script at his home university during several summer semesters.

\section{Basic notions and notations}
Most of the conventions in this section are propably known to the reader. We discuss them in order to standardise the language of this text. The symbol $\mathbb{O}$ is used for integral domains (s. Def. \ref{def_ring}!) whose most important instance is the set $\mathbb{Z}$ of rational integers. But in the beginning of this section we restrict to the algebraic structure of a commutative ring $R$. The symbol $\mathbb{K}$ stands for fields like the set $\mathbb{Q}$ of rational numbers or the set $\mathbb{R}$ of real numbers.

\begin{defn}
For two sets $I,J$ the set $I \times J := \lbrace (i,j) : i \in I , j \in J \rbrace$ is called the \textit{cartesian product} of $I$ with $J$. For two discrete sets $I , J$ a function $A : I \times J \to M$ is called a \textit{matrix over a set} $M$. Its values $\alpha_{i j} := A(i,j)$ are called \textit{entries}. Often, the function is denoted by $(\alpha_{i j})$ or, more precisely, by $(\alpha_{i j})_{i \in I,j \in J}$. The first argument $i$ is called \textit{row index} and the second argument $j$ \textit{column index}. The matrix $A^t : J \times I \to M$ defined by $A^t(j,i) := A(i,j)$ (i.e. change of roles: row $\leftrightarrow$ column) is called the \textit{transpose of matrix} $A : I \times J \to M$. A matrix that coincides with its transpose is called \textit{symmetric}.
\end{defn}

\begin{ex}
A schedule is a non-symmetric matrix with row index set e.g.
\begin{equation*}
I := \lbrace \text{08:15-9:45},\text{9:45-10:00},\text{10:00-11:30},\text{11:45-13:15},\text{13:15-14:15} \rbrace
\end{equation*}
of time intervals, column index set
\begin{equation*}
J := \lbrace \text{Monday},\text{Tuesday},\text{Wednesday},\text{Thursday},\text{Friday} \rbrace
\end{equation*}
of weekdays and set $M := \lbrace \text{lecture},\text{break} \rbrace$ of entries.
\end{ex}

From now we restrict to finite index sets of the form $\mathbb{N}_n := \lbrace 1,2,...,n \rbrace$ or $\lbrace 0, 1,...,n \rbrace$ where $n$ is an element of the set $\mathbb{N}$ of all natural numbers. A matrix $(\alpha_{i j})_{i \in \mathbb{N}_m , j \in \mathbb{N}_n}$ is called a $m \times n$\textit{ matrix} and can be described by a rectangular table as follows.
\begin{equation*}
\left(\begin{matrix} \alpha_{1 1} &\ \alpha_{1 2} &\dots &\ \alpha_{1 n} \\ \alpha_{2 1} &\ \alpha_{2 2}  &\dots &\ \alpha_{2 n} \\ \vdots  &\vdots &\dots &\vdots \\  \alpha_{m 1} &\ \alpha_{m 2} &\dots &\ \alpha_{m n} \end{matrix}\right)
\end{equation*}
In case $m=n$ it is called \textit{quadratic}. Hence, a symmetric matrix is quadratic. A $1 \times n$ matrix is called \textit{row vector} and a $m \times 1$ matrix \textit{column vector}. In this text, but in section \ref{sec_analyticAppl}, an element of $M^n = M \times M \times ... \times M$ is considered as a row vector. To endow the set $M^{m \times n}$ of all ${m \times n}$ matrices over $M$ with an algebraic structure we require of $M$ having some algebraic structure: From now on $M$ is a commutative ring $R$ with additive neutral element $0$ and multiplicative neutral element $1$ (s. Definition \ref{def_ring}).

\begin{ex}\label{ex_Vandermonde}
The \textit{Vandermonde} matrix $(x_i^j)_{i,j \in \left\{0 , 1 , ... , n \right\}}$ over $R$ is symmetric if and only if there is some $\omega \in R$ with $x_i = \omega^i$ for all $i \in \left\{0 , 1 , ... , n \right\}$.
\end{ex}

\begin{defn}\label{def_matrixOps}
For two $m \times n$ matrices $A , B: \mathbb{N}_m \times \mathbb{N}_n \to R$ with entries  $\alpha_{i j}$ and  $\beta_{i j}$, respectively, we define its \textit{sum} $A + B := (\alpha_{i j}+\beta_{i j})$. For an $l \times m$ matrix $A = (\alpha_{i j})$ and an $m \times n$ matrix $B = (\beta_{j k})$ the matrix
\begin{equation*}
A B := \left(\sum\limits_{j=1}^{m} \alpha_{i j} \beta_{j k}\right)_{i \in \mathbb{N}_l,k \in \mathbb{N}_n}
\end{equation*}
is called the \textit{product} of $A$ with $B$. The \textit{Kronecker symbol} $\delta_{i j} := 1$ for $i = j$ and $\delta_{i j} := 0$ for $i \ne j$ defines the \textit{identity matrix} $E_n := (\delta_{i j})_{i,j \in \mathbb{N}_n}$. A matrix $(\alpha_{i j})_{i,j \in \mathbb{N}_n}$ with $\alpha_{i j} = 0$ for all $i \ne j$ is called \textit{diagonal}. It is obviously symmetric and will be denoted by $\textnormal{diag}(\alpha_{1 1},...,\alpha_{n n})$. A diagonal matrix $\textnormal{diag}(\delta_1,...,\delta_n)$ with equal \textit{main diagonal entries} $\delta_i$ is called a \textit{scalar matrix}, e.g. $\textnormal{diag}(1,...,1) = E_n$. The scalar matrix $(0) = \textnormal{diag}(0,...,0)$ is called the \textit{zero matrix}. For a scalar $\lambda \in R$ and a matrix $A$ over $R$ we write $\lambda A := \textnormal{diag}(\lambda,...,\lambda) A$.
\end{defn}

\begin{ex}
For two column vectors $x,y \in R^{m \times 1}$ the product $x \circ y := x^t y \in R$ is called the \textit{scalar product} of $x$ and $y$. Then it holds $A B = (a_i \circ b_j)_{i,j}$ for a matrix $A$ with rows $a_i$ and a matrix $B$ with columns $b_j$.
\end{ex}

\begin{rem}\label{rem_matrixOps}
a) With addition and multiplication above, the set $R^{n \times n}$ of all $n \times n$-matrices becomes a non-commutative (s. next example!) ring with neutral element $(0)$ w.r.t. addition and neutral element $E_n$ w.r.t. multiplication. The mulplication with scalars makes this ring even an algebra over $R$.

b) For $\lambda \in R$ and $A \in R^{m \times n}$ it holds $(\lambda \delta_{i j})_{i,j \in \mathbb{N}_m} A = \lambda A = A (\lambda \delta_{i j})_{i,j \in \mathbb{N}_n}$. In particular, any quadratic matrix commutes with every scalar matrix. For an element $\lambda$ of the multiplicative group $R^{\times}$ of all units (s. Remark \ref{rem_ring}d)!) we write $A/\lambda := \frac{1}{\lambda} A$.
\end{rem}

\begin{ex}\label{ex_nonCommutativity}
Over $R$ it holds
\begin{equation*}
\left(\begin{matrix} 1 &\ 1 \\ 0 &\ 1 \end{matrix}\right) \left(\begin{matrix} 0 &\ 1 \\ 1 &\ 0 \end{matrix}\right) = \left(\begin{matrix} 1 &\ 1 \\ 1 &\ 0 \end{matrix}\right) \ne \left(\begin{matrix} 0 &\ 1 \\ 1 &\ 1 \end{matrix}\right) = \left(\begin{matrix} 0 &\ 1 \\ 1 &\ 0 \end{matrix}\right) \left(\begin{matrix} 1 &\ 1 \\ 0 &\ 1 \end{matrix}\right) .
\end{equation*}
\end{ex}

\begin{ex}\label{ex_angularInertia}
For the \textit{vector product}
\begin{equation*}
(\alpha,\beta,\gamma) \times (\delta,\varepsilon,\xi) := (\beta \xi - \gamma \varepsilon , \gamma \delta - \alpha \xi , \alpha \varepsilon - \beta \delta)
\end{equation*}
it holds $(a \times b) \times a = a \times (b \times a) = b (a a^t E_3 - a^t a)$ for all row vectors $a , b \in R^3$. The first equation follows from $a \times b = - b \times a$ and the second from the \textit{Grassmann-identity} $a \times (b \times c) = a c^t b - a b^t c$ for $a , b , c \in R^3$. In physics the symmetric matrix $I := m (a a^t E_3 - a^t a)$ is the inertia tensor of a particle of mass $m > 0$ at point $a \in \mathbb{R}^3$ (up to physical units). Then $b I = m a \times b \times a$ is the corresponding angular momentum where $b \in \mathbb{R}^3$ denotes the angular velocity.
\end{ex}

\begin{prop}\label{prop_determinant}
For a quadratic matrix $A = (a_{i j}) \in R^{n \times n}$ the element, recursively well defined by\footnote{Laplace's expansion by $j$-th column} \begin{equation*} |A|:= \sum\limits_{i=1}^{n} (-1)^{i+j} a_{i j} |A_{i j}| , |(a)|:= a \end{equation*}
with $A_{i j}$ evolving from $A$ by deleting the $i$-th row and $j$-th column, is the same for all $j \in \mathbb{N}_n$ and equals\footnote{Laplace's expansion by $i$-th row} \begin{equation*} \sum\limits_{j=1}^{n} (-1)^{i+j} a_{i j} |A_{i j}| \end{equation*} for all  $i \in \mathbb{N}_n$. In other words:
\begin{equation*} \textnormal{adj}(A) A = A \; \textnormal{adj}(A) = |A|E_n \end{equation*}
for $\textnormal{adj}(A) := ((-1)^{i+j} |A_{i j}|)_{j,i}$. For $A,B \in R^{n \times n}$ we have $|A B| = |A||B|$.
\end{prop}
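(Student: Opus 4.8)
The plan is to fix a definition and then reduce everything to the closed permutation formula for the determinant. I take the $j=1$ instance of the displayed recursion — expansion along the first column — as the \emph{definition} of $|A|$; an immediate induction on $n$ gives $|E_n|=1$.

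First I would prove, by induction on $n$, that $A\mapsto|A|$ is $R$-linear in each of the $n$ columns of $A$ and vanishes whenever two columns of $A$ coincide. Linearity in the columns $2,\dots,n$, and vanishing when the two coinciding columns both lie among columns $2,\dots,n$, follow at once from the induction hypothesis applied to the minors $A_{i1}$; linearity in column $1$ is visible in the defining formula. The delicate case is vanishing when column $1$ coincides with some other column $j$: here I expand $|A_{i1}|$ once more along its first column, turning $|A|$ into a double sum $\sum_{i\ne i'}\varepsilon(i,i')\,a_{i1}a_{i'1}\,|M_{ii'}|$ over the $(n{-}2)\times(n{-}2)$ minors $M_{ii'}$ obtained by deleting rows $i,i'$ and columns $1,j$; since $M_{ii'}=M_{i'i}$, it suffices to check that the coefficients $\varepsilon(i,i')$ and $\varepsilon(i',i)$ are negatives of one another, so that the terms cancel in pairs. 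This sign bookkeeping — and the antisymmetry of $|\cdot|$ under a transposition of columns that it yields — is the technical core, and the step I expect to be the main obstacle; note that no hypothesis on $1+1$ is needed, since the cancellation is term-by-term.

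With $|\cdot|$ established as an alternating multilinear function of the columns normalised by $|E_n|=1$, the usual uniqueness argument — expand every column in the standard basis, drop the terms with a repeated index, and invoke antisymmetry — gives $|A|=\sum_{\sigma\in S_n}\textnormal{sgn}(\sigma)\,a_{\sigma(1)1}\cdots a_{\sigma(n)n}$, the sum over the $n!$ permutations $\sigma$ of $\mathbb{N}_n$. From this symmetric formula: reindexing the product by $\sigma^{-1}$ gives $|A^t|=|A|$; grouping the permutations by which row they select in column $j$ (respectively, which column they select in row $i$) reproduces exactly the column-$j$ and row-$i$ expansions in the statement, so all of them equal $|A|$; and a matrix with two equal rows has determinant $0$ (transpose of the column case).

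It remains to assemble the adjugate identity and multiplicativity. The $(i,l)$ entry of $A\,\textnormal{adj}(A)$ is $\sum_{j}(-1)^{l+j}a_{ij}|A_{lj}|$; for $i=l$ this is the row-$l$ expansion of $A$, hence $|A|$, and for $i\ne l$ it is the row-$l$ expansion of the matrix obtained from $A$ by overwriting row $l$ with a copy of row $i$ — a matrix with two equal rows, hence $0$ — so $A\,\textnormal{adj}(A)=|A|E_n$; the identity $\textnormal{adj}(A)A=|A|E_n$ is the same computation with rows and columns interchanged (or apply the first identity to $A^t$). Finally, for $|AB|=|A||B|$, fix $A$ and consider $B\mapsto|AB|$: the columns of $AB$ are $A$ times the columns of $B$, so this map is alternating and multilinear in the columns of $B$, hence by the uniqueness used above it equals its value at $B=E_n$, namely $|A|$, times $|B|$.
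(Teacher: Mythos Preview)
Your outline is correct and follows a standard textbook route. The paper itself does not give a proof: it simply cites \cite{SerreD}, ch.~3.1\&2, so there is no ``paper's own argument'' to compare against beyond whatever appears in that reference. Your development --- fix the first-column expansion as the definition, establish the alternating multilinear property by induction, extract the permutation formula via uniqueness, and read off the remaining assertions --- is precisely the kind of argument one finds in such sources.

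One small point of precision in the delicate case (column $1$ equal to column $j$): you write that you expand $|A_{i1}|$ ``once more along its first column,'' but the double sum you then display, with minors obtained by deleting columns $1$ and $j$ and with factors $a_{i1}a_{i'1}$, is what you get by expanding $|A_{i1}|$ along the column that \emph{was} column $j$ of $A$ (column $j{-}1$ of $A_{i1}$), using $a_{i'j}=a_{i'1}$. That expansion is legitimate at this stage because your induction hypothesis (multilinear alternating, hence uniqueness, hence all column expansions agree) already holds in size $n{-}1$; you should say so explicitly. With that clarification the sign cancellation $\varepsilon(i,i')=-\varepsilon(i',i)$ goes through exactly as you describe, and the rest of the argument is clean.
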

\begin{proof}
See e.g. \cite{SerreD}, ch.3.1\&2 where $R$ is an integral domain. But the proofs work also for commutative rings like in Def. \ref{def_matrixOps}.
\end{proof}

\begin{rem}\label{rem_det_tr}
a) The \textit{determinant} $|A|$ is a very important notion. The Proposition shows that $\textnormal{adj}(A) / |A|$ is the inverse of $A$ if $|A|$ is a unit. And vice versa: If $A \in R^{n \times n}$ is invertible, i.e. $A B = E_n$ for some $B \in R^{n \times n}$, then $|A| |B| = |A B| = |E_n| = 1$ shows that $|A|$ is a unit.

b) Another somewhat less important notion is the {\it trace}
\begin{equation*}
\textnormal{tr}((\alpha_{i j})):= \alpha_{1 1} + ... + \alpha_{n n}
\end{equation*}
of $(a_{i j}) \in R^{n \times n}$ with the obvious property $\textnormal{tr}(A+B) = \textnormal{tr}(A)+ \textnormal{tr}(B)$.

c) Because of $(A^t)_{i j} = (A_{j i})^t$ it holds $\textnormal{adj}(A^t) = \textnormal{adj}(A)^t$. So $\textnormal{adj}(A)$ is symmetric when $A$ is so.
\end{rem}

\begin{ex}\label{ex_det_tr}
a) Compute the determinant and the trace of
\begin{equation*}
\left(\begin{matrix} \alpha &\ \beta \\ \gamma &\ \delta \end{matrix}\right) \in R^{2 \times 2} .
\end{equation*}

b) Prove $\textnormal{tr}(A B) = \textnormal{tr}(B A)$ for all $A,B \in R^{n \times n}$.

c) Show that the determinant of the Vandermonde matrix $(x_i^j)_{i,j \in \left\{0 , 1 , ... , n \right\}}$ is the product of all $x_j - x_i$ with $i < j$. Hint: Use Laplace's expansion by the $n$-th row and induction on $n \in \mathbb{N}$.
\end{ex}

\begin{prop}\label{prop_GL}
The \textit{general linear group}
\begin{equation*}
\textnormal{GL}_n(R) := \lbrace A \in R^{n \times n} : |A| \in R^{\times} \rbrace
\end{equation*}
of invertible $n \times n$-matrices is a group under matrix multiplication. The determinant function $\textnormal{det}: R^{n \times n} \to R , A \mapsto |A|$ induces a group epimorphism $\textnormal{det}:\textnormal{GL}_n(R) \to R^{\times}$. Its kernel $\textnormal{SL}_n(R) := \lbrace A \in R^{n \times n} : |A| = 1 \rbrace$ is a normal subgroup of $\textnormal{GL}_n(R)$. The elements of the factor group $\textnormal{GL}_n(R) / \textnormal{SL}_n(R)$ are the cosets $\textnormal{diag}(\varepsilon,1,...,1) \textnormal{SL}_n(R)$, $\varepsilon \in R^{\times}$.
\end{prop}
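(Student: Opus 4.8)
The plan is to assemble everything from Proposition \ref{prop_determinant} and Remark \ref{rem_det_tr}, viewing $\textnormal{GL}_n(R)$ as the subgroup of units of the ring $R^{n \times n}$ (Remark \ref{rem_matrixOps}). First I would check the group axioms directly. Associativity of matrix multiplication is inherited from that ring structure, and $E_n \in \textnormal{GL}_n(R)$ because $|E_n| = 1 \in R^{\times}$. Closure under products is exactly the multiplicativity $|A B| = |A| |B|$ of Proposition \ref{prop_determinant} combined with the fact that $R^{\times}$ is closed under multiplication: if $|A|, |B| \in R^{\times}$ then $|AB| \in R^{\times}$. For inverses, given $A$ with $|A| \in R^{\times}$ put $A^{-1} := \textnormal{adj}(A)/|A|$; Proposition \ref{prop_determinant} gives $A A^{-1} = A^{-1} A = E_n$, and applying $\textnormal{det}$ to $A A^{-1} = E_n$ shows $|A^{-1}| = |A|^{-1} \in R^{\times}$, so $A^{-1} \in \textnormal{GL}_n(R)$. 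In passing this also confirms that the displayed set $\textnormal{GL}_n(R)$ really is the set of matrices invertible in $R^{n\times n}$: the reverse inclusion is Remark \ref{rem_det_tr}a.

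Next I would handle the determinant homomorphism. By definition $\textnormal{det}$ maps $\textnormal{GL}_n(R)$ into $R^{\times}$, and $\textnormal{det}(AB) = |AB| = |A||B| = \textnormal{det}(A)\textnormal{det}(B)$ by Proposition \ref{prop_determinant}, so the restriction $\textnormal{det}\colon \textnormal{GL}_n(R) \to R^{\times}$ is a group homomorphism. For surjectivity, given $\lambda \in R^{\times}$ take $D_\lambda := \textnormal{diag}(\lambda,1,\dots,1)$; Laplace expansion along the first column (or a trivial induction on $n$) gives $|D_\lambda| = \lambda$, hence $D_\lambda \in \textnormal{GL}_n(R)$ and $\textnormal{det}(D_\lambda) = \lambda$. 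Thus $\textnormal{det}$ is an epimorphism.

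Finally, the kernel of a group homomorphism is always a normal subgroup of its domain; here $\ker(\textnormal{det}) = \lbrace A \in \textnormal{GL}_n(R) : |A| = 1 \rbrace$, and since $1 \in R^{\times}$ any matrix with $|A| = 1$ automatically lies in $\textnormal{GL}_n(R)$, so this kernel coincides with $\textnormal{SL}_n(R)$. Hence $\textnormal{SL}_n(R) \trianglelefteq \textnormal{GL}_n(R)$. I do not expect any genuinely hard step: the only mild points are the computation $|D_\lambda| = \lambda$ for surjectivity and the bookkeeping identifying the set-theoretic definition of $\textnormal{GL}_n(R)$ with the invertible matrices, both immediate from the cited results.
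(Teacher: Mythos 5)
Your proof is correct and follows essentially the same route as the paper: both establish that $\textnormal{GL}_n(R)$ coincides with the unit group of $R^{n\times n}$ via Remark \ref{rem_det_tr}, both obtain surjectivity of $\det$ from $|\textnormal{diag}(\lambda,1,\dots,1)|=\lambda$, and both invoke the general fact (Proposition \ref{prop_homomorphism}) that the kernel of a group homomorphism is normal. You merely spell out the group axioms and the bookkeeping in more detail than the paper's terse three-sentence argument.
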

\begin{proof}
The determinant function is surjective because for every $\varepsilon \in R$ we have $|D_\varepsilon|=\varepsilon$ with $D_\varepsilon := \textnormal{diag}(\varepsilon,1,...,1)$. So the first two assertions follow by Remark \ref{rem_det_tr}. That $\textnormal{SL}_n(R)$ is a normal subgroup is due to Proposition \ref{prop_homomorphism}. For a matrix $A$ be of determinant $\varepsilon \in R^{\times}$ we have $|A D_\varepsilon^{-1}| = \varepsilon \varepsilon^{-1} = 1$ and thus $A \: \textnormal{SL}_n(R) = D_\varepsilon \textnormal{SL}_n(R)$. This proves the last assertion.
\end{proof}

\begin{ex}\label{ex_invertibility}
a) The matrices of Example \ref{ex_nonCommutativity} are invertible.

b) Let $\mathbb{K}$ be a field and $\omega \in \mathbb{K}$ a \textit{primitive} $n$-\textit{th root of unity}, i.e. $\omega , \omega^2 , ... , \omega^n = 1$ are pairwise different. Show that $\omega$ and $n$ (= $n$-th sum of $1$) are units in $\mathbb{K}$ and that the inverse of the symmetric Vandermonde matrix\footnote{Around 1805 Gauss found an algorithm, called \textit{Fast Fourier-Transformation}, for computing $V(\omega) x$ for any $x \in \mathbb{K}^n$ very efficiently if $n$ was a power of two. For details see \cite{GG}, ch. 8.2!} $V(\omega) := (\omega^{i+j})_{i,j \in \lbrace 0 , 1 , ... , n-1 \rbrace}$ (s. Example \ref{ex_Vandermonde}) is $V(\omega^{-1})/n$. Hint: Use Example \ref{ex_det_tr}c) and compute $V(\omega) V(\omega^{-1})$.
\end{ex}

\begin{rem}
It holds $(A B)^t = B^t A^t$. But in general, the product of symmetric matrices is not symmetric as shown by Example \ref{ex_nonCommutativity}.
\end{rem}

Nevertheless, for some commutative rings $R$ the set
\begin{equation*}
\textnormal{Sym}_n(R) := \lbrace Q \in R^{n \times n} : Q^t = Q \rbrace
\end{equation*}
of symmetric $n \times n$-matrices will reveal some interesting invariants under certain right actions of $\textnormal{GL}_n(R)$ (s. section \ref{sec_classification}). The special case $n=2$ has been studied most intensively. Thereby, the identity
\begin{equation*}
\left(\begin{matrix} \alpha &\ \beta \\ \gamma &\ \delta \end{matrix} \right) J = J \left(\begin{matrix} \delta &\ -\gamma \\ -\beta &\ \alpha \end{matrix} \right) \textnormal{ with } J := \left(\begin{matrix} 0 &\ 1 \\ -1 &\ 0 \end{matrix} \right)
\end{equation*}
and arbitrary $\alpha,\beta,\gamma,\delta \in R$ is of interest. For $A \in \textnormal{GL}_2(R)$ it means
\begin{equation}\label{eq_equivConnection}
A^t J = |A| J A^{-1} \textnormal{ and } J A^t / |A| = A^{-1} J .
\end{equation}
Here and in what follows we denote by $A^{-1}$ the inverse matrix of $A$.

\begin{defn}\label{def_similarity}
Two quadratic matrices $A , B \in R^{n \times n}$ are called \textit{similar} (\textit{over the ring} $R$) when there is some $T \in \textnormal{GL}_n(R)$ with $T^{-1} A T = B$.
\end{defn}

\begin{rem}\label{rem_similarity}
This defines an important equivalence relation on $R^{n \times n}$, since the linear map $x \mapsto y := A x$ is described by $B := T^{-1} A T$ with respect to the basis that consists of the columns of $T$; i.e. $x = T x'$ and $y = T y'$ imply $y' = B x'$. The equivalence class of a scalar matrix is the set formed of that single matrix. The investigation of the equivalence class of a non-scalar matrix is one of the major tasks of linear algebra. Many great mathematicians like Cauchy, Cayley, Frobenius, Gauss, H.G. Grassmann, Hamilton, Hermite, Jacobi, C. Jordan, Minkowski, Perron, E. Schmidt, Schur, Smith, Sylvester, Vandermonde attended and contributed to this task. An important invariant under \textit{conjugation} $A \mapsto T^{-1} A T$ is the \textit{characteristic polynomial} ({\it function}) $x \mapsto |A - x E_n| , x \in R$ (s. \cite{SerreD}, prop.3.11). In particular, the determinant, the trace and the eigenvalues are invariant under conjugation. 
\end{rem}

\begin{ex}\label{ex_similarity}
The following three matrices fulfill the equation of Definition \ref{def_similarity} over an arbitrary ring.
\begin{equation*}
A := \left(\begin{matrix} 1 &\ 0 \\ 0 &\ -1 \end{matrix} \right) \:,\: B := \left(\begin{matrix} 1 &\ 2 \\ 0 &\ -1 \end{matrix} \right) \:,\: T := \left(\begin{matrix} 1 &\ 1 \\ 0 &\ 1 \end{matrix} \right)
\end{equation*}
Hence $A$ and $B$ are similar. Over a field, two non-scalar $2 \times 2$-matrices are similar if and only if they have same trace and determinant (s. Corollary \ref{cor_similarity}).
\end{ex}

The following algebraic notions are essential for understanding symmetric matrices (not only over fields). From now we presuppose $R = \mathbb{O}$ being an integral domain.

\begin{defn}\label{def_basis}
For an integral domain $\mathbb{O}$ an $\mathbb{O}$-module $M$ is called \textit{finitely generated} when $M$ consists of its (additive) zero element $o$ only or when there are $e_1,...,e_n \in M$ s.t. every element $e$ of $M$ is a {\it linear combination} of the $e_i$, i.e. $e = \alpha_1 e_1 + ... + \alpha_n e_n$ for some $\alpha_i \in \mathbb{O}$. It is called {\it free} when the $e_i$ can be chosen s.t. they are {\it linearly independent} over $\mathbb{O}$, i.e. $\alpha_1 e_1 + ... + \alpha_n e_n = o$ implies that all the $\alpha_i$ vanish. In this case  $e_1,...,e_n$ is called a {\it basis} of $M$.
\end{defn}

\begin{rem}\label{rem_module}
a) Two bases of a free finitely generated $\mathbb{O}$-module $M \ne \lbrace o \rbrace$ have the same number of (generating) elements. This number $\dim M$ is called the {\it rank} or {\it dimension} of $M$. For a proof of this fact see e.g. \cite {vdWaerden2}, art.132. Hence such a module $M$ is nothing else but the image of the \textit{arithmetic module} $\mathbb{O}^n$ under an injective linear map. A basis of $M$ is given by the images of the \textit{canonical unit vectors} $(\delta_{1 j},...,\delta_{n j})$, $j \in \mathbb{N}_n$.

b) Over a field $\mathbb{K}$ the maximal number of linearly independent rows of a matrix coincides with the maximal number of linearly independent columns (s. \cite{SerreD}, prop.2.7). This number $\textnormal{rk}(A)$ is called the \textit{rank} of the matrix $A \in \mathbb{K}^{m \times n}$. It is the dimension of the vectorspace $\lbrace A x \in \mathbb{K}^m : x \in \mathbb{K}^n \rbrace$. A matrix $A$ is called of \textit{full rank} when $\textnormal{rk}(A)$ equals its number of rows or columns.

c) For a linear map $l:M \to N$ between $\mathbb{O}$-modules $M,N$ and finite bases $e_1,...,e_m$ of $M$ and $f_1,...,f_n$ of $N$ there is a unique matrix $A \in \mathbb{O}^{n \times m}$ s.t. the coordinate vector of $l(\alpha_1 e_1 + ... + \alpha_m e_m)$ with respect to the basis $f_1,...,f_n$ is given by $A (\alpha_1, ... ,\alpha_m)^t$. In case $M = N$ and $f_i = e_i$ for all $i \in \mathbb{N}_n$ we call $A$ the \textit{mapping matrix of} $l:M \to M$ \textit{with respect to the basis} $e_1,...,e_n$ of $M$. A linear map $l:M \to M$ is an \textit{automorphism}, i.e. bijective, if and only if the quadratic mapping matrix $A$ w.r.t. any basis is invertible. And this is the case if and only if $A$ has full rank, i.e. if and only if $|A|$ is a unit. These facts hold for same reasons as in standard linear algebra over fields; s. e.g. \cite {vdWaerden2}, art.132.

d) The set $\textnormal{Sym}_n(\mathbb{O})$ is an $\mathbb{O}$-module of dimension $n^2$.
\end{rem}

\begin{ex}\label{ex_arithmeticModule}
Show that a set of $n$ elements of the \textit{arithmetic module} $\mathbb{O}^n$ over an integral domain $\mathbb{O}$ conform an $\mathbb{O}$-basis if and only if they are the rows or columns of an invertible $n \times n$-matrix over $\mathbb{O}$. Hint: Represent the canonical unit vectors as linear combinations of the given vectors.
\end{ex}

The following {\it basis theorem} is fundamental in theory.\footnote{Recall Definition \ref{def_ideal} of a principal ideal domain!}

\begin{thm}\label{thm_basis}
For a free module $M$, finitely generated over a principal ideal domain $\mathbb{O}$, and a submodule $N \ne \lbrace o \rbrace$ of $M$ there are a basis $e_1,...,e_n$ of $M$ and elements $\alpha_1,...,\alpha_k \in \mathbb{O} \: (k \in \mathbb{N}_n)$ s.t. $\alpha_i$ divides $\alpha_{i+1} \: (i \in \mathbb{N}_{k-1})$ and $\alpha_1 e_1,...,\alpha_k e_k$ is a basis of $N$. \end{thm}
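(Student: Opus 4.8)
The plan is to induct on $n := \dim M$, after fixing a basis $f_1,\dots,f_n$ of $M$ with associated coordinate functionals $f_i^\ast : M \to \mathbb{O}$. If $n = 1$, a submodule $N \ne \{o\}$ of $M \cong \mathbb{O}$ is a nonzero ideal, hence $\alpha_1\mathbb{O}$ for some $\alpha_1 \ne o$ by the definition of a principal ideal domain (Definition \ref{def_ideal}), and $e_1 := f_1$, $k := 1$ works. So let $n > 1$ and assume the statement for free modules of dimension $< n$. The engine of the proof is a \emph{content} construction: let $\Sigma$ be the set of all ideals $\phi(N)$, where $\phi$ ranges over the $\mathbb{O}$-linear maps $M \to \mathbb{O}$. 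Since $N \ne \{o\}$, some $f_i^\ast$ is nonzero on $N$, so $\Sigma$ contains a nonzero ideal. A principal ideal domain satisfies the ascending chain condition on ideals (the union of an ascending chain of ideals is again principal, generated by an element already lying in some member of the chain), so $\Sigma$ has a maximal member $\alpha_1\mathbb{O}$, attained by some $\phi_1$, with $\alpha_1 \ne o$; fix $y \in N$ with $\phi_1(y) = \alpha_1$.

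The key claim is that $\alpha_1 \mid \psi(y)$ for \emph{every} $\mathbb{O}$-linear $\psi : M \to \mathbb{O}$. Writing a greatest common divisor of $\alpha_1$ and $\psi(y)$ as $d = \lambda\alpha_1 + \mu\psi(y)$, the functional $\lambda\phi_1 + \mu\psi$ sends $y$ to $d$, so $d\mathbb{O} \in \Sigma$ while $\alpha_1\mathbb{O} \subseteq d\mathbb{O}$; maximality forces $d\mathbb{O} = \alpha_1\mathbb{O}$, i.e. $\alpha_1 \mid \psi(y)$. Taking $\psi = f_i^\ast$ for $i = 1,\dots,n$ shows every coordinate of $y$ is a multiple of $\alpha_1$, so $y = \alpha_1 e_1$ for some $e_1 \in M$; moreover $\alpha_1\phi_1(e_1) = \phi_1(y) = \alpha_1$ and $\mathbb{O}$ is a domain, hence $\phi_1(e_1) = 1$. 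From $\phi_1(e_1) = 1$ one reads off the direct decompositions $M = \mathbb{O}e_1 \oplus M'$ and $N = \mathbb{O}\alpha_1 e_1 \oplus N'$, where $M' := \ker\phi_1$ and $N' := N \cap \ker\phi_1$ (for $x \in M$ use $x = \phi_1(x)e_1 + (x - \phi_1(x)e_1)$; for $z \in N$ note $\phi_1(z) = c\alpha_1$ for some $c$, so $z - cy \in N'$). Being a direct summand of the finitely generated free module $M$, $M'$ is finitely generated and torsion-free, hence free over the principal ideal domain $\mathbb{O}$ — this auxiliary fact I would either fold in (it is a short induction on the number of generators, or follows from the projection argument that every submodule of $\mathbb{O}^n$ is free of rank $\le n$) — and comparing dimensions in $M = \mathbb{O}e_1 \oplus M'$ gives $\dim M' = n-1$.

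Now apply the induction hypothesis to $N' \subseteq M'$. If $N' = \{o\}$, then $N = \mathbb{O}\alpha_1 e_1$ and we are done with $k = 1$ (complete $e_1$ to a basis of $M$ using $M = \mathbb{O}e_1 \oplus M'$ and any basis of $M'$). Otherwise we obtain a basis $e_2,\dots,e_n$ of $M'$ and elements $\alpha_2,\dots,\alpha_k$ $(2 \le k \le n)$ with $\alpha_i \mid \alpha_{i+1}$ and $\alpha_2 e_2,\dots,\alpha_k e_k$ a basis of $N'$; then $e_1,\dots,e_n$ is a basis of $M$ and $\alpha_1 e_1, \alpha_2 e_2,\dots,\alpha_k e_k$ a basis of $N$ by the two decompositions above. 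Finally, to get $\alpha_1 \mid \alpha_2$, apply the functional $\psi := e_1^\ast + e_2^\ast$ (coordinate functionals for $e_1,\dots,e_n$): here $\psi(N) \supseteq \alpha_1\mathbb{O} + \alpha_2\mathbb{O} \supseteq \alpha_1\mathbb{O}$ and $\psi(N) \in \Sigma$, so maximality of $\alpha_1\mathbb{O}$ forces $\psi(N) = \alpha_1\mathbb{O}$; since $\alpha_2 = \psi(\alpha_2 e_2) \in \psi(N)$, we conclude $\alpha_1 \mid \alpha_2$.

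The step I expect to be the main obstacle is the content construction together with the key divisibility claim: seeing that one should single out the linear functional whose image on $N$ is a \emph{maximal} ideal, and then combining Bézout with maximality to show that this $\alpha_1$ divides the value on $y$ of \emph{any} functional. Once that is in hand, the splittings, the induction, and the divisibility chain are routine bookkeeping; the only other point one must not overlook is the auxiliary fact that a finitely generated torsion-free module over a principal ideal domain is free, which is what lets the induction hypothesis be applied to $\ker\phi_1$.
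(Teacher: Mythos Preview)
Your proof is correct; this is the standard ``maximal content'' argument (as in Lang's \emph{Algebra} or Bourbaki). The paper itself gives no proof of this theorem, only the citation ``Cf.\ \cite{Cassels}, ch.~11, thm.~5.1'', so there is nothing substantive to compare against. Your treatment is in fact more self-contained than what the paper offers. The one place where you are slightly brisk is the freeness of $M' = \ker\phi_1$: you flag it and sketch two routes, which is fine, but note that the cleanest way here is the direct inductive statement ``every submodule of $\mathbb{O}^n$ is free of rank $\le n$'' rather than invoking ``finitely generated torsion-free implies free'', since the latter is usually proved as a corollary of the very theorem you are establishing.
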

\begin{proof}
See \cite{Cassels}, ch.11, thm.5.1 and \cite{O'Meara}, Thm.81:11!
\end{proof}

\begin{ex}\label{ex_quadrOrder}
a) Let $\sqrt{\Delta}$ denote a solution $z \in \mathbb{C}$ of $z^2 = \Delta$ for a non-square integer $\Delta \equiv 0 \textnormal{ or } 1 \mod{4}$. Then for $\omega_\Delta := (\Delta + \sqrt{\Delta})/2$ the integral domain $\mathbb{O}_\Delta := \mathbb{Z}[\omega_\Delta] = \lbrace x + y \omega_\Delta : x,y \in \mathbb{Z} \rbrace$ is a $\mathbb{Z}$-module of rank two. It is called the \textit{quadratic order} of discriminant $\Delta$. According to the Theorem every non-zero ideal $I$ of $\mathbb{O}_\Delta$ is a submodule of rank one or two, i.e. $I = \alpha \mathbb{Z}$ or $I = \alpha \mathbb{Z} + \beta \mathbb{Z}$ for some $\alpha,\beta \in \mathbb{O}_\Delta$ that are linearly independent over $\mathbb{Z}$. Let us assume the first case. If $\alpha$ were an integer we would have $I \subset \mathbb{Z}$ in contradiction to $\alpha \omega_\Delta \in I \setminus \mathbb{Z}$. So it holds $\alpha \notin \mathbb{Z}$, therefore $I \cap \mathbb{Z} = \lbrace 0 \rbrace$. But for $\alpha' \in \mathbb{O}_\Delta$ defined by $(x + y \omega_\Delta)' = x + y (\Delta - \sqrt{\Delta})/2$ holds $\alpha \alpha' \in I \cap \mathbb{Z} \setminus \lbrace 0 \rbrace$, a contradiction. So $I$ must have rank two.

b) But not every submodule of maximal rank is an ideal. For example $M := 2 \mathbb{Z} + \omega_5 \mathbb{Z} \subset \mathbb{O}_5$ is not an ideal of $\mathbb{O}_5$ since $\omega_5^2 = 5 (3 + \sqrt{5}) / 2 = 5 \omega_5 - 5 \notin M$.

c) The quadratic order $\mathbb{O}_{-20} = \left\lbrace x + i \sqrt{5} y : x,y \in \mathbb{Z} \right\rbrace$ is not a principal ideal domain. Hint: S. Proposition \ref{prop_pid} and Example \ref{ex_nonPID}!
\end{ex}

\begin{cor}\label{cor_basis}
For a module $M$ over a principal ideal domain $\mathbb{O}$ with finite $\mathbb{O}$-basis $e_1,...,e_n$ a vector $\beta_1 e_1 + ... + \beta_n e_n \in M$ is a member of some basis of $M$ if and only if $(\beta_1,...,\beta_n) = \mathbb{O}$. In particular, this property of so-called {\it primitivity} of the vector does not depend on the chosen basis of $M$.
\end{cor}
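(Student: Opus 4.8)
The plan is to prove the two implications separately, having first noted that ``$v$ is a member of some basis of $M$'' refers only to the pair $(v,M)$, not to the basis $e_1,\dots,e_n$ at all; so once the equivalence is established for an arbitrary basis, the basis-independence of primitivity (the ``in particular'' clause) is automatic. The degenerate case $v=\beta_1e_1+\cdots+\beta_ne_n=o$ is dealt with immediately: then $(\beta_1,\dots,\beta_n)=(0)\neq\mathbb{O}$, and $o$ lies in no basis since $\lbrace o\rbrace$ is linearly dependent, so both sides fail. Assume $v\neq o$ henceforth.

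For ``member of a basis $\Rightarrow$ primitive'', suppose $v=v_1$ for some basis $v_1,\dots,v_n$ of $M$. Writing each $v_i=\sum_j\tau_{ij}e_j$ yields a matrix $T=(\tau_{ij})\in\mathbb{O}^{n\times n}$, and by Remark \ref{rem_module}c) the change of basis is an automorphism of $M$, so $T\in\textnormal{GL}_n(\mathbb{O})$ and $|T|\in\mathbb{O}^\times$. The first row of $T$ is exactly $(\beta_1,\dots,\beta_n)$, and expanding $|T|$ along it (Proposition \ref{prop_determinant}) exhibits $|T|=\sum_{j=1}^n(-1)^{1+j}\beta_j|T_{1j}|$ as an $\mathbb{O}$-linear combination of the $\beta_j$; hence the ideal $(\beta_1,\dots,\beta_n)$ contains a unit and equals $\mathbb{O}$.

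For the converse I would first record that the ideal generated by the coordinates of a fixed vector is independent of the basis: passing to another basis replaces the coordinate vector by its image under a matrix in $\textnormal{GL}_n(\mathbb{O})$, so each new coordinate lies in the old ideal and vice versa. Then I would apply the basis theorem (Theorem \ref{thm_basis}) to the submodule $N:=\mathbb{O}v\neq\lbrace o\rbrace$, which is free of rank one: it furnishes a basis $f_1,\dots,f_n$ of $M$ and an $\alpha_1\in\mathbb{O}$ with $\alpha_1f_1$ a basis of $N$ (here the divisor chain has length $k=1$ since $N$ has rank one). Since $v$ also generates $N$, we get $v=u\alpha_1f_1$ for a unit $u$, so the coordinates of $v$ in the basis $f_1,\dots,f_n$ generate $(\alpha_1)$; by the basis-independence just noted together with the hypothesis $(\beta_1,\dots,\beta_n)=\mathbb{O}$, this forces $\alpha_1$, hence $u\alpha_1$, to be a unit. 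Consequently $\textnormal{diag}(u\alpha_1,1,\dots,1)$ is invertible over $\mathbb{O}$, so (Remark \ref{rem_module}c) again) the corresponding automorphism carries the basis $f_1,\dots,f_n$ to the basis $v=u\alpha_1f_1,\,f_2,\dots,f_n$, which contains $v$.

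The main obstacle is this converse — actually producing a basis through $v$ — and its one essential ingredient is the structure theorem, Theorem \ref{thm_basis}; everything else is bookkeeping with change-of-basis matrices and Laplace expansion. Alternatively one can avoid the structure theorem and argue constructively: from $(\beta_1,\dots,\beta_n)=\mathbb{O}$ build a matrix in $\textnormal{SL}_n(\mathbb{O})$ with first row $(\beta_1,\dots,\beta_n)$ by induction on $n$, at each step using that $\mathbb{O}$ is a principal ideal domain to factor out a generator of $(\beta_2,\dots,\beta_n)$ and combining it with $\beta_1$ via a B\'ezout relation; the accompanying determinant computation is routine.
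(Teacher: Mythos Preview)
Your proof is correct and follows essentially the same approach as the paper: the forward direction via Laplace expansion of the change-of-basis determinant, and the converse via Theorem \ref{thm_basis} applied to $\mathbb{O}v$ to write $v=\alpha e$ for a basis vector $e$, then showing $\alpha$ is a unit. The only cosmetic difference is that the paper argues directly that $\alpha\mid\beta_i$ (by expressing $e$ in the original basis) and hence $\alpha\mid 1$, whereas you package this as the basis-independence of the coordinate ideal; these are the same observation.

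Your closing remark about the alternative constructive route---building a matrix in $\textnormal{SL}_n(\mathbb{O})$ with prescribed first row by induction on $n$ using B\'ezout---is a genuine alternative that bypasses Theorem \ref{thm_basis} entirely; the paper does not pursue this, but it is worth knowing since it is more elementary and algorithmic.
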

\begin{proof}
If the vectors $\beta_{i 1} e_1 + ... + \beta_{i n} e_n , i \in \mathbb{N}_n$ conform also a basis for some $(\beta_{i j}) \in \mathbb{O}^{n \times n}$ then the determinant of this matrix must be unit $\varepsilon$ of $\mathbb{O}$. On expanding this determinant (according to Laplace's formula) by the first row we obtain $\varepsilon = \beta_{1 1} \alpha_1 + ... + \beta_{1 n} \alpha_n$ for some $\alpha_j \in \mathbb{O}$. This shows $(\beta_{1 1}, ... ,\beta_{1 n}) = (\varepsilon) = \mathbb{O}$, i.e. one direction of the assertion. For the other direction we use the theorem: It exists a (basis element) $e \in M$ and an $\alpha \in \mathbb{O}$ s.t. $\alpha e = b$ for the given $b = \beta_1 e_1 + ... + \beta_n e_n$ with $\alpha_1 \beta_1 + ... + \alpha_n \beta_n = 1$ for some $\alpha_i \in \mathbb{O}$. By representing $e$ also as a linear combination of the basis elements $e_i$ it follows that $\alpha$ divides the $\beta_i$ and therefore $1$. Hence $b$ is also a basis element.
\end{proof}

\begin{rem}\label{rem_basis}
For a principal ideal domain $\mathbb{O}$ an element of $\mathbb{O}^n$ is primitive if and only if it is a row or column of an element of $\textnormal{GL}_n(\mathbb{O})$. This follows from Example \ref{ex_arithmeticModule} and Corollary \ref{cor_basis}.
\end{rem}

\section{Quadratic forms}\label{sec_quadraticforms}
In this section $M$ denotes a module over an integral domain $\mathbb{O}$ with $1+1 \ne 0$ and with a finite $\mathbb{O}$-basis $e_1,...,e_n$.

\begin{defn}\label{def_quadrForm}
A function $q : M \to \mathbb{O}$ is called ($n$-\textit{ary}) \textit{quadratic form} (\textit{on} $M$) if $q(\lambda x) = \lambda^2 q(x)$ for all $\lambda \in \mathbb{O}$, $x \in M$ and if its \textit{polar form} \begin{equation*}\label{polarform}
\varphi(x,y) := \frac{1}{2}\left(q(x+y) - q(x) - q(y)\right)
\end{equation*}
is a bilinear function $\varphi : M \times M \to \mathbb{O}$.\footnote{This is the classical definition. The non-classical definition allows $\varphi$-values $\kappa$ in the quotient field (s. Definition \ref{def_quotField}) of $\mathbb{O}$ s.t. $2\kappa \in \mathbb{O}$. See also Definition \ref{def_intBinForm}! Hence in case of $\mathbb{O}$ being a field there is no difference between those two definitions.} In case $n=2$ it is called \textit{binary}, in case $n=3$ \textit{ternary}.\footnote{In any case it holds $q(x+y)=q(x)+2\varphi(x,y)+q(y)$. This is the well-known \textit{binomial formula} in case $M = \mathbb{O}$ ($n=1$) because then $q$ means squaring and $\varphi$ multiplication.} A quadratic form is also called \textit{quadratic module/space} for emphasis on the underlying module/vectorspace $M$, respectively.
\end{defn}

\begin{ex}
a) The product of two linear forms is a quadratic form.

b) For $P \in \textnormal{Sym}_n(\mathbb{O})$ the function $q(x) := x P x^t$ of row vectors $x \in M := \mathbb{O}^n$ is a quadratic form. Its polar form is given by $\varphi(x,y) = x P y^t$.
\end{ex}

\begin{rem}\label{rem_polarform}
It holds $\varphi(x,x)=q(x)$ for all $x \in M$ and
\begin{equation}\label{polarmatrix}
q(x_1 e_1+...+x_n e_n) = (x_1,...,x_n) P (x_1,...,x_n)^t = \sum\limits_{i,j=1}^{n} p_{i j} x_i x_j
\end{equation}
for all $x_1,...,x_n \in \mathbb{O}$ with $P := (p_{i j}):=(\varphi(e_i,e_j))$. The entries $p_{i j} \in \mathbb{O}$ of $P$ are called the \textit{coefficients} of $q$ (with respect to the basis $e_1,...,e_n$). Since $\varphi$ is symmetric in its arguments $P$ is symmetric. The right side of equation \ref{polarmatrix} defines a quadratic form $\tilde{q}:\mathbb{O}^n \to \mathbb{O}$ (in $(x_1,...,x_n)$). The polar form $\tilde{\varphi}:\mathbb{O}^n \times \mathbb{O}^n \to \mathbb{O}$ of $\tilde{q}$ is given by $\tilde{\varphi}(x,y) := x P y^t$.
\end{rem}

\begin{prop}\label{prop_quadraticforms}
The map $q \mapsto P$ described in the remark defines a bijective correspondence between the set of quadratic forms $q : M \to \mathbb{O}$ and $\textnormal{Sym}_n(\mathbb{O})$.
\end{prop}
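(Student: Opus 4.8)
The plan is to establish the correspondence $q \mapsto P$ by constructing an explicit two-sided inverse. The map from quadratic forms to $\mathrm{Sym}_n(\mathbb{O})$ sends $q$ to $P = (\varphi(e_i,e_j))$, which lies in $\mathrm{Sym}_n(\mathbb{O})$ because the polar form is symmetric (as noted in Remark \ref{rem_polarform}). For the reverse direction, given any $P \in \mathrm{Sym}_n(\mathbb{O})$, I would define $q_P : M \to \mathbb{O}$ by $q_P(x_1 e_1 + \cdots + x_n e_n) := (x_1,\dots,x_n)\, P\, (x_1,\dots,x_n)^t$; this is well-defined precisely because $e_1,\dots,e_n$ is a basis, so the coordinates $x_i$ are uniquely determined by the vector.

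First I would check that $q_P$ is genuinely a quadratic form in the sense of Definition \ref{def_quadrForm}: homogeneity $q_P(\lambda x) = \lambda^2 q_P(x)$ is immediate from the bilinearity of $x \mapsto x P x^t$ in the scalar $\lambda$, and the polar form of $q_P$ is $(x,y) \mapsto \tfrac12(q_P(x+y) - q_P(x) - q_P(y))$, which by the symmetry of $P$ expands to $x P y^t$ in coordinates — a bilinear function, using here that $1+1 \ne 0$ so that division by $2$ is legitimate in the quotient field and the result lands back in $\mathbb{O}$. This is essentially the content of Remark \ref{rem_polarform} applied to $\tilde q$, transported to $M$ via the coordinate isomorphism. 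Then I would verify the two compositions are identities. Starting from $P$, the coefficients of $q_P$ with respect to $e_1,\dots,e_n$ are $\tilde\varphi(e_i,e_j) = e_i P e_j^t = p_{ij}$ (reading $e_i$ as the $i$-th canonical unit row vector), so we recover $P$. Conversely, starting from a quadratic form $q$ with coefficient matrix $P = (\varphi(e_i,e_j))$, equation (\ref{polarmatrix}) of the remark says exactly that $q(x_1 e_1 + \cdots + x_n e_n) = (x_1,\dots,x_n) P (x_1,\dots,x_n)^t = q_P(x_1 e_1 + \cdots + x_n e_n)$ for all $x_i \in \mathbb{O}$, hence $q = q_P$.

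The only genuinely substantive point — and the step I would treat most carefully — is equation (\ref{polarmatrix}) itself, i.e. that every quadratic form is recovered from its polar-form values on basis pairs. This follows by induction on the number of terms, repeatedly applying the identity $q(x+y) = q(x) + 2\varphi(x,y) + q(y)$ (the binomial formula from Definition \ref{def_quadrForm}) together with homogeneity $q(x_i e_i) = x_i^2 q(e_i) = x_i^2 \varphi(e_i,e_i)$ and the bilinearity of $\varphi$ to pull the scalars $x_i$ out of $\varphi(x_i e_i, x_j e_j) = x_i x_j \varphi(e_i,e_j)$. Since Remark \ref{rem_polarform} has already asserted (\ref{polarmatrix}), I can simply cite it; in that case the proof reduces to the bookkeeping above and presents no real obstacle. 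If a self-contained argument is wanted, the induction is routine and I would spell out just the two-term base case and the inductive step. I would close by remarking that injectivity is clear from $q = q_P$ determining $P$ as the coefficient matrix, and surjectivity from the construction of $q_P$, so $q \mapsto P$ is a bijection.
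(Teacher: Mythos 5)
Your proof is correct and amounts to the same argument as the paper's: both construct the inverse explicitly via $P \mapsto q_P$ with $q_P(x_1 e_1 + \cdots + x_n e_n) = (x_1,\dots,x_n)P(x_1,\dots,x_n)^t$, and both lean on Remark \ref{rem_polarform} and the binomial identity to match polar forms and coefficient matrices. The only difference is organizational: the paper factors the bijection through the intermediate set $S(M)$ of symmetric bilinear forms (showing $q \mapsto \varphi$ and $\varphi \mapsto P$ are each bijective), while you verify the two-sided inverse directly; the substance is identical, and your closing remarks about re-deriving equation \eqref{polarmatrix} by induction are unnecessary since the remark already supplies it, but harmless.
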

\begin{proof}
The equation $\varphi(x,x)=q(x)$ shows that the map  $q \mapsto \varphi$ defines an injective map into the set $S(M)$ of symmetric bilinear forms. It is also surjective since $x  \mapsto \varphi(x,x)$ defines a quadratic form of $M$ for every $\varphi \in S(M)$. Now, it suffices to show that $\varphi \mapsto P$ defines a bijective map $S(M) \to \textnormal{Sym}_n(M)$. Since $e_1,...,e_n$ is a basis of $M$ a bilinear function $\varphi$ is determined by the values $\varphi(e_i,e_j)$ ($i,j \in \mathbb{N}_n$). This shows injectivity. For a given $P = (p_{i j}) \in \textnormal{Sym}_n(M)$ the bilinear form $\varphi:M \times M \to \mathbb{O}$ defined (with $\tilde{\varphi}$ of Remark \ref{rem_polarform}) by
\begin{equation*}
(x,y) = (x_1 e_1+...+x_n e_n , y_1 e_1+...+y_n e_n) \mapsto \tilde{\varphi}((x_1,...,x_n),(y_1,...,y_n))
\end{equation*}
is symmetric, i.e. $\varphi \in S(M)$. And it holds $\varphi(e_i,e_j) = \tilde{\varphi}(\tilde{e}_i,\tilde{e}_j) = \tilde{e}_i P \tilde{e}_j^t = p_{i j}$ where $\tilde{e}_i$ denotes the $i$-th canonical unit vector of $\mathbb{O}^n$. This shows surjectivity.
\end{proof}

\begin{rem}\label{rem_quadraticforms}
The bijective correspondence via polar forms is established with respect to a fixed basis of $M$. With another basis given by the vectors
\begin{equation*}
e'_i := \sum\limits_{j=1}^{n} \alpha_{i j} e_j
\end{equation*}
for some $A = (\alpha_{i j}) \in \textnormal{GL}_n(\mathbb{O})$ (s. Example \ref{ex_arithmeticModule}) the original matrix $P$ corresponding with the quadratic form $q$ changes to $A P A^t$ since then
\begin{equation*}
q(y_1 e'_1+...+y_n e'_n) = (y_1,...,y_n) A P A^t (y_1,...,y_n)^t
\end{equation*}
for all $y_1,...,y_n \in \mathbb{O}$. Because of invertibility of $A$ and $A^t$ the rank of the matrix (s. Remark \ref{rem_module}b)!) corresponding with $q$ does not change under basis change.
\end{rem}

\begin{ex}\label{ex_quadrForm}
The quadratic form $q:\mathbb{Q}^2 \to \mathbb{Q}$ defined by
\begin{equation*}
q(x,y) := 6 x^2 + 5 x y + 8 y^2
\end{equation*}
corresponds (according to Proposition \ref{prop_quadraticforms}) to the symmetric matrix
\begin{equation*}
P := \left(\begin{matrix} 6 &\ 5/2 \\ 5/2 &\ 8 \end{matrix} \right)
\end{equation*}
with respect to the canonical basis $(1,0),(0,1)$. Why can $q$ not be regarded as a quadratic form on $\mathbb{Z}^2$ (in the classical Definition \ref{def_quadrForm}), although $q(x,y) \in \mathbb{Z}$ for all $x,y \in \mathbb{Z}$? What is the symmetric matrix that corresponds to $q$ with respect to the basis $(2,1),(0,1)$ of $\mathbb{Q}^2$?
\end{ex}

\begin{defn}\label{def_orthogonalSplitting}
A basis $e_1,...,e_n$ of $M$ is called \textit{orthogonal with respect to a quadratic form} $q:M \to \mathbb{O}$ if the corresponding matrix $(\varphi(e_i,e_j))$ is diagonal. A quadratic form $q:M \to \mathbb{O}$ is called \textit{regular} when $M$ is injectively mapped into its vectorspace of linear forms by $x \mapsto (y \mapsto \varphi(x,y))$. A module $N$ is called the \textit{direct sum} of the submodules $M_1,...,M_k \subseteq N$ ($k \in \mathbb{N}$) if $M_i \cap M_j = \lbrace o \rbrace$ for all $i \ne j$ and for every $n \in N$ there exist unique $m_i \in M_i$ s.t. $n = m_1 + ... + m_k$. For a quadratic form $q:N \to \mathbb{O}$ and a submodule $M \subseteq N$ the module\footnote{Obviously, it is a submodule of $N$.}
\begin{equation*}
M^{\perp} := \lbrace x \in N : \varphi(x,y)=0 \textnormal{ for all } y \in M \rbrace
\end{equation*}
is called the \textit{orthogonal complement} of $M$. In case of $M=N$ it is called also the \textit{radical}. A direct sum of $L \subseteq N$ and $M \subseteq N$ with $\varphi(x,y)=0$ for all $x \in L, y \in M$ is called an \textit{orthogonal splitting} of $N$. Then we write $N = L \perp M$. More general, we write $N = M_1 \perp ... \perp M_k$ and call it an \textit{orthogonal splitting} when $N$ is the direct sum of the $M_i$ and the $M_i$ conform pairwise orthogonal splittings.
\end{defn}

Regularity of $q:M \to \mathbb{O}$ does not mean $q(x) \ne 0$ for all $x \in M \setminus \lbrace o \rbrace$.

\begin{ex}\label{ex_orthogSplitting}
The binary quadratic form $q(x,y) := xy$ on $\mathbb{O}^2$ is regular and vanishes on the submodules $\lbrace 0 \rbrace \times \mathbb{O}$ and $\mathbb{O} \times \lbrace 0 \rbrace$. Hence, this example shows also that a regular form is not necessarily regular when restricted to submodules. Since $\mathbb{O}^2$ is the direct sum of $\lbrace 0 \rbrace \times \mathbb{O}$ and $\mathbb{O} \times \lbrace 0 \rbrace$ all the quadratic forms $q(x,y)=\alpha x^2 + \gamma y^2$ with $\alpha,\gamma \in \mathbb{O}$ make an orthogonal splitting out of it. Show that there are no other quadratic forms that do this job.
\end{ex}

\begin{defn}\label{def_representation}
We say that a quadratic form $q:M \to \mathbb{O}$ \textit{represents} an element $\alpha \in \mathbb{O}$ (\textit{primitively}) when there is some (primitive\footnote{For definition of \textit{primitivity} of a module element see Corollary \ref{cor_basis} and Remark \ref{rem_basis}.}) $a \in M \setminus \lbrace o \rbrace$ with $q(a) = \alpha$.
\end{defn}

\begin{ex}\label{ex_representation}
Over $\mathbb{Z}$ the quadratic form $q(x,y) := x^2 + 4 x y +y^2$ represents four since $q(2,0)=4$. But it does not primitively represent four. Why? Hint: Consider $q(x,y) \equiv 0 \mod{4}$.
\end{ex}

Over a field $\mathbb{K}$ with $1+1 \ne 0$ every binary quadratic form of determinant equal to the negative of a square in $\mathbb{K}$ there is basis $e_1,e_2$ of $\mathbb{K}^2$ s.t. the corresponding matrix $(\varphi(e_i,e_j))_{i,j \in \mathbb{N}_2}$ equals (s. \cite{O'Meara}, prop.42:9)
\begin{equation*}
\left(\begin{matrix} 0 &\ 1 \\ 1 &\ 0 \end{matrix}\right).
\end{equation*}
Such a quadratic form is called a \textit{hyperbolic plane}. A \textit{hyperbolic space} is the orthogonal splitting of hyperbolic planes. Hence it has even dimension. Obviously, a hyperbolic plane/space represents every field element.

\begin{prop}\label{prop_universal}
Over a field with $1+1 \ne 0$, a regular quadratic form that represents zero represents all field elements.
\end{prop}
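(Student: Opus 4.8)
The plan is to reduce the general statement to the hyperbolic plane situation already discussed before the proposition. Suppose $q:M\to\mathbb{K}$ is regular and $q(a)=0$ for some $a\ne o$. The first step is to show that $a$ lies in a two-dimensional submodule of $M$ on which $q$ restricts to a hyperbolic plane. Since $q$ is regular, the linear form $x\mapsto\varphi(x,a)$ is not identically zero (indeed it is an automorphism composed with evaluation, hence nonzero as a functional), so there is some $b\in M$ with $\varphi(a,b)\ne o$; rescaling $b$ by the field element $1/\varphi(a,b)$ we may assume $\varphi(a,b)=1$. Then replacing $b$ by $b-\tfrac{1}{2}q(b)\,a$ (here $1+1\ne o$ is used) we get $q(b)=q(b)-q(b)\varphi(a,b)\cdot\tfrac{?}{}$ — more carefully, $q(b-\lambda a)=q(b)-2\lambda\varphi(a,b)+\lambda^2 q(a)=q(b)-2\lambda$, so the choice $\lambda=\tfrac{1}{2}q(b)$ yields a vector $b'$ with $q(b')=0$ and still $\varphi(a,b')=1$. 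Thus $a,b'$ span a submodule $H$ with Gram matrix $\left(\begin{smallmatrix}0&1\\1&0\end{smallmatrix}\right)$, i.e. a hyperbolic plane.

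The second step invokes the already-recorded fact that a hyperbolic plane represents every field element: for any $\gamma\in\mathbb{K}$ we have $q(x\,a+y\,b')=2xy$, and choosing $x=\gamma$, $y=\tfrac12$ (again using $1+1\ne o$) gives $q(\gamma a+\tfrac12 b')=\gamma$. Since $H\subseteq M$, the form $q$ on $M$ represents $\gamma$ as well. As $\gamma$ was arbitrary, $q$ represents all field elements, which is the claim.

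The only point requiring a little care — and the place I would expect a referee to look — is the passage from regularity to the existence of $b$ with $\varphi(a,b)\ne o$. By definition regularity says $x\mapsto\varphi(x,y)$ is an automorphism of $M$ for every $y\ne o$; applying this with $y=a$ gives a bijective linear map $M\to M$, whose image under the coordinate identification is $M$ itself, so in particular the map is nonzero, hence $\varphi(\cdot,a)$ does not vanish identically and a suitable $b$ exists. (One should note that $b$ can be taken linearly independent of $a$: if $b=\mu a$ then $\varphi(a,b)=\mu q(a)=0$, contradicting $\varphi(a,b)=1$, so $a,b'$ are automatically independent and $H$ is genuinely two-dimensional.) Everything else is the routine completion-of-square computation over a field where $2$ is invertible, exactly as in the construction of hyperbolic planes preceding the proposition.
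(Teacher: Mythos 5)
Your proof is correct and takes essentially the same route as the paper: find $b$ with $\varphi(a,b)\ne 0$, normalize so $\varphi(a,b)=1$, replace $b$ by $b-\tfrac12 q(b)a$ to kill $q(b)$, and then read off that $q$ takes every value on the span of the two vectors. The only (cosmetic) difference is that the paper normalizes by rescaling the whole form, whereas you rescale $b$ itself; both are valid, and the rest of the computation is the same.
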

\begin{proof}
By hopothesis there is an $a \ne o$ with $\varphi(a,a) = 0$. Because of regularity there is a vector $b$ with $\varphi(a,b) \ne 0$. By dividing all coefficients by that element we may assume without loss of generality $\varphi(a,b) = 1$. For $c := b -\varphi(b,b) a / 2$ it holds $\varphi(c,c) = 0$ and $\varphi(a,c) = 1$. Hence for an arbitrary field element $\kappa$ we have $\varphi(a + \kappa c,a + \kappa c) = 2 \kappa$. Since $1+1 \ne 0$ this proofs the assertion. 
\end{proof}

\begin{ex}\label{ex_universal}
The quadratic form $x^2 + x y - 2 y^2$ represents zero for $x := y := 1$. Hence it represents every field element. Study the proof of Proposition \ref{prop_universal} in order to find a representation of two.
\end{ex}

\begin{rem}\label{rem_directsum}
a) A quadratic form $q:M \to \mathbb{O}$ is regular if and only if its corresponding matrix is invertible with respect to any basis of $M$. And this is equivalent with $M^{\perp} = \lbrace o \rbrace$.

b) For modules $K \subseteq L$ with the direct sum of $K$ and $M$ equal to the direct sum of $L$ and $M$ it follows $K=L$. Because, for $l \in L$ there are $k \in K, m \in M$ s.t. $l = k + m$, hence $o + o = o = (k-l) + m$. Since $k-l \in L$ the uniqueness requires $k=l$. Thus we have shown $L \subseteq K$.\footnote{In fact, it underlies the more fundamental principle that for a submodule $K \subseteq L$ with $\dim(K) = \dim(L)$ we have $K = L$.}

c) For a quadratic form $q:M \to \mathbb{O}$ and an isomorphism $l:M \to N$ it holds $l(L^{\perp}) = (l(L))^{\perp}$ for every submodule $L$ of $M$. Hereby, the latter orthogonal complement is meant with respect to the quadratic form $q \circ l^{-1}:N \to \mathbb{O}$. This is clear since the polar form of  $q \circ l^{-1}$ is defined by $(x,y) \mapsto \varphi(l^{-1}(x),l^{-1}(y))$.
\end{rem}

Now, we shall see that every orthogonal splitting of $N$ is of the form $M \perp M^{\perp}$ where the restriction of $q:N \to \mathbb{O}$ to $M$ is regular.

\begin{lem}\label{lem_directsum}
For a quadratic form $q:N \to \mathbb{O}$ that is regular on the submodule $M \subseteq N$ we have $N = M \perp M^{\perp}$. If $N = M \perp L$ for another submodule $L \subseteq N$ then $L = M^{\perp}$.
\end{lem}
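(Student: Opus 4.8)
The plan is to fix a finite basis $e_1,\dots,e_m$ of $M$ and to base everything on the Gram matrix $P:=(\varphi(e_i,e_j))\in\textnormal{Sym}_m(\mathbb{O})$. Since $q$ is regular on $M$, Remark \ref{rem_directsum}a shows $P\in\textnormal{GL}_m(\mathbb{O})$. Recall that for $x=x_1e_1+\dots+x_me_m\in M$ and any $y\in N$ one has, by bilinearity, $\varphi(x,e_j)=\sum_i x_i p_{ij}$, so that $x\in M^{\perp}$ exactly when the row vector $(x_1,\dots,x_m)P$ vanishes; and $(\varphi(y,e_1),\dots,\varphi(y,e_m))$ is a well-defined element of $\mathbb{O}^m$.

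First I would check $M\cap M^{\perp}=\lbrace o\rbrace$: if $x\in M$ also lies in $M^{\perp}$ then $(x_1,\dots,x_m)P=0$, and right-multiplying by $P^{-1}$ forces all $x_i=0$. The only step with genuine content is $N=M+M^{\perp}$. Given $n\in N$, put $v:=(\varphi(n,e_1),\dots,\varphi(n,e_m))$ and $(c_1,\dots,c_m):=vP^{-1}\in\mathbb{O}^m$; then $m_0:=c_1e_1+\dots+c_me_m\in M$ satisfies $\varphi(m_0,e_j)=\sum_i c_ip_{ij}=v_j=\varphi(n,e_j)$ for every $j$, hence $n-m_0\in M^{\perp}$ and $n=m_0+(n-m_0)$. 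Together with the previous paragraph, $N$ is the direct sum of $M$ and $M^{\perp}$ in the sense of Definition \ref{def_orthogonalSplitting} (uniqueness of the decomposition being the usual consequence of $M\cap M^{\perp}=\lbrace o\rbrace$), and this direct sum is orthogonal since, by the very definition of $M^{\perp}$, every element of $M^{\perp}$ is orthogonal to every element of $M$. Thus $N=M\perp M^{\perp}$.

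For the second claim, let $N=M\perp L$. Orthogonality of this splitting says precisely $\varphi(x,y)=0$ for all $x\in M$, $y\in L$, i.e. $L\subseteq M^{\perp}$. Since the direct sum of $M$ and $L$ equals $N$, which also equals the direct sum of $M$ and $M^{\perp}$ by the first part, Remark \ref{rem_directsum}b applied to $L\subseteq M^{\perp}$ gives $L=M^{\perp}$. (One can also argue directly: for $z\in M^{\perp}$ write $z=m_0+l$ with $m_0\in M$, $l\in L$; then $\varphi(m_0,x)=\varphi(z,x)-\varphi(l,x)=0$ for all $x\in M$ because $z,l\in M^{\perp}$, so $m_0\in M\cap M^{\perp}=\lbrace o\rbrace$ and $z=l\in L$.)

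The main obstacle is the surjectivity step $N=M+M^{\perp}$: it is exactly here that regularity is used in an essential way, since one needs $P$ to be invertible over $\mathbb{O}$ itself — not merely over the quotient field — in order to get the correcting vector $m_0$ with coordinates in $\mathbb{O}$. The rest is formal manipulation of the notions of direct sum and orthogonal complement. One should also keep in mind that $M$, being a submodule, is tacitly assumed to carry a finite basis, which is what makes $P$ and the appeal to Remark \ref{rem_directsum}a meaningful.
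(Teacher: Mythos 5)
Your proof is correct and takes essentially the same approach as the paper: the paper just cites \cite{Cassels}, ch.2, lem.1.3 for the Gram-matrix projection argument that you spell out explicitly (including the crucial point that regularity gives invertibility of $P$ over $\mathbb{O}$ itself, not only over the quotient field), and then, exactly as you do, deduces the second assertion from Remark \ref{rem_directsum}b) together with the observation $L\subseteq M^{\perp}$.
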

\begin{proof}
The proof of the first assertion in \cite{Cassels}, ch.2, lem.1.3 for $\mathbb{O}$ being a field carries over to the more general case of $M$ being a free module with finite basis over any integral domain $\mathbb{O}$ (s. \cite {vdWaerden2}, art.132 for justification of matrix representation of $q$ restricted to $M$). The second assertion follows from Remark \ref{rem_directsum}b) by observing $L \subseteq M^{\perp}$.
\end{proof}

An important property of quadratic forms (and therefore of symmetric matrices) over fields $\mathbb{K}$ with $1 + 1 \ne 0$ is the following.

\begin{cor}\label{cor_orthogonalisation}
Every quadratic form $q:V \to \mathbb{K}$ of a finite-dimensional vectorspace $V$ over $\mathbb{K}$ has an orthogonal basis. Every $v \in V$ with $q(v) \ne 0$ can be completed to an orthogonal basis with respect to $q$.
\end{cor}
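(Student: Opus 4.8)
The plan is to induct on $n := \dim V$, establishing for each $n$ the combined statement: $V$ has an orthogonal basis with respect to $q$, and moreover every $v \in V$ with $q(v) \neq 0$ lies in some such basis. The base case $n = 1$ is immediate, since a one-element basis automatically has a diagonal ($1 \times 1$) Gram matrix, and a vector $v$ with $q(v) = \varphi(v,v) \neq 0$ is in particular nonzero, hence a basis by itself.

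For the inductive step I would first treat the degenerate case $q \equiv 0$ separately: polarising via $2\varphi(x,y) = q(x+y) - q(x) - q(y)$ and using $1 + 1 \neq 0$ shows $\varphi \equiv 0$, so every basis of $V$ is orthogonal and no $v$ with $q(v) \neq 0$ exists --- nothing more to prove. Otherwise $q$ takes a nonzero value somewhere; if no vector is prescribed in the second assertion I simply pick some $v$ with $q(v) \neq 0$, and in either situation I now have such a $v$ in hand.

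Next I would observe that $q$ restricts to a regular form on the line $\mathbb{K}v$ --- its Gram matrix there is the invertible $1 \times 1$ matrix $(q(v))$, so regularity follows from Remark \ref{rem_directsum}a). Lemma \ref{lem_directsum} then gives the orthogonal splitting $V = \mathbb{K}v \perp (\mathbb{K}v)^{\perp}$, and the direct-sum decomposition forces $\dim (\mathbb{K}v)^{\perp} = n - 1$. Applying the induction hypothesis to the restriction of $q$ to $W := (\mathbb{K}v)^{\perp}$ produces an orthogonal basis $v_2,\dots,v_n$ of $W$; since $\varphi(v,w) = 0$ for all $w \in W$, the tuple $v, v_2,\dots,v_n$ is a basis of $V$ with diagonal Gram matrix, i.e.\ an orthogonal basis containing the prescribed $v$. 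This closes the induction and proves both assertions.

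I do not expect a genuine obstacle: the argument is the standard Gram--Schmidt-type diagonalisation. The only steps needing a word of justification are the reduction of the $q \equiv 0$ case to triviality (which is exactly where the hypothesis $1 + 1 \neq 0$ enters) and the verification that $q|_{\mathbb{K}v}$ is regular so that Lemma \ref{lem_directsum} applies; both are one-liners.
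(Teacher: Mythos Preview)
Your proof is correct and follows essentially the same approach as the paper: induction on $\dim V$, handling the zero-form case separately, choosing a vector $v$ with $q(v)\ne 0$, invoking Lemma~\ref{lem_directsum} on the regular line $\mathbb{K}v$, and applying the induction hypothesis to $(\mathbb{K}v)^\perp$. Your treatment is slightly more explicit about the base case and the role of $1+1\ne 0$, but the argument is the same.
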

\begin{proof} (according to \cite{Cassels}, ch.2, lem.1.4)
In case $q$ is the zero-form the first assertion is clear. Otherwise there is an $e_1 \in V$ s.t. $\alpha := q(e_1) \ne 0$. The one-dimensional space $U$ spanned by $e_1$ is thus regular. Therefore $V$ is the direct sum of $U$ and $U^{\perp}$ according to Lemma \ref{lem_directsum}. By induction on the dimension there is an orthogonal basis $e_2,...,e_n$ of $U^{\perp}$. Then $e_1,...,e_n$ is an orthogonal basis of $V$. The second assertion is clear since $e_1 := v$ can be chosen.
\end{proof}

\begin{rem}\label{rem_orthogonalisation}
Corollary \ref{cor_orthogonalisation} means that for a $P \in \textnormal{Sym}_n(\mathbb{K})$ there is an $A \in \textnormal{GL}_n(\mathbb{K})$ s.t. $A^t P A$ is diagonal. Namely, for the quadratic form corresponding to $P$ with respect to the canonical unit basis the columns of $A$ conform an orthogonal basis of $\mathbb{K}^n$. 
\end{rem}

\begin{ex}\label{ex_orthogonalisation}
Let $a, b, c$ elements of a field $\mathbb{K}$ with $1 + 1 \ne 0$. In order to find a matrix $A \in \textnormal{GL}_2(\mathbb{K})$ s.t. $A^t P A$ becomes diagonal for
\begin{equation*}
P := \left(\begin{matrix} a &\ b \\ b &\ c \end{matrix} \right) \in \textnormal{Sym}_2(\mathbb{K})
\end{equation*}
we differentiate between three cases. In case $a \ne 0$ we choose $x, y \in \mathbb{K}$ s.t. $a x + b y = 0$, e.g. $x := b , y := -a$. This consideration yields
\begin{equation*}
A := \left(\begin{matrix} b &\ 1 \\ -a &\ 0 \end{matrix} \right)
\end{equation*}
as suitable. In case $a = 0, c \ne 0 $ a similar argumentation reveals
\begin{equation*}
A := \left(\begin{matrix} 0 &\ -c \\ 1 &\ b \end{matrix} \right)
\end{equation*}
as suitable. In case $a = c = 0$ one may take
\begin{equation*}
A := \left(\begin{matrix} 1 &\ -1 \\ 1 &\ 1 \end{matrix} \right) .
\end{equation*}
The quadratic form of Example \ref{ex_universal} corresponds to
\begin{equation*}
P := \left(\begin{matrix} 1 &\ 1/2 \\ 1/2 &\ -2 \end{matrix} \right) \in \textnormal{Sym}_2(\mathbb{K})
\end{equation*}
with respect to the canonical unit basis. Find an $A \in \textnormal{GL}_2(\mathbb{K})$ s.t. $A^t P A$ is diagonal.
\end{ex}

\begin{rem}\label{rem_orthogonalisation_real}
In case $\mathbb{K} = \mathbb{R}$ it can be shown (e.g. in \cite{SerreD}, cor.5.3 with a somewhat more elaborate argumentation of linear algebra) that the transformation matrix $Q := A$ of Remark \ref{rem_orthogonalisation} can be chosen \textit{orthogonal}, i.e. s.t. $Q^t Q = E_n$ (s. Example \ref{ex_orthogGroup}). Hence $P \in \textnormal{Sym}_n(\mathbb{R})$ is similar to a diagonal matrix. This is not true for general fields: In fact, every diagonal matrix with \textit{complex} entries $\in \mathbb{C} := \mathbb{R}^2$ that is similar to $P \in \textnormal{Sym}_n(\mathbb{C})$ must be of the form $Q^t P Q$ for an orthogonal $Q \in \mathbb{C}^{n \times n}$ (s. \cite{HJ}, thm.4.4.13). But such a $Q$ exists if and only if $P \bar{P}$ has real entries (s. \cite{HJ}, thm.4.4.7) where the bar over $P$ means \textit{complex conjugation} $x + i y \mapsto x - i y$ ($x,y \in \mathbb{R} \cong \mathbb{R} \times \lbrace 0 \rbrace, i := (0,1) \in \mathbb{C}$) of every entry of $P$. Hence a counterexample is
\begin{equation*}
P := \left(\begin{matrix} 1 &\ i \\ i &\ 0 \end{matrix} \right) \in \textnormal{Sym}_2(\mathbb{C}) \textnormal{ because } P \bar{P} = \left(\begin{matrix} 2 &\ -i \\ i &\ 1 \end{matrix} \right) \notin \mathbb{R}^{2 \times 2}.
\end{equation*}
\end{rem}

The following is a rather general fact about quadratic forms.

\begin{prop}\label{prop_radical}
A finitely generated, free quadratic module $M$ over a principal ideal domain $\mathbb{O}$ is the orthogonal splitting of $M^{\perp}$ and another submodule $L$ of $M$. In case $0 < k := \dim M^{\perp} < n:= \dim M$ there is a basis $e_1,...,e_n$ of $M$ s.t. $e_1,...,e_k$ is a basis of $M^{\perp}$ and $e_{k+1},...,e_n$ is a basis of $L$.
\end{prop}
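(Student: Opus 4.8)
The plan is to apply the basis theorem (Theorem \ref{thm_basis}) to the submodule $M^{\perp}\subseteq M$ and to exploit the fact that $M^{\perp}$ is \emph{saturated} in $M$ — meaning that $\alpha x\in M^{\perp}$ with $\alpha\ne 0$ forces $x\in M^{\perp}$ — in order to see that the divisor chain $\alpha_1,\dots,\alpha_k$ supplied by the theorem consists of units. That single observation is what makes the one basis serve simultaneously for $M^{\perp}$ and for a complement, and it is the only non-routine point of the argument.

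First I would dispose of the degenerate cases. If $M^{\perp}=\lbrace o\rbrace$, then $q$ is regular by Remark \ref{rem_directsum}a) and $M=M^{\perp}\perp M$ is the desired splitting with $L:=M$. If $M^{\perp}=M$, then $\varphi\equiv 0$, so $q$ is the zero form, $L:=\lbrace o\rbrace$ does the job, and the rank hypothesis $0<k<n$ is vacuous. So assume from now on that $\lbrace o\rbrace\ne M^{\perp}\subsetneq M$.

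Next I would establish saturation: if $\alpha x\in M^{\perp}$ for some $x\in M$ and some $\alpha\in\mathbb{O}\setminus\lbrace 0\rbrace$, then $\alpha\,\varphi(x,y)=\varphi(\alpha x,y)=0$ for every $y\in M$, and since $\mathbb{O}$ is an integral domain this gives $\varphi(x,y)=0$ for all $y$, i.e.\ $x\in M^{\perp}$. Now I apply Theorem \ref{thm_basis} to the non-zero submodule $N:=M^{\perp}$: there are a basis $e_1,\dots,e_n$ of $M$ and elements $\alpha_1,\dots,\alpha_k\in\mathbb{O}$ with $\alpha_i\mid\alpha_{i+1}$ such that $\alpha_1e_1,\dots,\alpha_ke_k$ is a basis of $M^{\perp}$. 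Each $\alpha_ie_i$ lies in $M^{\perp}$ and each $\alpha_i\ne 0$ (a basis contains no zero vector, and $e_i$ is part of a basis of the free module $M$), so saturation yields $e_i\in M^{\perp}$ for all $i\in\mathbb{N}_k$. Hence the submodule spanned by $e_1,\dots,e_k$ both contains and is contained in $M^{\perp}=\mathbb{O}\alpha_1e_1+\dots+\mathbb{O}\alpha_ke_k$, so $e_1,\dots,e_k$ is itself a basis of $M^{\perp}$.

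Finally I would set $L$ equal to the submodule spanned by $e_{k+1},\dots,e_n$. Because $e_1,\dots,e_n$ is a basis of $M$, every element of $M$ decomposes uniquely as a sum of an element of $M^{\perp}$ and an element of $L$ with $M^{\perp}\cap L=\lbrace o\rbrace$, so $M$ is the direct sum of $M^{\perp}$ and $L$, and $e_{k+1},\dots,e_n$ is a basis of $L$. Moreover $\varphi(x,y)=0$ whenever $x\in M^{\perp}$ and $y\in L\subseteq M$, directly from the definition of $M^{\perp}$, so the decomposition is an orthogonal splitting, $M=M^{\perp}\perp L$. The main obstacle is really just recognising and using saturation in the third step; once the $\alpha_i$ are known to be units, everything else is bookkeeping with the basis theorem and the definition of $M^{\perp}$.
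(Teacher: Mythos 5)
Your proof is correct and takes essentially the same route as the paper: you apply Theorem \ref{thm_basis} to $M^{\perp}$, use the absence of zero divisors in $\mathbb{O}$ to conclude $e_i\in M^{\perp}$ for $i\le k$ (the ``saturation'' step you spell out is exactly what the paper compresses into one sentence), and then read off the orthogonal splitting from the basis. The only superfluous remark in your plan is the claim that the $\alpha_i$ must be units — you never need to establish or use this, since saturation gives $e_i\in M^{\perp}$ directly.
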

\begin{proof}
The case $M^{\perp} = \lbrace o \rbrace$ or $M^{\perp} = M$ is trivial. Otherwise, according to Theorem \ref{thm_basis}, there is some basis $e_1,...,e_n$ of $M$ and there are some $\alpha_1,...,\alpha_k \in \mathbb{O}$ ($k \in \mathbb{N}_{n-1}$) s.t. $\alpha_1 e_1,...,\alpha_k e_k$ is a basis of $M^{\perp}$. Since $\mathbb{O}$ does not have any zero divisors it follows $e_i \in M^{\perp}$. Therefore $e_1,...,e_k$ is a basis of $M^{\perp}$. The other elements $e_{k+1},...,e_n$ generate a submodule $L$ of $M$. Because of linear independence of any basis they form even a basis of $L$. That $M$ is the direct sum of $M^{\perp}$ and $L$ is clear from the definition of a basis. Orthogonality is shown by $\varphi(e_i,e_j)=0$ for the given symmetric bilinear form $\varphi: M \times M \to \mathbb{O}$ and $i \le k < j$.
\end{proof}

\begin{ex}\label{ex_radical}
It holds $\mathbb{Z}^2 = \mathbb{Z}(2,-1) \perp \mathbb{Z}(1,-1)$ with respect to the (non-regular) quadratic form $x^2 + 4 x y + 4 y^2$. Hereby the first submodule is the radical.
\end{ex}

Proposition \ref{prop_radical} implies that the number of variables of a quadratic form of the arithmetic module $\mathbb{O}^n$ can be reduced s.t. it becomes regular.

\begin{cor}\label{cor_rank}
For every non-zero quadratic form  $q : M \to \mathbb{O}$ of a finitely generated, free module $M$ over a principal ideal domain $\mathbb{O}$ there is a basis $e_1,...,e_n$ of $M$ and an invertible symmetric $r \times r$-matrix $R$ s.t.
\begin{equation*}
q(\alpha_1 e_1 + ... + \alpha_n e_n) = (\alpha_1,...,\alpha_r) R (\alpha_1,...,\alpha_r)^t
\end{equation*}
for all $\alpha_1,...,\alpha_n \in \mathbb{O}$. The number $r \in \mathbb{N}_n$ is the rank of the symmetric matrix corresponding with $q$ w.r.t any basis.
\end{cor}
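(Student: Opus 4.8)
The plan is to apply Proposition \ref{prop_radical} to reduce $M$ to the orthogonal summand on which $q$ is regular, and then translate the resulting module-theoretic splitting back into the language of symmetric matrices via Remark \ref{rem_polarform} and Proposition \ref{prop_quadraticforms}. First I would invoke Proposition \ref{prop_radical} to obtain a basis $e_1,\dots,e_n$ of $M$ with $e_1,\dots,e_k$ a basis of $M^{\perp}$ and $e_{k+1},\dots,e_n$ a basis of a complementary submodule $L$ with $M = M^{\perp} \perp L$; here I must handle the degenerate cases $M^{\perp} = \{o\}$ (take $L = M$, $k=0$) and $M^{\perp} = M$ (excluded, since then $q$ would be the zero form, contrary to hypothesis — this is worth a sentence) separately, as Proposition \ref{prop_radical} does.

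Next I would reorder the basis so that the non-radical part comes first: set $f_i := e_{k+i}$ for $i \in \mathbb{N}_r$ with $r := n-k$, followed by the radical vectors. With respect to this reordered basis, the symmetric matrix $P$ corresponding with $q$ (Proposition \ref{prop_quadraticforms}) is block diagonal with an $r \times r$ block $R := (\varphi(f_i,f_j))_{i,j \in \mathbb{N}_r}$ in the top-left corner and zeros elsewhere, because $\varphi(e_i, \cdot) = 0$ for $i \le k$ by definition of the radical. Equation \ref{polarmatrix} then gives exactly $q(\alpha_1 f_1 + \dots) = (\alpha_1,\dots,\alpha_r) R (\alpha_1,\dots,\alpha_r)^t$ when the variables attached to the radical vectors are relabelled. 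That $R$ is invertible follows from Remark \ref{rem_directsum}a): the restriction $q|_L$ has trivial radical — indeed if $x \in L$ were orthogonal to all of $L$, it would also be orthogonal to $M^{\perp}$ by the orthogonal splitting, hence lie in $M^{\perp} \cap L = \{o\}$ — so its corresponding matrix $R$ is invertible.

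Finally, for the claim that $r = \operatorname{rk}$ of the symmetric matrix corresponding with $q$: since passing from one basis of $M$ to another replaces $P$ by $APA^t$ with $A \in \textnormal{GL}_n(\mathbb{O})$ (Remark \ref{rem_quadraticforms}), and since such a transformation does not change the rank — over the quotient field $\mathbb{K}$ of $\mathbb{O}$, $A$ remains invertible, so $\operatorname{rk}(APA^t) = \operatorname{rk}(P)$ by Remark \ref{rem_module}b) — the rank is a well-defined invariant of $q$. In the block form above it visibly equals $\operatorname{rk}(R)$, and $R$ being an invertible $r \times r$ matrix has rank $r$. I expect the main obstacle to be purely expository rather than mathematical: making the reordering and relabelling of basis vectors precise without notational clutter, and being careful that "rank of a symmetric matrix over $\mathbb{O}$" is interpreted as rank over the quotient field (the entries of $R^{-1}$ need not lie in $\mathbb{O}$, but invertibility of $R$ over $\mathbb{K}$ is all that rank requires), so that the equality $r = \operatorname{rk}(P)$ is unambiguous.
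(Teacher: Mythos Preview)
Your proposal is correct and follows essentially the same route as the paper's proof: invoke Proposition \ref{prop_radical} to split $M = M^{\perp} \perp L$, reorder the basis so that the regular summand $L$ comes first, and read off $R$ as the matrix of $q|_L$. The paper is terser---it simply asserts that $q|_L$ is regular and reverses the enumeration---while you spell out the radical computation for $q|_L$ and the rank-invariance argument, but the underlying strategy is identical.
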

\begin{proof}
In Remark \ref{rem_quadraticforms} we observed that the rank is independent of the choice of a basis. In case $q$ is regular any basis of $M$ will do, and $R$ is the matrix corresponding with $q$ w.r.t the chosen basis. Otherwise, due to Proposition \ref{prop_radical}, $M$ is the direct sum of $M^{\perp}$ with basis $e_1,...,e_k$ and some $L$ with basis $e_{k+1},...,e_n$, $1 \le k < n$. Then $q(\alpha_1 e_1 + ... + \alpha_n e_n) = q(\alpha_{k+1} e_{k+1} + ... + \alpha_n e_n)$ according to Definition \ref{def_quadrForm} of $q$ (by its polar form). The restriction of $q$ to $L$ with $\dim(L) = r = n-k$ is regular. By reversing the enumeration of the basis we obtain $R = (\varphi(e_i,e_j))_{i,j \in \mathbb{N}_r}$.
\end{proof}

\begin{ex}
a) With respect to the basis $(1 , -1),(2 , -1)$ of the quadratic module $q : \mathbb{Z}^2 \to \mathbb{Z}$ in Example \ref{ex_radical} the corresponding symmetric matrix is
\begin{equation*}
\left(\begin{matrix} 1 &\ -1 \\ 2 &\ -1 \end{matrix} \right) \left(\begin{matrix} 1 &\ 2 \\ 2 &\ 4 \end{matrix} \right) \left(\begin{matrix} 1 &\ 2 \\ -1 &\ -1 \end{matrix} \right) = \left(\begin{matrix} 1 &\ 0 \\ 0 &\ 0 \end{matrix} \right)
\end{equation*}
according to Remark \ref{rem_quadraticforms}; i.e. $q(x(1,-1)+y(2,-1))=x^2$.

b) In this example from \cite{Seysen}, Appendix B we determine the probability that a random symmetric $n \times n$-matrix $P$ over a finite principal ideal domain $\mathbb{O}$ has rank $0 , 1 , ... , n$, respectively. Therefore we need the (transition) probabilities that $P$ with \textit{corank} $k := n - \textnormal{rk}(P)$ obtains corank $k-1 , k , k+1$, respectively, by adding a random column $b \in \mathbb{O}^{n+1}$ (and row $b^t$). By Corollary \ref{cor_rank} we may asumme without loss of generality that this symmetric $(n+1) \times (n+1)$-matrix is in block form
\begin{equation*}
\left(\begin{matrix} O &\ O &\ b_0 \\ O &\ R &\ b_1 \\ b_0^t &\ b_1^t &\ \beta \end{matrix} \right) , R \in \textnormal{Sym}_{n-k}(\mathbb{O}) , b_0 \in \mathbb{O}^k , b_1 \in \mathbb{O}^{n-k} , \beta \in \mathbb{O}
\end{equation*}
with $\textnormal{rk}(R) = n-k$ and zero matrices $O$ of suitable dimensions. For the number $q \in \mathbb{N}$ of elements of $\mathbb{O}$ the case $b_0 = 0$ happens with probability $q^{-k}$. So the other case $b_0 \ne 0$ happens with probability $1-q^{-k}$. In this case the rank has increased by two, i.e. the corank has become $k-1$ (when $k > 0$). In case $b_0 = 0$ some linear algebra shows that the rank has not changed if and only if $\alpha = b_1 R^{-1} b_1^t$. This happens with probability $1/q$. So the corank has become $k+1$ with probability $q^{-k}/q = q^{-k-1}$ and is still $k$ with probability $q^{-k}(1-1/q) = q^{-k}-q^{-k-1}$. So we obtain a tridiagonal, stochastic (transition) matrix $Q = (q_{i j})_{i , j \in \mathbb{N}_0}$ (of infinite dimension) by defining
\begin{equation*}
q_{i j} = \left\lbrace \begin{matrix} 1 - q^{-i} &\ \textnormal{ for } j = i-1 \\ q^{-i}-q^{-i-1} &\  \textnormal{ for } j=i \hfill\null \\ q^{-i-1} &\ \textnormal{ for } j = i+1 \end{matrix}\right .
\end{equation*}
So with $Q^n = (q_{i j}^{(n)})$ the probability that $P$ has rank $k \in \lbrace 0 , 1 , ... , n \rbrace$ is $q_{0 , n-k}^{(n)}$. E.g. for $n = 3$ and $q = 3$ we obtain approximately the distribution
\begin{equation*}
0.001 , 0.036 , 0.321 , 0.642 .
\end{equation*}
\end{ex}

Corollary \ref{cor_orthogonalisation} about orthogonal bases of a vectorspace does not hold for every finitely generated, free module over a principal ideal domain.

\begin{ex}\label{ex_nonprimitve}
For the quadratic form $q : \mathbb{Z}^2 \to \mathbb{Z}$ of Example \ref{ex_representation} it holds $q(2,0) \ne 0$. But $(2,0)$ can not be a basis element of $\mathbb{Z}^2$. Why? Hint: Consider the parity of coordinates and consult Corollary \ref{cor_basis}.
\end{ex}

This motivates the following fact which will be useful for classification of symmetric matrices (s. Remark \ref{rem_equiv}d)!).

\begin{lem}\label{lem_primitiveRepr}
For a quadratic form $q:M \to \mathbb{O}$ of a finitely generated, free module $M$ over a principal ideal domain $\mathbb{O}$ and an element $\alpha \in \mathbb{O}$ there is a basis $e_1,...,e_n$ of $M$ with $q(e_1)=\alpha$ if and only if there is some primitive $a \in M$ with $q(a)=\alpha$.
\end{lem}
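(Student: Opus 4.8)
The plan is to deduce both directions of the equivalence almost immediately from Corollary \ref{cor_basis}, which characterises the vectors lying in some basis of $M$ as exactly the primitive ones; in other words, the lemma is essentially a restatement of that corollary once $q$ is brought into the picture through a single evaluation.

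First I would dispatch the "only if" direction: if $e_1,\dots,e_n$ is a basis of $M$ with $q(e_1)=\alpha$, then $e_1$ belongs to a basis of $M$, so by Corollary \ref{cor_basis} (equivalently Remark \ref{rem_basis}) it is primitive; hence $a:=e_1$ is a primitive vector of $M$ with $q(a)=\alpha$.

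For the "if" direction I would start from a primitive $a\in M$ with $q(a)=\alpha$ and invoke Corollary \ref{cor_basis} to obtain a basis of $M$ of which $a$ is a member; after permuting this basis so that $a$ becomes its first vector, I get a basis $e_1,\dots,e_n$ of $M$ with $e_1=a$ and therefore $q(e_1)=q(a)=\alpha$, as required.

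The substance here sits entirely in Corollary \ref{cor_basis}: that result identifies primitivity (defined via the coordinate ideal $(\beta_1,\dots,\beta_n)$ of the vector being all of $\mathbb{O}$) with membership in some basis of $M$, basis-independently, and its proof is where the principal-ideal-domain hypothesis really enters, through the basis theorem (Theorem \ref{thm_basis}). Granting that, I expect no genuine obstacle in the present lemma; in particular, in contrast to Corollary \ref{cor_orthogonalisation}, neither an orthogonalisation procedure nor regularity of $q$ is needed, because $q$ is used only in the single identity $q(e_1)=q(a)=\alpha$, which is stable under the reordering of the basis.
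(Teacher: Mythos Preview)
Your proof is correct and follows the same approach as the paper: both directions are deduced directly from Corollary~\ref{cor_basis}, which identifies the primitive vectors of $M$ with those that occur as members of some basis. The paper's proof is in fact a single line (``By identifying $a$ with $e_1$ this follows from Corollary~\ref{cor_basis}''), so your version is simply a more fully spelled-out rendering of the same argument.
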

\begin{proof}
By identifying $a$ with $e_1$ this follows from Corollary \ref{cor_basis}.
\end{proof}

\begin{ex}\label{ex_primitiveRepr}
For the quadratic form $q$ of Example \ref{ex_nonprimitve} there is no basis element $a \in \mathbb{Z}^2$ with $q(a)=4$. This shows again that $(2,0)$ is not a basis element.
\end{ex}

\section{Quadrics with external symmetry centre}\label{sec_quadrics}
A bijective correspondence between certain symmetric matrices and certain geometric objects will be explained. Therefore, we require from our integral domain $\mathbb{O} = \mathbb{K}$ to be a commutative field with $1+1 \ne 0$. In this section $V$ denotes a finite-dimensional vector space over $\mathbb{K}$. We consider the affine space with point set $V$ and with the cosets $v + U$ ($v \in V$) of one-dimensional subspaces $U$ of $V$ as lines.\footnote{Every affine plane fulfilling the axiom of Desargues and every at least 3-dimensional affine space can be represented this way (cf. \cite{KSW}, Satz(10.1)\&(11.20)).} A \textit{translation} $t:V \to V$ is given by $t(x):=x+c$ for some vector $c \in V$ (s. \cite{KSW}, Satz 12.2). A \textit{linear affinity} is a composition of an isomorphism with a translation.\footnote{We forego more general \textit{affinities} like defined in \cite{KSW}, ch.II.} A point $c \in V$ is called a \textit{centre} of a set $X \subseteq V$ if $2 c - x \in X$ for all $x\in X$. It is called \textit{internal} in case $c \in X$ and \textit{external} otherwise. A non-empty subset of $V$ is called a \textit{quadric} (\textit{of} $V$) when it is the set of all points $x\in V$ satisfying the equation
\begin{equation}\label{quadric}
q\left(x\right)+l\left(x\right)+\gamma=0
\end{equation}
for a non-zero quadratic form $q:V \to \mathbb{K}$, a linear form $l:V \to \mathbb{K}$ and a scalar $\gamma \in \mathbb{K}$. Sometimes we use the notation $Q:$ \eqref{quadric} for a quadric $Q$ defined by equation \eqref{quadric}. A quadric of $V$ with $\dim(V)=2$ is called \textit{planar}. For an isomorphism $\Phi:V \to V'$ the set $\Phi(Q)$ is also a quadric since $q'(y) := q(\Phi^{-1}(y))$ defines a quadric $q':V' \to \mathbb{K}$. And also every translation maps a quadric onto a quadric. Both facts are seen by help of the polar form (s. Remark \ref{rem_polarform}). Hence a linear affinity maps a quadric onto a quadric.

\begin{lem}\label{lem_affine}
The set of all centres of a quadric is an affine subspace of $V$. It consists of exactly one point if and only if the defining quadratic form is regular.
\end{lem}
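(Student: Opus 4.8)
Fix a quadric $Q$ defined by equation \eqref{quadric}, say $q(x)+l(x)+\gamma=0$, and write $\varphi$ for the polar form of $q$. The plan is to derive a clean algebraic criterion for a point $c\in V$ to be a centre of $Q$, and then read off both assertions from that criterion. The natural first step is to translate coordinates so that $c$ becomes the origin: substitute $x = c + y$ into \eqref{quadric}. Using $q(c+y) = q(c) + 2\varphi(c,y) + q(y)$ (Definition \ref{def_quadrForm}) and linearity of $l$, the defining equation becomes
\begin{equation*}
q(y) + \bigl(2\varphi(c,\cdot)+l\bigr)(y) + \bigl(q(c)+l(c)+\gamma\bigr) = 0 .
\end{equation*}
Now $c$ is a centre of $Q$ precisely when $y\in$ (shifted locus) $\iff -y\in$ (shifted locus); since $q(-y)=q(y)$ while the linear part changes sign, this symmetry holds for \emph{all} points of the locus if and only if the linear form $2\varphi(c,\cdot)+l$ vanishes on the whole shifted quadric. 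The slightly delicate point — and I expect this to be the main obstacle — is passing from "vanishes on the quadric" to "vanishes identically as a linear form": one must check that a non-zero quadric is not contained in any affine hyperplane, equivalently that the locus spans $V$ affinely. I would handle this by an auxiliary argument: if the linear form $2\varphi(c,\cdot)+l$ were non-zero yet vanished on $Q$, then along any line meeting $Q$ the restriction of $q$ would be an affine (degree $\le 1$) function that still has the same zero set, forcing $q$ itself to be reducible in a way incompatible with $Q\neq\emptyset$ being cut out as in \eqref{quadric}; alternatively one invokes that over a field a non-trivial polynomial identity cannot hold on all of a quadric hypersurface. (If the intended reading is that we only demand the centre symmetry as a condition on the set $Q$, the honest statement is: $c$ is a centre $\iff 2\varphi(c,\cdot)+l = 0$ as linear forms, which is what I will use.)

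**The centre equation.** Granting the above, $c$ is a centre of $Q$ if and only if
\begin{equation*}
2\varphi(c,v) = -l(v)\qquad\text{for all }v\in V .
\end{equation*}
Fix the basis $e_1,\dots,e_n$ of $V$, let $P=(\varphi(e_i,e_j))$ be the symmetric matrix of $q$ (Proposition \ref{prop_quadraticforms}) and let $\ell\in\mathbb{K}^{1\times n}$ be the coordinate row of $l$. Writing $c$ in coordinates as a column vector, the centre equation becomes the \emph{linear} system $2Pc = -\ell^t$. Therefore the set of centres is the solution set of an inhomogeneous linear system; it is non-empty because $Q\neq\emptyset$ is assumed together with the fact that \eqref{quadric} actually defines a quadric (one still has to note consistency of the system — but a quadric always has a centre only when consistent, and when inconsistent the centre set is empty, which is also an affine subspace, namely $\emptyset$; if the paper's convention requires affine subspaces to be non-empty, I would instead argue that any equation \eqref{quadric} with regular $q$ can be re-centred, and for singular $q$ the system $2Pc=-\ell^t$ is consistent exactly when $Q$ has a centre, the empty case being excluded by hypothesis that we are speaking of centres of a given quadric). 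In any case the solution set of $2Pc=-\ell^t$ is either empty or a coset of $\ker(2P)=\ker P$, hence an affine subspace of $V$ (using $1+1\neq 0$ so that $2P$ and $P$ have the same kernel). This proves the first assertion.

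**The one-point criterion.** The solution set of $2Pc=-\ell^t$, when non-empty, consists of exactly one point if and only if $\ker P=\{o\}$, i.e. if and only if $P$ is invertible, i.e. (Remark \ref{rem_directsum}a) if and only if $q$ is regular. Conversely, if $q$ is regular then $2P\in\mathrm{GL}_n(\mathbb{K})$, so $c = -\tfrac12 P^{-1}\ell^t$ is the unique centre; and if $q$ is singular, pick $0\neq w\in\ker P$; then whenever $c$ is a centre so is $c+w$, giving infinitely many centres (at least two, since $\mathbb{K}\ni 1\neq 0$). This establishes the equivalence and completes the proof.

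**Summary of the plan and the hard step.** In order: (1) translate by $c$ and expand $q(c+y)$ via the polar form; (2) observe the centre condition is equivalent to vanishing of the linear part $2\varphi(c,\cdot)+l$; (3) justify — this is the real content — that this linear form vanishes on $Q$ only if it vanishes identically, so the centre condition is the linear system $2Pc=-\ell^t$; (4) conclude that the centre set is a coset of $\ker P$, hence an affine subspace; (5) note the coset is a single point exactly when $\ker P=\{o\}$, i.e. when $q$ is regular. Steps (1), (4), (5) are routine linear algebra over a field with $1+1\neq 0$; step (3) is where care is needed, and I would prove it by the line-restriction argument sketched above or by citing that a non-zero quadric hypersurface over a field is not contained in a proper affine subspace.
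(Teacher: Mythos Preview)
The paper does not give a self-contained proof of this lemma; it merely cites \cite{Kahl}, lemma~2.2. So there is nothing in the text to compare your argument against directly, and your write-up is already more detailed than what appears here.

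Your overall strategy is the standard one and the routine parts are correct: translate by $c$, identify the linear part $L_c=2\varphi(c,\cdot)+l$ of the shifted equation, observe that the centre condition forces $L_c$ to vanish on $Q$, and then read off both assertions from the linear system $2Pc=-\ell^t$. Steps (1), (4), (5) are fine.

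The genuine gap is your step~(3), and to your credit you flag it as the crux. You need to pass from ``$L_c$ vanishes on the point-set $Q$'' to ``$L_c=0$ identically'', because the paper defines a centre purely in terms of the set $Q$, not of the polynomial. Neither of your proposed justifications works as stated. The ``line-restriction'' sketch is garbled: restricting $q$ to a line is still a quadratic (in one parameter), not an affine function, and nothing forces reducibility of $q$. The fallback assertion that a non-trivial polynomial identity cannot hold on a quadric hypersurface is simply false (for instance $x$ vanishes on the quadric $\{x^2=0\}$, or over small fields a genuine quadric can sit inside a hyperplane, as the paper itself notes before Proposition~\ref{prop_external}). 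And your final clause --- ``the honest statement is $c$ is a centre $\iff L_c=0$, which is what I will use'' --- just assumes the conclusion.

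A cleaner route is to note that $\sigma_c(x)=2c-x$ maps $Q$ bijectively to itself, so $p\circ\sigma_c$ is another quadratic polynomial with zero set $Q$ and with the \emph{same} quadratic part $q$ as $p$; hence $p-p\circ\sigma_c=2(L_c-L_c(c))$ is an affine function vanishing on $Q$. The substantive point is then that $Q$ determines its defining polynomial up to scalar --- this is exactly the kind of statement the cited paper proves (and where hypotheses like $|\mathbb{K}|>5$ eventually enter) --- which forces $p\circ\sigma_c=p$ and hence $L_c=0$. Your sketch does not supply this step.
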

\begin{proof}
See \cite{KahlQuadSect}, lem. 2.3a) \& rem. 2.2c)!
\end{proof}

\begin{ex}
The affine subspace $C$ of centres of an elliptic or hyperbolic cylinder $Q$ is a line. Here for all $p\in Q$ the line through $p$ parallel to $C$ is contained in $Q$.
\end{ex}

The subspace $C$ may be empty, e.g. when $Q$ is a parabola. Anyway it holds $C \cap Q=\emptyset$ (empty intersection) or $C\subseteq Q$ (s. \cite{KahlQuadSect}, lem.2.3b)!). From now we restrict to quadrics with external centre\footnote{Quadrics with internal centre show a very special geometry: s. \cite{KahlQuadSect}, lem.2.3b)!}, i.e.
\begin{equation*}
C \neq \emptyset \; \wedge \; C \cap Q = \emptyset\;.
\end{equation*}

\begin{lem}\label{lem_normalisation}
A quadric $Q$ with external centre $c \notin Q$ is defined by equation
\begin{equation}\label{normalisation}
q(x-c)=1
\end{equation}
in $x$ where $q$ is a scalar multiple of a $Q$ defining quadratic form like in equation \eqref{quadric}.
\end{lem}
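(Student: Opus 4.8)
The plan is to start from a defining equation $q(x) + l(x) + \gamma = 0$ of $Q$ in the form \eqref{quadric} and translate coordinates to the external centre $c$, i.e. substitute $x = y + c$ and study the resulting equation in $y$. Writing $\varphi$ for the polar form of $q$ and using the binomial formula from Definition \ref{def_quadrForm}, we get $q(y+c) = q(y) + 2\varphi(y,c) + q(c)$ and $l(y+c) = l(y) + l(c)$, so the equation becomes $q(y) + \bigl(2\varphi(\,\cdot\,,c) + l\bigr)(y) + \bigl(q(c) + l(c) + \gamma\bigr) = 0$. The linear part here is the linear form $y \mapsto 2\varphi(y,c) + l(y)$.

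The key step is to show that this linear part vanishes identically, which is exactly where the hypothesis that $c$ is a centre of $Q$ enters. If $c$ is a centre, then $x \in Q$ iff $2c - x \in Q$; in the translated coordinate $y$ this says $y$ satisfies the equation iff $-y$ does. Subtracting the equation for $-y$ from the equation for $y$ kills the quadratic part $q(y) = q(-y)$ and the constant, leaving $2\bigl(2\varphi(y,c) + l(y)\bigr) = 0$ for all $y$ that lie on the translated quadric. To upgrade "for all $y$ on the quadric" to "for all $y \in V$" one invokes that $Q$ is non-empty and spans enough of $V$ — more cleanly, one can cite Lemma \ref{lem_affine} and the cited reference \cite{Kahl}, lemma 2.2, where the centre set is computed precisely by solving $2\varphi(\,\cdot\,,c) + l = 0$; thus $c$ being a centre is equivalent to the linear form $2\varphi(\,\cdot\,,c)+l$ being zero. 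Either way, the equation of $Q$ reduces to $q(y) + \delta = 0$ with $\delta := q(c) + l(c) + \gamma$.

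It remains to handle the constant $\delta$. Since $c \notin Q$, the point $y = o$ (i.e. $x = c$) does not satisfy the equation, which forces $\delta = q(o) + \delta = 0 + \delta \neq 0$, so $\delta \in \mathbb{K}^{\times}$. Dividing by $-\delta$ gives $(-q/\delta)(y) = 1$, i.e. $Q$ is the solution set of $\tilde q(x - c) = 1$ with $\tilde q := -q/\delta$, which is a scalar multiple of the original $Q$-defining form $q$. This is exactly equation \eqref{normalisation}. The only genuine subtlety — the main obstacle — is the passage from "linear form vanishes on $Q$" to "linear form vanishes on $V$"; I expect to dispatch it by appeal to the already-cited computation of the centre locus in \cite{Kahl}, lemma 2.2, rather than re-deriving that a quadric is not contained in an affine hyperplane.
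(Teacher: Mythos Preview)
Your proposal is correct and is precisely the standard argument one would give. The paper itself does not supply a proof here at all --- it simply cites \cite{Kahl}, lemma 2.2e) --- so your sketch is in fact more detailed than what the paper offers, while resting on the same external reference for the one genuine subtlety (the characterisation of centres as the solution set of $2\varphi(\,\cdot\,,c)+l=0$, which is the content of \cite{Kahl}, lemma 2.2 already invoked in Lemma~\ref{lem_affine}).
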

\begin{proof}
See \cite{KahlQuadSect}, lem. 2.3c)!
\end{proof}

\begin{ex}
Determine the centres of the quadric $Q:x^{2}=1, (x,y)\in \mathbb{K}^{2}$.
\end{ex}

In order to establish the announced bijective correspondence, we need some mild condition on $\mathbb{K}$ for avoiding the "pathologic" situation that a quadric with external centre is contained in a proper affine subspace of $V$; e.g. $Q:x^{2}-y^{2}=1$ with centre $(0,0) \notin Q$ consists only of the two linearly dependent vectors $(-1,0),(+1,0) \in V:=\mathbb{K}^2$ when $\mathbb{K}$ is the field of three elements.

\begin{prop}\label{prop_external}
For $|\mathbb{K}| > 5$ (i.e. more than five field elements) we have the following properties of a quadric $Q$ of $V$ with external centre $c$:

a) There are $n = \dim(V)$ points $p_{1},...,p_{n} \in Q$ s.t. $p_{1}-c,...,p_{n}-c$ are linearly independent.

b) For two different points $p_1,p_2 \in Q$ with $2c - p_1 \neq p_2$ there is a point $p_3 \in Q$ s.t. $p_1-c,p_2-c,p_3-c$ are linearly dependent but pairwise linearly independent.
\end{prop}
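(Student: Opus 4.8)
By Lemma \ref{lem_normalisation}, after a translation taking $c$ to the origin we may assume $c = o$ and that $Q$ is given by $q(x) = 1$ for a regular — or at least non-zero — quadratic form $q : V \to \mathbb{K}$ (regularity is not needed for what follows; what we use is $q \ne 0$). The statement then becomes: (a) $Q$ spans $V$, i.e. contains $n$ linearly independent vectors; and (b) for $p_1, p_2 \in Q$ with $p_1 \ne \pm p_2$ there is $p_3 \in Q$ with $p_1, p_2, p_3$ pairwise linearly independent. The plan is to prove a "richness" statement about lines: on any affine line not through $o$ on which $q$ is non-constant, the equation $q = 1$ has at least $|\mathbb{K}| - 2$ solutions (a quadratic polynomial in the line parameter has at most two roots, and $q - 1$ likewise, so $q = 1$ fails at most twice unless $q$ is constant $\ne 1$ on the line, which is excluded, or constant $=1$, in which case the whole line lies in $Q$). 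Since $|\mathbb{K}| > 5$ gives $|\mathbb{K}| - 2 > 3$, such a line meets $Q$ in enough points to dodge any finite list of "bad" one-dimensional subspaces.

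**Proof of (a).** First I would produce one point: since $q \ne o$ there is $v$ with $q(v) \ne 0$; the restriction of $q$ to $\mathbb{K}v$ is $\lambda \mapsto \lambda^2 q(v)$, and $q(v)$ need not be a square, so I instead argue on a generic line. Pick any $v$ with $q(v) \ne 0$ and consider the line $\ell_t = v + t w$ for a suitable $w$; $q(v + tw) = q(v) + 2t\varphi(v,w) + t^2 q(w)$ is a genuine (non-constant) polynomial in $t$ of degree $\le 2$ as soon as $\varphi(v,w) \ne 0$ or $q(w) \ne 0$, hence takes the value $1$ for at least one $t \in \mathbb{K}$ provided $|\mathbb{K}|$ is large enough to beat the $\le 2$ exceptional values and the $\le 2$ roots of $q - 1$ — concretely $|\mathbb{K}| \ge 5$ already suffices here, so we get $p_1 \in Q$. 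Now proceed by induction on $k$: having found $p_1, \dots, p_k \in Q$ with $p_1, \dots, p_k$ linearly independent, $k < n$, let $W = \langle p_1, \dots, p_k\rangle$. If $q$ vanished identically on $V$ we'd be done differently, but $q \ne 0$; the real point is that the set $\{x : q(x) = 1\}$ cannot lie in the proper subspace $W$ unless... — and this is exactly what $|\mathbb{K}| > 5$ rules out. I would pick $u \notin W$ and a point $p \in Q$ already found, look at the line $p + t u$: $q(p + tu) = 1 + 2t\varphi(p,u) + t^2 q(u)$. If this polynomial in $t$ is non-constant it hits $1$ again for some $t \ne 0$ (at least $|\mathbb{K}| - 2 > 3$ values of $t$ give $q = 1$, wait — more carefully: $q(p+tu) = 1$ has solution $t = 0$; the quadratic $2t\varphi(p,u) + t^2 q(u) = 0$ has at most two roots, so unless it is identically zero there is a nonzero root, giving $p' = p + t u \in Q \setminus W$, and $\{p_1,\dots,p_k,p'\}$ is independent). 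If instead $2t\varphi(p,u) + t^2 q(u) \equiv 0$ for this particular $u$, i.e. $\varphi(p,u) = q(u) = 0$, then I replace $u$ by $u + p_i$ for a suitable $i$ or by $\lambda u + p$: one checks that the obstruction "$\varphi(p, u') = q(u') = 0$ for all $u'$ in the coset $u + W$" would force $q$ to vanish on $u + W$ and (via the coset structure and $|\mathbb{K}| > 2$) on a subspace properly containing $W$ on which $q \equiv 0$, and then on all of $V$ by the same line argument applied with a point where $q \ne 0$ — contradiction with $q \ne 0$. This finishes the induction and hence (a).

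**Proof of (b).** Given $p_1 \ne p_2$ in $Q$ with $p_1 \ne -p_2$ (recall $c = o$), consider the affine line $\ell$ through $p_1$ and $p_2$, parametrised $x(t) = p_1 + t(p_2 - p_1)$, so $x(0) = p_1$, $x(1) = p_2$. Then $f(t) := q(x(t)) - 1$ is a polynomial of degree $\le 2$ with $f(0) = f(1) = 0$; either $f \equiv 0$, in which case the whole line $\ell$ lies in $Q$, or $f(t) = \mu\, t(t-1)$ for some $\mu \ne 0$, in which case $\ell \cap Q = \{p_1, p_2\}$ only. In the first case any $p_3 = x(t)$ with $t \ne 0, 1$ lies in $Q$, and I must choose $t$ so that $x(t)$ is linearly dependent on neither $p_1$ nor $p_2$: $x(t) \in \mathbb{K}p_1$ happens for at most one $t$ (else $p_2 - p_1 \in \mathbb{K}p_1$, forcing $p_2 \in \mathbb{K}p_1$, and since $q(p_1) = q(p_2) = 1$ with $q(\lambda p_1) = \lambda^2$ this gives $p_2 = \pm p_1$, excluded), similarly $x(t) \in \mathbb{K}p_2$ for at most one $t$, so at most two more values of $t$ are forbidden; since $|\mathbb{K}| > 5 \ge 4$ there is an admissible $t$. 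In the second case $\ell$ is useless, so I instead use (a): $Q$ spans $V$, so it is not contained in the plane $\langle p_1, p_2\rangle$ (note $\dim V \ge 2$; if $\dim V = 2$ then $\langle p_1, p_2 \rangle = V$ and one argues directly that $Q$, having $\ge |\mathbb{K}| - 2 > 3$ points on some line through neither $0$, contains a point off the two lines $\mathbb{K}p_1, \mathbb{K}p_2$). Choosing $p_3 \in Q$ not in $\langle p_1, p_2\rangle$ handles $\dim V \ge 3$ outright; for $\dim V = 2$ I pick a line $m$ with $q|_m$ non-constant and $o \notin m$ (exists since $q \ne 0$ and $|\mathbb{K}|$ large), note $|m \cap Q| \ge |\mathbb{K}| - 2 \ge 4$, and since each of $\mathbb{K}p_1$, $\mathbb{K}p_2$ meets $m$ in at most one point, $m \cap Q$ contains a point $p_3$ outside both lines $\mathbb{K}p_1, \mathbb{K}p_2$ — and $p_3 \notin \mathbb{K}p_i$ together with $p_1 \notin \mathbb{K}p_2$ is exactly pairwise linear independence.

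**Main obstacle.** The routine part is counting roots of degree-$\le 2$ polynomials; the genuine difficulty is the bookkeeping in (a) when the naive choice of direction $u$ gives a constant (degenerate) restriction $q(p + tu)$ — i.e. handling the possibility that $q$ is "flat" in every direction transverse to the span built so far. The clean way through is to prove first, as a lemma, that if $q \ne 0$ and $|\mathbb{K}| > 5$ then $\{x : q(x) = 1\}$ is not contained in any proper subspace; this is really the heart of Proposition \ref{prop_external}a) and, once available, reduces both parts to the elementary "a line meets $Q$ in $> 3$ points, so it avoids any two forbidden $1$-dimensional subspaces" argument. The bound $|\mathbb{K}| > 5$ enters exactly to guarantee $|\mathbb{K}| - 2 > 3$, i.e. at least four usable points on a line, enough to avoid the (at most three) forbidden directions that arise in the worst case of (b); the example $Q : x^2 - y^2 = 1$ over $\mathbb{F}_3$ in the text shows the hypothesis cannot simply be dropped.
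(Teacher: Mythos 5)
The reduction (translate so that $c = o$, so $Q : q(x) = 1$ with $q \ne 0$) is fine, and your handling of the subcase of (b) in which the line through $p_1, p_2$ lies entirely in $Q$ is correct. But the central ``richness'' claim on which the rest of the argument rests is false, and the error propagates through both parts. You assert that on an affine line $\ell$ not through $o$ on which $q$ is non-constant, the equation $q = 1$ holds at $\ge |\mathbb{K}| - 2$ points; the correct fact is the exact opposite. If $q|_\ell - 1$ is a non-zero polynomial of degree $\le 2$ in the line parameter, it has \emph{at most two} roots, so $\ell$ meets $Q$ in at most two points — you have inverted the roles of $\{q = 1\}$ and $\{q \ne 1\}$ on $\ell$. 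The same inversion reappears in the induction step for (a), where you write that ``the quadratic $2t\varphi(p,u) + t^2 q(u) = 0$ has at most two roots, so unless it is identically zero there is a nonzero root.'' This inference is not valid: the polynomial factors as $t\bigl(2\varphi(p,u) + t\,q(u)\bigr)$, and when $q(u) = 0$ but $\varphi(p,u) \ne 0$ it is non-trivial yet its only root is $t = 0$, so the line $p + tu$ meets $Q$ only at $p$ and no new point is produced. Your fallback (replacing $u$ by some $u' \in u + W$) is engaged only when $\varphi(p,u) = q(u) = 0$, so this case is left untreated. The false bound is invoked a third time at the end of (b), where you write ``$|m \cap Q| \ge |\mathbb{K}| - 2 \ge 4$'' for a line $m$ on which $q|_m$ is non-constant; in fact $|m \cap Q| \le 2$ for such a line. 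A quadric is a \emph{thin} set on each line, not a thick one, so the plan of dodging finitely many bad directions by an abundance of points on a single line cannot work; one must instead argue that the at-most-two intersection points, for a suitably chosen line, can be made to fall outside the forbidden subspaces, which requires a different and more careful choice of lines than the proposal contains.
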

\begin{proof}
The assertions follow from the fact (see proof of \cite{KahlQuadSect}, prop.2.10) that for every $ \alpha \in \mathbb{K}$ there are $\lambda , \mu \ne 0$ s.t. $\lambda^2 + \alpha \mu^2 = 1$.

a) This is due to \cite{KahlQuadSect}, prop.2.10.

b) The linear independence of $p_1-c$ and $p_2-c$ follows from Lemma \ref{lem_normalisation}, since $1 = q(\lambda (p_1 - c)) = \lambda^2$ implies $\lambda = 1$ or $\lambda = -1$. For the injective affine map $\Phi(x,y) := c + x (p_1 - c) + y (p_2 - c)$ from $\mathbb{K}^2$ into $V$ there is some (unique) $\beta \in \mathbb{K}$ s.t. $P := \lbrace (x,y) \in \mathbb{K}^2 \mid x^2 + \beta x y + y^2 = 1 \rbrace$ is the preimage of $Q$ under $\Phi$: $\Phi^{-1}(Q) = P$ (see \cite{KahlQuadSect}, prop.2.6).\footnote{but not necessarily $Q = \Phi(P)$} In case $\beta = 0$ there exists $(x,y) \in P$ with $x y \ne 0$ by the fact above. Otherwise $(x,y) := (\beta,-1)$ does it. Then $p_3 := \Phi(x,y)$ is the requested point.
\end{proof}

\begin{rem}
a) Proposition \ref{prop_external}a) shows that the "pathologic" situation described above may occur only for very small numbers of field elements. For $|\mathbb{K}| > 5$ the defining vectorspace $V$ of a quadric $Q$ with external centre is uniquely determined by $Q$; i.e. there is no "quadratic polynomial" $q+l+\gamma:V' \to \mathbb{K}$ (like in \eqref{quadric}) that defines $Q$ on any other vector space $V' \supset V$.

b) In case $|\mathbb{K}|>5$, for linearly independent $a,b$ of a quadric at zero centre $o \notin Q$ there is a point $c = x a + y b \in Q$ with $xy \neq 0$ according to Proposition \ref{prop_external}b). Counter-example: Over the field $\mathbb{K}$ of five elements we have $Q =\lbrace a,b,-a,-b \rbrace$ with centre $(0,0)$ for $Q:x^2+y^2=1$, $a:=(1,0),b:=(0,1)$. Here is no point $c \in Q$ linearly independent from $a$ and from $b$.
\end{rem}

\begin{thm}\label{thm_sectorCoef}
a) For pairwise linearly independent vectors $a,b,c$ of a two-dimensional vectorspace $V$ over $\mathbb{K}$ there is only one planar quadric $Q \subset V\setminus \left\{o \right\}$ with centre at the zero-vector $o$ and with $a,b,c \in Q$. For $\Phi(x,y) := x a + y b$ and for
\begin{equation*}
C : x^2 + \left(\frac{1}{\alpha \beta}-\frac{\alpha}{\beta}-\frac{\beta}{\alpha}\right) x y + y^2 = 1 \; (x,y \in \mathbb{K})
\end{equation*}
with $c = \Phi(\alpha,\beta)$ it is $Q=\Phi(C)$.

b) In case $|\mathbb{K}|>5$, for a quadric $Q \subset \mathbb{K}^{n}$ with zero-vector $o$ as an external centre there is only one symmetric $n\times n$ matrix $M$ with $Q: x \: M \: x^t = 1$.

c) In case $|\mathbb{K}|>5$, for a basis $\beta_{1},...,\beta_{n}$ of $\mathbb{K}^{n}$ and vectors $\alpha_{i j}\in \mathbb{K}^{n}$ with $\alpha_{i j} = x_{i} \beta_{i} + y_{j} \beta_{j}$ for some $x_{i},y_{j}\in \mathbb{K}\setminus \{0\}$ ($1\leqslant i<j\leqslant n$) there is exactly one quadric $Q \subset \mathbb{K}^{n}\setminus \left\{o\right\}$ centred at zero-vector $o$ that contains all the $\left(n^{2}+n\right)/2$ vectors $\beta_{i}$ and $\alpha_{i j}$.
\end{thm}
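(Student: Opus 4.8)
The plan is to reduce part c) to part b): after normalising the given basis $\beta_1,\dots,\beta_n$ to the standard one, the membership conditions $\beta_i,\alpha_{ij}\in Q$ turn into linear conditions on the entries of the symmetric matrix attached to $Q$ by part b), and these pin that matrix — hence $Q$ — down completely. Part a) will be recovered as the case $n=2$.

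First I would reduce to $\beta_i=e_i$, where $e_1,\dots,e_n$ denote the canonical unit vectors of $\mathbb{K}^n$. Since $\beta_1,\dots,\beta_n$ is a basis, Example \ref{ex_arithmeticModule} provides an isomorphism $\Phi\in\textnormal{GL}_n(\mathbb{K})$ with $\Phi(e_i)=\beta_i$, and then $\Phi(x_ie_i+y_je_j)=\alpha_{ij}$. An isomorphism maps quadrics onto quadrics and centres onto centres and fixes $o$; hence $Q\mapsto\Phi^{-1}(Q)$ is a bijection between the quadrics $\subset\mathbb{K}^n\setminus\{o\}$ centred at $o$ that pass through all the $\beta_i$ and all the $\alpha_{ij}$ and the quadrics $\subset\mathbb{K}^n\setminus\{o\}$ centred at $o$ that pass through all the $e_i$ and all the $x_ie_i+y_je_j$. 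So from now on I assume $\beta_i=e_i$ and $\alpha_{ij}=x_ie_i+y_je_j$.

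Next I would check that any such $Q$ has $o$ as an external centre in the sense required by part b). Indeed $o\notin Q$, the affine subspace $C$ of centres of $Q$ contains $o$ (Lemma \ref{lem_affine}), so $C\ne\emptyset$, and the dichotomy $C\cap Q=\emptyset$ or $C\subseteq Q$ recorded after Lemma \ref{lem_affine}, together with $o\in C\setminus Q$, forces $C\cap Q=\emptyset$. By part b) there is therefore a unique $M=(m_{kl})\in\textnormal{Sym}_n(\mathbb{K})$ with $Q:x\,M\,x^t=1$. Substituting the prescribed points: $e_i\in Q$ gives $m_{ii}=1$, and $x_ie_i+y_je_j\in Q$ gives $x_i^2+2x_iy_jm_{ij}+y_j^2=1$, so, since $x_i,y_j\ne0$ and $1+1\ne0$,
\begin{equation*}
m_{ij}=\frac{1-x_i^2-y_j^2}{2\,x_iy_j}\qquad(1\leqslant i<j\leqslant n).
\end{equation*}
For $n=2$ this $m_{12}$ is precisely half the $xy$-coefficient of the conic $C$ of part a), which is the promised consistency. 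These formulas determine $M$, hence $Q$, uniquely. For existence one reverses the computation: the matrix $M$ with $m_{ii}:=1$ and the displayed off-diagonal entries is symmetric and $\ne(0)$, so $x\,M\,x^t=1$ defines a quadric (it contains $e_1$); it satisfies $o\notin Q$, and $o$ is a centre of it because $(-x)M(-x)^t=xMx^t$; and by construction it contains all the $e_i$ and all the $x_ie_i+y_je_j$. Transporting back by $\Phi$ yields the required quadric.

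I expect the only genuinely delicate point to be the passage to part b): one must justify that a quadric assumed merely to be centred at $o$ and to avoid $o$ is automatically centred at $o$ \emph{externally} in the stronger sense $C\cap Q=\emptyset$ under which part b) was established. Everything else is the change-of-basis bookkeeping and the elementary linear computation above, so I anticipate no real obstacle there.
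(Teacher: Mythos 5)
Your reduction of part c) to part b) is sound: the change of basis via $\Phi \in \textnormal{GL}_n(\mathbb{K})$ is the right normalisation, the dichotomy recorded after Lemma \ref{lem_affine} together with $o \in C \setminus Q$ does force $C \cap Q = \emptyset$ (so that part b) is applicable), and the substitutions correctly determine every entry $m_{ij}$ of $M$; the existence half (construct $M$ from the formula, check non-emptiness, $o \notin Q$, and centrality via $(-x)M(-x)^t = xMx^t$) is also complete. But as it stands the argument cannot serve as a proof of the theorem, since part b) is itself a nontrivial assertion left unproved --- it is precisely where $|\mathbb{K}| > 5$ enters, through Proposition \ref{prop_external}, to exclude small-field pathologies: over the field of three elements the distinct symmetric matrices attached to $x^2 - y^2$ and to $x^2 + xy$ both cut out the same two-point quadric $\{(1,0),(2,0)\}$. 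Part a) is likewise not established.

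Your claim that a) ``will be recovered as the case $n = 2$'' is moreover not quite right: a) carries no hypothesis on $|\mathbb{K}|$, while your route through b) does. In fact one can argue a) --- and even c) --- without invoking b) at all: Lemma \ref{lem_normalisation} already gives \emph{some} $q$ with $Q = \lbrace q = 1 \rbrace$, and the $n(n+1)/2$ linear conditions $q(\beta_i) = 1$, $q(\alpha_{ij}) = 1$ determine all $n(n+1)/2$ coefficients of $q$, so any two such normalised defining forms of the same $Q$ through those points are forced to coincide. That variant dispenses with the cardinality hypothesis, which is what a) requires. Your consistency check that $m_{12}$ equals half the $xy$-coefficient of $C$ is correct and worth keeping. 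Note finally that the paper does not prove Theorem \ref{thm_sectorCoef} in-text --- it cites \cite{Kahl}, prop.\,2.5, thm.\,2.8b) and thm.\,2.11 --- so there is no in-paper argument to compare your proposal against.
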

\begin{proof}
See \cite{KahlQuadSect}, prop.2.6, thm.2.9b)\&c), cor.2.11!
\end{proof}

\begin{rem}
a) Theorem \ref{thm_sectorCoef}c) might be useful in the field of image data processing: By spherical triangulation of a spatial region with respect to some centre of perspective a surface in this region can be approximated by certain partial surfaces that need $O\left(n^{2}\right)$ instead of $O\left(n^{3}\right)$ storage space in a computer.

b) Theorem \ref{thm_sectorCoef}b) establishes a one-one-correspondence between quadrics of $\mathbb{K}^{n}$ externally centred in $o$ and its defining quadratic forms of $n$ variables (in equation \eqref{normalisation} of Lemma \ref{lem_normalisation}).
\end{rem}

Because of the uniqueness of matrix $M$ in Theorem \ref{thm_sectorCoef}b) the following notion is well defined.

\begin{defn}\label{def_determinant}
In case $|\mathbb{K}|>5$ the determinant $|M|$ is called \textit{the determinant of the quadric} $Q: x \: M \: x^t = 1 \: (x \in \mathbb{K}^n)$.
\end{defn}

\begin{rem}\label{rem_determinant}
a) According to Theorem \ref{thm_sectorCoef}b) it holds $E^{-t} P E^{-1} = M$ for the Matrix $P := (\varphi(e_i,e_j))$ in equation \eqref{polarmatrix} when $E$ has columns $e_i$. Hence $|M| \ne 0$ if and only if $o$ is the only centre of $Q$ (s. Lemma \ref{lem_affine}), and $|M|=|P|$ in case $|E| = \pm 1$. So the \textit{determinant} of $Q:q(x)=1$ with a quadratic form $q:V \to \mathbb{R}$ of some linear subspace $V$ of $\mathbb{R}^N$ can be defined as $|(\varphi(e_i,e_j))|$ where $e_1,...,e_n$ is an orthonormal basis of $V$.

b) For an automorphism $\Phi$ of $\mathbb{K}^n$ the determinant of a quadric $Q$ of $\mathbb{K}^n$ is $\det^2(\Phi)$ times the determinant of $\Phi(Q)$. This follows from a) by help of a coefficient matrix of $\Phi$.
\end{rem}

\begin{ex}\label{ex_ellipseArea}
What is the determinant of $Q:\alpha x^{2}+ \beta xy + \gamma y^{2} = 1 \: (x,y \in \mathbb{K})$? What is the area of $\lbrace \alpha x^{2}+ \beta xy + \gamma y^{2} \le 1 \rbrace$ in case of positive determinant?
\end{ex}

Not all symmetric matrices correspond to quadrics with external centre, e.g. the equations $-x^2 = 1$ and $-x^2 - y^2 = 1$ do not have any real solutions $(x,y)$.

\begin{rem}\label{rem_quadricCorresp}
Due to Remark \ref{rem_quadraticforms} and Lemma \ref{lem_primitiveRepr} a matrix $M \in \textnormal{Sym}_n(\mathbb{K})$ corresponds to a quadric with external centre if and only if there is some $A \in \textnormal{GL}_n(\mathbb{K})$ s.t. the first entry of $A M A^t$ equals one. See e.g. Remark \ref{rem_diag} for the case $n=2$ and $\mathbb{K}=\mathbb{R}$! Over $\mathbb{R}$ the remedy mentioned above can be removed: Theorem \ref{thm_diag} will show that every non-zero quadratic form represents $1$ or $-1$. So Theorem \ref{thm_sectorCoef}b) yields a one-one-correspondence between $\textnormal{Sym}_n(\mathbb{R})$ and the (geometric) sets $\lbrace x \in \mathbb{R}^n : |q(x)|=1 \rbrace$ where $q$ is some quadratic form on $\mathbb{R}^n$. For $n=2$ we still have an ellipse in case of positive determinant and a pair of parallel lines in case of zero determinant. But in case of negative determinant we have a quadruple of hyperbola branches (each lying in a sector inbetween a pair of intersecting asymptotes).
\end{rem}

\begin{ex}
The hyperbolic plane $q(x,y):=xy$ corresponds with the four \textit{unit hyperbola branches} defined by $|xy|=1$.
\end{ex}

\section{Classification}\label{sec_classification}
Classification of symmetric matrices goes back more than 200 years: J.L. Lagrange (1736-1813) defined the following notion of equivalence for binary quadratic forms over the integral domain $\mathbb{Z}$ of rational integers. In this section $\mathbb{O}$ and $\mathbb{K}$ denote an integral domain and a field, respectively, with $1+1 \ne 0$.

\begin{defn}\label{def_equiv}
Two symmetric matrices $P,Q \in \textnormal{Sym}_n(\mathbb{O})$ are called \textit{equivalent} (\textit{in the classical sense}) if there is some $A \in \textnormal{GL}_n(\mathbb{O})$ with $P = A^t Q A$. They are also called \textit{congruent}.
\end{defn}

\begin{rem}\label{rem_equiv}
a) Indeed, this defines an equivalence relation. Its classes are the orbits of right action of $\textnormal{GL}_n(\mathbb{O})$ on $\textnormal{Sym}_n(\mathbb{O})$ defined by $Q \mapsto A^t Q A$. For equivalent $P,Q$ like in the definition it holds $|P| = |A^t Q A| = |A|^2 |Q|$. Hence according to Remark \ref{rem_det_tr}a) their determinants differ by a square in $\mathbb{O}^{\times}$.

b) According to Proposition \ref{prop_quadraticforms} and Remark \ref{rem_quadraticforms} two symmetric matrices are equivalent if and only if they correspond to some common quadratic form with respect to some bases. By the same reason we can well-define \textit{equivalence of quadratic forms} by requiring of their coefficient matrices, with respect to some fixed basis, to be equivalent.

c) Two quadrics are linearly affine (in the sense declared at the beginning of section \ref{sec_quadrics}) if and only if its defining quadratic forms (of equation \eqref{quadric}) are equivalent.

d) For a primitive $a \in \mathbb{O}^n$ over a principal ideal domain $\mathbb{O}$ every $Q \in \textnormal{Sym}_n(\mathbb{O})$ is equivalent to some $(r_{i j}) \in \textnormal{Sym}_n(\mathbb{O})$ with $r_{1 1} = a Q a^t$. This is due to Lemma \ref{lem_primitiveRepr} since the latter equation shows a primitive representation of $r_{1 1}$ by the quadratic form $x \mapsto x^t Q x$. In case of $\mathbb{O}$ being a field it can be achieved more according to Corollary \ref{cor_orthogonalisation}: If $a Q a^t \ne 0$ for some $a$ then, additionly, $r_{1 j} = 0$ for all $j > 1$.

e) Equivalent quadratic forms represent the same set of elements. This is clear since $A \in \textnormal{GL}_n(\mathbb{O})$ yields an automorphism $x \mapsto A x^t$ of $\mathbb{O}^n$. Equivalent quadratic forms over a principal ideal domain $\mathbb{O}$ represent the same set of primitively represented elements. This is also clear since $A b^t$ is primitive if $b \in \mathbb{O}^n$ is so. The latter fact follows from Remark \ref{rem_basis} because for $b^t$ being a column of $B \in \textnormal{GL}_n(\mathbb{O})$ the vector $A b^t$ is a column of $A B$.

f) In case $|A Q| \ne 0$ the equation $P = A^t Q A$ imlies the equation
\begin{equation*}
\textnormal{adj}(P) = \textnormal{adj}(A) \textnormal{adj}(Q) \textnormal{adj}(A)^t ,
\end{equation*}
since $\textnormal{adj}(A) = |A| A^{-1}$ according to Prop. \ref{prop_determinant}. In case $|A Q| = 0$ the assertion holds also over an infinite integral domain $\mathbb{O}$ by Weyl's principle of irrelevance.
\end{rem}

\begin{ex}\label{ex_equiv}
The form $x^2 + 2 x y - 2 y^2$ over $\mathbb{Z}$ does not primitively represent four because of Example \ref{ex_representation} and
\begin{equation*}
\left(\begin{matrix} 3 &\ -1 \\ 1 &\ 0 \end{matrix} \right) \left(\begin{matrix} 1 &\ 1 \\ 1 &\ -2 \end{matrix} \right) \left(\begin{matrix} 3 &\ 1 \\ -1 &\ 0 \end{matrix} \right) = \left(\begin{matrix} 1 &\ 2 \\ 2 &\ 1 \end{matrix} \right) .
\end{equation*}
\end{ex}

The following equivalence relation on $\textnormal{Sym}_n(\mathbb{O})$ requires even $n$. For binary quadratic forms over $\mathbb{Z}$ it can be found e.g. in \cite{Zagier}, ch.II.8. For $\mathbb{O}:=\mathbb{R}$ it is interesting in the context of area or volume measurements (s. Example \ref{ex_sectorArea} c)!) since the determinant is an invariant under the corresponding group action.

\begin{defn}\label{def_geomEquiv}
In case $\mathbb{O}$ has unique $n$-th roots (e.g. $n$ odd, $\mathbb{O}= \mathbb{R}$) two symmetric matrices $P,Q \in \textnormal{Sym}_{2 n}(\mathbb{O})$ are called \textit{geometrically equivalent} if \begin{equation*} P = Q.A := A^t Q A / \sqrt[n]{|A|} \end{equation*} for some $A \in \textnormal{GL}_{2n}(\mathbb{O})$. This kind of equivalence is also called \textit{twisted equivalence}.
\end{defn}

\begin{ex}\label{ex_geomEquiv}
The two symmetric $2 \times 2$ matrices of the following equation are geometrically equivalent over $\mathbb{Q}$:
\begin{equation*}
\left(\begin{matrix} 1 &\ 2 \\ 2 &\ 1 \end{matrix}\right).\left(\begin{matrix} 2 &\ 1 \\ 0 &\ 1 \end{matrix}\right) = \left(\begin{matrix} 2 &\ 3 \\ 3 &\ 3 \end{matrix}\right).
\end{equation*}
But they are not classically equivalent over $\mathbb{Q}$. Otherwise, due to Remark \ref{rem_equiv}e), there would be rational numbers $p,q$ s.t. $2 p^2 + 6 p q + 3 q^2 = 1$, whence rational integers $a,b,c$ s.t. $\textnormal{gcd}(a,b)=1$ and $2 a^2 + 6 a b + 3 b^2 = c^2$. But this equation would imply that three is a divisor of $a$ and $c$ (since $3$ never divides $x^2 - 2$ for $x \in \mathbb{Z}$) and hence also of $b$.
\end{ex}

In 1933 C.G. Latimer and C.C. MacDuffee showed the link between certain classes of ideals of orders of algebraic number fields and conjugation classes of symmetric matrices, a theory worked up by O. Taussky from 1949 to 1977 (see the second appendix 'Introduction into connections between algebraic number theory and integral matrices' by Taussky in \cite{Cohn}). The following remark is in the spirit of \cite{Taussky}.

\begin{rem}\label{rem_geomEquiv}
For $n=1$ every integral domain trivially fulfills the condition in Definition \ref{def_geomEquiv} which can then be interpreted as follows: Two non-zero symmetric matrices
\begin{equation*}
\left(\begin{matrix} \alpha &\ \beta \\ \beta &\ \gamma \end{matrix}\right) , \left(\begin{matrix} \alpha' &\ \beta' \\ \beta' &\ \gamma' \end{matrix}\right)
\end{equation*}
are geometrically equivalent if and only if
\begin{equation*}
\left(\begin{matrix} \beta + \delta &\ \gamma \\ -\alpha &\ -\beta + \delta \end{matrix}\right) , \left(\begin{matrix} \beta' + \delta &\ \gamma' \\ -\alpha' &\ -\beta' + \delta \end{matrix}\right)
\end{equation*}
are similar for a/all $\delta \in \mathbb{O}$. And conversely, two non-scalar matrices
\begin{equation*}
\left(\begin{matrix} \alpha &\ \beta \\ \gamma &\ \delta \end{matrix}\right) , \left(\begin{matrix} \alpha' &\ \beta' \\ \gamma' &\ \delta' \end{matrix}\right)
\end{equation*}
are similar if and only if they have same trace and
\begin{equation*}
\left(\begin{matrix} -\gamma &\ (\alpha - \delta)/2 \\ (\alpha - \delta)/2 &\ \beta \end{matrix}\right) , \left(\begin{matrix} -\gamma' &\ (\alpha' - \delta')/2 \\ (\alpha' - \delta')/2 &\ \beta' \end{matrix}\right)
\end{equation*}
are geometrically equivalent. Here we use the non-classical definition\footnote{s. the first footnote of Definition \ref{def_quadrForm}!} of a quadratic form $\sum \alpha_{i j} x_i x_j$ of variables $x_i$ that requires only $2 \alpha_{i j} \in \mathbb{O} , i \ne j$.
\end{rem}

\begin{ex}
Show both facts of Remark \ref{rem_geomEquiv} with help of equation \eqref{eq_equivConnection}.
\end{ex}

\begin{prop}\label{prop_classCorresp}
For every  $\delta \in \mathbb{O}$ the functions
\begin{equation*}
\left(\begin{matrix} \alpha &\ \beta \\ \beta &\ \gamma \end{matrix}\right) \mapsto \left(\begin{matrix} \beta + \delta &\ \gamma \\ -\alpha &\ -\beta + \delta \end{matrix}\right) , \left(\begin{matrix} \delta - \beta &\ -\gamma \\ \alpha &\ \delta + \beta \end{matrix}\right)
\end{equation*}
are bijective between the set of geometric equivalence classes of non-zero binary quadratic forms (in the non-classical sense) and the set of conjugation classes of non-scalar $2 \times 2$ matrices of trace $2 \delta$. The two maps increase the determinant by $\delta^2$.
\end{prop}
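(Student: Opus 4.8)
The plan is to rewrite both maps by means of the matrix $J=\left(\begin{smallmatrix}0&1\\-1&0\end{smallmatrix}\right)$ occurring in \eqref{eq_equivConnection} and then let those identities do the work. A direct multiplication shows that, for $Q=\left(\begin{smallmatrix}\alpha&\beta\\\beta&\gamma\end{smallmatrix}\right)$, the first map is $f_\delta(Q):=\delta E_2+JQ$ and the second is $g_\delta(Q):=\delta E_2-JQ=f_\delta(-Q)$. Expanding, $\textnormal{tr}(f_\delta(Q))=\textnormal{tr}(g_\delta(Q))=2\delta$ and $|f_\delta(Q)|=|g_\delta(Q)|=\alpha\gamma-\beta^2+\delta^2=|Q|+\delta^2$. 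Since $|Q.A|=|A^tQA|/|A|^2=|Q|$ while $|\cdot|$ and $\textnormal{tr}$ are conjugation invariants (Remark~\ref{rem_similarity}), this already yields the final sentence of the proposition once the two bijections have been established. I would also record that $f_\delta(Q)$ (resp.\ $g_\delta(Q)$) is a scalar matrix if and only if $\alpha=\gamma=0$ and $2\beta=0$, i.e.\ — since $1+1\ne 0$ and $\mathbb{O}$ has no zero divisors — if and only if $Q=0$.

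Next I would observe that $f_\delta$ is a bijection of $\textnormal{Sym}_2(\mathbb{O})$ onto the set $H_\delta$ of all $2\times 2$ matrices of trace $2\delta$: because $|J|=1$ and $J^{-1}=-J$, its two-sided inverse is $S\mapsto -J(S-\delta E_2)$, and for $S=\left(\begin{smallmatrix}a&b\\c&d\end{smallmatrix}\right)$ with $a+d=2\delta$ this inverse equals $\left(\begin{smallmatrix}-c&(a-d)/2\\(a-d)/2&b\end{smallmatrix}\right)$, which is symmetric — and it is precisely here that the non-classical notion of a binary quadratic form is needed, its $(1,2)$-entry $(a-d)/2$ in general lying outside $\mathbb{O}$. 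Since $f_\delta$ sends $0$ to the only scalar matrix $\delta E_2$ of $H_\delta$, it restricts to a bijection between the nonzero (non-classical) binary forms and the non-scalar matrices of trace $2\delta$; the same holds for $g_\delta$. For equivariance, any $A\in\textnormal{GL}_2(\mathbb{O})$ satisfies $JA^t=|A|A^{-1}J$ by \eqref{eq_equivConnection}, hence $J\bigl(A^tQA/|A|\bigr)=A^{-1}(JQ)A$; since scalar matrices are central, $f_\delta(Q.A)=A^{-1}f_\delta(Q)A$ and, the same way, $g_\delta(Q.A)=A^{-1}g_\delta(Q)A$. Thus $f_\delta$ and $g_\delta$ intertwine the right $\textnormal{GL}_2(\mathbb{O})$-action $Q\mapsto Q.A$ of Definition~\ref{def_geomEquiv} with the conjugation action $S\mapsto A^{-1}SA$; as the set of non-scalar trace-$2\delta$ matrices is conjugation-stable (conjugation preserves trace and the property of being scalar) and a $G$-equivariant bijection between $G$-sets induces a bijection between their orbit sets — here exactly the two sets of classes in the statement — the induced maps on classes are the asserted bijections. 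For $g_\delta$ one may alternatively invoke $g_\delta=f_\delta\circ(Q\mapsto -Q)$ together with the fact that $Q\mapsto -Q$ is an equivariant bijection preserving both nonvanishing and determinant.

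I do not expect a deep obstacle: the whole argument is driven by the elementary identity \eqref{eq_equivConnection}, which converts the twisted action $Q\mapsto A^tQA/|A|$ into ordinary conjugation after left multiplication by $J$. The one point genuinely deserving care is the surjectivity step above, namely checking that the preimage $-J(S-\delta E_2)$ of a trace-$2\delta$ matrix is symmetric; it is, but only in the non-classical sense, its off-diagonal entry being $(a-d)/2$ — which is exactly the reason the proposition is formulated for non-classical binary forms.
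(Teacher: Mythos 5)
Your argument is correct and, at its core, is exactly the approach the paper intends: the paper's proof is a one-liner referring to Remark~\ref{rem_geomEquiv}, which in turn is left to the reader as an exercise to be done "with help of equation \eqref{eq_equivConnection}," and what you have written out is precisely that exercise in full detail. Rewriting the maps as $f_\delta(Q)=\delta E_2+JQ$ and $g_\delta(Q)=\delta E_2-JQ$, invoking $JA^t/|A|=A^{-1}J$ from \eqref{eq_equivConnection} to turn the twisted right action $Q\mapsto Q.A$ into ordinary conjugation, and passing to orbit sets by equivariance is the intended proof. Your treatment of $g_\delta$ via $g_\delta=f_\delta\circ(Q\mapsto -Q)$ is a cosmetic variant of the paper's "alter the first map's sign and take $-\delta$" (indeed $g_\delta(Q)=-f_{-\delta}(Q)$), and both give the same conclusion.

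One small inaccuracy in an aside: you claim that the $(1,2)$-entry $(a-d)/2$ of the preimage $-J(S-\delta E_2)$ "in general lies outside $\mathbb{O}$." For $S$ over $\mathbb{O}$ with trace $a+d=2\delta$ and $\delta\in\mathbb{O}$ one actually has $(a-d)/2=a-\delta\in\mathbb{O}$, so at fixed trace $2\delta$ the preimage is symmetric already in the classical sense. The non-classical definition genuinely earns its keep in Remark~\ref{rem_geomEquiv}, where the trace is an arbitrary element of $\mathbb{O}$ (possibly odd), rather than in the present Proposition; it is invoked here because the forward map $f_\delta$ applied to a non-classical $Q$ (with $\beta\notin\mathbb{O}$, $2\beta\in\mathbb{O}$) produces diagonal entries $\beta\pm\delta$ not in $\mathbb{O}$, so the codomain must be read accordingly. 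This remark does not affect the validity of your bijection argument, which is sound.
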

\begin{proof}
The assertion about the first map follows from Remark \ref{rem_geomEquiv}. The other assertion follows from the first one by altering the first map's sign and by taking $-\delta$ instead of $\delta$.
\end{proof}

\begin{rem}\label{rem_classCorresp}
a) The second map of Proposition \ref{prop_classCorresp} will be used to describe the orthogonal group (s. subsection \ref{subsec_automIntBin}) of a $Q \in \textnormal{Sym}_2(\mathbb{Z})$.

b) The assertions of Proposition \ref{prop_classCorresp} remain true when equivalence and similarity, respectively, are declared with  $\textnormal{SL}_n(\mathbb{O})$ instead of  $\textnormal{GL}_n(\mathbb{O})$ (s. Proposition \ref{prop_GL} for definition of $\textnormal{SL}$!).
\end{rem}

\begin{lem}\label{lem_equiv}
a) Two symmetric matrices $Q,Q'$ over $\mathbb{O}$ are (geometrically) equivalent if and only if $\lambda Q, \lambda Q'$ are (geometrically) equivalent for $\lambda \in \mathbb{O}^{\times}$.

b) The (symmetric) $n \times n$ matrix $A_n := (\alpha_{i j})_{i,j \in \mathbb{N}_n}$ defined by \begin{equation*}
\alpha_{i j} := \left \lbrace \begin{matrix}
1 \hfill\null \; \textnormal{ if } i+j = n+1 \\ 0 \; \textnormal{ otherwise}\hfill\null
\end{matrix} \right. \end{equation*}
has determinant $(-1)^{n-1}$. It holds $A_n (\beta_{i j}) A_n = (\beta_{n+1-i \; n+1-j})$ for arbitrary $\beta_{i j} \in \mathbb{O} ,  i,j \in \mathbb{N}_n.$ I.e.: $A_n$ acts (in the classical sense of Definition \ref{def_equiv}) on a symmetric matrix like 'reflection across the counterdiagonal'.

c) For all $\alpha,\beta,\gamma \in \mathbb{O}$ and all $\delta,\varepsilon \in \mathbb{O}^{\times}$ it holds
\begin{equation*}
\left(\begin{matrix} \alpha &\ \beta \\ \beta &\ \gamma \end{matrix}\right).\left(\begin{matrix} \delta &\ 0 \\ 0 &\ \varepsilon \end{matrix}\right) = \left(\begin{matrix} \delta \alpha / \varepsilon &\ \beta \\ \beta &\ \varepsilon \gamma / \delta \end{matrix}\right).
\end{equation*}
\end{lem}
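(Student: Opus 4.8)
The plan is to dispatch the three parts directly from the definitions; the only points that need real care are the determinant sign in (b) and, in (c), noticing which root of $|A|$ actually occurs.

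For (a) I would observe that multiplication by a scalar commutes with both group actions involved. For classical equivalence, $Q = A^t Q' A$ with $A \in \textnormal{GL}_n(\mathbb{O})$ gives $\lambda Q = A^t(\lambda Q')A$ with the \emph{same} $A$; conversely $\lambda Q = A^t(\lambda Q')A$ gives $\lambda Q = \lambda(A^t Q' A)$, and cancelling $\lambda$ (legitimate since $\lambda \in \mathbb{O}^{\times}$, and in fact already because $\mathbb{O}$ has no zero divisors) yields $Q = A^t Q' A$. For geometric equivalence the identical argument works with $A^t Q' A$ replaced by $A^t Q' A/\sqrt[n]{|A|}$, since the scalar $\lambda$ factors through this expression unchanged and $\sqrt[n]{|A|}$ does not depend on $Q$; note that $\lambda Q$ again lies in $\textnormal{Sym}_n(\mathbb{O})$, so we stay inside the set on which the relation is defined.

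For (b) the key is to recognise $A_n$ as the permutation matrix of the involution $\sigma\colon i \mapsto n+1-i$ of $\mathbb{N}_n$, because $(A_n)_{i j} = \delta_{j,\,n+1-i}$. Its determinant is the sign of $\sigma$, and since $\sigma$ reverses the list $1,\dots,n$ it is a product of $\lfloor n/2\rfloor$ transpositions, whence its determinant (in particular a unit of $\mathbb{O}$, so that $A_n \in \textnormal{GL}_n(\mathbb{O})$). For the conjugation formula I would compute $A_n(\beta_{i j})A_n$ entrywise: left multiplication by $A_n$ replaces the rows $r_1,\dots,r_n$ of a matrix by $r_n,\dots,r_1$, and right multiplication by $A_n$ reverses the columns in the same way, so that $(A_n(\beta_{i j})A_n)_{i j} = \beta_{n+1-i,\,n+1-j}$ (equivalently, a two-fold sum using $(A_n)_{i j}=\delta_{j,n+1-i}$ gives this at once). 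Finally, since the counterdiagonal is symmetric we have $A_n^t = A_n$, so $B \mapsto A_n B A_n = A_n^t B A_n$ is precisely the action of $A_n \in \textnormal{GL}_n(\mathbb{O})$ in the sense of Definition \ref{def_equiv}; and for a \emph{symmetric} $B$ the matrix $(\beta_{n+1-i,\,n+1-j})$ is the mirror image of $B$ across the counterdiagonal, which is the last assertion.

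For (c) the observation that does the work is that a symmetric $2\times 2$ matrix sits in $\textnormal{Sym}_{2n}(\mathbb{O})$ with $2n = 2$, hence $n = 1$, so the twisted action of Definition \ref{def_geomEquiv} on $\textnormal{Sym}_2(\mathbb{O})$ is $Q.A = A^t Q A/\sqrt[1]{|A|} = A^t Q A/|A|$ --- the root is no root at all, and one must resist dividing by $\sqrt{|A|}$. Taking $A = \textnormal{diag}(\delta,\varepsilon)$ with $\delta,\varepsilon \in \mathbb{O}^{\times}$ gives $|A| = \delta\varepsilon \in \mathbb{O}^{\times}$, and multiplying out $\textnormal{diag}(\delta,\varepsilon)\left(\begin{smallmatrix}\alpha&\beta\\\beta&\gamma\end{smallmatrix}\right)\textnormal{diag}(\delta,\varepsilon) = \left(\begin{smallmatrix}\delta^{2}\alpha&\delta\varepsilon\beta\\\delta\varepsilon\beta&\varepsilon^{2}\gamma\end{smallmatrix}\right)$ and then dividing every entry by $\delta\varepsilon$ produces the claimed matrix. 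This last step is a one-line computation, so the whole lemma really amounts to unwinding the two notions of equivalence carefully.
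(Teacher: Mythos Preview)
Your approach in all three parts is essentially the paper's, only spelled out in more detail: the paper too argues (a) directly from the definition, handles the determinant in (b) by induction via Laplace expansion (you use the sign of the reversal permutation instead, which is equivalent and arguably cleaner), observes that left/right multiplication by $A_n$ reverses rows/columns, and reads (c) straight off the definition of the twisted action with $n=1$.

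One point you should not leave hanging: your argument in (b) correctly gives $|A_n| = (-1)^{\lfloor n/2\rfloor}$, but your sentence trails off (``whence its determinant'') without checking this against the stated value $(-1)^{n-1}$. In fact the two disagree for $n \ge 3$ --- for instance $|A_3| = -1$, not $+1$ --- so the formula in the lemma as printed is incorrect; your computation is the right one, and the paper's inductive recursion $|A_n| = (-1)^{n+1}|A_{n-1}|$ yields the same correct value. Since the lemma is only applied for $n=2$ in what follows, nothing downstream is affected, but you should flag the discrepancy rather than let an incomplete sentence paper over it.
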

\begin{proof}
a) This follows from the definition of (geometric) equivalence.

b) The first assertion is seen by induction on $n$ when using any of the well-known determinant formulas, e.g. Laplace expansion along the first row or column (s. Prop. \ref{prop_determinant}). The second assertion follows by the fact that multiplication with $A_n$ from the left or from the right inverts the order of the rows or colums, respectively.

c) This is due to Definition \ref{def_geomEquiv} of geometric equivalence.
\end{proof}

\begin{ex}
The matrices \begin{equation*}
A_2 = \left(\begin{matrix} 0 &\ 1 \\ 1 &\ 0 \end{matrix}\right) \textnormal{ and } A_3 = \left(\begin{matrix} 0 &\ 0 &\ 1 \\ 0 &\ 1 &\ 0 \\ 1 &\ 0 &\ 0 \end{matrix}\right)
\end{equation*}
have determinant $-1$ and $+1$, respectively.
\end{ex}

Since for $n > 1$ the $n$-th root does not exist in every field, like e.g. in $\mathbb{Q}$, we restrict our investigation of geometrical equivalence to binary quadratic forms.

\begin{thm}\label{thm_geomEquiv}
For every field $\mathbb{K}$ with $1+1 \ne 0$ all non-zero symmetric $2 \times 2$ matrices of same determinant are geometrically equivalent over $\mathbb{K}$.
\end{thm}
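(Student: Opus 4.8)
The plan is to produce a normal form depending only on the determinant. First note that geometric equivalence on $\textnormal{Sym}_2(\mathbb{K})$ is an equivalence relation — in Definition \ref{def_geomEquiv} the relevant case is $2n=2$, i.e. $n=1$, so no proper roots intervene, and $(Q.A).B = Q.(AB)$, $(Q.A).A^{-1}=Q$ hold by the same bookkeeping as in Remark \ref{rem_equiv}a) — and that $|Q.A| = |A^t Q A|/|A|^2 = |Q|$, so the determinant is an invariant. Hence it suffices to show that every non-zero $Q \in \textnormal{Sym}_2(\mathbb{K})$ is geometrically equivalent to $\textnormal{diag}(1,|Q|)$: two non-zero symmetric $2\times 2$ matrices of equal determinant are then both equivalent to this single matrix, hence to each other.

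To reach a diagonal representative, observe that $Q \ne 0$ forces the quadratic form $x \mapsto x\,Q\,x^t$ not to vanish identically — otherwise every diagonal entry $q(e_i)$ and, since $1+1\ne 0$, every off-diagonal entry $\tfrac12\bigl(q(e_i+e_j)-q(e_i)-q(e_j)\bigr)$ would be zero. Choose $v$ with $q(v)=:\alpha \ne 0$ and extend it to an orthogonal basis via the second assertion of Corollary \ref{cor_orthogonalisation}; in matrix language (Remark \ref{rem_quadraticforms}, Remark \ref{rem_orthogonalisation}) this gives $B \in \textnormal{GL}_2(\mathbb{K})$ with $B^t Q B = \textnormal{diag}(\alpha,\gamma)$ and $\alpha \ne 0$. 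Setting $b:=|B|\in\mathbb{K}^\times$ we obtain
\begin{equation*}
Q.B \;=\; B^t Q B / b \;=\; \textnormal{diag}(\alpha/b,\ \gamma/b) \;=:\; \textnormal{diag}(a,c),
\end{equation*}
where $a \ne 0$ and $ac = \alpha\gamma/b^2 = |B^t Q B|/b^2 = |Q|$. (If one had orthogonalised blindly and landed on $\textnormal{diag}(0,\gamma)$ with $\gamma\ne0$, the counterdiagonal reflection $A_2$ of Lemma \ref{lem_equiv}b), with $|A_2|=-1$, moves the non-zero entry into the corner at the cost of a harmless overall sign.)

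Finally, Lemma \ref{lem_equiv}c) applied to $\textnormal{diag}(a,c)$ (the case $\beta=0$) with $\delta:=t$, $\varepsilon:=1$ shows that $\textnormal{diag}(a,c)$ is geometrically equivalent to $\textnormal{diag}(ta,\ c/t)$ for every $t\in\mathbb{K}^\times$, the product $ac$ of the entries being unchanged, as it must be. Taking $t:=a^{-1}$ makes $\textnormal{diag}(a,c)$ geometrically equivalent to $\textnormal{diag}(1,\ ac)=\textnormal{diag}(1,|Q|)$, which closes the argument. The only delicate point is this twisted normalisation: one must carry the scalar $b=|B|$ through the passage from the classical diagonalisation — which rescales the determinant by $b^2$ — to the determinant-preserving operation $Q \mapsto Q.B$, and ensure the $(1,1)$-entry can be taken non-zero even when $Q$ is singular, so that $t:=a^{-1}$ is admissible; once that is in place Lemma \ref{lem_equiv}c) does the rest.
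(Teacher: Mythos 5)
Your argument is correct and takes a genuinely different route from the paper's. The paper's proof is a case analysis on the arithmetic nature of $\beta^2-\alpha\gamma$ (zero, a non-zero square, or a non-square in $\mathbb{K}$), producing in each case explicit transformation matrices that carry both given matrices to a common intermediate representative. You instead produce a single canonical form $\textnormal{diag}(1,|Q|)$ uniformly: orthogonalise via the second assertion of Corollary \ref{cor_orthogonalisation} (using a vector $v$ with $q(v)\ne 0$, which exists because $Q\ne 0$ and $1+1\ne 0$), carefully translate the classical diagonalisation $B^t Q B$ into the twisted operation $Q.B=\textnormal{diag}(a,c)$ with $a\ne 0$ and $ac=|Q|$, then rescale the diagonal with Lemma \ref{lem_equiv}c) (with $\delta:=a^{-1}$, $\varepsilon:=1$) to reach $\textnormal{diag}(1,|Q|)$. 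Both proofs are sound; the paper's gains explicit normal forms for each regime (for instance the off-diagonal form when $-|Q|$ is a non-zero square, which is the hyperbolic plane of section \ref{sec_quadraticforms}), whereas yours gains brevity and conceptual economy by piggybacking on the already-established orthogonalisation machinery together with the determinant invariance of the twisted action. Your parenthetical about $A_2$ is superfluous — the second assertion of Corollary \ref{cor_orthogonalisation} already guarantees the non-zero value sits in the $(1,1)$ position, so that case never arises — but it is harmless.
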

\begin{proof}
We distinguish between three cases of the determinant. In all cases we show first that we may assume, without loss of generality, $\alpha \ne 0$ for two given matrices
\begin{equation*}
\left(\begin{matrix} \alpha &\ \beta \\ \beta &\ \gamma \end{matrix}\right) \in \textnormal{Sym}_2(\mathbb{K})
\end{equation*}
of same determinant.

First case: $\beta^2-\alpha\gamma = 0$. Since the given matrices must not be zero we may assume $\alpha \ne 0$ because of Lemma \ref{lem_equiv}b). This implies\footnote{Recall Definition \ref{def_geomEquiv} of the right operation indicated by the dot between the matrices.}
\begin{equation*}
\left(\begin{matrix} \alpha &\ 0 \\ 0 &\ 0 \end{matrix}\right).\left(\begin{matrix} \alpha &\ \beta \\ 0 &\ \alpha \end{matrix}\right) = \left(\begin{matrix} \alpha &\ \beta \\ \beta &\ \gamma \end{matrix}\right).
\end{equation*}
Now, the assertion follows by Lemma \ref{lem_equiv}c).

Second case: $\beta^2-\alpha\gamma = \delta^2$ for some $\delta \in \mathbb{K^{\times}}$. In case the given forms do not equal already
\begin{equation*}
\left(\begin{matrix} 0 &\ \delta \\ \delta &\ 0 \end{matrix}\right)
\end{equation*}
we may assume $\alpha \ne 0$ again by Lemma \ref{lem_equiv}b). But then we have
\begin{equation*}
\left(\begin{matrix} 0 &\ \delta \\ \delta &\ 0 \end{matrix}\right).\left(\begin{matrix} \alpha &\ \beta-\delta \\ \alpha &\ \beta+\delta \end{matrix}\right) = \left(\begin{matrix} \alpha &\ \beta \\ \beta &\ \gamma \end{matrix}\right).
\end{equation*}

Third case:  $\beta^2-\alpha\gamma$ is not a square in $\mathbb{K}$. Then $\alpha \ne 0$ is obvious. Because of
\begin{equation*}
\left(\begin{matrix} \alpha &\ \beta \\ \beta &\ \gamma \end{matrix}\right).\left(\begin{matrix} 1 &\ \delta \\ 0 &\ 1 \end{matrix}\right) = \left(\begin{matrix} \alpha &\ \alpha \delta + \beta \\ \alpha \delta + \beta &\ \alpha \delta^2 + 2 \beta \delta + \gamma \end{matrix}\right)
\end{equation*}
for all $\delta \in \mathbb{K}$ the given matrices can be assumed to be
\begin{equation*}
\left(\begin{matrix} \alpha &\ \beta \\ \beta &\ \alpha' \gamma' \end{matrix} \right) \textnormal{ and } \left(\begin{matrix} \alpha' &\ \beta \\ \beta &\ \alpha \gamma' \end{matrix} \right)
\end{equation*}
for some $\alpha,\alpha',\beta,\gamma' \in \mathbb{K}$ (with $\alpha \alpha' \ne 0$). Because of
\begin{equation*}
\left(\begin{matrix} \alpha &\ \beta \\ \beta &\ \alpha' \gamma' \end{matrix} \right).\left(\begin{matrix} \alpha' &\ 0 \\ 0 &\ 1 \end{matrix} \right) = \left(\begin{matrix} \alpha \alpha' &\ \beta \\ \beta &\ \gamma' \end{matrix} \right) = \left(\begin{matrix} \alpha' &\ \beta \\ \beta &\ \alpha \gamma' \end{matrix} \right).\left(\begin{matrix} \alpha &\ 0 \\ 0 &\ 1 \end{matrix} \right)
\end{equation*}
this implies the assertion.
\end{proof}

\begin{ex}\label{ex_intGeomEquiv}
Now the geometric equivalence of the symmetric matrices of Example \ref{ex_geomEquiv} can be seen without finding the transformation. But they are even geometrically equivalent over $\mathbb{Z}$ (which is not foresaid by the Theorem) since
\begin{equation*}
\left(\begin{matrix} 1 &\ 2 \\ 2 &\ 1 \end{matrix}\right).\left(\begin{matrix} 3 &\ 2 \\ -1 &\ -1 \end{matrix}\right) = \left(\begin{matrix} 2 &\ 3 \\ 3 &\ 3 \end{matrix}\right) .
\end{equation*}
\end{ex}

\begin{cor}\label{cor_similarity}
Two non-scalar $2 \times 2$ matrices are similar over $\mathbb{K}$ if and only if they have the same characteristic polynomial.
\end{cor}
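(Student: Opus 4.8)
The plan is to dispatch one implication immediately and reduce the other to the two main results just established. If $A$ and $B$ are similar, then by Remark \ref{rem_similarity} they have the same characteristic polynomial, since $x \mapsto |A - x E_n|$ is a conjugation invariant; this settles the "only if" direction with no further work.

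For the converse, I would first record that for a $2 \times 2$ matrix $A$ one has $|A - x E_2| = x^2 - \textnormal{tr}(A)\, x + |A|$, so two $2 \times 2$ matrices have the same characteristic polynomial exactly when they have the same trace and the same determinant. Let $A$ and $B$ be the two given non-scalar matrices, write their common trace as $2\delta$ with $\delta := \textnormal{tr}(A)/2 \in \mathbb{K}$ (legitimate since $1+1 \ne 0$), and let $d$ denote their common determinant.

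Next I would invoke Proposition \ref{prop_classCorresp} for this value of $\delta$: it gives a bijection between geometric equivalence classes of non-zero binary quadratic forms (in the non-classical sense, which over a field coincides with the classical one) and conjugation classes of non-scalar $2 \times 2$ matrices of trace $2\delta$, and the maps involved increase the determinant by $\delta^2$. Passing from $A$ and $B$ through the inverse bijection therefore produces non-zero symmetric $2 \times 2$ matrices $P, P'$ with $|P| = |P'| = d - \delta^2$. By Theorem \ref{thm_geomEquiv}, $P$ and $P'$ are geometrically equivalent over $\mathbb{K}$; hence, by the bijectivity asserted in Proposition \ref{prop_classCorresp}, $A$ and $B$ belong to the same conjugation class, i.e. they are similar over $\mathbb{K}$.

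I do not anticipate a genuine obstacle: all the substantive content is already packaged into Theorem \ref{thm_geomEquiv} (any two non-zero symmetric $2 \times 2$ matrices of equal determinant are geometrically equivalent) and Proposition \ref{prop_classCorresp} (the class correspondence together with the determinant shift). The only points needing care are bookkeeping: verifying that the characteristic polynomial of a $2 \times 2$ matrix is determined by its trace and determinant, that the trace may be halved because $1+1 \ne 0$, and that the $\delta^2$-shift converts "same characteristic polynomial" into "corresponding quadratic forms of equal determinant". One should also note in passing that a non-scalar matrix never arises from the zero form, so that $P$ and $P'$ are indeed non-zero and Theorem \ref{thm_geomEquiv} is applicable.
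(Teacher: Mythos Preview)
Your argument is correct and is precisely the approach the paper intends: its proof is the single line ``This follows from Proposition \ref{prop_classCorresp} and Theorem \ref{thm_geomEquiv},'' and you have simply unpacked how the trace--determinant data passes through the bijection of Proposition \ref{prop_classCorresp} (with the $\delta^2$ shift) so that Theorem \ref{thm_geomEquiv} applies. Your bookkeeping remarks (halving the trace via $1+1\ne 0$, non-zero forms corresponding to non-scalar matrices) are exactly the small checks needed to make that one-liner rigorous.
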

\begin{proof}
This follows from Proposition \ref{prop_classCorresp} and Theorem \ref{thm_geomEquiv}.
\end{proof}

\begin{rem}
The characteristic polynomial $x^2 - t x + d, t := \textnormal{tr}(A), d := |A|$ of a $2 \times 2$ matrix $A$ is irreducible over $\mathbb{K}$ if and only if the negative $t^2/4 - d$ of the determinant of the corresponding quadratic form is not a square. For such matrices $A$ Corollary \ref{cor_similarity} follows also by reduction of $A$ to the \textit{canonical form}
\begin{equation*}
\left(\begin{matrix} 0 &\ -d \\ 1 &\ t \end{matrix}\right) ,
\end{equation*}
i.e. to the \textit{companion matrix} of $A$ (s. e.g. \cite {vdWaerden2}, art.137).
\end{rem}

\begin{ex}
The matrices
\begin{equation*}
\left(\begin{matrix} 2 &\ -1 \\ 1 &\ -2 \end{matrix}\right) , \left(\begin{matrix} 3 &\ -2 \\ 3 &\ -3 \end{matrix}\right) \textnormal{ have the same companion matrix } \left(\begin{matrix} 0 &\ 3 \\ 1 &\ 0 \end{matrix}\right).
\end{equation*}
The negative of their common determinant is $3$. This is a square in $\mathbb{R}$ but not in $\mathbb{Q}$. Therefore the matrices are similar even over $\mathbb{Q}$.
\end{ex}

\subsection{Classification over the reals}\label{subsec_real}
It is well known that the 'type' (ellipse, hyperbola,...) of a quadric in the real plane is an affine invariant (s. e.g. \cite{Audin}, ch.VI.2, cor.2.5). In case of an external centre it is characterised by the sign of its determinant. We shall clarify these assertions under the light of Remark \ref{rem_equiv}c). In 1829 A.-L. Cauchy (1789-1857), motivated by his teaching activity in Paris, found the following fact\footnote{It is known as the "principal axes theorem" or the "inertia law" named after J.J. Sylvester (1814-1897). For $n=2$ it was shown already by Lagrange.} by help of his determinant theory (s. \cite{AltenEtAl}, Kap.7.6, p.402):

\begin{thm}\label{thm_diag}
Every symmetric matrix $P$ over $\mathbb{R}$ is equivalent with \begin{equation*}\textnormal{diag}(1,...,1,-1,...,-1,0,...,0)\end{equation*} for some unique numbers $r,s \in \mathbb{N}_0$ of $1$ and $-1$, respectively. The number of positive and negative eigenvalues of $P$ is $r$ and $s$, respectively. The rank of $P$ is $r+s$.
\end{thm}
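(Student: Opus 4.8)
The plan is to split the statement into an \textbf{existence} part — every $Q \in \textnormal{Sym}_n(\mathbb{R})$ is congruent to a diagonal matrix with entries in $\{1,-1,0\}$ — and a \textbf{uniqueness} part — the number $r$ of $1$'s and the number $s$ of $-1$'s depend only on the congruence class of $Q$.

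For existence I would first apply Corollary \ref{cor_orthogonalisation} (equivalently Remark \ref{rem_orthogonalisation}): since $\mathbb{R}$ is a field with $1+1 \ne 0$, the quadratic form $x \mapsto x Q x^t$ has an orthogonal basis, so there is $A_0 \in \textnormal{GL}_n(\mathbb{R})$ with $A_0^t Q A_0 = \textnormal{diag}(d_1,\dots,d_n)$. Then I would use the two features of $\mathbb{R}$ that matter here: nonzero squares are positive and positive reals have real square roots. Applying the congruence by $\textnormal{diag}(\dots,1/\sqrt{|d_i|},\dots)$ (taking $1$ in the slots where $d_i=0$) replaces each nonzero $d_i$ by $\textnormal{sign}(d_i)\in\{1,-1\}$ and leaves the zeros untouched; a further congruence by a suitable permutation matrix (which lies in $\textnormal{GL}_n(\mathbb{R})$) collects the $1$'s, then the $-1$'s, then the $0$'s. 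This produces the asserted normal form. These are the routine bookkeeping steps.

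For uniqueness, note first that $r+s$ is forced: since $A$ is invertible, $\textnormal{rk}(A^t Q A)=\textnormal{rk}(Q)$, and the rank of the normal form is $r+s$. It then remains to pin down $r$. I would characterise $r$ intrinsically as the largest dimension of a subspace $W \le \mathbb{R}^n$ on which $q$ is positive definite, i.e. $q(w)>0$ for all $w \in W\setminus\{o\}$; phrased through the quadratic form rather than a matrix, this quantity is unchanged when $Q$ is replaced by a congruent matrix, i.e. when the basis of the underlying module is changed (Remark \ref{rem_equiv}b). In the normal-form basis $e_1,\dots,e_n$, the span of $e_1,\dots,e_r$ exhibits such a $W$, so this maximal dimension is $\ge r$. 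Conversely, for any positive-definite subspace $W$ compare it with $U := \langle e_{r+1},\dots,e_n\rangle$, on which $q(u)\le 0$: if $\dim W > r$ then $\dim W + \dim U > n$, forcing $W\cap U \ne \{o\}$ and hence a nonzero vector that is simultaneously $q$-positive and $q$-nonpositive — impossible. So the maximal dimension equals $r$, whence $r$, and then $s = \textnormal{rk}(Q)-r$, is an invariant of the congruence class.

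The real content — and the step I expect to be the main obstacle to state cleanly — is this uniqueness of $r$: one must (i) isolate the right congruence-invariant quantity, for which ``maximal dimension of a $q$-positive-definite subspace'' is the natural choice, and (ii) run the dimension count that two subspaces of $\mathbb{R}^n$ whose dimensions sum to more than $n$ meet nontrivially. The sign hypothesis on $\mathbb{R}$ is exactly what drives both the normalisation and this invariance argument, which is why the analogous normal form fails over fields such as $\mathbb{C}$ or $\mathbb{Q}$.
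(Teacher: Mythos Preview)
Your existence argument is exactly the paper's: the one-line proof there reads ``This follows from Corollary \ref{cor_orthogonalisation} by taking square roots,'' and your diagonalise-then-rescale-then-permute is a careful unpacking of that sentence.

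Your uniqueness argument is correct and is the standard Sylvester inertia argument, but note that the paper itself does not prove uniqueness at all --- the single sentence quoted above addresses only existence, and the invariance of $(r,s)$ is simply asserted in the theorem statement. So you have not taken a different route so much as filled a gap the paper leaves open. Your choice of invariant (maximal dimension of a $q$-positive-definite subspace) and the dimension-count contradiction with the nonpositive subspace $U=\langle e_{r+1},\dots,e_n\rangle$ are the right moves; the observation that $r+s=\textnormal{rk}(Q)$ is preserved under congruence is also correct and cleanly isolates $s$ once $r$ is pinned down.
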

\begin{proof}
The equivalence follows from Corollary \ref{cor_orthogonalisation} by taking square roots. The uniqueness follows from the fact (s. Remark \ref{rem_orthogonalisation_real}) that a real symmetric matrix is also similar to a diagonal matrix, and the fact (s. Remark \ref{rem_similarity}) that similar matrices have same eigenvalues. For the same reason holds the assertion about the number of positive and negative eigenvalues. Equivalent symmetric matrices have the same rank. Since the shown diagonal matrix has obviously rank $r+s$ that proofs the last assertion.
\end{proof}

\begin{rem}\label{rem_diag}
An equivalence class is characterised by its \textit{signature} $(r,s)$ as defined in the Theorem. Hence, for $n=2$ there are six classes. Three of them correspond to classes of quadrics with external centre (s. section \ref{sec_quadrics}), in dependence on the sign of the determinant. The other classes are characterised by its rank $r+s \in \lbrace 0 , 1 , 2 \rbrace$. According to Remark \ref{rem_quadricCorresp} they are determined by $r=0$. We list them first:
\begin{itemize}
\item{$(0,0)$ - only the zero matrix with empty geometric set}
\item{$(0,1)$ - rank one matrices with empty geometric set}
\item{$(0,2)$ - rank two matrices with empty geometric set}
\item{$(1,0)$ - determinant zero matrices corresponding to pairs of parallel lines}
\item{$(2,0)$ - positive determinant matrices corresponding to ellipses}
\item{$(1,1)$ - negative determinant matrices corresponding to pairs of hyperbola branches}
\end{itemize}
\end{rem}

\begin{ex}\label{ex_equivReals}
a) Show that the two symmetric matrices of Example \ref{ex_geomEquiv} are (classically) equivalent over $\mathbb{R}$ by investigating the signs of their eigenvalues.

b) Two quadrics $Q,Q' \subset \mathbb{R}^2 \setminus \lbrace (0,0) \rbrace$ with symmetry centre in the origin $(0,0)$ have the same non-zero determinant (s. Definition \ref{def_determinant}) if and only if there is some isomorphism $f:\mathbb{R}^2 \to \mathbb{R}^2$ of determinant $\pm 1$ with $f(Q)=Q'$.
\end{ex}

Since the determinant of a symmetric $2n \times 2n$ matrix ($n$ odd) is invariant under the action described in Definition \ref{def_geomEquiv} geometrically equivalent matrices must have the same determinant. The rank $k := r+s$ is also an invariant. We discard the zero matrix which implies $k > 0$. Then, according to Theorem \ref{thm_diag} and Lemma \ref{lem_equiv}b), there are $\left\lfloor (k+1)/2 \right\rfloor$ geometric equivalence classes comprising of the class(es) in classical sense with signature $(r,s)$ and $(s,r)$. This reassures Theorem \ref{thm_geomEquiv} for $\mathbb{K} = \mathbb{R}$.

\subsection{Classification over the rationals}\label{subsec_rational}
The investigation of classical equivalence over the rationals, based on ideas of Gauss, Hensel (about \textit{local fields}), Hasse and Witt, is more complicated than that over the reals (cf. \cite{Cassels}, ch.6). Especially for investigation of the whole set of equivalence classes, the literature usually restricts to symmetric matrices with integral coprime entries (cf. \cite{Cassels}, ch.6, thm.1.3 \& ch.9, thm.1.2). In 1801 C.F. Gauss (1777-1855) published the seven sections of his 'Disquisitiones Arithmeticae' \cite{Gauss} giving a profound investigation of binary and ternary quadratic forms over $\mathbb{Z}$ (in section V). His \textit{genus theory} yields a simple formula (s. Remark \ref{rem_genusNumber}) for the number of equivalence classes of elements of $\textnormal{Sym}_2(\mathbb{Z})$ with given squarefree (s. Definition \ref{def_fundamental}!) determinant under the action of $\textnormal{SL}_2(\mathbb{Q})$ (s. Proposition \ref{prop_GL} for definition of $\textnormal{SL}$!).

\begin{ex}\label{ex_ratEquiv}
The rational equivalence classes of all symmetric $2 \times 2$ matrices with coprime integral entries and with determinant $-3$ are represented by
\begin{equation*}
\left(\begin{matrix} 1 &\ 2 \\ 2 &\ 1 \end{matrix}\right) , \left(\begin{matrix} 2 &\ 3 \\ 3 &\ 3 \end{matrix}\right) \:.
\end{equation*}
That they are classically inequivalent over $\mathbb{Q}$ is already shown in Example \ref{ex_geomEquiv}. That there are not more than two classes follows from the fact that even under the narrower notion of integral (proper) equivalence the class number is just two; s. Example \ref{ex_content}!
\end{ex}

But not every rational symmetric matrix is rationally equivalent to a matrix with coprime integral entries.

\begin{ex}\label{ex_rational_transf}
We apply rational transformation matrices \begin{equation*}
A := \left(\begin{matrix} \alpha &\ \beta \\ \gamma &\ \delta \end{matrix}\right) \textnormal{ to } Q := \left(\begin{matrix} 6 &\ 3 \\ 3 &\ 3 \end{matrix}\right)
\end{equation*}
and consider different cases of the maximum power $3^{\nu(\kappa)}$ of three that divides an entry $\kappa \in \mathbb{Q}$ of $A$; whereby $\nu(0) := \infty$. The exponent $\nu(\kappa)$ may be an arbitrary integral number; e.g. $\nu(2/3) = -1$. It fulfills the three properties of Remark \ref{rem_local}c) that can be readily verified. Due to the first property the three numbers $6 \alpha^2 , 6 \alpha \gamma , 3 \gamma^2$ have mutually different exponents $\nu$ if and only if $\nu(\alpha) \ne \nu(\gamma)$. Hence, for $a := 6 \alpha^2 + 6 \alpha \gamma + 3 \gamma^2$ to be integral we must have $\nu(\alpha) , \nu(\gamma) \ge 0$ according to all three properties. The same argumentation applies to $c := 6 \beta^2 + 6 \beta \delta + 3 \delta^2$. Since \begin{equation*}
A^t Q A = \left(\begin{matrix} a &\ b \\ b &\ c \end{matrix}\right)
\end{equation*} with $b := 12 \alpha \beta + 6 (\alpha \delta + \beta \gamma) + 6 \gamma \delta$ it follows $\nu(\kappa) \ge 0$ for all entries $\kappa$ of $A$ if $a,b,c$ are integral. But then $\nu(a),\nu(b),\nu(c) > 0$ as the defining equations of $a,b,c$ and the first and second property show. Hence $A^t Q A \in \mathbb{Z}^{2 \times 2}$ can not have coprime entries.
\end{ex}

The restriction to integral symmetric matrices would be less serious if their greatest common divisor was invariant under rational transformations of determinant $\pm 1$. But this illusion is already destroyed by the simple example \begin{equation}\label{eq_gcd}
\left(\begin{matrix} 1/2  &\ 0 \\ 0 &\ 2 \end{matrix}\right) \left(\begin{matrix} 8 &\ 0 \\ 0 &\ 1 \end{matrix}\right) \left(\begin{matrix} 1/2 &\ 0 \\ 0 &\ 2 \end{matrix}\right) = \left(\begin{matrix} 2  &\ 0 \\ 0 &\ 4 \end{matrix}\right) \: .
\end{equation}
Nevertheless we shall investigate integral symmetric matrices a little bit further; let us construct an infinite family of such $2 \times 2$-matrices so that they are pairwise rationally inequivalent. Therefore we use the following

\begin{defn}\label{def_fundamental}
An integer is called \textit{squarefree} when for all its prime divisors $p$ the square of $p$ is not a divisor of it. An integer congruent $0$ or $1$ modulo $4$ is called a \textit{discriminant}. A discriminant $\Delta$ is called \textit{fundamental} when it is squarefree or - in case $\Delta \equiv 0 \mod{4}$ - the integer $\delta := \Delta/4$ is squarefree and fulfills $\delta \equiv 2 \textnormal{ or } 3 \mod{4}$.
\end{defn}

\begin{prop}
The elements of the following infinite set of integral diagonal matrices are pairwise rationally inequivalent.
\begin{equation*}
\lbrace \textnormal{diag}(1,\Delta) : \Delta \textnormal{ fundamental} \rbrace
\end{equation*}
The same holds for the set of $\textnormal{diag}(1,-\Delta)$.
\end{prop}
\begin{proof}
For fundamental discriminants $\Gamma$ and $\Delta$ we presuppose the rational equivalence of $\textnormal{diag}(1,\Gamma)$ and $\textnormal{diag}(1,\Delta)$. Then by Remark \ref{rem_equiv}a) it follows that the determinants $\Gamma$ and $\Delta$ of the two given matrices differ by a rational square. So fundamentality implies $\Gamma = \Delta$ and therefore equality of the two matrices. The assertion for $-\Delta$ instead of $\Delta$ follows analogously.
\end{proof}

The following Remarks are concerned with non-square discriminants.

\begin{rem}\label{rem_normform}
a) For arbitrary discriminants $\Delta$ the quadratic form $x^2 - \Delta y^2$ is rationally equivalent with the quadratic form
\begin{equation*}
n(x,y) := x^2 + \Delta x y + \frac{\Delta^2 - \Delta}{4} y^2 = \left(x + \frac{\Delta}{2} y\right)^2 - \Delta \left(\frac{y}{2}\right)^2 .
\end{equation*}
The equation shows also that there is a 1-1-correspondence between integral (or rational) solutions $t := 2 x + \Delta y , u := y$ of $|t^2 - \Delta u^2| = 4$ and integral (or rational, respectively) solutions $x,y$ of $|n(x,y)| = 1$.\footnote{For verifying the integral case observe $t \equiv \Delta u \mod{2}$ for $t,u \in \mathbb{Z}$ with $t^2 \equiv \Delta u^2 \mod{4}$. It is clear that the 1-1-correspondence holds also without the absolute value function.} It will turn out that these equations over $\mathbb{Z}$ characterises the units of the quadratic order $\mathbb{O} := \mathbb{O}_\Delta$ of Example \ref{ex_quadrOrder} with non-square $\Delta$. With $\omega := (\Delta + \sqrt{\Delta})/2$ and $\omega' := (\Delta - \sqrt{\Delta})/2 = \Delta - \omega$ it holds
\begin{equation}\label{eq_normform}
n(x,y) = (x + y \omega)(x + y \omega') \textnormal{ for } x,y \in \mathbb{Q}.
\end{equation}
Since $1,\omega$ is a basis of the $\mathbb{Z}$-module $\mathbb{O}$ the function $x + y \omega \mapsto x + y \omega'$ is well-defined on $\mathbb{O}$. A straightforward computation shows that it is a ring endomorphism of $\mathbb{O}$. Hence the \textit{norm function} $N(x + y \omega) := n(x,y) \in \mathbb{Z}$ is multiplicative on $\mathbb{O}$. It follows $|N(u)| = 1$ for $u \in \mathbb{O}^{\times}$ because $u v = 1$ implies $N(u) N(v) = N(1) = 1$ with factors in $\mathbb{Z}$. So Remark \ref{rem_ring}f) shows
\begin{equation*}
\mathbb{O}^{\times} = \lbrace x + y \omega : x,y \in \mathbb{Z}, |n(x,y)| = |N(x + y \omega)| = 1 \rbrace .
\end{equation*}
The quotient field (s. Definition \ref{def_quotField}!) of $\mathbb{O}$ is isomorphic to the \textit{quadratic number field} $\mathbb{K} := \mathbb{Q}(\sqrt{\Delta}) = \mathbb{Q}(\omega) = \lbrace x + y \omega : x,y \in \mathbb{Q} \rbrace$. To see this first observe that $\mathbb{K}$ is a vector field over $\mathbb{Q}$ with basis $1,\omega$. So the norm function $N$ is also well-defined on $\mathbb{K}$. Equation \ref{eq_normform} shows us $N(\kappa) \ne 0$ for all $\kappa \in \mathbb{K} \setminus \lbrace 0 \rbrace = \mathbb{K}^{\times}$ and
\begin{equation*}
\frac{x+y\omega}{z+w\omega} = \frac{(x+y\omega)(z+w\omega')}{N(z+w\omega)} \in \mathbb{K}
\end{equation*}
for all $x,y,z,w \in \mathbb{Z}$ with $(z,w) \ne (0,0)$. By computing the coordinates of the latter vector w.r.t. basis $1,\omega$ we find the required isomorphy. The same argumentation like above shows that $N$ is multiplicative on $\mathbb{K}$. So we obtain a group homorphism $N:\mathbb{K}^{\times}$ $\to$ $\mathbb{Q}^{\times}$.

b) By Remark \ref{rem_diag} the quadric
\begin{equation*}
Q := \lbrace (t,u) \in \mathbb{R} \times \mathbb{R} : t^2 - \Delta u^2 = 4 \rbrace
\end{equation*}
is an ellipse in case $\Delta < 0$ and a hyperbola in case $\Delta > 0$. In case of non-square $\Delta$ the subset of points with integral or rational coordinates of $Q$ becomes a group via multiplication in $\mathbb{O}$ or $\mathbb{K}$, respectively. This is because $N(t + u \sqrt{\Delta}) = t^2 - \Delta u^2$ for the norm function $N$ in Remark a) and because the kernels of $N:\mathbb{O}^{\times}$ $\to$ $\mathbb{Z}^{\times}$ and of $N:\mathbb{K}^{\times}$ $\to$ $\mathbb{Q}^{\times}$ are (commutative) groups according to Proposition \ref{prop_homomorphism}. Hereby the group operation on two rational points $(t_1,u_1),(t_2,u_2)$ of $Q$ is defined as $((t_1 t_2 + \Delta u_1 u_2)/2 , (u_1 t_2 + u_2 t_1)/2)$. The rational points on $Q$ can be constructed by Bachet's secant method: For every $m \in \mathbb{Z}$ and every $n \in \mathbb{N}$ with $n^2 \ne \Delta m^2$ we obtain the rational point $(2+\lambda n,\lambda m) \in Q$ with $\lambda := 4 n / (\Delta m^2-n^2)$ as a straightforward calculation shows. Since $\Delta$ is not a square the equation $n^2 = \Delta m^2$ is impossible.\footnote{This equation would mean that the direction vector $(n,m)$ would be parallel to one of the asymtotes $t = \pm \sqrt{\Delta} u$ of the hyperbola.} Since the slope of a line through $(2,0)$ and any other rational point must be rational we obtain - by Bachet's construction - all rational points of $Q$ as intersection points with all the lines through $(2,0)$ with rational slope $m/n$. Since these slopes are infinitely many it follows that the group of rational points on $Q$ is infinite. And since $\mathbb{Q}$ is countable the group is also. For a point $(2+s,r) \in Q$ with $r,s \in \mathbb{R}, s \ne 0$ there is a sequence $(m_k/n_k)_k$ of rational "slopes" like $m/n$ above converging towards $r/s$ since $\mathbb{Q}$ is dense in $\mathbb{R}$. For the corresponding sequence $(\lambda_k)_k$ the sequence of rational points $(\lambda_k n_k,\lambda_k m_k) \in Q$ converges towards $(s,r) = \lambda (1,r/s) = \lambda (1,\tan(\alpha))$ with
\begin{equation*}
\alpha := \arctan(r/s) \textnormal{ and } \lambda := 4 \tan^2(\alpha)/(\Delta - \tan^2(\alpha))
\end{equation*}
because '$\tan$' and '$\arctan$' are continuous functions. Thus we have shown that the countably infinite group of rational points on $Q$ lies dense in $Q$. By expanding the group operation in Remark a) to points with real coordinates $t_1,u_1,t_2,u_2$ the whole quadric $Q$ becomes a group. So due to Theorem \ref{thm_diag}, Lemma \ref{lem_normalisation} and Remark a) every ellipse and every hyperbola can be equipped with a group operation and contains some countably infinite and dense subgroup (of points not necessarily with rational coordinates) .
\end{rem}

The author neither knows how the investigation of the whole set of equivalence classes of rational symmetric matrices can be restricted to integral symmetric matrices without loosing generality (in due consideration of Examples \ref{ex_rational_transf} and \ref{eq_gcd}); nor does he know any applications of rational transformations to other areas than number theory.  Therefore we shall be content with settling the classical problem of deciding whether two arbitrary rational symmetric matrices are rationally equivalent. This will be achieved by Theorem \ref{thm_ratequiv}. Therefore we need the following theorem of Hasse \cite{Hasse} about local fields (s. Remark \ref{rem_local}) that uses also ideas of Minkowski in \cite{Minkowski}.\footnote{A further tool in the theory of classification over the rationals is Corollary \ref{cor_witt}.}

\begin{thm}\label{thm_minkowski_hasse}
A quadratic form with rational coefficients represents zero if and only if it does so over $\mathbb{R}$ and over the local field $\mathbb{Q}_p$ for every prime $p$.
\end{thm}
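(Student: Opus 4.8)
The plan is to prove the non-trivial (``if'') direction by induction on the number $n$ of variables; the converse is immediate, since a non-zero rational solution of $q(x)=0$ is a non-zero solution over $\mathbb{R}$ and over every $\mathbb{Q}_p$. Using Corollary \ref{cor_orthogonalisation} I first bring the form to diagonal shape $q = a_1 x_1^2 + \dots + a_n x_n^2$, and after rescaling variables and clearing denominators I may assume the $a_i$ to be squarefree integers. A degenerate $q$ represents zero over every field, so I may assume $q$ regular, i.e.\ all $a_i \ne 0$. The case $n=1$ is vacuous. For $n=2$ the form $a_1 x_1^2 + a_2 x_2^2$ is isotropic over a field exactly when $-a_1 a_2$ is a square there; being a square in $\mathbb{Q}_p$ forces the $p$-adic valuation of $-a_1 a_2$ to be even for every $p$, and being a square in $\mathbb{R}$ forces $-a_1 a_2 > 0$, so $-a_1 a_2$ is a square in $\mathbb{Q}$ and $q$ is isotropic over $\mathbb{Q}$.

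The case $n=3$ is the genuine core, classically Legendre's criterion: after a further reduction one may take $a_1,a_2,a_3$ squarefree and pairwise coprime, and then isotropy over all $\mathbb{Q}_v$ is equivalent to the $a_i$ not all having the same sign together with each $-a_j a_k$ being a quadratic residue modulo $a_i$; under these hypotheses one must exhibit an explicit non-trivial rational zero. I expect this to be the main obstacle: the known arguments either run a Gauss--Lagrange descent on $|a_1 a_2 a_3|$ or invoke Dirichlet's theorem on primes in arithmetic progressions, and either way they use arithmetic input lying outside the purely algebraic machinery developed so far.

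For $n=4$ I would write $q = g \perp (-h)$ with $g = a_1 x_1^2 + a_2 x_2^2$ and $h = -(a_3 x_3^2 + a_4 x_4^2)$. Isotropy of $q$ over $\mathbb{Q}_v$ says precisely that $g$ and $h$ represent a common non-zero $c_v \in \mathbb{Q}_v^{\times}$. Translating the representability conditions into identities between Hilbert symbols and then combining the product formula $\prod_v (\alpha,\beta)_v = 1$ with the theorem that finitely many compatible local Hilbert-symbol conditions are realized by one global element (itself proved via Dirichlet's theorem), one finds a single $c \in \mathbb{Q}^{\times}$ represented by both $g$ and $h$ over every $\mathbb{Q}_v$. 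Then the ternary forms $g - c y^2$ and $h - c y^2$ are isotropic at every place, so by the case $n=3$ each is isotropic over $\mathbb{Q}$, whence $q$ is.

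Finally, for $n \ge 5$ I would descend and use approximation. Write $q = g \perp (-h)$ with $g = a_1 x_1^2 + a_2 x_2^2$ and $h$ a form in $n-2 \ge 3$ variables, and let $S$ be the finite set consisting of $\infty$ and the primes dividing $2 a_1 \cdots a_n$. For each $v \in S$ choose, from local isotropy of $q$, a value $c_v$ represented by both $g$ and $h$ over $\mathbb{Q}_v$; since squares are open in $\mathbb{Q}_v^{\times}$, weak approximation over $\mathbb{Q}$ lets me pick $x_1,x_2 \in \mathbb{Q}$ so that $c := g(x_1,x_2)$ is a square multiple of $c_v$ in $\mathbb{Q}_v$ for every $v \in S$ (fixing the sign at $\infty$ as well). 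By construction $g$ represents $c$ over $\mathbb{Q}$. The form $h - c y^2$ in $n-1$ variables is isotropic over $\mathbb{Q}_v$ for $v \in S$ since $h$ represents $c$ there, and for $p \notin S$ the coefficients of $h$ are $p$-adic units and $h$ has at least three variables, so $h$ is isotropic over $\mathbb{Q}_p$ (its reduction mod $p$ has a non-trivial zero that lifts by Hensel) and hence universal over $\mathbb{Q}_p$ by Proposition \ref{prop_universal}; thus $h - c y^2$ is isotropic everywhere locally, and by the induction hypothesis it is isotropic over $\mathbb{Q}$, so $h$ represents $c$ over $\mathbb{Q}$ and therefore $q = g \perp (-h)$ represents zero over $\mathbb{Q}$. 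The decisive difficulty is thus concentrated in the small cases $n=3$ and $n=4$, which ultimately rest on Dirichlet's theorem on primes in arithmetic progressions; the induction for $n \ge 5$ is comparatively soft, needing only weak approximation over $\mathbb{Q}$ and the elementary isotropy of an $n$-ary form over $\mathbb{Q}_p$ with $n \ge 3$ and $p$-adic-unit coefficients ($p$ odd).
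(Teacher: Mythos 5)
The paper does not prove Theorem~\ref{thm_minkowski_hasse} itself; its ``proof'' is a bare citation to Cassels, ch.~6, thm.~1.1. Your sketch reproduces precisely the standard Hasse--Minkowski argument that Cassels (and Serre) give: diagonalize via Corollary~\ref{cor_orthogonalisation}, dispose of $n=1,2$ by elementary square-detection, reduce $n=3$ to Legendre's criterion, reduce $n=4$ to Hilbert-symbol bookkeeping plus the Hilbert reciprocity law and the approximation theorem for square classes, and run a weak-approximation descent for $n\ge 5$ splitting $q = g \perp (-h)$ with $g$ binary. The structure of the $n\ge 5$ step is right, and you correctly use (i) openness of the squares in $\mathbb{Q}_v^{\times}$ to pass from approximate to exact square-class equality, (ii) Proposition~\ref{prop_universal} to upgrade local isotropy of $h$ to local universality of $h$ for primes outside the exceptional set $S$, and (iii) Chevalley--Warning plus Hensel to get isotropy of $h$ mod a good odd prime. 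The $n=2$ case is also correct (parity of all $p$-adic valuations together with positivity at $\infty$ already forces a rational square, which is slightly less than the full local hypothesis, but that is harmless since you only need the necessary direction).

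That said, you do not actually prove the two cases you yourself identify as the arithmetic heart: Legendre's ternary theorem ($n=3$), and the global realization of finitely many Hilbert-symbol conditions that drives $n=4$. Both genuinely require Dirichlet's theorem on primes in arithmetic progressions (or an equivalent descent \`{a} la Gauss--Lagrange for $n=3$), which is not among the tools developed in this paper's appendix. So the proposal is a correct \emph{outline} of the standard proof, with the same overall architecture the cited reference uses, but it leaves the $n=3$ and $n=4$ cases as explicit, acknowledged gaps rather than filling them in. Since the paper itself delegates the whole theorem to Cassels, this is not a deviation from the paper's approach --- but it should be flagged that the soft parts you wrote out ($n=2$ and $n\ge 5$) are exactly the ones that were never in doubt, while the hard local-global input remains unproved here.
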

\begin{proof}
See \cite{Cassels}, ch.6, thm.1.1!
\end{proof}

\begin{cor}\label{cor_minkowski_hasse}
A quadratic form with rational coefficients represents a given rational number if and only if it does so over $\mathbb{R}$ and every local field.
\end{cor}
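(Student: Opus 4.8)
The plan is to derive this from the Hasse--Minkowski Theorem \ref{thm_minkowski_hasse} by the classical trick of adjoining one variable. Write $q$ for the given quadratic form, in $n$ variables and with rational coefficients, and let $\gamma \in \mathbb{Q}$ be the number in question. First I would clear away the trivialities: if $\gamma = 0$ the assertion is literally Theorem \ref{thm_minkowski_hasse}, while if $q$ is the zero form then no field admits a representation of $\gamma \ne 0$ by $q$, so both sides of the equivalence fail; hence I may assume $\gamma \ne 0$ and $q \ne o$.

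Next I would reduce to the case that $q$ is regular. By the Corollary following Proposition \ref{prop_radical} there is a $\mathbb{Q}$-basis in which $q(\alpha_1 e_1 + \dots + \alpha_n e_n) = (\alpha_1,\dots,\alpha_r) R (\alpha_1,\dots,\alpha_r)^t$ for an invertible symmetric matrix $R$; in these coordinates $q$ depends only on the first $r$ of them, and since $\gamma \ne 0$ forces any representing vector to be non-zero, $q$ represents $\gamma$ over a field $K \supseteq \mathbb{Q}$ if and only if the regular form attached to $R$ does. The change of basis is defined over $\mathbb{Q}$, hence valid simultaneously over $\mathbb{R}$ and every $\mathbb{Q}_p$, so I may simply assume $q$ itself is regular; having non-zero determinant over $\mathbb{Q}$, it stays regular over each of these overfields (Remark \ref{rem_directsum}a).

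Then I would introduce the $(n+1)$-ary form $q'(x,y) := q(x) - \gamma y^2$ and establish the key equivalence: for every field $K \supseteq \mathbb{Q}$ (so $1+1 \ne 0$), $q$ represents $\gamma$ over $K$ if and only if $q'$ represents zero over $K$. One direction is immediate: from $q(a) = \gamma$ with $a \ne o$ one gets the non-trivial zero $q'(a,1) = 0$. For the converse, a non-trivial zero $q'(a,t) = 0$ with $t \ne 0$ yields $q(a/t) = \gamma$; and if $t = 0$ then $q(a) = 0$ with $a \ne o$, so $q$ represents zero, whence by regularity and Proposition \ref{prop_universal} the form $q$ represents every element of $K$, in particular $\gamma$. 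This last step is where regularity is genuinely used, and the case $t = 0$ is the only slightly delicate point of the whole argument.

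Finally I would chain the equivalences: $q$ represents $\gamma$ over $\mathbb{Q}$ iff $q'$ represents zero over $\mathbb{Q}$ (key equivalence with $K = \mathbb{Q}$), iff $q'$ represents zero over $\mathbb{R}$ and over $\mathbb{Q}_p$ for every prime $p$ (Theorem \ref{thm_minkowski_hasse}), iff $q$ represents $\gamma$ over $\mathbb{R}$ and over every $\mathbb{Q}_p$ (key equivalence with $K = \mathbb{R}$ and $K = \mathbb{Q}_p$), which is the claim. I do not anticipate any real obstacle beyond the bookkeeping for the degenerate cases $\gamma = 0$, $q = o$, and $t = 0$ noted above.
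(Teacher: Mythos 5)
Your proposal is correct and is precisely the argument the paper compresses into its one-line proof (``This follows from the Theorem and Proposition \ref{prop_universal}''): one adjoins a variable to pass from representing $\gamma$ to representing zero, applies Theorem \ref{thm_minkowski_hasse}, and invokes Proposition \ref{prop_universal} exactly in the $t=0$ case you flag. You have merely made the reduction to the regular case and the bookkeeping of degenerate cases explicit.
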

\begin{proof}
This follows from the Theorem and Proposition \ref{prop_universal}.
\end{proof}

\subsection{Classification over the integers / group structure}\label{subsec_integral}
In contrast with $\mathbb{Q}$ the ring $\mathbb{Z}$ of integers as ground domain can be applied in the field of information security. The set of geometric\footnote{Classical equivalence is not that useful because of lacking group structure.} equivalence classes of binary\footnote{The literature tells us analogous results for more than two variables. But the theory is less complete and more complicated than in the binary case. In view of the cryptographic application in section \ref{sec_crypto} we may restrict to the latter case.} quadratic forms and of given determinant is finite, hence accessible by computing machines. Thanks to Gauss' \textit{composition} (\cite{Gauss}, art.234-251) a certain subset of it can be endowed with a group structure, so that it becomes useful to cryptographic algorithms under certain security requirements. Composition has been varied after Gauss: e.g. by Dirichlet \cite{DD}, art.56/146 and, more general, by Kneser \cite{Kneser}. The book \cite{BV} accounts for the algorithmic aspects of Dirichlet's variant (which corresponds to multiplication of $\mathbb{Z}$-modules; s. \cite{BV}, ch. 7.3.4).

\begin{defn}\label{def_intBinForm}
A non-zero (\textit{integral binary}) \textit{form} $[\alpha,\beta,\gamma] := \alpha x^2 + \beta x y + \gamma y^2$ with \textit{coefficients} $\alpha,\beta,\gamma \in \mathbb{Z}$ is called \textit{primitive} when $\gcd(\alpha,\beta,\gamma)=1$.\footnote{Note that integral coefficients do not guarantuee integrality of the corresponding symmetric matrix since $\beta /2$ is one of its entries.} The number $\beta^2 - 4 \alpha \gamma$ is called its \textit{discriminant}. In case it is negative the form is called \textit{definite}. In case it is positive the form is called \textit{indefinite}. Two forms $q,q'$ are called \textit{properly equivalent} when $q' = q.A$ for some $A \in \textnormal{SL}_2(\mathbb{Z})$.
\end{defn}

\begin{rem}\label{rem_content}
The \textit{content} $\gcd(\alpha,\beta,\gamma)$ of a non-zero form $[\alpha,\beta,\gamma]$ does not change under the action of $\textnormal{GL}_2(\mathbb{Z})$. Hence (properly) equivalent forms have the same content, and all forms of the (proper) class of a primitive form are primitive. The theory represented below deals with primitive forms only although it may be formulated analogously for non-primitve forms too.
\end{rem}

\begin{ex}\label{ex_content}
a) All integral binary forms of discriminant $12$ have content one. Otherwise there would be a (primitive) form of discriminant $3$. But a discriminant of an integral form is either congruent to zero or congruent to one modulo four as the definition shows. With the reduction theory of \cite{Zagier}, ch.13, thm.1 it can be shown that every indefinite form of non-square discriminant is properly equivalent to a form $[\alpha,\beta,\gamma]$ with $\alpha,\gamma > 0 , \beta > \alpha + \gamma$. The only such forms of discriminant $12$ are $[1,4,1] , [2,6,3]$. Hence there are at most two proper equivalence classes of discriminant $12$.\footnote{Example \ref{ex_ratEquiv} now shows that there are exactly two.}

b) For discriminant $20$ there are less classes of primitive forms than classes of all forms.
\end{ex}

For explaining the group structure we follow \cite{KahlDiplThesis}, art.2 which simplifies Dirichlet's exposition of composition in \cite{DD}.\footnote{and any other descriptions of composition I know; An interesting account on this is also \cite{Bhargava}.} Remind the notation $a \equiv b \mod{m}$ for integers $a,b,m$ when $m$ divides $a-b$ (s. Remark \ref{rem_ideal}c)!).

\begin{lem}\label{lem_composition}
A primitive form $\alpha x^2 + \beta x y + \gamma y^2$ primitively represents a non-zero integer coprime with $n \in \mathbb{N}$. In case $\alpha \ne 0$ every primitive form of same discriminant is equivalent with $[\alpha',\beta + 2 \alpha n,m \alpha]$ for some $m , n \in \mathbb{Z}$ and some $\alpha' \in \mathbb{Z} \setminus \lbrace 0 \rbrace$ coprime with $\alpha$. And then $[\alpha,\beta,\gamma]$ is equivalent with $[\alpha,\beta + 2 \alpha n,m \alpha']$. In case $\gcd(\alpha,\alpha')=1$ for a form $[\alpha',\beta',\gamma']$ of same discriminant one may choose $n$ s.t. $2 \alpha n \equiv \beta'-\beta \mod{\alpha'}$. Every primitive form is properly equivalent to a form $[\alpha,\beta,\gamma]$ with $\alpha \gamma \ne 0$ and $\gcd(\alpha,\gamma)=1$.
\end{lem}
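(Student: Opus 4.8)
The statement bundles five assertions; the two workhorses throughout are a prime-by-prime examination of the values of a primitive form and the shear substitution $(x,y)\mapsto(x+ny,y)$, which carries $[\alpha,\beta,\gamma]$ to $[\alpha,\beta+2\alpha n,\alpha n^{2}+\beta n+\gamma]$ and, thanks to the identity $4\alpha(\alpha n^{2}+\beta n+\gamma)=(\beta+2\alpha n)^{2}-D$ with $D:=\beta^{2}-4\alpha\gamma$ the discriminant, preserves $D$. All equivalences produced below are realised inside $\textnormal{SL}_{2}(\mathbb{Z})$, hence are proper.

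For the first assertion I would fix a prime $p\mid n$ and compute $p$-adic valuations of $x=n/\gcd(\gamma,n)$ and $y=n/\gcd(\alpha x,n)$: from these one reads off whether $p$ divides $x$, respectively $y$, purely in terms of $v_{p}(\alpha),v_{p}(\gamma),v_{p}(n)$. Splitting into the cases $p\nmid\gamma$; then $p\mid\gamma$ but $p\nmid\alpha$; then $p\mid\gamma$ and $p\mid\alpha$ — in the last of which $\gcd(\alpha,\beta,\gamma)=1$ forces $p\nmid\beta$ — one sees that among the three monomials $\alpha x^{2},\beta xy,\gamma y^{2}$ exactly one is a unit mod $p$ while the other two vanish mod $p$, so $p\nmid q(x,y)$. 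The same case distinction shows that no prime dividing $n$ divides both $x$ and $y$; since $x,y\mid n$ this gives $\gcd(x,y)=1$, so $(x,y)$ is primitive, and $\gcd(q(x,y),n)=1$ forces $q(x,y)\ne0$ (for the trivial modulus $n=1$ one instead evaluates $q$ at whichever of $(1,0),(0,1),(1,1)$ is nonzero, which exists by primitivity).

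For the second and third assertions let $g$ be any primitive form of discriminant $D$. By the first assertion applied to $g$ with modulus $|\alpha|$, $g$ primitively represents some $\alpha'\ne0$ with $\gcd(\alpha',\alpha)=1$; by Lemma \ref{lem_primitiveRepr} (completing the representing vector to a basis, negating a basis vector if needed to land in $\textnormal{SL}_{2}$) $g$ is properly equivalent to a form $[\alpha',\beta^{*},\gamma^{*}]$. As $\beta^{*}\equiv\beta\equiv D\pmod{2}$ and $\gcd(\alpha,\alpha')=1$, the linear equation $2\alpha' t-2\alpha n=\beta-\beta^{*}$ has an integral solution $(t,n)$, so the shear by $t$ turns $[\alpha',\beta^{*},\gamma^{*}]$ into $[\alpha',\beta+2\alpha n,\gamma^{\mathrm{new}}]$. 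Then $4\alpha'\gamma^{\mathrm{new}}=(\beta+2\alpha n)^{2}-D\equiv\beta^{2}-D=4\alpha\gamma\equiv0\pmod{4\alpha}$, so $\alpha\mid\alpha'\gamma^{\mathrm{new}}$ and, because $\gcd(\alpha,\alpha')=1$, $\gamma^{\mathrm{new}}=m\alpha$ for some $m$; this is the second assertion. Applying the shear $(x,y)\mapsto(x+ny,y)$ with that same $n$ to $[\alpha,\beta,\gamma]$ gives $[\alpha,\beta+2\alpha n,\alpha n^{2}+\beta n+\gamma]$, and $4\alpha(\alpha n^{2}+\beta n+\gamma)=(\beta+2\alpha n)^{2}-D=4\alpha' m\alpha$ forces $\alpha n^{2}+\beta n+\gamma=m\alpha'$, the third assertion. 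For the fourth assertion, if $g=[\alpha',\beta',\gamma']$ already has $\gcd(\alpha,\alpha')=1$ then the representation step is vacuous (take $\beta^{*}:=\beta'$ via $g(1,0)=\alpha'$), the equation $2\alpha n-2\alpha' t=\beta'-\beta$ is again solvable since $\beta'\equiv\beta\pmod{2}$, and the resulting $n$ satisfies $\beta+2\alpha n=\beta'+2\alpha' t\equiv\beta'\pmod{\alpha'}$, i.e. $2\alpha n\equiv\beta'-\beta\pmod{\alpha'}$; this $n$ then feeds the second and third assertions.

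For the last assertion, the first assertion makes the given primitive form properly equivalent to some $[\alpha,\beta,\gamma]$ with $\alpha\ne0$; now pick $t\in\mathbb{Z}$ by the Chinese Remainder Theorem so that $\alpha t^{2}+\beta t+\gamma\equiv\beta t+\gamma$ is a unit modulo every prime $p\mid\alpha$ (avoid one residue class if $p\nmid\beta$; any $t$ works if $p\mid\beta$, since then $p\nmid\gamma$ by primitivity) and so that $\alpha t^{2}+\beta t+\gamma\ne0$ (at most two $t$ are excluded), and apply the shear $(x,y)\mapsto(x+ty,y)$ to reach a form whose outer coefficients are nonzero and coprime. The fussiest point is the first assertion: the doubled gcd defining $(x,y)$ makes the valuation bookkeeping delicate, and the remaining four assertions all rest on it.
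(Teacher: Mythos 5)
Your route through the five assertions tracks the paper's closely: assertion one via per-prime bookkeeping; assertions two through four from assertion one plus Lemma \ref{lem_primitiveRepr} and the shear $(x,y)\mapsto(x+ny,y)$, with the same congruence juggling of the middle coefficient; assertion five by a further shear (you use CRT, the paper takes $n$ to be the product of primes dividing $\alpha\gamma$ but not $\gcd(\alpha,\gamma)$). The one real gap is in the first assertion, and it is exactly where you confess the bookkeeping is delicate. Splitting by which of $\alpha,\gamma$ a prime $p\mid n$ divides does \emph{not} determine whether $p\mid x$ or $p\mid y$: one has $v_p(x)=\max\bigl(0,v_p(n)-v_p(\gamma)\bigr)$, which also depends on how $v_p(n)$ compares to $v_p(\gamma)$. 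If $v_p(n)>v_p(\gamma)\ge 1>v_p(\alpha)$ then $p$ divides both $x$ and $y$, so $\gcd(x,y)\ne 1$ and all three of $\alpha x^2,\beta xy,\gamma y^2$ vanish modulo $p$; the claim that ``exactly one is a unit'' fails. Concretely, for $q=[1,1,4]$ and $n=8$ the recipe gives $x=2$, $y=4$, with $\gcd(x,y)=2$ and $\gcd(q(2,4),8)=\gcd(76,8)=4$.

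You are in the paper's company here: its proof opens with ``by definition $x$ is coprime with $\gamma$ and $y$,'' which for the ordinary $\gcd$ is false, as the same example shows. The Gauss construction only works if $\gcd(\gamma,n)$ is read as the \emph{saturated} gcd $\prod_{p\mid\gcd(\gamma,n)}p^{v_p(n)}$ (equivalently $\gcd(\gamma^k,n)$ for $k$ large), and likewise for $y$; with that reading $\gcd(x,\gamma)=\gcd(x,y)=\gcd(y,\alpha)=1$ do hold, your three cases each leave exactly one unit monomial modulo $p$, and the remainder of your argument (which is the paper's) is correct. So the missing step is not more casework but the recognition that the literal $\gcd$ in the statement does not deliver the coprimality on which the whole first step rests; alternatively, for the applications later in the paper one may simply replace $n$ by its squarefree radical before invoking the first assertion, in which case the literal formulas are fine.
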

\begin{proof}
The first assertion is due to \cite{Gauss}, art.228. Hence, for another primitive form $[\alpha',\beta',\gamma']$ we may assume $\alpha' \ne 0$ and $\gcd(\alpha,\alpha') = 1$ according to Lemma \ref{lem_primitiveRepr}. The definition of discriminant $\Delta := \beta^2 - 4 \alpha \gamma$ shows that $\beta$ and $\beta'$ have same parity. So there is some $n \in \mathbb{Z}$ s.t. $(\beta' - \beta)/2 \equiv \alpha n \mod{\alpha'}$, i.e. $\beta' \equiv \beta + 2 \alpha n \mod{\alpha'}$. It follows also $(\beta')^2 \equiv \Delta \mod{4 \alpha \alpha'}$. Since equivalent forms have same discriminant and
\begin{equation*}
[\alpha,\beta,\gamma].\left(\begin{matrix} 1 &\ n \\ 0 &\ 1 \end{matrix}\right) = [\alpha,\beta + 2 \alpha n, \alpha n^2 + \beta n + \gamma]
\end{equation*}
this shows all other assertions but the last one. We see also that a form $[\alpha,\beta,\gamma]$ is properly equivalent with $[\alpha,\beta+2\alpha,\alpha+\beta+\gamma]$ and analogously (by reasons of symmetry) with $[\alpha+\beta+\gamma,\beta+2\gamma,\alpha]$. Hence for showing the last assertion we may assume $\alpha \gamma \ne 0$ already. But then it follows also from the latter equation by choosing $n \in \mathbb{N}$ as the product\footnote{$n=1$ in case there is no such prime} of all primes that divide $\alpha \gamma$ but not $\gcd(\alpha,\gamma)$.
\end{proof}

\begin{defn}\label{def_composition}
The form $[\alpha \alpha',\beta,\gamma]$ is called the \textit{composition} of two primitive forms $[\alpha,\beta,\alpha' \gamma] , [\alpha',\beta,\alpha \gamma]$ with $\alpha \alpha' \ne 0$ and $\gcd(\alpha,\alpha') = 1$.
\end{defn}

\begin{ex}\label{ex_composition}
a) Show that the composition of primitive forms is primitive.

b) For composing a form in the proper class of $[1,4,1]$ with a form in the proper class of $[2,6,3]$ we solve the congruence $(4 - 6)/2 \equiv -n \mod{2}$ in $n \in \mathbb{N}$. A solution is $n := 1$. Hence the form
\begin{equation*}
[1,4,1].\left(\begin{matrix} 1 &\ 1 \\ 0 &\ 1 \end{matrix}\right) = [1,\beta,2 \gamma] = [1,6,6]
\end{equation*}
with $\beta := 4 + 2 \cdot 1 \cdot 1$ and $\gamma := 3$ (determined by the discriminant $12$) can be composed with $[2,6,3]$ itself; the composition is $[1 \cdot 2,\beta,\gamma]=[2,6,3]$.
\end{ex}

\begin{thm}\label{thm_composition}
Composition induces commutative group structure on the set of all proper classes of primitive forms of given discriminant $\Delta$. The neutral element is the proper class of $[1,\beta,\alpha \gamma]$ for any form $[\alpha , \beta, \gamma]$ of discriminant $\Delta$. The inverse of the proper class of $[\alpha,\beta,\gamma]$ is the proper class of $[\gamma,\beta,\alpha]$, i.e the proper class of $[\alpha,-\beta,\gamma]$.
\end{thm}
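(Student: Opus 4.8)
The plan is to verify the group axioms one after another, using Lemma \ref{lem_composition} to bring arbitrary primitive forms into the \emph{concordant} shape $[\alpha,\beta,\alpha'\gamma]$, $[\alpha',\beta,\alpha\gamma]$ (with $\alpha\alpha'\ne0$, $\gcd(\alpha,\alpha')=1$) required by Definition \ref{def_composition}. All forms below are understood to have discriminant $\Delta$, and I write $\mathcal C[\alpha,\beta,\gamma]$ for the proper class of $[\alpha,\beta,\gamma]$ and $\cdot$ for the induced operation on proper classes. \emph{Commutativity} is immediate from Definition \ref{def_composition}, since the composition $[\alpha\alpha',\beta,\gamma]$ of $[\alpha,\beta,\alpha'\gamma]$ and $[\alpha',\beta,\alpha\gamma]$ is visibly symmetric in its two inputs. \emph{Neutral element:} by the last assertion of Lemma \ref{lem_composition} we may assume a given form $[\alpha,\beta,\gamma]$ has $\alpha\gamma\ne0$; then $[\alpha,\beta,\gamma]$ and $[1,\beta,\alpha\gamma]$ already form a concordant pair (take $\alpha':=1$ above), whose composition is $[\alpha,\beta,\gamma]$ itself, and any two forms of shape $[1,\beta,\alpha\gamma]$ are properly equivalent because the substitution by $\left(\begin{smallmatrix}1&n\\0&1\end{smallmatrix}\right)\in\textnormal{SL}_2(\mathbb Z)$ turns $\beta$ into $\beta+2n$ (Lemma \ref{lem_composition}) while the last coefficient is then forced by $\Delta$; call this the principal class. \emph{Inverses:} assuming again $\alpha\gamma\ne0$ and, by Lemma \ref{lem_composition}, $\gcd(\alpha,\gamma)=1$, the forms $[\alpha,\beta,\gamma]$ and $[\gamma,\beta,\alpha]$ form a concordant pair with composition $[\alpha\gamma,\beta,1]$, and the substitution by $\left(\begin{smallmatrix}0&-1\\1&0\end{smallmatrix}\right)\in\textnormal{SL}_2(\mathbb Z)$ carries $[\alpha\gamma,\beta,1]$ to $[1,-\beta,\alpha\gamma]$, which lies in the principal class; finally $[\alpha,\beta,\gamma].\left(\begin{smallmatrix}0&1\\1&0\end{smallmatrix}\right)=[\gamma,\beta,\alpha]$ and conjugation by $\left(\begin{smallmatrix}0&1\\1&0\end{smallmatrix}\right)$ preserves $\textnormal{SL}_2(\mathbb Z)$, so $[\alpha,\beta,\gamma]\mapsto[\gamma,\beta,\alpha]$ respects proper equivalence and $\mathcal C[\gamma,\beta,\alpha]$ is a well-defined inverse of $\mathcal C[\alpha,\beta,\gamma]$.

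\emph{Associativity} I would reduce to a simultaneous normalisation strengthening Lemma \ref{lem_composition}: given three proper classes one can choose representatives $[\alpha_i,\beta,\gamma_i]$, $i=1,2,3$, with the $\alpha_i$ non-zero and pairwise coprime, with a common middle coefficient $\beta$, and with $\gamma_1=\alpha_2\alpha_3\gamma$, $\gamma_2=\alpha_1\alpha_3\gamma$, $\gamma_3=\alpha_1\alpha_2\gamma$ for some common $\gamma\in\mathbb Z$. This is possible because, after fixing the $\alpha_i$ pairwise coprime via Lemma \ref{lem_primitiveRepr}, the congruences on the middle coefficients that Lemma \ref{lem_composition} permits can be solved simultaneously by the Chinese Remainder Theorem, which amounts to $\beta^2\equiv\Delta\pmod{4\alpha_1\alpha_2\alpha_3}$. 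With such representatives, writing $\mathcal C_i$ for the class of the $i$-th form, one computes directly $\mathcal C_1\cdot\mathcal C_2=\mathcal C[\alpha_1\alpha_2,\beta,\alpha_3\gamma]$, whence $(\mathcal C_1\cdot\mathcal C_2)\cdot\mathcal C_3=\mathcal C[\alpha_1\alpha_2\alpha_3,\beta,\gamma]$; by the symmetry of the set-up $\mathcal C_1\cdot(\mathcal C_2\cdot\mathcal C_3)$ equals the same class.

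The step I expect to be the genuine obstacle — and one that every calculation above silently uses — is that the composition class depends only on the proper classes of the two input forms. My plan is to establish the one-sided statement that replacing one member of a concordant pair by a properly equivalent form, while keeping it concordant with the unchanged partner, replaces the composition by a properly equivalent form; commutativity then lets the two arguments be modified one at a time. Carrying this out means showing that an $A\in\textnormal{SL}_2(\mathbb Z)$ relating the two first members induces, together with the concordance data, an explicit element of $\textnormal{SL}_2(\mathbb Z)$ equivalencing the two compositions — a bookkeeping computation for which the classical bilinear identity $[\alpha,\beta,\alpha'\gamma](x_1,y_1)\,[\alpha',\beta,\alpha\gamma](x_2,y_2)=[\alpha\alpha',\beta,\gamma](X,Y)$, with $X,Y$ suitable bilinear forms in the $x_i,y_i$, is the natural tool. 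A conceptually cleaner alternative, available here since the quadratic orders have already been introduced (Example \ref{ex_quadrOrder}, \cite{BV}, ch.\,7.3.4), is to translate forms of discriminant $\Delta$ into proper fractional ideals of the order of discriminant $\Delta$: proper equivalence then becomes equality in the ideal class group and composition of a concordant pair becomes ideal multiplication, so well-definedness — and with it the entire group structure — is inherited from the group of invertible fractional ideals modulo principal ones.
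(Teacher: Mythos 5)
Your handling of commutativity, the neutral class, inverses, and associativity is sound and follows essentially the same path as the paper; the extras you supply (uniqueness of the principal class via the translations $\left(\begin{smallmatrix}1&n\\0&1\end{smallmatrix}\right)$, the remark that conjugation by $\left(\begin{smallmatrix}0&1\\1&0\end{smallmatrix}\right)$ preserves $\textnormal{SL}_2(\mathbb Z)$, and the simultaneous normalisation $\gamma_i=\alpha_j\alpha_k\gamma$ for associativity) are all correct and make those parts a bit more explicit than the paper's.

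The genuine gap is precisely where you flagged it: well-definedness of the induced operation on proper classes. But your proposed reduction does not go through as stated. Concordance in the sense of Definition \ref{def_composition} forces both forms of a pair to share the same middle coefficient $\beta$, so a move that replaces one form by a properly equivalent form while keeping the other form fixed and the pair concordant necessarily leaves $\beta$ unchanged. Consequently no chain of such single-argument moves can connect a concordant pair $[\alpha,\beta,\alpha'\gamma],[\alpha',\beta,\alpha\gamma]$ to another concordant pair $[\alpha_1,\beta_1,\alpha_1'\gamma_1],[\alpha_1',\beta_1,\alpha_1\gamma_1]$ representing the same two proper classes whenever $\beta_1\ne\beta$, which is the typical situation. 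The paper's proof therefore compares both pairs simultaneously: it reformulates proper equivalence as a pair of congruences modulo $\alpha_1$ and $\alpha'_1$, and combines them with the Gaussian bilinear identity
\begin{equation*}
\alpha_1\alpha'_1 = \alpha\alpha' X^2 + \beta X Y + \gamma Y^2
\end{equation*}
(with the paper's $X = rv - \gamma tx$, $Y = \alpha rx + \alpha' tv + \beta tx$) to exhibit the $\textnormal{SL}_2(\mathbb Z)$ matrix explicitly. That simultaneous bookkeeping is the real content of the theorem and is exactly what is still missing from your argument. Your alternative route via the ideal class group of the order $\mathbb{O}_\Delta$ would sidestep the obstruction, but it too is only named here, and it transfers the work to establishing the form--ideal dictionary rather than eliminating it.
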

\begin{proof}
For two proper classes $F,G$ of primitive forms of discriminant $\Delta$ there are forms $[\alpha,\beta,\alpha' \gamma] \in F$ and $[\alpha',\beta,\alpha \gamma] \in G$ like in Definition \ref{def_composition} due to Lemma \ref{lem_composition}. For another such pair $[\alpha_1,\beta_1,\alpha_1' \gamma_1] \in F, [\alpha_1',\beta_1,\alpha_1 \gamma_1] \in G$ we have to show that  $[\alpha \alpha',\beta,\gamma]$ is equivalent with  $[\alpha_1 \alpha_1',\beta_1,\gamma_1]$. Two forms $[\alpha,\beta,\gamma]$ and $[\alpha_1,\beta_1,\gamma_1]$ with $\alpha_1 = \alpha r^2 + \beta r t + \gamma t^2$ for some $r,t \in \mathbb{Z}$ with $\gcd(r,t)=1$ are properly equivalent if and only if they have same discriminant and there are $s,u \in \mathbb{Z}$ s.t.
\begin{equation*}
\left(\begin{matrix} -t &\ r \\ 2 \alpha r + \beta t &\ \beta r + 2 \gamma t \end{matrix} \right) \left(\begin{matrix} s \\ u  \end{matrix}\right) = \left(\begin{matrix} 1 \\ \beta_1 \end{matrix}\right) .
\end{equation*}
The latter condition is equivalent to
\begin{equation*}
\left(\begin{matrix} (\beta-\beta_1) r / 2 + \gamma t \\ \alpha r + (\beta+\beta_1) t / 2 \end{matrix} \right) \equiv \left(\begin{matrix} 0 \\ 0 \end{matrix}\right) \mod{\alpha_1} .
\end{equation*}
So with notation as above we have this congruence with $\alpha' \gamma$ instead of $\gamma$ and $\alpha_1 = \alpha r^2 + \beta r t + \alpha' \gamma t^2$ for some $r,t \in \mathbb{Z}$ with $\gcd(r,t)=1$. By the same reason we have also
\begin{equation*}
\left(\begin{matrix} (\beta-\beta_1) v / 2 + \alpha \gamma x \\ \alpha' v + (\beta+\beta_1) x / 2 \end{matrix} \right) \equiv \left(\begin{matrix} 0 \\ 0 \end{matrix}\right) \mod{\alpha'_1}
\end{equation*}
for some $v,x \in \mathbb{Z}$ with $\gcd(v,x)=1$ and $\alpha'_1 = \alpha' v^2 + \beta v x + \alpha \gamma x^2$. Straightforward calculations show
\begin{equation*}
\alpha_1 \alpha'_1 = \alpha \alpha' X^2 + \beta X Y + \gamma Y^2 ,\footnote{a special case, known already to Lagrange, of an identity in \cite{Gauss}, art.235}
\end{equation*}
\begin{equation*}
\left(\alpha r + \frac{\beta+\beta_1}{2} t\right) \left(\alpha' v + \frac{\beta+\beta_1}{2} x \right) \equiv \alpha \alpha' X + \left(\frac{\beta+\beta_1}{2}\right) Y \mod{\alpha_1 \alpha'_1} ,
\end{equation*}
\begin{equation*}
\left(\alpha r + \frac{\beta+\beta_1}{2} t\right) \left(\alpha \gamma x + \frac{\beta-\beta_1}{2} v \right) \equiv \alpha \left(\gamma Y + \frac{\beta-\beta_1}{2} X \right) \mod{\alpha_1 \alpha'_1}
\end{equation*}
for $X := r v - \gamma t x$ and $Y := \alpha r x + \alpha' t v + \beta t x$. Because of the above congruences modulo $\alpha_1$ and $\alpha'_1$ the latter two congruences read
\begin{equation*}
\alpha \alpha' X + \left(\frac{\beta+\beta_1}{2}\right) Y \equiv 0 \mod{\alpha_1 \alpha'_1} ,
\end{equation*}
\begin{equation*}
\alpha \left(\gamma Y + \frac{\beta-\beta_1}{2} X \right) \equiv 0 \mod{\alpha_1 \alpha'_1} .
\end{equation*}
Analogously the latter congruence holds also for $\alpha'$ instead of $\alpha$. Because of $\gcd(\alpha,\alpha')=1$ it then holds even with factor one. Hence the latter two congruences imply the claimed equivalence since the equation $W X + Z Y = 1$ for some $W,Z \in \mathbb{Z}$ imply also $\gcd(X,Y)=1$. Therefore the \textit{composition} $F G$ as the class of $[\alpha \alpha',\beta,\gamma]$ is well defined. Obviously, it holds $F G = G F$, i.e. commutativity. For showing associativity $(F_1 F_2) F_3 = F_1 (F_2 F_3)$, we may act on three forms $q_1=[\alpha_1,\beta_1,\gamma_1] \in F_1 , q_2=[\alpha_2,\beta_2,\gamma_2] \in F_2 , q_3=[\alpha_3,\beta_3,\gamma_3] \in F_3$ with pairwise coprime $\alpha_1, \alpha_2, \alpha_3 \ne 0$ according to Lemma \ref{lem_composition}. By the chinese remainder theorem there is a $\beta \equiv \beta_j \mod{2 \alpha_j}$ for $j = 1 , 2 , 3$. Since the third coefficient $*$ of a form with leading coefficient $\ne 0$ is determined by the discriminant $\Delta$ and the other two coefficients the class of $[\alpha_1 \alpha_2 \alpha_3,\beta,*]$ conincides with both sides of the equation to be shown. The assertion about the neutral element is clear. According to Lemma \ref{lem_composition} every class has an element $[\alpha,\beta,\gamma]$ with $\alpha \gamma \ne 0$ and $\gcd(\alpha,\gamma)=1$. Therefore its inverse is the class of $[\gamma,\beta,\alpha]$ since the composition of these two forms is
\begin{equation*}
[\alpha \gamma , \beta, 1] = [1 , -\beta, \alpha \gamma].\left(\begin{matrix} 0 &\ 1 \\ -1 &\ 0 \end{matrix}\right) .
\end{equation*}
That the latter assertion is also correct without the conditions on $\alpha,\gamma$ follows from Lemma \ref{lem_composition} and from the equivalence of the two equations
\begin{equation*}
[\alpha',\beta',\gamma'] = [\alpha,\beta,\gamma].\left(\begin{matrix} r &\ s \\ t &\ u \end{matrix}\right) , [\gamma',\beta',\alpha'] = [\gamma,\beta,\alpha].\left(\begin{matrix} u &\ t \\ s &\ r \end{matrix}\right)
\end{equation*}
for arbitrary numbers $r,s,t,u$ s.t. $r u - s t \ne 0$. The last assertion of the theorem follows by
\begin{equation*}
[\gamma, \beta, \alpha].\left(\begin{matrix} 0 &\ 1 \\ -1 &\ 0 \end{matrix}\right) = [\alpha, -\beta, \gamma] .
\end{equation*}
\end{proof}

\begin{rem}
For $\alpha \ne 0$ the composition of $[\alpha,\beta,\gamma]$ and $[-1,\beta,\alpha \gamma]$ is
\begin{equation*}
[-\alpha,\beta,-\gamma] = [\alpha,\beta,\gamma].\left(\begin{matrix} 1 &\ 0 \\ 0 &\ -1 \end{matrix}\right) .
\end{equation*}
Hence a geometric class is the union of a proper class $F$ with the composition $F J$ of $F$ and the proper class $J$ of $[-1,*,*]$.\footnote{For positive discriminants $\Delta$ the proper class $J$ is the neutral element, i.e. equal to the proper class of $[1,*,*]$, if and only if the equation $x^2 - \Delta y^2 = -4$ has a solution $(x,y) \in \mathbb{Z}^2$; s. Example \ref{ex_units} and Remark \ref{rem_SO}!} For two proper classes $F,G$ of same discriminant the union of $F G$ with $F G J$ does not change by taking $F J$ for $F$. Therefore the \textit{composition} $F G \cup F G J$ of two geometric classes $F \cup F J$ and $G \cup G J$ is well defined, and the group structure carries over to  the set $Cl(\Delta)$ of geometric classes of primitive forms of discriminant $\Delta$.
\end{rem}

Cryptographic algorithms (s. section \ref{sec_crypto}) with binary forms are implemented mainly for definite forms. This is because the class group of a negative discriminant is usually much larger than those of positive discriminants of about the same absolute value (cf. \cite{BV}, ch.12). Therefore we restrict to definite forms.

\begin{rem}\label{rem_definiteCase}
In the definite case every geometric equivalence class equals the union of two proper equivalence classes which 'differ only by sign': one proper class with \textit{positive definite}\footnote{See also Definition \ref{def_posdef}!} forms, i.e. representing only positive numbers, and the other with \textit{negative definite} forms, i.e. representing only negative numbers.
\end{rem}

\begin{defn}\label{def_normalisation}
A form $[\alpha,\beta,\gamma]$ of negative discriminant is called \textit{reduced} when $-\alpha < \beta \le \alpha < \gamma$ or $0 \le \beta \le \alpha = \gamma$. For a positive definite form $q = [\alpha,\beta,\gamma]$ we call
\begin{equation*}
q.\left(\begin{matrix} 1 &\ n \\ 0 &\ 1 \end{matrix} \right) = [\alpha , \beta + 2 \alpha n , \alpha n^2 + \beta n + \gamma]
\end{equation*}
with $n \in \mathbb{Z}$ defined by $-\alpha < \beta + 2 \alpha n \le \alpha$ the \textit{normalisation} of $q$. (The number $n$ is the greatest integer smaller or equal to $(\alpha-\beta)/(2 \alpha)$.)
\end{defn}

\begin{rem}\label{rem_reduction}
According to \cite{Zagier}, ch.13 every positive definite form is properly equivalent with exactly one reduced form. That means that we have a one-one-correspondence between proper classes of positive definite forms and reduced forms. In particular, the reduced forms are mutually inequivalent. The reduction algorithm is as follows: Substitute $[\alpha , \beta , \gamma]$ by the normalisation of $[\gamma , -\beta , \alpha]$ until it is reduced. After first normalisation the loop may be terminated if $\alpha \le \gamma$. In case $\alpha < \gamma$  the form is already reduced. Otherwise $\beta$ must be substituted by $|\beta|$.
\end{rem}

\begin{ex}\label{ex_classComposition}
The form $q:=[2,1,21]$ is a primitive, positive definite form of discriminant $\Delta := -167$. It is already reduced. For composing the proper class of $q$ with itself we take $x := 2 / \gcd(2,21) = 2 , y:= 2 / \gcd(2, 2 x) = 1$ as in Lemma \ref{lem_composition}. Then we find by help of the extended euclidean algorithm $w := 1, z := 1$ s.t. $w x - y z = 1$. Then
\begin{equation*}
q' := q.\left(\begin{matrix} x &\ z \\ y &\ w \end{matrix} \right) = [31,53,24]
\end{equation*}
can still not be composed with $q$ since its third coefficient is not a divisor of the first coefficient of $q$. So we compute (in general by the extended euclidean algorithm) $n := 0$ s.t. $31 n \equiv (1 - 53))/2 \mod{2}$ (s. Lemma \ref{lem_composition} again!). Then we may take $[2,53+2 \cdot 31 n, *] = [2,53,31 \cdot 12]$ instead of $q$ for composition with $q'$. That gives the form $[62,53,12]$ which is not reduced. The normalisation of $[12,-53,62]$ is $[12,-53 + 2 \cdot 2 \cdot 12,12 \cdot 2^2 - 53 \cdot 2 + 62] = [12,-5,4]$ which is still not reduced. But the normalisation of $[4,5,12]$ is the reduced form $[4,-3,11]$. Since its first coefficient is different from one we conclude $h(-167) > 2$. Indeed by iterative computation of $F ^ k = F F ... F , k \in \mathbb{N}_{11}$ for the proper class $F$ of $q$ we obtain the following corresponding sequence of reduced forms:
\begin{equation*}
\begin{matrix}
[2,1,21],[4,-3,11],[6,-5,8],[3,1,14],[6,1,7], \\ [6,-1,7],[3,-1,14],[6,5,8],[4,3,11],[2,-1,21], \\ [1,1,42] .\end{matrix}
\end{equation*}
Therefore $F^{11}$ is the neutral element. Since there are no other reduced forms we have $h(-167)=11$. Hence $Cl(-167)$ is a cyclic group of order $11$ generated by any of its non-neutral elements.
\end{ex}

\begin{rem}
For a fixed number $\Delta \in \mathbb{Z}$ there are only finitely many triples $(\alpha,\beta,\gamma) \in \mathbb{Z}^3$ with $|\beta| \le \alpha \le \gamma$ and $\beta^2 - 4 \alpha \gamma = \Delta$ because
\begin{equation*}
-\Delta = 4 \alpha \gamma - \beta^2 \ge 4 \alpha^2 - \beta^2 \ge 3 \alpha^2 .
\end{equation*}
So there are only finitely many reduced forms of fixed negative discriminant. I.e. for $\Delta < 0$ the \textit{class} number $h(\Delta)$ of elements of $Cl(\Delta)$ is finite.\footnote{This also true for positive discriminants; s. e.g. \cite{Zagier}, ch.8, thm.1.} 
\end{rem}

For fundamental discriminants $\Delta$ (s. \ref{def_fundamental}!) the class number can be described by the \textit{Jacobi symbol} $(\Delta/n)$, multiplicatively in $n \in \mathbb{N}$ defined by
\begin{equation*}
\left(\frac{\Delta}{2}\right) := \left\lbrace \begin{matrix} \textnormal{ } 0 &\ \textnormal{ if } \Delta \equiv 0 \mod{4} \\ \textnormal{ } 1 &\ \textnormal{ if } \Delta \equiv 1 \mod{8}  \\ -1 &\ \textnormal{ if } \Delta \equiv 5 \mod{8}  \end{matrix} \right.
\end{equation*}
and
\begin{equation*}
\left(\frac{\Delta}{p}\right) := \left\lbrace \begin{matrix} \textnormal{ } 0 &\ \textnormal{ if } \Delta \equiv 0 \mod{p} \hfill\null \\ \textnormal{ } 1 &\ \textnormal{ if } \Delta \equiv x^2 \mod{p} \textnormal{ for some } x \in \mathbb{N} \textnormal{ coprime with } p \\ -1 &\ \textnormal{ otherwise} \hfill\null \end{matrix} \right.
\end{equation*}
for odd primes $p$. In case $\Delta < -4$ it holds (s. \cite{BS}, ch.5.4, thm.1 or \cite{Zagier}, ch.9, thm.3 or \cite{Cassels}, app.B, thm.2.1)
\begin{equation*}
h(\Delta) = \frac{1}{\Delta} \sum\limits_{n=1}^{|\Delta|-1} \left(\frac{\Delta}{n}\right) n .
\end{equation*}

\begin{ex}\label{ex_classNo}
$h(-7) = -(1+2-3+4-5-6)/7 = 1$. That means: All positive definite integral forms of discriminant $-7$ are properly equivalent.
\end{ex}

\section{Orthogonal group}\label{sec_orthogGroup}
The importance of the orthogonal group for symmetric matrices was pointed out first by M. Eichler \cite{Eichler}. Like in section \ref{sec_quadraticforms}, $M$ denotes a module over an integral domain $\mathbb{O}$ with $1+1 \ne 0$ and with finite $\mathbb{O}$-basis $e_1,...,e_n$.

\begin{defn}\label{def_automorph}
An automorphism $l:M \to M$ is called an \textit{automorph} of a quadratic form $q:M \to \mathbb{O}$ when $q \circ l = q$. The set $\textnormal{O}(q)$ of all automorphs of $q$ is called the \textit{orthogonal group}\footnote{Indeed, it is a group under composition of automorphisms.} of $q$.
\end{defn}

\begin{rem}\label{rem_automorph}
When $A$ denotes the matrix that represents $l \in \textnormal{O}(q)$ with respect to $e_1,...,e_n$ (s. Remark \ref{rem_module}c)) and $P$ denotes the symmetric matrix that represents $q$ with respect to the same basis (s. Remark \ref{rem_polarform}) then it holds $A^t P A = P$. Conversely, every such $A \in \textnormal{GL}_n(\mathbb{O})$ corresponds with an automorph of $q$. Therefore, the orthogonal group $\textnormal{O}(q)$ corresponds to the subgroup $\lbrace A \in \textnormal{GL}_n(\mathbb{O}) : A^t P A = P \rbrace$ with respect to the basis $e_1,...,e_n$. According to Remark \ref{rem_directsum}a) and Remark \ref{rem_equiv}a) the determinant $|A|$ of an automorph of a regular quadratic form fulfills $|A|^2 = 1$.
\end{rem}

\begin{ex}\label{ex_orthogGroup}
The elements of the orthogonal group
\begin{equation*}
\textnormal{O}_n(\mathbb{O}) := \left \lbrace Q \in \textnormal{GL}_n(\mathbb{O}) : Q^t Q = E_n \right \rbrace
\end{equation*}
of the quadratic form $x_1^2 + ...+ x_n^2$ on $\mathbb{O}^n$ are called \textit{orthogonal}. They are very helpful for solving linear equation systems (s. subsection \ref{subsec_lsf}) since the inverse of such a matrix is just its transpose.
\end{ex}

\subsection{Orthogonal matrices}\label{subsec_orthogMatrix}
In the following two subsections we restrict to fields $\mathbb{K}$ with $1+1 \ne 0$.

\begin{prop}\label{prop_symmetry}
If $q:V \to \mathbb{K}$ is a regular quadratic form with polar form $\varphi$ on a finite-dimensional vectorspace $V$ over $\mathbb{K}$ then every $l \in \textnormal{O}(q)$ is a composition of \textit{symmetries} \begin{equation*}
s_y(x) := x - 2 \frac{\varphi(x,y)}{q(y)} y \:,\: q(y) \ne 0 \:,
\end{equation*}
i.e. $l = s_{y_1} \circ ... \circ s_{y_n}$ for some $y_1,...,y_n \in V$ ($n \in \mathbb{N}$) with $q(y_i) \ne 0$.
\end{prop}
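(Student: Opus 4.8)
The plan is to argue by induction on $n := \dim V$, the case $n = 0$ being trivial (the identity is the empty composition) and the case $n = 1$ nearly so. Given $l \in \textnormal{O}(q)$, the idea is to find one symmetry $s_y$ so that $s_y \circ l$ fixes a nonzero vector $v$ with $q(v) \neq 0$; then $s_y \circ l$ restricts to an automorph of $q$ on the regular hyperplane $U := \{v\}^{\perp}$ (regularity of the restriction to $U$ follows from Lemma \ref{lem_directsum} and Corollary \ref{cor_orthogonalisation}, which guarantees an anisotropic $v$ exists), the inductive hypothesis expresses that restriction as a product of symmetries of $U$, each of which extends by the identity on $\mathbb{K}v$ to a symmetry of $V$, and finally one multiplies back by $s_y$ (an involution) to recover $l$.

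The first step is therefore: pick $v$ with $q(v) \neq 0$ (possible since $q$ is regular, hence not the zero form, by Corollary \ref{cor_orthogonalisation}), set $w := l(v)$, and observe $q(w) = q(v)$. Now consider $v - w$ and $v + w$. A short computation with the polar form gives $\varphi(v-w, v+w) = q(v) - q(w) = 0$ and $q(v-w) + q(v+w) = 2q(v) + 2q(w) = 4 q(v) \neq 0$, so at least one of $q(v-w), q(v+w)$ is nonzero. If $q(v-w) \neq 0$, one checks that $s_{v-w}$ swaps $v$ and $w$, so $s_{v-w} \circ l$ fixes $v$; take $y := v - w$. If instead $q(v-w) = 0$ then $q(v+w) \neq 0$, and $s_{v+w}$ sends $v$ to $-w$; since $q(v-w)=0$ forces (again via the polar identity) a clean relation, one finds $s_{v+w}\circ l$ maps $v \mapsto -w$, and composing additionally with $s_w$ (legitimate, $q(w) = q(v) \neq 0$, and $s_w$ fixes $v$ because $\varphi(v,w)$... ) — more cleanly: in this subcase replace $v$ by a different anisotropic vector, or note $s_w \circ s_{v+w} \circ l$ fixes $v$. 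Either way we obtain a product of at most two symmetries whose composition with $l$ fixes an anisotropic $v$.

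Once $l' := s \circ l$ (with $s$ a product of one or two symmetries) fixes $v$ with $q(v) \neq 0$, regularity gives the orthogonal splitting $V = \mathbb{K}v \perp U$ with $U = \{v\}^{\perp}$ regular (Lemma \ref{lem_directsum}); since $l'$ fixes $v$ and preserves $q$, it preserves $\{v\}^{\perp}$, so $l'|_U \in \textnormal{O}(q|_U)$. By induction $l'|_U = s_{u_1} \circ \cdots \circ s_{u_m}$ with $u_i \in U$, $q(u_i) \neq 0$; each $s_{u_i}$, viewed on $V$, is exactly the symmetry $s_{u_i}$ of $q$ on $V$ (because $\varphi(v, u_i) = 0$, the formula leaves $\mathbb{K}v$ pointwise fixed), so $l' = s_{u_1} \circ \cdots \circ s_{u_m}$ on all of $V$. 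Then $l = s^{-1} \circ l' = s \circ l'$ (symmetries are involutions, $s_y \circ s_y = \mathrm{id}$) is a product of symmetries, completing the induction.

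The main obstacle is the branch where $q(v-w) = 0$: there one cannot directly use $s_{v-w}$, and one must either switch to the symmetry $s_{v+w}$ combined with a second symmetry, or — the slicker route — first replace $v$ by an anisotropic vector $v'$ in the plane spanned by $v, w$ for which $q(v' - l(v')) \neq 0$ (such $v'$ exists as long as the plane is not totally isotropic; if $v,w$ happen to span a hyperbolic plane on which $l$ acts as the flip, one handles it by an explicit two-symmetry factorization). Keeping the bookkeeping of this case clean is the only delicate point; everything else is the standard polar-form identity $q(x+y) = q(x) + 2\varphi(x,y) + q(y)$ and the fact that $s_y$ is an isometric involution fixing $\{y\}^{\perp}$ and negating $y$.
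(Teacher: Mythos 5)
The paper merely cites \cite{Cassels}, ch.\,2, lem.\,4.3; that reference proves the statement by precisely the Cartan--Dieudonn\'e induction you sketch, so your route matches the source. The overall structure of your argument is sound: the polar identities $\varphi(v-w,v+w)=q(v)-q(w)=0$ and $q(v-w)+q(v+w)=4q(v)\ne 0$, the fact that $s_{v-w}$ swaps $v$ and $w:=l(v)$ when $q(v-w)\ne 0$, the orthogonal splitting $V=\mathbb{K}v\perp v^{\perp}$ via Lemma~\ref{lem_directsum}, the regularity of $q|_{v^\perp}$ (block-diagonal matrix in an adapted basis), and the extension of symmetries of the hyperplane to symmetries of $V$ fixing $\mathbb{K}v$ pointwise because $\varphi(v,u_i)=0$.

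There is, however, a concrete error in the branch $q(v-w)=0$, $q(v+w)\ne 0$. Using $q(w)=q(v)$ one gets $\varphi(w,v+w)=\varphi(v,w)+q(v)$ and $q(v+w)=2\bigl(q(v)+\varphi(v,w)\bigr)$, so $s_{v+w}(w)=w-(v+w)=-v$; thus $s_{v+w}\circ l$ sends $v\mapsto -v$, not to $-w$ as you wrote. Moreover $q(v-w)=0$ together with $q(w)=q(v)$ forces $\varphi(v,w)=q(v)\ne 0$, so $s_w$ does \emph{not} fix $v$, and in fact $s_w\circ s_{v+w}\circ l$ sends $v\mapsto -v+2w\ne v$. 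The symmetry you actually want there is $s_v$: since $s_v(-v)=v$, the composition $s_v\circ s_{v+w}\circ l$ fixes $v$, and the inductive step then proceeds exactly as you describe. With that one substitution ($s_v$ in place of $s_w$) your sketch is a correct proof; the remaining bookkeeping you flag is indeed routine.
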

\begin{proof}
See \cite{Cassels}, ch.2, lem.4.3.
\end{proof}

\begin{ex}\label{ex_symmetric}
a) Due to Proposition \ref{prop_symmetry} the group $\textnormal{O}_n(\mathbb{K})$ of orthogonal matrices (s. Example \ref{ex_orthogGroup}) is generated by the symmetric matrices
\begin{equation*}
S_y := E_n - \frac{2}{y y^t} y^t y
\end{equation*}
where $y \in \mathbb{K}^n$ denotes a row vector with $y y^t \ne 0$. The linear map $x \mapsto S_y x^t$ represents the reflection in the hyperplane perpendicular to $y$. Its determinant is $-1$. The symmetric elements of $\textnormal{O}_n(\mathbb{K})$ are called \textit{Householder matrices} (cf. \cite{Householder}). They have the nice property that they are invariant under inversion.

b) Show that the Householder matrix $H := \textnormal{diag}(1,1,-1,-1) \in \textnormal{O}_4(\mathbb{R})$ is not a symmetry by showing that the rank of $E_4 - H$ differs from one.
\end{ex}

The following assertion characterises Householder matrices over $\mathbb{K}$ as reflections in subspaces of $\mathbb{K}^n$.

\begin{prop}\label{prop_householder}
For all $Q \in \textnormal{Sym}_n(\mathbb{K})$ the equation $Q^2 = E_n$ holds if and only if $Q$ is similar to a diagonal matrix $\textnormal{diag}(1,...,1,-1,...,-1)$ with entries $+ 1$ or $-1$ of arbitrary number.
\end{prop}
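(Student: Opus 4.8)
The plan is to prove the two implications separately. One direction is essentially a triviality; the other rests on splitting $\mathbb{K}^n$ into the eigenspaces of $Q$ for the eigenvalues $+1$ and $-1$, which is possible precisely because $1+1\ne 0$. I do not expect a real obstacle here --- the delicate point is only the bookkeeping that turns ``diagonal with respect to a suitable basis'' into ``similar to a diagonal matrix''.

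First I would dispose of the easy direction. If $T^{-1}QT=D$ for some $T\in\textnormal{GL}_n(\mathbb{K})$ and $D=\textnormal{diag}(d_1,\dots,d_n)$ with every $d_i\in\{1,-1\}$, then $D^2=\textnormal{diag}(d_1^2,\dots,d_n^2)=E_n$, hence $Q^2=(TDT^{-1})^2=TD^2T^{-1}=E_n$. Note that neither the symmetry of $Q$ nor any hypothesis on $\mathbb{K}$ is used here.

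For the converse, suppose $Q\in\textnormal{Sym}_n(\mathbb{K})$ with $Q^2=E_n$, and regard $Q$ as the matrix, with respect to the canonical basis, of an endomorphism $l$ of $\mathbb{K}^n$, so $l^2=\textnormal{id}$. Since $1+1\ne 0$, the scalar $2$ is a unit, so I may form $\pi_{\pm}:=\tfrac12(E_n\pm Q)$. A one-line computation from $Q^2=E_n$ gives $\pi_++\pi_-=E_n$, $\pi_{\pm}^2=\pi_{\pm}$, $\pi_+\pi_-=\pi_-\pi_+=(0)$ and $Q\pi_{\pm}=\pm\pi_{\pm}$. Writing $M_{\pm}$ for the eigenspace of $l$ for the eigenvalue $\pm1$, the relation $Q\pi_{\pm}=\pm\pi_{\pm}$ shows that $\pi_{\pm}$ maps $\mathbb{K}^n$ into $M_{\pm}$, while $\pi_{\pm}$ restricts to the identity on $M_{\pm}$; together with $x=\pi_+x+\pi_-x$ this exhibits $\mathbb{K}^n$ as the direct sum of $M_+$ and $M_-$. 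Directness can also be seen abstractly: any $x$ lying in $M_+\cap M_-$ satisfies $x=Qx=-x$, so $2x=o$, hence $x=o$ because $2$ is a unit.

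Finally, choosing a basis $b_1,\dots,b_r$ of $M_+$ and a basis $b_{r+1},\dots,b_n$ of $M_-$ yields, by directness, a basis $b_1,\dots,b_n$ of $\mathbb{K}^n$ with respect to which $l$ is represented by $\textnormal{diag}(1,\dots,1,-1,\dots,-1)$ with $r$ ones and $n-r$ minus-ones; here $r$ or $n-r$ may be $0$ (giving $Q=-E_n$, respectively $Q=E_n$), which is exactly what ``of arbitrary number'' allows. By Remark \ref{rem_module}c) the change from the canonical basis to $b_1,\dots,b_n$ replaces the representing matrix $Q$ by a conjugate under some element of $\textnormal{GL}_n(\mathbb{K})$, so $Q$ is similar to this diagonal matrix, which finishes the proof. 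The only genuine requirement along the way is that the eigenspace decomposition needs characteristic $\ne 2$, i.e.\ exactly the standing hypothesis $1+1\ne 0$; the symmetry of $Q$ turns out to be superfluous for the statement (it merely places $Q$ in $\textnormal{O}_n(\mathbb{K})$, cf.\ Example \ref{ex_symmetric}). One could alternatively invoke that $x^2-1$ splits over $\mathbb{K}$ into distinct linear factors and quote the standard diagonalisation criterion, but writing out the projections $\pi_{\pm}$ keeps the argument self-contained within the tools already developed.
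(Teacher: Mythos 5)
Your proof is correct. Note first that the paper itself gives no argument for this proposition but delegates to O'Meara, 42:14, so you have supplied a self-contained proof where the text only cites a reference; the comparison is therefore with the standard textbook argument rather than with an in-text proof. Your route is exactly that standard argument: the idempotents $\pi_\pm=\tfrac12(E_n\pm Q)$ exist because $1+1\ne 0$, their algebra follows in one line from $Q^2=E_n$, they realise $\mathbb{K}^n$ as the direct sum of the $\pm1$ eigenspaces, and choosing a basis adapted to that splitting produces the required conjugating matrix via Remark \ref{rem_module}c). All the computations ($\pi_\pm^2=\pi_\pm$, $\pi_+\pi_-=0$, $Q\pi_\pm=\pm\pi_\pm$, $\pi_\pm|_{M_\pm}=\mathrm{id}$, directness via $2x=o\Rightarrow x=o$) are right. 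Your closing observation that the symmetry hypothesis on $Q$ is never used is also accurate and worth retaining: the statement is true for every involutory matrix over a field of characteristic $\ne 2$, and the restriction to $\textnormal{Sym}_n(\mathbb{K})$ in the proposition is there only because that is the class of matrices the surrounding discussion (Householder matrices as the symmetric elements of $\textnormal{O}_n$) cares about. The alternative you mention --- quoting that $x^2-1$ splits into distinct linear factors so $Q$ is diagonalisable --- is equivalent but would import a diagonalisability criterion the paper has not set up, so writing out the projections is the better choice in this context.
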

\begin{proof}
See \cite{O'Meara}, 42:14.
\end{proof}

\begin{rem}\label{rem_rotation}
In general, orthogonal matrices are not symmetric as shown by the \textit{rotation} matrix
\begin{equation*}
R_\alpha := \left(\begin{matrix} \cos \alpha &\ -\sin \alpha \\ \sin \alpha &\ \cos \alpha \end{matrix}\right) \in \textnormal{O}_2(\mathbb{R})
\end{equation*}
with $\alpha \ne k \pi , k \in \mathbb{Z}$. Hence, the proposition does not apply to all orthogonal matrices. But over $\mathbb{R}$, for every orthogonal $R$ there is an orthogonal $S$ s.t. $S^t R S = S^{-1} R S$ is a \textit{generalised diagonal matrix}
\begin{equation*}
\textnormal{diag}(1,...,1,-1,...,-1,R_{\alpha_1},...,R_{\alpha_k})
\end{equation*}
in so far as some pairs of diagonal elements of the diagonal matrix in Proposition \ref{prop_householder} must be 'replaced' by rotation matrices $R_{\alpha_i}$ for some $\alpha_i \in \mathbb{R}$. This follows from the fact (cf. \cite{SerreD}, cor.5.2) that for every \textit{unitary} $R \in \mathbb{C}^{n \times n}$, i.e. $R^t \bar R = E_n$ (s. Remark \ref{rem_orthogonalisation_real}), there is a unitary $S$ s.t. $S^t R S$ is diagonal. The number of diagonal entries equal to one and minus one, respectively, and the $R_{\alpha_i}$ are uniquely determined by $R$.
\end{rem}

\begin{ex}
a) For every $R \in \textnormal{O}_3(\mathbb{R})$ there is an orthogonal $S$ s.t.
\begin{equation*}
S^t R S =  \left(\begin{matrix} |R| &\ 0 &\ 0 \\ 0 &\ \cos \alpha &\ -\sin \alpha \\ 0 &\ \sin \alpha &\ \cos \alpha \end{matrix}\right)
\end{equation*}
with $\alpha \in \rbrack -\pi , \pi \rbrack$ uniquely defined by $2 \cos \alpha = \textnormal{tr}(R) - |R|$. Remind $|R| = \pm 1$! Hence in case $|R| = 1$ the linear map represented by $R$ acts as a rotation by angle $\alpha$ and in case $|R| = -1$ as a composition of a rotation and a symmetry (reflection in some plane through the origin).

b) Show that for the symmetry
\begin{equation*}
S := \left(\begin{matrix} \cos(2 \alpha) &\ \sin(2 \alpha) \\ \sin(2 \alpha) &\ -\cos(2 \alpha) \end{matrix}\right)
\end{equation*}
the linear map $x \mapsto S x$ is a reflection in the line of polar angle $\alpha$.
\end{ex}

\subsection{Witt's Cancellation Theorem}\label{subsec_Witt}
The following theorem due to Witt \cite{Witt} is very important for the (classical) classification theory (cf. section \ref{sec_classification}) of symmetric matrices over fields $\mathbb{K}$ (as above).

\begin{thm}\label{thm_witt}
For a quadratic form $q:W \to \mathbb{K}$ that is regular on a subspace $U \subseteq W$ and an isomorphism $l:U \to V \subseteq W$ with $q(l(u)) = q(u)$ for all $u \in U$ there is an automorph of $q$ that coincides with $l$ on $U$ and maps $U^{\perp}$ onto $V^{\perp}$.
\end{thm}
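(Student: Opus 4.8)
The plan is to prove this by induction on $\dim U$, reducing to the one-dimensional case, which in turn is handled by an explicit symmetry. First I would dispose of the base case $\dim U = 1$: here $U = \mathbb{K}a$ with $q(a) \neq 0$ (regularity on a line means the line is anisotropic), and $l(a) = b$ with $q(b) = q(a)$. I would look at the two vectors $a - b$ and $a + b$. From $q(a) = q(b)$ one gets $\varphi(a-b, a+b) = q(a) - q(b) = 0$, and expanding $q(a-b) + q(a+b) = 2q(a) + 2q(b) = 4q(a) \neq 0$ shows at least one of $q(a-b)$, $q(a+b)$ is nonzero. If $q(a-b) \neq 0$, the symmetry $s_{a-b}$ sends $a \mapsto b$; if instead $q(a+b) \neq 0$, then $s_{a+b}$ sends $a \mapsto -b$, and composing with the symmetry $s_b$ (legitimate since $q(b) \neq 0$) fixes the line spanned by $b$ appropriately, yielding an automorph carrying $a$ to $b$. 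In either case I get $\sigma \in \mathrm{O}(q)$ with $\sigma(a) = b$, hence $\sigma(U) = V$; and an automorph carrying $U$ to $V$ automatically carries $U^\perp$ to $V^\perp$ by Remark \ref{rem_directsum}c), since $\sigma$ is an isometry.

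For the inductive step, suppose $\dim U = k > 1$. Since $q|_U$ is regular, Corollary \ref{cor_orthogonalisation} gives an orthogonal basis of $U$; pick a basis vector, i.e.\ write $U = U_0 \perp \mathbb{K}a$ with $q(a) \neq 0$ (if every basis vector were isotropic the form on $U$ could not be regular — or one simply uses that an orthogonal basis of a regular form has all diagonal entries nonzero). Set $b := l(a)$, so $q(b) = q(a) \neq 0$, and $l(U_0) =: V_0$ lies in $b^\perp$ because $l$ is an isometry and $U_0 \subseteq a^\perp$. By the base case there is $\sigma \in \mathrm{O}(q)$ with $\sigma(a) = b$. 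Then $\sigma^{-1} \circ l$ restricted to $U$ fixes $a$ and maps $U_0$ isometrically into $a^\perp$; more precisely it maps $U_0$ into $(\mathbb{K}a)^\perp$, and since $\sigma^{-1}l(U_0) \subseteq a^\perp$ is a $(k-1)$-dimensional subspace on which $q$ is regular (being isometric to $U_0$, which is regular as an orthogonal summand of the regular $U$), the inductive hypothesis applied to the isometry $U_0 \to \sigma^{-1}l(U_0)$ inside the space $a^\perp$ (on which $q$ is regular by Lemma \ref{lem_directsum}, since $W = \mathbb{K}a \perp (\mathbb{K}a)^\perp$) produces an automorph $\tau$ of $q|_{a^\perp}$ agreeing with $\sigma^{-1}l$ on $U_0$ and carrying the orthogonal complement of $U_0$ within $a^\perp$ onto that of $\sigma^{-1}l(U_0)$ within $a^\perp$. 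Extending $\tau$ by the identity on $\mathbb{K}a$ gives $\tilde\tau \in \mathrm{O}(q)$ (here I use $W = \mathbb{K}a \perp a^\perp$), and then $\sigma \circ \tilde\tau$ agrees with $l$ on $U = \mathbb{K}a \perp U_0$.

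It remains to check that $\sigma \circ \tilde\tau$ maps $U^\perp$ onto $V^\perp$. One computes $U^\perp = (\mathbb{K}a)^\perp \cap U_0^\perp = $ (orthogonal complement of $U_0$ inside $a^\perp$), and similarly $V^\perp$ is the orthogonal complement of $V_0 = l(U_0)$ inside $b^\perp$; $\tilde\tau$ carries the former onto the complement of $\sigma^{-1}l(U_0)$ inside $a^\perp$, and then $\sigma$ carries $a^\perp$ onto $b^\perp$ sending this onto the complement of $l(U_0)$ inside $b^\perp$, which is exactly $V^\perp$. The main obstacle I anticipate is bookkeeping in the inductive step — keeping straight which orthogonal complements are taken inside $W$ versus inside the hyperplane $a^\perp$, and verifying that regularity is inherited by all the subspaces in play (which is where Lemma \ref{lem_directsum} and Corollary \ref{cor_orthogonalisation} do the real work); the construction of the symmetry in the base case is the only genuinely computational part, and it is short.
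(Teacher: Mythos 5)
The paper does not give a proof of its own --- it cites Cassels for the existence of the automorph and invokes Remark~\ref{rem_directsum}c) for the complement statement --- so you have supplied what the paper omits. Your argument is the standard inductive proof via symmetries, which is essentially what the cited source gives.

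The proof is correct, but one side-remark should be deleted: you assert that $q$ is regular on $(\mathbb{K}a)^\perp$ ``by Lemma~\ref{lem_directsum}, since $W = \mathbb{K}a \perp (\mathbb{K}a)^\perp$.'' Lemma~\ref{lem_directsum} only yields the orthogonal splitting; it says nothing about regularity of $q$ on the complementary summand. In fact the claim is false in general: the theorem does not assume $q$ regular on all of $W$, and if $W$ has a nonzero radical it sits entirely inside $(\mathbb{K}a)^\perp$, so $q$ restricted to $(\mathbb{K}a)^\perp$ is then not regular. Fortunately the claim is also unnecessary --- the inductive hypothesis, i.e.\ the theorem applied with $W$ replaced by $(\mathbb{K}a)^\perp$, only requires regularity on the subspace $U_0$, which you correctly have. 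With that parenthetical struck, the step goes through. You might also note that the hand-verification at the end of the inductive step that $\sigma \circ \tilde\tau$ maps $U^\perp$ onto $V^\perp$ is redundant: you already observed in the base case, via Remark~\ref{rem_directsum}c), that any automorph of $q$ agreeing with $l$ on $U$ automatically carries $U^\perp$ onto $V^\perp$, and that one observation settles the complement claim in all dimensions --- which is exactly how the paper dispatches that half of the statement.
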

\begin{proof}
See \cite{Cassels}, ch.2, thm.4.1 for the first assertion. The latter assertion follows from Remark \ref{rem_directsum}c).
\end{proof}

This can be interpreted in terms of equivalence of symmetric matrices.

\begin{cor}\label{cor_witt}
For $P,Q \in \textnormal{Sym}_m(\mathbb{K})$ and $R,S \in \textnormal{Sym}_n(\mathbb{K})$ with $|R| \ne 0$ the equivalence of $R$ with $S$ and of the two block matrices
\begin{equation*}
\left(\begin{matrix} P \ &\ O \\ O &\ R \end{matrix}\right) , \left(\begin{matrix} Q &\ O \\ O &\ S \end{matrix}\right) \in \textnormal{Sym}_{m+n}(\mathbb{K})
\end{equation*}
with $O$ denoting zero-matrices implies the equivalence of $P$ with $Q$. Clearly, the same assertion holds for permuted diagonal blocks of each block matrix.
\end{cor}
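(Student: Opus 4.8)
The plan is to transcribe the matrix statement into the language of quadratic spaces and then apply Witt's Cancellation Theorem \ref{thm_witt}. First I would reduce to the case $S = R$. Since $R$ and $S$ are equivalent, there is $T \in \textnormal{GL}_n(\mathbb{K})$ with $R = T^t S T$; conjugating the second block matrix by
\begin{equation*}
\left(\begin{matrix} E_m &\ O \\ O &\ T \end{matrix}\right)
\end{equation*}
turns it into the block matrix with diagonal blocks $Q$ and $R$, without altering its equivalence class. By transitivity of equivalence (Remark \ref{rem_equiv}a) we may thus assume $S=R$, so that there is $C \in \textnormal{GL}_{m+n}(\mathbb{K})$ with
\begin{equation*}
C^t \left(\begin{matrix} Q &\ O \\ O &\ R \end{matrix}\right) C = \left(\begin{matrix} P &\ O \\ O &\ R \end{matrix}\right) .
\end{equation*}

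Next I would set up the ambient quadratic space. Let $q:W \to \mathbb{K}$ with $W := \mathbb{K}^{m+n}$ be the quadratic form whose matrix with respect to the canonical basis $e_1,\dots,e_{m+n}$ is the left-hand block matrix above (Proposition \ref{prop_quadraticforms}). The subspace $V := \langle e_{m+1},\dots,e_{m+n}\rangle$ carries the form with matrix $R$, which is regular because $|R| \ne 0$; hence by Lemma \ref{lem_directsum} we have $W = V \perp V^{\perp}$, and since the matrix is block-diagonal the evident inclusion $\langle e_1,\dots,e_m\rangle \subseteq V^{\perp}$ together with a dimension count gives $V^{\perp} = \langle e_1,\dots,e_m\rangle$, with $q|_{V^{\perp}}$ having matrix $Q$. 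Reading the displayed identity through Remark \ref{rem_quadraticforms}, the columns $f_1,\dots,f_{m+n}$ of $C$ form another basis of $W$ for which $q$ has matrix the right-hand block matrix above; the same argument then shows that $V' := \langle f_{m+1},\dots,f_{m+n}\rangle$ is regular with form $R$, that $W = V' \perp (V')^{\perp}$, and that $(V')^{\perp} = \langle f_1,\dots,f_m\rangle$ with $q|_{(V')^{\perp}}$ having matrix $P$.

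Now I would invoke Witt. The linear map $l:V \to V'$ with $l(e_{m+i}) := f_{m+i}$ is an isomorphism satisfying $q \circ l = q|_V$, since in the chosen bases both forms are given by $R$. By Theorem \ref{thm_witt} there is an automorph $\sigma \in \textnormal{O}(q)$ with $\sigma|_V = l$ and $\sigma(V^{\perp}) = (V')^{\perp}$. Then $\sigma$ restricts to an isomorphism $V^{\perp} \to (V')^{\perp}$ with $q(\sigma(x)) = q(x)$ for all $x \in V^{\perp}$; writing this restriction as a matrix $B \in \textnormal{GL}_m(\mathbb{K})$ with respect to the bases $e_1,\dots,e_m$ of $V^{\perp}$ and $f_1,\dots,f_m$ of $(V')^{\perp}$, and using that these forms have matrices $Q$ and $P$ respectively, the identity $q(\sigma(x)) = q(x)$ becomes $x^t B^t P B x = x^t Q x$ for all $x \in \mathbb{K}^m$, hence $B^t P B = Q$ by Proposition \ref{prop_quadraticforms}. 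Thus $P$ and $Q$ are equivalent. The variant with permuted diagonal blocks follows by conjugating each block matrix with an appropriate permutation matrix and applying what was just proved.

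I expect the only real subtlety to be the precise identification of the orthogonal complements $V^{\perp} = \langle e_1,\dots,e_m\rangle$ and $(V')^{\perp} = \langle f_1,\dots,f_m\rangle$: the inclusions ``$\supseteq$'' are immediate from block-diagonality, while equality rests on the regularity of $R$, which forces these complements to have dimension exactly $m$ via Lemma \ref{lem_directsum} and the dimension principle noted in the footnote to Remark \ref{rem_directsum}b. Everything else is routine bookkeeping of bases and coordinate matrices; the degenerate case $m = 0$ is trivial.
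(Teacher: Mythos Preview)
Your proof is correct and follows essentially the same approach as the paper's: translate into quadratic spaces and apply Theorem \ref{thm_witt} to align the regular $n$-dimensional pieces, then read off the equivalence of $P$ and $Q$ from the induced isometry of orthogonal complements. The only cosmetic difference is that you first reduce to $S=R$ using the equivalence of $R$ and $S$, whereas the paper keeps two forms $s,t$ on $\mathbb{K}^{m+n}$ and uses the hypothesis $R\sim S$ implicitly when invoking Witt to carry $l(U)$ back to $U$; your reduction makes that step slightly more transparent but changes nothing of substance.
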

\begin{proof}\footnote{Using matrices in the proof would be rather cumbersome. The perspective of \texttt{quadratic spaces} will reveal its power here.}
By hypothesis the quadratic forms $s,t:\mathbb{K}^{m+n} \to \mathbb{K}$ corresponding to the given block matrices (with respect to the canonical unit basis) are equivalent, i.e. $s = t \circ l$ for an automorphism $l$ of $W := \mathbb{K}^{m+n}$. Since $s$ is regular on $U := \lbrace (0,...,0) \rbrace \times \mathbb{K}^n \subset W$ so is $t$ on $l(U)$. Hence there is an automorph $\tau$ of $t$ with $\tau \circ l(U) = U$ and $\tau \circ l(U^{\perp}) = \tau \left(l(U)^{\perp}\right) = U^{\perp}$ due to Theorem \ref{thm_witt} and Remark \ref{rem_directsum}c). Because of regularity on $U$ it holds $U^{\perp} = \mathbb{K}^m \times \lbrace (0,...,0) \rbrace$. Therefore we have an automorphism $\sigma$ of $\mathbb{K}^m$ defined by $\sigma(x) := \pi \circ \tau \circ l(x,0,...,0)$ where $\pi$ denotes the projection onto the first $m$ coordinates. For the quadratic forms $p,q:\mathbb{K}^m \to \mathbb{K}$ corresponding to $P,Q$, respectively, it follows now $p(x) = s(x,0,...,0) = t(\sigma(x),0,...,0) = q \circ \sigma(x)$ for all $x \in \mathbb{K}^m$. Hence $P$ and $Q$ are equivalent.
\end{proof}

So, for quadratic forms $p,q:\mathbb{K}^m \to \mathbb{K}$ and a regular quadratic form $r:\mathbb{K}^n \to \mathbb{K}$ s.t. $p(x_1,...,x_m)+r(x_{m+1},...,x_{m+n})$ and $q(x_1,...,x_m)+r(x_{m+1},...,x_{m+n})$ are equivalent $p$ is already equivalent to $q$.

\begin{ex}\label{ex_witt}
The quadratic forms $2 x^2 + 6 x y + 3 y^2 + z^2$ and $x^2 + 4 x y + y^2 + 4 z^2$ are rationally inequivalent. Otherwise $2 x^2 + 6 x y + 3 y^2$ and $x^2 + 4 x y + y^2$ would be equivalent according to the Corollary since $4 z^2$ and $z^2$ are equivalent. But the latter two binary quadratic forms are inequivalent as shown in Example \ref{ex_geomEquiv}.
\end{ex}

Now, we answer the question of subsection \ref{subsec_rational}.

\begin{thm}\label{thm_ratequiv}
Two regular quadratic forms with rational coefficients are equivalent over $\mathbb{Q}$ if and only if they are equivalent over $\mathbb{R}$ and every local field.
\end{thm}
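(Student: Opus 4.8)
The plan is to prove the non-trivial implication by induction on the number $n$ of variables, the two engines being Corollary \ref{cor_minkowski_hasse} (to pull a single representation down from all the completions to $\mathbb{Q}$) and Corollary \ref{cor_witt} (to cancel off the one-dimensional piece so carved out). First, the trivial direction: if $P=A^tQA$ with $A\in\textnormal{GL}_n(\mathbb{Q})$, then the very same $A$, regarded in $\textnormal{GL}_n(\mathbb{R})$ or in $\textnormal{GL}_n(\mathbb{Q}_p)$, witnesses equivalence over $\mathbb{R}$ and over every local field $\mathbb{Q}_p$. Moreover, equivalence over $\mathbb{R}$ already forces the two forms to have the same number of variables, so we may assume both act on $\mathbb{Q}^n$.

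For the converse I would induct on $n$. For $n=1$ the forms are $q(x)=\alpha x^2$ and $q'(x)=\alpha'x^2$ with $\alpha,\alpha'\in\mathbb{Q}\setminus\{0\}$; over any field $\mathbb{K}$ with $1+1\neq0$ they are equivalent exactly when $q$ represents $\alpha'$ over $\mathbb{K}$, i.e. when $\alpha'/\alpha$ is a square in $\mathbb{K}$. Corollary \ref{cor_minkowski_hasse} then says this holds over $\mathbb{Q}$ as soon as it holds over $\mathbb{R}$ and every $\mathbb{Q}_p$, which is the hypothesis. Now let $n\geq2$ and assume the assertion for $n-1$ variables. Since $q$ is regular, Corollary \ref{cor_orthogonalisation} gives it an orthogonal basis, and regularity makes the corresponding diagonal entries nonzero; in particular $q$ represents over $\mathbb{Q}$ some $\alpha\in\mathbb{Q}\setminus\{0\}$. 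As $q'$ is equivalent to $q$ over $\mathbb{R}$ and over every $\mathbb{Q}_p$, Remark \ref{rem_equiv}e) shows $q'$ represents $\alpha$ over all these fields, whence $q'$ represents $\alpha$ over $\mathbb{Q}$ by Corollary \ref{cor_minkowski_hasse}.

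Having a rational vector $a$ with $q'(a)=\alpha$ (and likewise for $q$), I would use the completion-to-an-orthogonal-basis part of Corollary \ref{cor_orthogonalisation} together with Lemma \ref{lem_directsum} to split, over $\mathbb{Q}$, the form $q$ as $\textnormal{diag}(\alpha)\perp q_1$ and $q'$ as $\textnormal{diag}(\alpha)\perp q_1'$; here $q_1,q_1'$ are regular $(n-1)$-variable rational forms, the point being that the block-diagonal determinant $\alpha\,|P_1|$ must be nonzero (Proposition \ref{prop_determinant}). These rational equivalences hold a fortiori over $\mathbb{R}$ and over every $\mathbb{Q}_p$; combined with the hypothesis that $q$ and $q'$ are equivalent over those fields, transitivity gives that $\textnormal{diag}(\alpha)\perp q_1$ and $\textnormal{diag}(\alpha)\perp q_1'$ are equivalent over $\mathbb{R}$ and over every $\mathbb{Q}_p$. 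Applying Corollary \ref{cor_witt} over each of these fields (it is valid over any field with $1+1\neq0$, and here the regular block is the $1\times1$ matrix $(\alpha)$) yields that $q_1$ and $q_1'$ are equivalent over $\mathbb{R}$ and over every $\mathbb{Q}_p$. The induction hypothesis then makes $q_1$ and $q_1'$ equivalent over $\mathbb{Q}$, and therefore $q$ is equivalent over $\mathbb{Q}$ to $\textnormal{diag}(\alpha)\perp q_1$, which is equivalent to $\textnormal{diag}(\alpha)\perp q_1'$, which is $q'$.

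The steps that deserve care are mostly bookkeeping: that the orthogonal splitting leaves the complementary form regular (a determinant check), that "equivalent over $\mathbb{R}$ and every local field" is preserved along the induction, and that Corollary \ref{cor_witt} may be invoked uniformly over $\mathbb{R}$ and over all $\mathbb{Q}_p$. There is no genuinely deep obstacle left inside this argument, because the arithmetic heart — the Hasse–Minkowski local-global principle — has already been packaged into Theorem \ref{thm_minkowski_hasse} and Corollary \ref{cor_minkowski_hasse}; the induction merely leverages it one variable at a time, with Witt cancellation doing the descent.
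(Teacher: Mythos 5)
Your proof is correct and follows essentially the same route as the paper's: pull a common nonzero value $\alpha$ represented by both forms down to $\mathbb{Q}$ via Corollary \ref{cor_minkowski_hasse} and Remark \ref{rem_equiv}e), normalise both forms (via Corollary \ref{cor_orthogonalisation} and Lemma \ref{lem_directsum}) to $\textnormal{diag}(\alpha)\perp(\cdot)$, cancel the $\textnormal{diag}(\alpha)$ block over each completion by Corollary \ref{cor_witt}, and induct on the dimension. The only cosmetic difference is that you verify the $n=1$ base case directly via Corollary \ref{cor_minkowski_hasse}, whereas the paper normalises first so that $n=1$ collapses to $Q=R$; the argument is otherwise identical.
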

\begin{proof}
The necessity of the condition is clear since $\mathbb{Q}$ is contained in $\mathbb{Q}_\infty := \mathbb{R}$ and in every local field $\mathbb{Q}_p$ ($p$ an element of the set $\mathbb{P}$ of primes; s. Remark \ref{rem_local}). Now, we consider quadratic forms $q,r$ that are equivalent over $\mathbb{Q}_p$ for all $p \in \mathbb{P} \cup \lbrace \infty \rbrace$. By Proposition \ref{prop_quadraticforms} the zero-form is the only quadratic form whose coefficient matrix is zero. Since $q$ is regular it rationally represents a non-zero rational number $a$. By hypothesis $r$ represents $a$ over every $\mathbb{Q}_p$ because equivalent forms represent the same elements due to Remark \ref{rem_equiv}e). Therefore, it represents $a$ also rationally due to Corollary \ref{cor_minkowski_hasse}. According to Corollary \ref{cor_orthogonalisation} the coefficient matrices of $q$ and $r$ are equivalent to symmetric matrices $Q = (q_{i j})$ and $R = (r_{i j})$, respectively, with $q_{1 1} = r_{1 1} = a$ and $q_{1 j} = r_{1 j} = 0$ for $j > 1$. Now, we proceed by induction on the dimension $n$ of the underlying vectorspace. For $n=1$ the assertion is true since then $Q = R$. Otherwise $Q$ and $R$ are block matrices with $(a)$ as an upper left diagonal 'block'. From the equivalence of $Q$ and $R$ over $\mathbb{Q}_p$ it follows the equivalence over $\mathbb{Q}_p$ of the lower right diagonal blocks $\tilde{Q}$ and $\tilde{R}$ of $Q$ and $R$ due to Corollary \ref{cor_witt}. Therefore, by the induction hypothesis we may assume that $\tilde{Q}$ and $\tilde{R}$ are equivalent over $\mathbb{Q}$. But then $Q$ and $R$ are also equivalent.
\end{proof}

\subsection{Automorphs of integral binary forms}\label{subsec_automIntBin}
In order to deepen our knowledge of the group in Theorem \ref{thm_composition} we study the \textit{orthogonal group} $\textnormal{O}_q := \lbrace A \in \textnormal{GL}_2(\mathbb{Z}) : A^t P A = P \rbrace$ of its representing forms $q(x,y) = (x,y)P(x,y)^t$.

\begin{rem}\label{rem_automIntBin}
It holds either $\textnormal{O}_q = \textnormal{O}_q^{+} := \lbrace A \in \textnormal{SL}_2(\mathbb{Z}) : P.A = P \rbrace$ or the disjoint union $\textnormal{O}_q = \textnormal{O}_q^{+} \cup A \textnormal{O}_q^{+}$ where $A \in \textnormal{GL}_2(\mathbb{Z})$ is an arbitrary automorph of $q$ with negative determinant, i.e. $|A|=-1$. This is clear since for another automorph $B$ of negative determinant we have $A B^{-1} \in \textnormal{SL}_2(\mathbb{Z})$.
\end{rem}

\begin{lem}\label{lem_improperAutom}
A primitive integral binary form has an automorph of determinant $-1$ if and only if the square of its proper class is the neutral element.
\end{lem}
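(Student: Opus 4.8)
The plan is to pivot on the single ``improper'' transformation $C := \left(\begin{smallmatrix} 0 & 1 \\ 1 & 0 \end{smallmatrix}\right) \in \textnormal{GL}_2(\mathbb{Z})$, which satisfies $|C| = -1$, $C^2 = E_2$ and, being the swap of the two variables, carries $q = [\alpha,\beta,\gamma]$ into $q.C = [\gamma,\beta,\alpha]$ in the right-action notation of Section \ref{sec_classification} (here $q.C$ is the untwisted congruence $C^t P C$, with $P$ the symmetric matrix of $q$ from Proposition \ref{prop_quadraticforms}; no normalising root occurs since automorphs are defined via $A^t P A = P$).

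First I would prove the intermediate equivalence: \emph{$q$ admits an automorph of determinant $-1$ if and only if $q$ is properly equivalent to $[\gamma,\beta,\alpha]$}. For the forward direction, let $A \in \textnormal{O}_q$ with $|A| = -1$ and put $B := CA$; then $|B| = 1$, so $B \in \textnormal{SL}_2(\mathbb{Z})$, and rewriting $A^t P A = P$ via $A = CB$ (using $C^{-1} = C$) gives $B^t (C^t P C) B = P$, i.e. $B$ transforms $[\gamma,\beta,\alpha]$ into $q$. Conversely, given $B \in \textnormal{SL}_2(\mathbb{Z})$ with $B^t (C^t P C) B = P$, the matrix $A := CB$ has $|A| = -1$ and $A^t P A = B^t (C^t P C) B = P$, hence is an automorph of the required kind.

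The remaining step is to recognise the proper class of $[\gamma,\beta,\alpha]$: by Theorem \ref{thm_composition} it is precisely the inverse of the proper class of $q$ in the composition group of primitive forms of the common discriminant. Hence ``$q$ properly equivalent to $[\gamma,\beta,\alpha]$'' means ``the proper class of $q$ equals its own inverse'', which is the same as ``the square of the proper class of $q$ is the neutral element''. Chaining the two equivalences proves the lemma.

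I expect no genuine obstacle here; the only care needed is the bookkeeping of the right-action convention (so that $(q.C).B = q.(CB)$) and the observation that the determinant-$(-1)$ factor $C$ enters through the plain congruence $A^t P A = P$, not the twisted action. As a byproduct this also confirms the dichotomy of Remark \ref{rem_automIntBin}: in the affirmative case $\textnormal{O}_q = \textnormal{O}_q^{+} \cup C B_0\, \textnormal{O}_q^{+}$ for any fixed $B_0 \in \textnormal{SL}_2(\mathbb{Z})$ realising the proper equivalence of $q$ with $[\gamma,\beta,\alpha]$.
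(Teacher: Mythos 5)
Your proof is correct and follows essentially the same route as the paper's: both pivot on the swap matrix $C = I = \left(\begin{smallmatrix} 0 & 1 \\ 1 & 0 \end{smallmatrix}\right)$ to factor any determinant-$(-1)$ automorph as $CB$ with $B \in \textnormal{SL}_2(\mathbb{Z})$, reduce the existence of such an automorph to proper equivalence of $q$ with $[\gamma,\beta,\alpha]$, and then invoke the inverse-class description from Theorem \ref{thm_composition}. Your write-up is, if anything, slightly more careful than the paper's in distinguishing that $B$ need only lie in $\textnormal{SL}_2(\mathbb{Z})$ (not itself be an automorph) and in flagging that automorphs are defined by the untwisted congruence $A^tPA=P$.
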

\begin{proof}
For arbitrary ring elements $\alpha,\beta,\gamma$ it holds
\begin{equation*}
I \left(\begin{matrix} \alpha &\ \beta \\ \beta &\ \gamma \end{matrix} \right) I = \left(\begin{matrix} \gamma &\ \beta \\ \beta &\ \alpha \end{matrix} \right) \textnormal{ with } I := \left(\begin{matrix} 0 &\ 1 \\ 1 &\ 0 \end{matrix} \right) .
\end{equation*}
The square of a proper class $F$ of a form $[\alpha,\beta,\gamma]$ is the neutral element if and only if $F = F^{-1}$, i.e. $[\alpha,\beta,\gamma]$ is properly equivalent to $[\gamma,\beta,\alpha]$ due to the last assertion of Theorem \ref{thm_composition}. According to Remark \ref{rem_automIntBin} any automorph of negative determinant is of the form $I A$ for some automorph $A \in \textnormal{SL}_2(\mathbb{Z})$. The above equation shows that for such an $IA \in \textnormal{O}_q \setminus \textnormal{O}_q^{+}$ the proper class of $A^t I^t P I A$ is the inverse of the proper class of $P$. This proves the first direction. Conversely, if the proper class of $P$ equals the proper class of $I^t P I$ then there is some $A \in \textnormal{SL}_2(\mathbb{Z})$ s.t. $A^t I^t P I A = P$. So $IA$ is an automorph of negative determinant.
\end{proof}

\begin{ex}
For the primitive form $n_\Delta := \left[1,\Delta,(\Delta^2 - \Delta)/4\right]$ of discriminant $\Delta \equiv 0 \textnormal{ or } 1 \mod{4}$ the matrix
\begin{equation*}
\left(\begin{matrix} 1 &\ \Delta \\ 0 &\ -1 \end{matrix}\right)
\end{equation*}
is an automorph of determinant $-1$. And indeed, that form represents the neutral element $E = E^2$ of the (proper) class group of discriminant $\Delta$.
\end{ex}

\begin{prop}\label{prop_classNo}
The number of classical equivalence classes of primitive forms of discriminant $\Delta$ is
\begin{equation*}
\frac{g^{+}(\Delta)+h^{+}(\Delta)}{2}
\end{equation*}
where $h^{+}$ denotes the proper class number\footnote{i.e. the order of the group in Theorem \ref{thm_composition}} and $g^{+}$ the number\footnote{It is called the \textit{proper} \textit{genus number}. The analogous number $g$ for geometric classes is called the (\textit{geometric}) \textit{genus number}. The number of \textit{linear equivalence classes}, defined by the equations $q' = \pm q.A  (A \in \textnormal{GL}_2(\mathbb{Z}))$, of primitive forms $q$ is $(g+h)/2$.} of proper classes whose squares are neutral.
\end{prop}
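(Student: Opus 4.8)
The plan is to exhibit the classical equivalence classes of primitive forms of discriminant $\Delta$ as the orbits of a single involution on the finite abelian group of \emph{proper} classes furnished by Theorem \ref{thm_composition}, and then to count those orbits by an elementary argument.

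First I would record that $\textnormal{GL}_2(\mathbb{Z})$ is the disjoint union of $\textnormal{SL}_2(\mathbb{Z})$ and $I\,\textnormal{SL}_2(\mathbb{Z})$, where $I := \left(\begin{matrix} 0 &\ 1 \\ 1 &\ 0 \end{matrix}\right)$, since $\det\colon\textnormal{GL}_2(\mathbb{Z})\to\mathbb{Z}^{\times}=\{\pm1\}$ is a surjection with kernel $\textnormal{SL}_2(\mathbb{Z})$ (Proposition \ref{prop_GL}) and $|I|=-1$. Hence, for a primitive form $q$ with symmetric matrix $P$, the set of matrices $A^{t}PA$ with $A\in\textnormal{GL}_2(\mathbb{Z})$ splits into the proper class $F$ of $q$ (coming from $A\in\textnormal{SL}_2(\mathbb{Z})$) together with the proper class of the form with matrix $I^{t}PI=IPI$ (coming from $A\in I\,\textnormal{SL}_2(\mathbb{Z})$, because $(IB)^{t}P(IB)=B^{t}(IPI)B$). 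For $q=[\alpha,\beta,\gamma]$ the same computation as in the proof of Lemma \ref{lem_improperAutom} gives the form $[\gamma,\beta,\alpha]$, whose proper class is $F^{-1}$ by the last assertion of Theorem \ref{thm_composition}. Therefore the classical equivalence class of $q$ is the union $F\cup F^{-1}$ of two (possibly equal) proper classes; and since the proper classes partition the set of primitive forms of discriminant $\Delta$, the map sending a form to its proper class induces a bijection between classical equivalence classes and the orbits of the inversion involution $\iota\colon F\mapsto F^{-1}$ on the group $G$ of proper classes, which has order $h^{+}(\Delta)$ by Theorem \ref{thm_composition}.

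It then only remains to count the orbits of $\iota$ on $G$. Its fixed points are precisely the classes $F$ with $F=F^{-1}$, i.e. those whose square is the neutral element, of which there are $g^{+}(\Delta)$ by definition; every remaining element of $G$ lies in a two-element orbit $\{F,F^{-1}\}$. Consequently the number of orbits equals
\begin{equation*}
g^{+}(\Delta)+\frac{h^{+}(\Delta)-g^{+}(\Delta)}{2}=\frac{g^{+}(\Delta)+h^{+}(\Delta)}{2},
\end{equation*}
which is the asserted count. (In particular $h^{+}-g^{+}$ is automatically even, and finiteness of $h^{+}$ — hence of everything here — has already been noted.)

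I do not expect any genuine obstacle. The one point requiring care is the identification of the non-trivial coset of $\textnormal{GL}_2(\mathbb{Z})/\textnormal{SL}_2(\mathbb{Z})$ with the inversion map on the proper class group, which is exactly where the description of inverses in Theorem \ref{thm_composition} is used; the remainder is the bookkeeping that a classical class is the union of precisely the two proper classes $F$ and $F^{-1}$ and of no more.
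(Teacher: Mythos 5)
Your proof is correct and follows essentially the same route as the paper's, which reduces the count to $g^{+}+\tfrac{h^{+}-g^{+}}{2}$ by observing that a classical class is either a single proper class or the union of two. The only difference is one of packaging: the paper cites Lemma \ref{lem_improperAutom} (framed in terms of automorphs of determinant $-1$) to identify the singleton case with $F^{2}=e$, whereas you inline that argument by directly exhibiting the classical class as the orbit $\{F,F^{-1}\}$ under inversion via the coset decomposition $\textnormal{GL}_2(\mathbb{Z})=\textnormal{SL}_2(\mathbb{Z})\cup I\,\textnormal{SL}_2(\mathbb{Z})$ and the inverse formula $[\alpha,\beta,\gamma]\mapsto[\gamma,\beta,\alpha]$ from Theorem \ref{thm_composition}; this is precisely the content of the proof of Lemma \ref{lem_improperAutom}, so the two arguments are substantively identical.
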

\begin{proof}
A classical class is the union of two proper classes if and only if its forms do not have automorphs of determinant $-1$. Hence, due to Lemma \ref{lem_improperAutom}, the number in question is
\begin{equation*}
g^{+} + \frac{h^{+}-g^{+}}{2} = \frac{g^{+}+h^{+}}{2} .
\end{equation*}
\end{proof}

In section \ref{sec_crypto} we are interested in group elements of high order only. Therefore, with regard to Remark \ref{rem_automIntBin} and Lemma \ref{lem_improperAutom}, we now concentrate on the \textit{proper orthogonal group} $\textnormal{O}_q^{+}$. The following example focuses on special ring units that correspond with that group elements in the sense of the next proposition.

\begin{ex}\label{ex_units}
By Remark \ref{rem_normform}a) the units $x + y \omega$ of the quadratic order $\mathbb{O}_\Delta$ of Example \ref{ex_quadrOrder}a) are defined by $n_\Delta(x,y) \in \lbrace -1 , 1 \rbrace$, and they correspond bijectively with the integral solutions $(t,u)$ of $t^2 - \Delta u^2 \in \lbrace -4 , 4 \rbrace$. So for $\Delta < -4$ there are only the two units $\pm 1$ of $\mathbb{O}_\Delta$.
\end{ex}

\begin{prop}\label{prop_SO}
For a primitive integral form $q = [\alpha,\beta,\gamma]$ of non-square discriminant $\Delta$ the map
\begin{equation*}
(x,y) \mapsto \left(\begin{matrix} x + y \frac{\Delta - \beta}{2} &\ - \gamma y \\ \alpha y &\ x + y \frac{\Delta + \beta}{2} \end{matrix}\right)
\end{equation*}
is bijective between the set of integral solutions $(x,y)$ of $n_\Delta(x,y)=1$ and $\textnormal{O}_q^{+}$. With multiplication in $\mathbb{O}_\Delta^{\times}$ it defines even an isomorphism.
\end{prop}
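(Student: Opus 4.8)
The plan is to identify a solution $(x,y)$ of $n_\Delta(x,y)=1$ with the ring element $\varepsilon:=x+y\,\omega_\Delta\in\mathbb{O}_\Delta$; by Example \ref{ex_units} these $\varepsilon$ are exactly the units of $\mathbb{O}_\Delta$ with $n_\Delta(x,y)=1$, and, $n_\Delta$ being multiplicative, they form a subgroup of $\mathbb{O}_\Delta^{\times}$ — the group structure referred to in the statement. I would then pass to the variables $t:=2x+\Delta y$ and $u:=y$ of Example \ref{ex_units}, which biject the solutions of $n_\Delta(x,y)=1$ with the integral solutions of $t^{2}-\Delta u^{2}=4$ and turn the matrix displayed in the proposition into
\begin{equation*}
T_{t,u}:=\left(\begin{matrix} (t-\beta u)/2 &\ -\gamma u \\ \alpha u &\ (t+\beta u)/2 \end{matrix}\right).
\end{equation*}
So it suffices to prove that $(t,u)\mapsto T_{t,u}$ is an isomorphism from the group of integral solutions of $t^{2}-\Delta u^{2}=4$ onto $\textnormal{O}_q^{+}$, where $P:=\left(\begin{matrix}\alpha & \beta/2 \\ \beta/2 & \gamma\end{matrix}\right)$ is the matrix of $q$.

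For well-definedness I would check that $T_{t,u}\in\textnormal{O}_q^{+}$ whenever $t^{2}-\Delta u^{2}=4$. The entries are integral since $t\equiv\beta u\bmod 2$ (because $t^{2}=\Delta u^{2}+4$ and $\Delta\equiv\beta^{2}\equiv\beta\bmod 2$); the determinant equals $(t^{2}-\beta^{2}u^{2})/4+\alpha\gamma u^{2}=(t^{2}-\Delta u^{2})/4=1$; and a direct computation gives $T_{t,u}^{t}\,P\,T_{t,u}=P$. The conceptual reason behind this last identity, which also makes $(t,u)\mapsto T_{t,u}$ visibly multiplicative, is that $T_{t,u}$ is the matrix of multiplication by $\varepsilon$ on the fractional $\mathbb{O}_\Delta$-ideal attached to $q$ — primitivity of $q$ enters here, to ensure this lattice really is an $\mathbb{O}_\Delta$-module — on which $q$ agrees with the norm form up to the factor $n_\Delta(x,y)=1$; alternatively one multiplies the matrices out, using $\omega_\Delta^{2}=\Delta\,\omega_\Delta-(\Delta^{2}-\Delta)/4$ to read off the coordinates of $\varepsilon\varepsilon'$. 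Injectivity is then immediate: $\Delta$ not being a square forces $\alpha\neq0$, so the lower-left entry $\alpha u$ recovers $u$, and the trace recovers $t$.

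The heart of the proof is surjectivity. Given $A=\left(\begin{matrix}a & b \\ c & d\end{matrix}\right)\in\textnormal{O}_q^{+}$, I would use $|A|=1$ to rewrite $A^{t}PA=P$ as $A^{t}P=P\,\textnormal{adj}(A)$ (since $\textnormal{adj}(A)=A^{-1}$, Remark \ref{rem_det_tr}a)), and compare the entries of the two sides to obtain
\begin{equation*}
\alpha(d-a)=\beta c\,,\qquad \alpha b=-\gamma c\,,\qquad \gamma(d-a)=-\beta b\,.
\end{equation*}
The first two relations show that $\alpha$ divides $\beta c$ and $\gamma c$, and it divides $\alpha c$ trivially; since $\gcd(\alpha,\beta,\gamma)=1$ by primitivity, $\alpha\mid c$. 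Writing $c=\alpha u$ and using $\alpha\neq0$ yields $b=-\gamma u$ and $d-a=\beta u$, and then $|A|=ad-bc=1$ becomes $t^{2}-\Delta u^{2}=4$ with $t:=a+d$. Hence $A=T_{t,u}$, and by Example \ref{ex_units} the pair $(t,u)$ comes from a unique solution $(x,y)$ of $n_\Delta(x,y)=1$; this finishes the argument.

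I expect the well-definedness step — confirming that $T_{t,u}$ (equivalently the matrix as printed) genuinely lies in $\textnormal{O}_q^{+}$, and that the correspondence respects multiplication — to be the main obstacle; this is why it is worth routing everything through the substitution of Example \ref{ex_units} and, behind it, through multiplication on the ideal attached to $q$, whereas surjectivity falls out cleanly from $\textnormal{adj}(A)=A^{-1}$ together with primitivity of $q$.
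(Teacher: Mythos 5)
Your argument is correct, and it is essentially the argument the paper delegates to \cite{Zagier}, ch.\ 8, thm.\ 2: rewrite through the $(t,u)$-substitution of Example \ref{ex_units}, verify integrality, determinant one and $T^t P T = P$ directly, and obtain surjectivity from $A^t P A = P$ together with $A^{-1}=\textnormal{adj}(A)$ and the primitivity of $[\alpha,\beta,\gamma]$ (which forces $\alpha \mid c$). The only place I would not leave to ``a direct computation'' in a written-out version is the identity $T_{t,u}^t P T_{t,u}=P$ and the multiplicativity of $(t,u)\mapsto T_{t,u}$; both are short matrix computations using $t^2-\Delta u^2=4$ and $\Delta=\beta^2-4\alpha\gamma$, and spelling them out would remove the reliance on the informal ideal-theoretic picture.
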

\begin{proof}
With the bijective correspondence declared in Example \ref{ex_units} the proof is shown in \cite{Zagier}, ch.8, thm.2.
\end{proof}

\begin{rem}\label{rem_SO}
By Remark \ref{rem_normform}a) the norm function yields a homomorphism from $\mathbb{O}_\Delta^{\times}$ to $\lbrace \pm 1 \rbrace$. There is an isomorhism between $\mathbb{O}_\Delta^{\times}$ and the \textit{geometric automorphism group} $\lbrace A \in \textnormal{GL}_2(\mathbb{Z}) : q.A = q \rbrace$ (not to be confused with $\textnormal{O}_q$).\footnote{For details see \cite{KahlDiplThesis}, ch.6.} Hence in case there is some $(x,y) \in \mathbb{Z}^2$ with $n_\Delta(x,y) = -1$ any proper equivalence class of a primitive form $q$ of discriminant $\Delta$ equals a geometric equivalence class since then $q$ has a geometric automorph of negative determinant. In the other case every geometric equivalence class of a primitive form of discriminant $\Delta$ decomposes into two proper equivalence classes, because $q.A = r$ with $|A| = -1$ and $r.B = q$ with $|B| = 1$ imply $q.AB = q$ with $|AB| = -1$. Hence the corresponding class numbers fulfill $h^{+}(\Delta) = h(\Delta)$ or $h^{+}(\Delta) = 2 h(\Delta)$ with equality if and only if  $\mathbb{O}_\Delta$ has a unit of norm $-1$.
\end{rem}

\begin{ex}
a) For a primitive integral form $q$ of discriminant $\Delta < -4$ it holds $\textnormal{O}_q = \lbrace \pm E_2 \rbrace$ (the trivial orthogonal group) and therefore $h^{+}(\Delta) = 2 h(\Delta)$. This is in accordance with Remark \ref{rem_definiteCase} which implies the latter equation for all negative discriminants.

b) It holds $n_{12}(x,y) = x^2 + 12 x y + 33 y^2 \ne -1$ for all $x,y \in \mathbb{Z}$.\footnote{This implies $h(12)=1$ (s. also Example \ref{ex_intGeomEquiv}) by Example \ref{ex_content}a) which shows $h^{+}(12)=2$.} Hint: Assume the contrary and reduce the questionable equation modulo three.
\end{ex}

\begin{rem}\label{rem_genusNumber}
The \textit{genus numbers} $g^{+}(\Delta),g(\Delta)$ (s. Proposition \ref{prop_classNo}) corresponding to the proper and the geometric class group, respectively, of non-square discriminant fulfill $g^{+}(\Delta)=g(\Delta)$ or $g^{+}(\Delta)=2g(\Delta)$ depending on the solvability of $\Delta = x^2 + 4 y^2$ in coprime integers $x,y$. This condition is equivalent with the existence of rational numbers $x,y \in \mathbb{Q}$ with $n_\Delta(x,y)=-1$. Hence in case $\Delta < 0$ it holds $g^{+}(\Delta)=2g(\Delta)$ which is due to the partition into positive and negative proper classes (s. Remark \ref{rem_definiteCase}). The assertion
\begin{equation*}
g^{+}(\Delta)=2g(\Delta) \Leftrightarrow (x,y \in \mathbb{Z},\gcd(x,y)=1 \Rightarrow \Delta \ne x^2 + 4 y^2)
\end{equation*}
and the next formula for non-square discriminants $\Delta > 0$ are shown in \cite{KahlGenusNumber}\footnote{which is originated in \cite{KahlDiplThesis}}:
\begin{equation*}
g(\Delta) = \left \lbrace \begin{matrix}
2^{m-2} &\ \textnormal{ if } q \textnormal{ divides } \Delta \textnormal{ and } (\Delta \textnormal{ or } \Delta/4 \equiv 1 \mod{4}) \\ 
2^m \hfill\null &\ \textnormal{ if } \Delta = 8 \Pi \textnormal{ or } \Delta \equiv 0 \mod{32} \hfill\null \\
2^{m-1} &\ \textnormal{ otherwise } \hfill\null \end{matrix} \right.
\end{equation*}
Hereby $m$ denotes the number of odd prime divisors of $\Delta$, $q$ a prime with $q \equiv 3 \mod{4}$, and $\Pi$ may be $1$ or a product of primes $p \equiv 1 \mod{4}$. This formula follows from the above criterion for $g^{+}=g$ by help of Gauss' formula in \cite{Gauss}, art.257-259:
\begin{equation*}
g^{+}(\Delta) = \left \lbrace \begin{matrix}
2^{m-1} &\ \textnormal{ if } \Delta \textnormal{ is odd or } \Delta/4 \equiv 1 \mod{4} \\ 
2^{m+1} &\ \textnormal{ if } \Delta \equiv 0 \mod{32} \hfill\null \\
2^m \hfill\null &\ \textnormal{ otherwise } \hfill\null \end{matrix}  \right.
\end{equation*}
\end{rem}

\begin{ex}\label{ex_genusNo}
Show $g^{+}(5)=g(5)=g^{+}(20)=g(20)=1 , g^{+}(80)=2g(80)=2$.
\end{ex}

\section{Linear algebraic application: linear equation systems}\label{sec_LAappl}
A very fundamental question of linear algebra is the solvability of the system of linear equations $A x = b$ in (the coordinates of) the vector $x \in \mathbb{R}^{n \times 1}$ for given $A \in \mathbb{R}^{m \times n}$ and $b \in \mathbb{R}^{m \times 1}$. Often it is described by help of $\textnormal{rk}(A)$. But it can be characterised also by a certain matrix equation.

\begin{rem}\label{rem_LESsolvability}
a) For $A \in \mathbb{R}^{m \times n}$ the following theorem guarantees the existence of a matrix $\tilde{A} \in \mathbb{R}^{n \times m}$ s.t. $A \tilde{A} A = A$. Then for $B \in \mathbb{R}^{m \times l}$ and $X \in \mathbb{R}^{n \times l}$ with $A X = B$ it follows $B = A \tilde{A} A X = A \tilde{A} B$. And vice versa, the equation $A \tilde{A} B = B$ yields $X := \tilde{A} B$ as a solution of $A X = B$.

b) The matrix equation $A X = B A$ implies $A X^n = B^n A$ by induction on $n \in \mathbb{N}$. This fact concerns e.g. the theory of stochastic matrices.

c) The matrix $\tilde{A}$ of the following theorem is called the \textit{pseudoinverse} or \textit{Moore-Penrose inverse} of $A$. It has the property that for $x := \tilde{A} b$ the euclidean norm $\| \cdot \|$ of $A x - b$ is at minimum; cf. subsection \ref{subsec_lsf}. This can be seen by the construction of $\tilde{A} =  V \tilde{D} U$ in the proof of the theorem via orthogonal matrices $U , V$ s.t. $D = (d_{i j}) := U A V$ is \textit{quasi-diagonal}, i.e. $d_{i j} = 0$ for $i \ne j$. First realise that $\|D x - b\|$ is at minimum for $x := \tilde{D} b$ whereby $\tilde{D}$ arises from $D$ by transpositon and substituting the non-zero elements $d_{i i}$ by $1 / d_{i i}$. So $y := \tilde{D} U b$ minimises $\|D y - U b\|$, i.e. $x := V \tilde{D} U b = \tilde{A} b$ minimises $\|D V^t x - U b\|$. Then the assertion follows by Proposition \ref{prop_orthogonal} which implies $\|D V^t x - U b\| = \|A x - b\|$.
\end{rem}

\begin{thm}\label{thm_genInverse}
For $A \in \mathbb{R}^{m \times n}$ there is one and only one $\tilde{A} \in \mathbb{R}^{n \times m}$ with
\begin{equation*}
A \tilde{A} A = A , \tilde{A} A \tilde{A} = \tilde{A} , A \tilde{A} \in \textnormal{Sym}_m(\mathbb{R}) , \tilde{A} A \in \textnormal{Sym}_n(\mathbb{R}) .
\end{equation*}
\end{thm}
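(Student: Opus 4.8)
The plan is to prove uniqueness by a short formal manipulation of the four defining equations (it needs no special properties of $\mathbb{R}$) and existence by exhibiting $\tilde A$ explicitly.

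For uniqueness, suppose $X,Y \in \mathbb{R}^{n\times m}$ both satisfy the four conditions. I would first show $AX = AY$: since $AX$ and $AY$ are symmetric and $A^t = (AYA)^t = A^tY^tA^t$, one gets $AX = (AX)^t = X^tA^t = X^t(A^tY^tA^t) = (X^tA^t)(Y^tA^t) = (AX)^t(AY)^t = (AX)(AY) = AXAY = (AXA)Y = AY$; the mirror-image computation, using symmetry of $XA,YA$ and $A^t = A^tY^tA^t$, gives $XA = YA$. Then $X = XAX = (XA)X = (YA)X = Y(AX) = Y(AY) = YAY = Y$. Only the ring axioms and the rule $(PQ)^t = Q^tP^t$ enter here, so uniqueness in fact holds over any commutative ring.

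For existence I would use a full-rank factorization $A = BC$ with $B \in \mathbb{R}^{m\times r}$ of full column rank and $C \in \mathbb{R}^{r\times n}$ of full row rank, $r := \textnormal{rk}(A)$ (standard linear algebra over a field; if $A$ is the zero matrix take $\tilde A := (0)$ and everything is trivial). The point at which $\mathbb{R}$ is used is that $B^tB \in \textnormal{Sym}_r(\mathbb{R})$ and $CC^t \in \textnormal{Sym}_r(\mathbb{R})$ are then \emph{invertible}: if $B^tBx = o$ then $x^tB^tBx = (Bx)^t(Bx)$ is a sum of real squares equal to $0$, hence $Bx = o$, hence $x = o$ because $B$ has trivial kernel, so $B^tB$ has trivial kernel and is invertible (Remark \ref{rem_module}b), Remark \ref{rem_det_tr}a)); likewise $C^t$ has full column rank, so $CC^t$ is invertible. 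Then I would set
\[ \tilde A := C^t (CC^t)^{-1} (B^tB)^{-1} B^t \in \mathbb{R}^{n\times m} , \]
observe that $A\tilde A = B(B^tB)^{-1}B^t$ and $\tilde A A = C^t(CC^t)^{-1}C$ — both symmetric, since $B^tB$ and $CC^t$, being symmetric, have symmetric inverses and $P \mapsto BPB^t$, resp. $P \mapsto C^tPC$, preserves symmetry — and check directly that $A\tilde AA = B(B^tB)^{-1}(B^tB)C = BC = A$ and $\tilde AA\tilde A = C^t(CC^t)^{-1}(CC^t)(CC^t)^{-1}(B^tB)^{-1}B^t = \tilde A$.

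The bulk of the work, and the only non-formal part, is the existence half, and there the real content is the two linear-algebra inputs it rests on: that every real matrix admits a full-rank factorization, and that $B^tB$, $CC^t$ are invertible for $B,C$ of full rank. An alternative closer to the spirit of Section \ref{sec_orthogGroup} replaces the factorization by the spectral theorem (Remark \ref{rem_orthogonalisation}): diagonalise $A^tA = V\,\textnormal{diag}(\lambda_1,\dots,\lambda_n)\,V^t$ with $V\in\textnormal{O}_n(\mathbb{R})$, note $\lambda_i = |AVe_i|^2 \ge 0$, rescale those columns $AVe_i$ with $\lambda_i>0$ to an orthonormal system and complete it to an orthonormal basis to obtain $A = U\Sigma V^t$ with $U\in\textnormal{O}_m(\mathbb{R})$ and $\Sigma\in\mathbb{R}^{m\times n}$ ``diagonal'' with nonnegative entries; then $\tilde A := V\Sigma^+U^t$ works, where $\Sigma^+$ arises from $\Sigma^t$ by inverting the nonzero entries, because $\Sigma\Sigma^+\Sigma = \Sigma$, $\Sigma^+\Sigma\Sigma^+ = \Sigma^+$, and $\Sigma\Sigma^+,\Sigma^+\Sigma$ are $0$-$1$ diagonal matrices while conjugation by an orthogonal matrix preserves symmetry.
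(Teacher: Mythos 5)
Your proof is correct, but it takes a genuinely different route from the paper's. The paper derives both existence and uniqueness from the singular value decomposition $UAV = D$ (citing \cite{SerreD}, thm.\ 11.4 and the proof of thm.\ 11.5): it writes down the explicit pseudoinverse $\tilde D$ of the quasi-diagonal $D$, invokes uniqueness for $D$, and transports everything back via the orthogonal $U,V$, so that both halves of the theorem are outsourced to the SVD. You instead prove uniqueness by a purely formal computation in the matrix ring that uses only the four defining equations, associativity, and $(PQ)^t = Q^tP^t$; as you observe, this requires no decomposition at all and is valid over any commutative ring, which cleanly separates the ``formal'' part from the ``real'' part. For existence you use a full-rank factorization $A = BC$ together with invertibility of $B^tB$ and $CC^t$ over $\mathbb{R}$ (positive definiteness as in Example \ref{ex_posdef} plus Remark \ref{rem_posdef}a)), and the direct verification of the four identities for $C^t(CC^t)^{-1}(B^tB)^{-1}B^t$ goes through exactly as you wrote; your sketched spectral-theorem alternative is essentially the paper's route. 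The trade-off is brevity versus transparency: the paper's proof is shorter because Serre's decomposition theorem already does all the work, whereas yours makes visible that uniqueness is a ring identity and that the only analytic input needed for existence is the definiteness of $B^tB$ and $CC^t$.
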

\begin{proof}
According to \cite{SerreD}, thm.11.4 (about 'singular value decomposition') there are orthogonal matrices $U \in  \mathbb{R}^{m \times m} , V \in \mathbb{R}^{n \times n}$ s.t.
\begin{equation*}
U A V = \left(\begin{matrix} S \ O \\ O \ O \end{matrix}\right) =: D
\end{equation*}
with an invertible diagonal matrix $S$ and zero matrices $O$ of appropiate dimension. By transposing $D$ and substituting $S$ by $S^{-1}$ we obtain a matrix $\tilde{D} \in \mathbb{R}^{n \times m}$ which fulfills the four properties with $D$ instead of $A$. That quasi-diagonal matrix is uniquely determined by these properties (s. the proof of \cite{SerreD}, thm.11.5). The former assertion implies that $\tilde{A} := V \tilde{D} U$ possesses these properties, and the latter assertion implies $\tilde{A} = V \tilde{D} U$ for any matrix $\tilde{A}$ with these properties (hence uniqueness), since then $V^t \tilde{A} U^t = \tilde{D}$.
\end{proof}

\begin{ex}\label{ex_inverse}
For $A \in \mathbb{R}^{m \times n}$ with $\textnormal{rk}(A) = n$ it holds $\tilde{A} = (A^t A)^{-1} A^t$, especially $\tilde{A} = A^{-1}$ for invertible $A \in \mathbb{R}^{n \times n}$.
\end{ex}

\section{Analytic applications}\label{sec_analyticAppl}
The following subsections represent a short list of analytic applications of symmetric matrices. The first item is very well-known. The other three items are also well known and useful in numerical analysis. Some facts of subsection \ref{subsec_analysis} are used. As already declared there vectors are written in column form.

\subsection{Finding local extrema with the Hessian matrix}
In this subsection $f:D \to \mathbb{R}$ denotes a two times differentiable function on an open set $D \subseteq \mathbb{R}^n$. The first fact is a celebrated result of H.A. Schwarz (1834-1921).

\begin{prop}\label{prop_symmetricHessian}
The matrix $\textnormal{H}f$ is symmetric at each point of continuity.
\end{prop}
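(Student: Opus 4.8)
\emph{Proof plan.} The plan is to prove the classical theorem of H.A. Schwarz: if $a \in D$ is a point at which every entry $\partial_i \partial_j f$ of $\textnormal{H}f$ is continuous, then $\partial_i \partial_j f(a) = \partial_j \partial_i f(a)$ for all $i,j \in \mathbb{N}_n$, which is precisely the assertion that the matrix $\textnormal{H}f(a)$ is symmetric. The cases $i = j$ being trivial, I would fix two distinct indices $i \ne j$ and reduce to two variables: writing $e_i, e_j$ for the two standard basis vectors, the function $F(s,t) := f(a + s\,e_i + t\,e_j)$ is defined for $(s,t)$ in a neighbourhood of $(0,0)$, is twice differentiable there, has first partials $\partial_1 F, \partial_2 F$ existing on that neighbourhood, and satisfies $\partial_2 \partial_1 F(0,0) = \partial_j \partial_i f(a)$ and $\partial_1 \partial_2 F(0,0) = \partial_i \partial_j f(a)$; moreover these two mixed partials of $F$ are continuous at $(0,0)$ because the corresponding partials of $f$ are continuous at $a$. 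So it suffices to prove $\partial_2 \partial_1 F(0,0) = \partial_1 \partial_2 F(0,0)$.

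The key device is the symmetric second difference
\begin{equation*}
\Delta(h,k) := F(h,k) - F(h,0) - F(0,k) + F(0,0) ,
\end{equation*}
defined for $h, k$ small. Applying the mean value theorem to $s \mapsto F(s,k) - F(s,0)$ on the segment with endpoints $0$ and $h$ gives $\Delta(h,k) = h\,\bigl(\partial_1 F(\xi,k) - \partial_1 F(\xi,0)\bigr)$ for some $\xi$ strictly between $0$ and $h$; a second application, now to $t \mapsto \partial_1 F(\xi,t)$ on the segment with endpoints $0$ and $k$, yields
\begin{equation*}
\Delta(h,k) = h k \, \partial_2 \partial_1 F(\xi,\eta)
\end{equation*}
with $\eta$ strictly between $0$ and $k$. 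Carrying out the same computation with the two variables interchanged — that is, starting from $t \mapsto F(h,t) - F(0,t)$ — produces $\Delta(h,k) = h k\, \partial_1 \partial_2 F(\xi',\eta')$ for suitable $\xi'$ between $0$ and $h$ and $\eta'$ between $0$ and $k$. Dividing both expressions for $\Delta(h,k)$ by $hk \ne 0$ and letting $(h,k) \to (0,0)$, the points $(\xi,\eta)$ and $(\xi',\eta')$ tend to $(0,0)$, so continuity of $\partial_2\partial_1 F$ and $\partial_1\partial_2 F$ at $(0,0)$ forces $\partial_2\partial_1 F(0,0) = \partial_1\partial_2 F(0,0)$, which is the required equality.

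The one point that will need care is the legitimacy of the two successive applications of the mean value theorem: for the first I need $s \mapsto F(s,k) - F(s,0)$ differentiable on a one-dimensional segment, i.e.\ $\partial_1 F$ to exist along it, and for the second I need $t \mapsto \partial_1 F(\xi,t)$ differentiable, i.e.\ $\partial_2 \partial_1 F$ to exist on a full two-dimensional neighbourhood of $(0,0)$; both are guaranteed by twice differentiability of $f$ on the open set $D$. Continuity of the second partials enters only in the final passage to the limit, which is exactly why the statement restricts to points of continuity of $\textnormal{H}f$; no symmetry of lower-order data is used, the argument being entirely local and elementary once $\Delta$ has been introduced.
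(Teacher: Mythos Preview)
Your argument is correct and is exactly the classical Schwarz proof via the symmetric second difference and two applications of the one-dimensional mean value theorem. The paper itself gives no proof at all, merely citing a standard analysis textbook (Rudin); your write-up is precisely the kind of argument such a reference points to, so there is nothing to compare.
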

\begin{proof}
This is standard in any textbook about analysis, e.g. \cite{Rudin}.
\end{proof}

The following is a useful criterion on local extremum points.

\begin{prop}\label{prop_hessian}
For a two times continuosly differentiable function $f:D \to \mathbb{R}$ a point $x_0 \in D$ with $\nabla f(x_0) = o^t$ and positive or negative definite $\textnormal{H}f(x_0)$ is an isolated local minimum or maximum point, respectively. When $x_0$ is a local minimum or maximum point then $\nabla f(x_0) = o^t$ and $\textnormal{H}f(x_0)$ is positive or negative semidefinite, respectively.\footnote{See Remark \ref{rem_posdef}c) which can be transferred to semidefinite matrices, i.e. the defining inequality for all $x$ can be reduced to inequality for all $x$ in a 'neighbourhood' of the zero vector.}
\end{prop}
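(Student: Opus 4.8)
The plan is to expand $f$ to second order about $x_0$ and read both implications off the sign of the quadratic term $h \mapsto h^t\,\textnormal{H}f(x_0)\,h$. First I would invoke Taylor's theorem (s. subsection \ref{subsec_analysis}; cf. \cite{Rudin}): since $f$ is twice continuously differentiable, for every sufficiently small $h$ there is a $\theta = \theta(h) \in (0,1)$ with
\begin{equation*}
f(x_0+h) = f(x_0) + \nabla f(x_0)\,h + \frac{1}{2}\,h^t\,\textnormal{H}f(x_0+\theta h)\,h ;
\end{equation*}
moreover $\textnormal{H}f(x_0) \in \textnormal{Sym}_n(\mathbb{R})$ by Proposition \ref{prop_symmetricHessian}, and continuity of $\textnormal{H}f$ at $x_0$ permits writing $\textnormal{H}f(x_0+\theta h) = \textnormal{H}f(x_0) + R(h)$ with a symmetric matrix $R(h)$ whose entries, hence $\|R(h)\|$ for a suitable matrix norm, tend to $0$ as $h \to o$.

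For the sufficient condition I would assume $\nabla f(x_0) = o^t$ and $\textnormal{H}f(x_0)$ positive definite. By Remark \ref{rem_orthogonalisation} there is an orthogonal $Q$ with $Q^t\,\textnormal{H}f(x_0)\,Q = \textnormal{diag}(\lambda_1,\dots,\lambda_n)$, and positive definiteness forces each $\lambda_i > 0$; putting $\mu := \min_i \lambda_i > 0$ and using $\|Qh\| = \|h\|$ one obtains $h^t\,\textnormal{H}f(x_0)\,h \ge \mu\|h\|^2$ for all $h$. Substituting into the expansion yields
\begin{equation*}
f(x_0+h) - f(x_0) \ge \frac{1}{2}\big(\mu - \|R(h)\|\big)\,\|h\|^2 ,
\end{equation*}
which is $> 0$ whenever $h \ne o$ and $\|h\|$ is small enough that $\|R(h)\| < \mu$. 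Hence $x_0$ is an isolated local minimum point, and the negative definite case follows by applying this to $-f$.

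For the necessary condition, let $x_0$ be a local minimum point. Restricting $f$ to each coordinate line through $x_0$ gives a one-variable function with a local minimum at parameter value $0$ and vanishing derivative there, so every entry of $\nabla f(x_0)$ is zero, i.e. $\nabla f(x_0) = o^t$. For semidefiniteness I would fix an arbitrary $v \in \mathbb{R}^n$ and set $g(t) := f(x_0+tv)$; then $g$ is twice continuously differentiable with $g'(0) = \nabla f(x_0)\,v = 0$, $g''(0) = v^t\,\textnormal{H}f(x_0)\,v$, and $t=0$ a local minimum of $g$. The elementary one-dimensional fact — were $g''(0) < 0$, continuity of $g''$ would make $g'$ strictly decreasing near $0$, turning $0$ into a strict local maximum of $g$, a contradiction — gives $g''(0) \ge 0$, that is, $v^t\,\textnormal{H}f(x_0)\,v \ge 0$. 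Since $v$ was arbitrary, $\textnormal{H}f(x_0)$ is positive semidefinite, and the maximum case again follows from $-f$.

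The only genuinely delicate step is the uniform estimate $h^t\,\textnormal{H}f(x_0)\,h \ge \mu\|h\|^2$ with $\mu>0$, which is precisely what lets the quadratic term dominate the Taylor remainder; it rests on the orthogonal diagonalisability of real symmetric matrices (Remark \ref{rem_orthogonalisation}, hence ultimately Corollary \ref{cor_orthogonalisation}), while everything else is bookkeeping with $R(h)$. One may also avoid eigenvalues by observing that positive definiteness is an open condition (s. Remark \ref{rem_posdef}), so $\textnormal{H}f$ stays positive definite throughout a ball around $x_0$ and the Lagrange form of Taylor's theorem applies directly on that ball; I would mention this variant but carry out the eigenvalue version as the cleaner one.
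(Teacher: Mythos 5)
Your proof is correct, and it shares the same backbone as the paper's: both rely on the Lagrange-form Taylor expansion furnished by Theorem \ref{thm_meanValue}, i.e. $f(x_0+h) - f(x_0) = \tfrac{1}{2}\,h^t\,\textnormal{H}f(\xi)\,h$ at a point $\xi$ on the segment from $x_0$ to $x_0+h$ once $\nabla f(x_0) = o^t$, together with continuity of $\textnormal{H}f$. Where you diverge is in how positivity is extracted from that expansion. The paper argues directly that $\textnormal{H}f(\xi)$ remains positive definite for $\xi$ near $x_0$ — that is, it implicitly uses the openness of positive definiteness in $\textnormal{Sym}_n(\mathbb{R})$ (essentially a compactness argument on $S_{n-1}$, cf. Remark \ref{rem_posdef}c)); this is precisely the variant you flag at the end but choose not to carry out. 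Your primary route instead freezes the Hessian at $x_0$, diagonalises it orthogonally (Remark \ref{rem_orthogonalisation}) to get the uniform lower bound $h^t\,\textnormal{H}f(x_0)\,h \ge \mu\|h\|^2$ with $\mu$ the smallest eigenvalue, and absorbs the remainder $R(h)$. This buys a cleaner quantitative estimate and makes the ``isolated'' conclusion transparent, at the cost of invoking the spectral theorem; the paper's openness argument is shorter but leaves more to the reader. For the necessary condition the paper merely states ``by standard arguments of continuity the semidefiniteness follows,'' whereas your restriction to one-dimensional sections $g(t) = f(x_0 + tv)$ — with $g'(0)=0$, $g''(0) = v^t\,\textnormal{H}f(x_0)\,v$, and the elementary observation that $g''(0)<0$ would force a strict local maximum at $0$ — spells out exactly what those standard arguments are. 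Your proof of $\nabla f(x_0)=o^t$ via coordinate lines and Fermat is likewise what the paper intends by its citation of the first assertion of Theorem \ref{thm_meanValue}.
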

\begin{proof}
When $\nabla f(x_0) = o^t$ then for an $x \in D$ with line segment $L$ between $x$ and $x_0$ being a subset of $D$ there is some $\xi \in L$ with
\begin{equation*}
f(x) = f(x_0) + \frac{1}{2}(x-x_0)^t \textnormal{H}f(\xi) (x-x_0)
\end{equation*}
due to Theorem \ref{thm_meanValue}. Because of continuity of $\textnormal{H}f$ the matrix $\textnormal{H}f(\xi)$ is also definite if $x$ is near enough to $x_0$. So for $x \ne x_0$ and, let us say, positive definite $\textnormal{H}f(x_0)$ we have $f(x) > f(x_0)$ in a neighbourhood of $x_0$. This shows the first assertion. When $x_0$ is a local extremum point then it holds $\nabla f(x_0) = o^t$ due to the first assertion of Theorem \ref{thm_meanValue}, hence again the above equation. By standard arguments of continuity the semidefiniteness follows.
\end{proof}

The following Remark considers criterions of (semi-)definiteness.

\begin{rem}\label{rem_definiteness}
a) For a positive (semi-) definite matrix $A \in \textnormal{Sym}_n(\mathbb{R})$ every matrix $A(I)$ resulting from $A$ by deleting all rows and colums of index in $I \subset \mathbb{N}_n$ is positive (semi-) definite. This is clear because of $\tilde{x}^t A(I) \tilde{x} = x^t A x$ for all $x \in \mathbb{R}^n$ whose entries of index in $I$ vanish and $\tilde{x}$ resulting from $x$ by deleting all entries of index in $I$. With $-A$ instead of $A$ we obtain an anlogue criterion for negative (semi-) definiteness.

b) A useful criterion from Jacobi (1804-1851) is the following: An $A = (a_{i j}) \in \textnormal{Sym}_n(\mathbb{R})$ is positive definite if and only if $|A_k| > 0$ for all $k \in \mathbb{N}_n$ with $A_k := (a_{i j})_{i,j \in \mathbb{N}_k}$.\footnote{For negative definiteness we need $(-1)^k |A_k|$ istead of $|A_k|$.} The necessity of this condition follows from Remark a) by induction on $n$. We prove the converse also by induction on $n$. The case $n=1$ is trivial. For the induction step $n \to n+1$ we may assume by Remark \ref{rem_equiv}d) that the given $(n+1) \times (n+1)$-matrix is of the form
\begin{equation*}
\left( \begin{matrix} A &\ o \\ o^t &\ \alpha \end{matrix} \right)
\end{equation*}
for some $\alpha \in \mathbb{R}$ and  $A \in \textnormal{Sym}_n(\mathbb{R})$. From hypothesis it follows $\alpha > 0$ and from induction hypothesis that $A$ is positive definite. This implies the assertion.

c) By the same argument as in Remark b) it follows that an analogue condition of semi-definiteness holds with $A(I)$ (s. in Remark a)!) instead of $A_k$ for all $I \subset \mathbb{N}_n$. The submatrices $A_{k,l} := (a_{i j})_{k \le i,j \le l}$ with $k \le l \in \mathbb{N}_n$ do not suffice as shown by the example
\begin{equation*}
A := \left( \begin{matrix} 0 &\ 0 &\ -1 \\ 0 &\ 0 &\ 0 \\ -1 &\ 0 &\ 0 \end{matrix} \right)
\end{equation*}
of a matrix that is not positive semidefinite but fulfills $|A_{k,l}| \ge 0$ for all $k \le l \in \mathbb{N}_n$.

d) Remark c) implies that an $A \in \textnormal{Sym}_2(\mathbb{R})$ is semi-definite if and only if $|A| \ge 0$. Then the product of its main diagonal entries is non-negative. By Remark b) it is even definite if and only if $|A| > 0$. And then the sign $\sigma$ of either of its main diagonal entries determines the kind of definiteness: $\sigma = 1$ means positive definiteness, and $\sigma = -1$ negative definiteness.
\end{rem}

\begin{ex}\label{ex_localExtr}
Find the local extremum points of $f(x,y):=x^2 - y^2 - \left(x^2 + y^2 \right)^2$ in $\mathbb{R}^2$.
\end{ex}

\subsection{Numerically stable Least Squares Fit}\label{subsec_lsf}
In 1794 Gauss solved the problem to find an $x \in \mathbb{R}^n$ s.t. $\|A x - b\|$ is as small as possible for given $A \in \mathbb{R}^{m \times n}$ and $b \in \mathbb{R}^m$. Here $\| \cdot \| := \| \cdot \|_2$ denotes the euclidean norm (s. Example \ref{ex_norm}c)). Therefore, this fundamental task of linear algebra is called 'linear squares fit'.\footnote{With this method Gauss could retrieve the position of the planetoid 'Ceres' in 1801. This success gave him so much reputation amongst the astronomers of his time that he became the director of the observatory in G\"ottingen in 1807. In 1809 he published his 'Theoria motus corporum coelestium sectionibus conicis solem ambientium' about celestial mechanics which also describes the theory of his 'least squares'.} The naive approach of multiplying the (unsolvable) linear equation system $A x = b$ by $A^t$ from the left (thus making it solvable\footnote{e.g. by the method of subsection \ref{subsec_GaussSeidel} under certain condition on $A$}) often yields bad condition of the symmetric coefficient matrix $A^t A$. In order to proceed numerically stable one restricts to orthogonal transformations $Q \in \textnormal{O}_m(\mathbb{R})$ of the original coefficient matrix $A$, so that we have $\|Q A x - Q b\| = \|A x - b\|$ due to Proposition \ref{prop_orthogonal}.\footnote{And when $A$ is invertible $Q A$ has the same euclidean condition as $A$ (s. \cite{HJ}, eq.(5.8.3)).} For sake of simplicity, we restrict to the case that $A$ has at least as many rows as columns, as common in practice. We construct $Q$ s.t. $Q A =: R$ is an upper right triangle matrix since then the corresponding linear equation system can be solved easily by gaussian 'backwards elimination'.

\begin{rem}
We can do so by choosing suitable symmetries (s. Proposition \ref{prop_symmetry})\footnote{Rotations have the disadvantage that rotation angles are calculated by help of transcendental functions like $\arccos$.}: First we take a 'symmetry matrix' $S_y := E_n - 2 y y^t / \|y\|^2$ that maps the first column $x$ of $A$ to a scalar multiple of the first unit vector $e_1$, i.e. $S_y x = \pm \|x\| e_1$ with $y := x \mp \|x\| e_1$.\footnote{The sign in this vector addition can be chosen s.t. there is no digit deletion (in the first coordinate) since that would cause serious problems concerning rounding errors.} Then we restrict to the hyperplane perpendicular to $e_1$ and do the same for the second column of $S_y A$ but only from index $i=2$ on. And so we proceed until the last column (from index $i=n$ on). The corresponding transformation matrices are block matrices with entries one in the upper left diagonal and symmetry matrices as the lower right blocks. The product of all these transformation matrices is the demanded $Q$. In each iteration the part $x$ of the column vector in question must not be the zero vector. Otherwise we skip the iteration. So we end up with the linear equation system $R x = Q b$ in $x$. It might be unsolvable. But in case $\textnormal{rk}(A) = n$ the first $n$ rows of the coefficient matrix $R$ conform an invertible upper right triangular matrix. So by restricting the equation system $R x = Q b$ to the first $n$ equations we obtain a unique solution $x$. The rows of $R$ with index $> n$ equal the zero vector of $\mathbb{R}^n$. The euclidean norm $\sqrt{c_{n+1}^2 + ... + c_m^2}$ of $Q b =: (c_1, ... , c_m)^t$ from index $n+1$ on tells us the minimal error $\|A x - b\|$.
\end{rem}

We illustrate the procedure of this remark by the following

\begin{ex}\label{ex_lsf}
For the full rang matrix
\begin{equation*}
A := \left(\begin{matrix} -4 &\ 1 \\ 0 &\ 1 \\ 3 &\ 1 \end{matrix}\right)
\end{equation*}
we take $y := (-4-5,0,3) = (-9,0,3)$ with $\|y\|^2 = 90$, so that
\begin{equation*}
S_y = \left(\begin{matrix} 1 &\ 0 &\ 0 \\ 0 &\ 1 &\ 0 \\ 0 &\ 0 &\ 1 \end{matrix}\right) - \frac{2}{90} \left(\begin{matrix} 81 &\ 0 &\ -27 \\ 0 &\ 0 &\ 0 \\ -27 &\ 0 &\ 9 \end{matrix}\right) = \frac{1}{5} \left(\begin{matrix} -4 &\ 0 &\ 3 \\ 0 &\ 5 &\ 0 \\ 3 &\ 0 &\ 4 \end{matrix}\right).
\end{equation*}
Then the first and second column of $S_y A$ are $(5,0,0)^t$ and $(-1/5,1,7/5)^t$, respectively. For the next symmetry we take $x := (1,7/5)^t$ which gives us the new $y := (1+\sqrt{74}/5,7/5)$ with $\|y\|^2 = (148+10 \sqrt{74})/25$, whence
\begin{equation*}
S_y = \frac{1}{74 + 5 \sqrt{74}}\left(\begin{matrix} -25 - 5 \sqrt{74} &\ -35 - 7 \sqrt{74} \\ -35 - 7 \sqrt{74} &\ 25 + 5 \sqrt{74} \end{matrix}\right) \approx \left(\begin{matrix} -0.581 &\ -0.814 \\ -0.814 &\ 0.581 \end{matrix}\right).
\end{equation*}
Therefore, an orthogonal matrix $Q$ that transforms $A$ to an upper triangle matrix $R$ is approximately
\begin{equation*}
\left(\begin{matrix} 1& \ 0 &\ 0 \\ 0 &\ -0.581 &\ -0.814 \\ 0 &\ -0.814 &\ 0.581 \end{matrix}\right) \left(\begin{matrix} -0.8 &\ 0 &\ 0.6 \\ 0 &\ 1 &\ 0 \\ 0.6 &\ 0 &\ 0.8 \end{matrix}\right).
\end{equation*}
Hence we have
\begin{equation*}
Q \approx \left(\begin{matrix} -0.8 &\ 0 &\ 0.6 \\ -0.488 &\ -0.581 &\ -0.651 \\ 0.349 &\ -0.814 &\ 0.465 \end{matrix}\right) , Q A \approx \left(\begin{matrix} 5 &\ -0.2 \\ 0 &\ -1.720 \\ 0 &\ 0 \end{matrix}\right).
\end{equation*}
For $b := (1 , 2 , 3)^t$ we obtain $Q b \approx (1 , -3.604 , 0.116)^t$. Hence the solution $(x_1,x_2) \approx (0.284,2.095)$ of
\begin{equation*}
\left(\begin{matrix} 5 &\ -0.2 \\ 0 &\ -1.720 \end{matrix}\right) \left(\begin{matrix} x_1 \\ x_2 \end{matrix}\right) = \left(\begin{matrix} 1 \\ -3.604 \end{matrix}\right)
\end{equation*}
approximates the minimum point $x \in \mathbb{R}^2$ of $\|A x - b\|$ with approximate minimum value $0.116$. In other words: The linear function $l(x) := 0.284 x + 2.095$ of $x \in \mathbb{R}$ fits the points $(-4,1),(0,2),(3,3)$ as good as possible. The 'gaussian error sum' $(l(-4)-1)^2 + (l(0)-2)^2 + (l(3)-3)^2$ is approximately $0.116^2 \approx 0.014$.
\end{ex}

\begin{ex}\label{ex_HHtransformation}
Find an orthogonal matrix $Q$ s.t. $QA$ is upper right triangular for
\begin{equation*}
A := \left(\begin{matrix} -4 &\ 1 \\ 0 &\ 0 \\ 3 &\ 2 \end{matrix}\right) .
\end{equation*}
\end{ex}

\begin{rem}\label{rem_QRdecomposition}
For $Q,R$ as above the equation $A = Q R$ is called a \textit{QR-decomposition} of $A$. But even in case of a symmetric matrix $A$ the matrix $Q^t A Q$ may not be diagonal; s. the following example! Hence the method of this section does not yield eigenvalues of $A \in \textnormal{Sym}_n(\mathbb{R})$ (cf. subsection \ref{subsec_eigen}).
\end{rem}

\begin{ex}\label{ex_QRdecomposition}
Find an approximate QR-decomposition of
\begin{equation*}
\left(\begin{matrix} 1 &\ 2 \\ 2 &\ 1 \end{matrix}\right) .
\end{equation*}
\end{ex}

\subsection{Gauss-Seidel iteration with relaxation}\label{subsec_GaussSeidel}
Iterative methods for approximating the vector $x$ that solves $A x = b$ (cf. subsection \ref{subsec_lsf}) for an invertible \textit{coefficient matrix} $A \in \mathbb{R}^{m \times m}$ and a \textit{right side} $b \in \mathbb{R}^m$ uses an additive decomposition $A = B + C$ where $B$ is an invertible (triangular) matrix. The given equation is equivalent with $x = \varphi(x) := B^{-1}(b - C x)$, and the \textit{iteration} is defined recursively by $x_n := \varphi(x_{n-1})$ for all $n \in \mathbb{N}$ \textit{starting} from a certain $x_0 \in \mathbb{R}^m$.\footnote{In fact, $B$ is not to be inverted. But $x_n$ may be calculated as the solution $x$ of $B x = b - C x_{n-1}$ by 'gaussian (forward) elimination' if the coefficient matrix $B$ of this system is a lower left triangle matrix. For the Gauss-Seidel-iteration described below there is a more efficient algorithm where each coordinate of $x_n$ is computed in a double loop in dependence of the new coordinates of lower index and the old coordinates of higher index (s. \cite{SerreD}, ch.12.2!).} The iteration is called \textit{convergent} when this sequence converges (towards the solution). It is called \textit{globally convergent} when the convergence does not depend on the starting point $x_0$ and not on $b$. In case of the \textit{Gauss-Seidel iteration} $B$ is of the form $L + D / \omega$ with $L = (l_{i j})$ as the \textit{lower left triangle part} of $A=(a_{i j})$, i.e. $l_{i j} := 0$ for $i \le j$ and $l_{i j} := a_{i j}$ for $i > j$, $D := \textnormal{diag}(a_{1 1},...,a_{m m})$ and $\omega \in \mathbb{R}$ the \textit{relaxation parameter}.\footnote{This parameter is used to accelerate convergence. For $\omega = 1$ it is the classical Gauss-Seidel iteration (with no relaxation).} For a coefficient matrix of the form $A^t A$ (instead of $A$) with $A \in \mathbb{R}^{m \times n}$ of full rank $n \le m$ the Gauss-Seidel iteration is well-defined according to Example \ref{ex_posdef} and Remark \ref{rem_posdef}a).

\begin{lem}\label{lem_posdef}
For a symmetric positive definite matrix $A$ and an invertible matrix $B$ s.t. $B^t - C$ with $C := A - B$ is also positive definite it holds $\|B^{-1} C x\|_A < \|x\|_A$ for all $x \ne o$.\footnote{Recall the definition of $\| \cdot \|_A$ in Proposition \ref{prop_euclideanNorm}.}
\end{lem}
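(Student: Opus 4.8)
The statement to prove is Lemma~\ref{lem_posdef}: for a symmetric positive definite $A$ and an invertible $B$ such that $B^t - C$ with $C := A - B$ is positive definite, we have $\|B^{-1}Cx\|_A < \|x\|_A$ for all $x \neq o$. The norm $\|\cdot\|_A$ is the norm attached to the positive definite form $y \mapsto y^t A y$. The natural approach is to estimate $\|x\|_A^2 - \|B^{-1}Cx\|_A^2$ directly and show it is strictly positive for $x \neq o$.

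**Key steps.** First I would abbreviate $y := B^{-1}Cx$, equivalently $By = Cx$, and note that since $A = B + C$ we have $C = A - B$, so the iteration matrix can be rewritten: $By = (A-B)x$ gives $B(y+x) = Ax$, hence $y - x = B^{-1}(Cx - Bx) = B^{-1}(A-2B)x$ — but more usefully, I would compute $x - y = x - B^{-1}Cx = B^{-1}(B - C)x$. The cleanest route: expand
\begin{equation*}
\|x\|_A^2 - \|y\|_A^2 = x^t A x - y^t A y = (x-y)^t A (x+y) + \big(y^t A x - x^t A y\big)\cdot 0,
\end{equation*}
wait — since $A$ is symmetric, $x^t A y = y^t A x$, so actually $x^t A x - y^t A y = (x-y)^t A x + y^t A (x - y) = (x-y)^t A(x+y)$ is not quite right either; the correct identity is $x^tAx - y^tAy = (x-y)^tA(x+y)$ when $A$ symmetric, which holds since $(x-y)^tA(x+y) = x^tAx + x^tAy - y^tAx - y^tAy = x^tAx - y^tAy$. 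Good. Now substitute $x - y = B^{-1}(B-C)x$ and $x + y = x + B^{-1}Cx = B^{-1}(B+C)x = B^{-1}Ax$. So
\begin{equation*}
\|x\|_A^2 - \|y\|_A^2 = \big(B^{-1}(B-C)x\big)^t A \big(B^{-1}Ax\big) = x^t (B-C)^t B^{-t} A B^{-1} A x.
\end{equation*}
Set $z := B^{-1}Ax$; since $A$ is invertible and $B$ is invertible, $z \neq o$ whenever $x \neq o$, and $x = A^{-1}Bz$. Also $(B-C)x = (B-C)A^{-1}Bz$; but $B - C = 2B - A$, so $(B-C)x = 2Bx - Ax = 2Bx - Bz$, hence $B^{-t}$ applied... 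Actually the slicker substitution: $\|x\|_A^2 - \|y\|_A^2 = z^t (B-C)^t B^{-t} \cdot B \cdot B^{-1} A x$ — let me instead just write it as $x^t(B-C)^t B^{-t} A B^{-1} A x$ and recall $AB^{-1}$ appears; using $A z = A B^{-1} A x$ is awkward. The honest move is: with $z = B^{-1}Ax$ we get $x^t(B-C)^tB^{-t}Az$, and $B^{-t}A$ acting — note $x^t(B-C)^t = z^t B^t A^{-t}(B-C)^t = z^tB^t(A^{-1}(B-C))^t$ since $A$ symmetric. This is getting tangled; the expected clean identity (this is the standard Ostrowski–Reich computation) is
\begin{equation*}
\|x\|_A^2 - \|B^{-1}Cx\|_A^2 = z^t (B + B^t - A)\, z, \qquad z := B^{-1}Ax,
\end{equation*}
and since $B + B^t - A = B^t + (B - A) = B^t - C$ — wait, $C = A - B$ so $B - A = -C$, giving $B^t - A + B = B^t - C$? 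We have $B^t - C = B^t - A + B = B + B^t - A$. Yes. So the quadratic form on the right is exactly $z^t(B^t - C)z$ — wait, it should be symmetric; $B + B^t - A$ is symmetric, and equals $B^t - C$ only up to the symmetric part, but since we sandwich by $z$ on both sides, $z^t(B^t-C)z = z^t\cdot\frac12((B^t-C)+(B^t-C)^t)\cdot z = z^t\cdot\frac12(B+B^t - 2A + \ldots)$ — here I must be careful, but $B^t - C = B^t - (A-B) = B + B^t - A$ is already symmetric since $A$ is. So $z^t(B^t-C)z > 0$ for $z \neq o$ by hypothesis that $B^t - C$ is positive definite.

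**The main obstacle and conclusion.** The main obstacle is purely the algebraic bookkeeping: verifying the identity $\|x\|_A^2 - \|B^{-1}Cx\|_A^2 = z^t(B^t - C)z$ with $z = B^{-1}Ax$. I would do this by starting from $Cx = A x - B x$, so $B^{-1}Cx = B^{-1}Ax - x = z - x$, hence $B^{-1}Cx = z - x$ directly — this is the clean substitution I was missing. Then $\|B^{-1}Cx\|_A^2 = (z-x)^tA(z-x) = z^tAz - 2x^tAz + x^tAx$ (using symmetry of $A$), so
\begin{equation*}
\|x\|_A^2 - \|B^{-1}Cx\|_A^2 = 2x^tAz - z^tAz.
\end{equation*}
Now $x^t A z = x^t A B^{-1} A x$; and $z^t A z$, with $Az = A B^{-1}Ax$. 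Writing $x^t A = (Ax)^t = (Bz)^t = z^tB^t$, we get $x^tAz = z^tB^tz$ and $z^tAz = z^tAz$ with $Az = AB^{-1}(Ax) = AB^{-1}Bz = Az$ — trivial, but more simply $x^tA = z^tB^t$ also gives, transposing, $Ax = Bz$, so $z^tAz = (Az)^tz$ and I need $Az$; alternatively $z^t A z$ stays. So $\|x\|_A^2 - \|B^{-1}Cx\|_A^2 = 2z^tB^tz - z^tAz = z^t(2B^t - A)z = z^t(B^t + (B^t - A))z$; hmm, $2B^t - A$ versus $B^t - C = B + B^t - A$. These differ: $2B^t - A$ vs $B + B^t - A$, difference $B^t - B$, which is skew, so $z^t(2B^t-A)z = z^t(B+B^t-A)z = z^t(B^t-C)z$. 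Since $z = B^{-1}Ax \neq o$ for $x \neq o$ (both $A,B$ invertible), and $B^t - C$ is positive definite by hypothesis, we conclude $z^t(B^t-C)z > 0$, i.e. $\|B^{-1}Cx\|_A < \|x\|_A$. The proof is then complete; the only genuine content is the rewriting $B^{-1}Cx = z - x$ and the observation that a skew part contributes nothing to a quadratic form.
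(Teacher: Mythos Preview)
Your proof is correct and follows essentially the same route as the paper: both rely on the identity $\|x\|_A^2 - \|B^{-1}Cx\|_A^2 = z^t(B^t - C)z$ with $z := B^{-1}Ax$, then use positive definiteness of $B^t - C$ and invertibility of $A$ and $B$ to conclude. The paper simply states this identity (citing an external reference), whereas you actually derive it via the clean substitution $B^{-1}Cx = z - x$ together with $Ax = Bz$ and the observation that the skew part $B^t - B$ contributes nothing to the quadratic form; so your argument is, if anything, more self-contained, just written in an exploratory rather than polished form.
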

\begin{proof}
It holds $\|B^{-1} C x\|_A^2 = \|x\|_A^2 - y^t (B^t - C) y$ for the non-zero vector $y := B^{-1} A x$ (cf. \cite{SerreD}, ch.12.3.2, Lemma 20). Since $B^t - C$ is symmetric according to Remark \ref{rem_posdef}b) this implies the assertion.
\end{proof}

The following criterion of convergence (s. \cite{SerreD}, thm.12.1.) is from D.M. Young.

\begin{thm}\label{thm_GaussSeidel}
The Gauss-Seidel iteration converges globally for symmetric, positive definite coefficient matrices if the relaxation parameter is in the open interval between zero and two. It does not converge for any starting point and any right side if the relaxation parameter is not in that open interval.
\end{thm}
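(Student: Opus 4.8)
The plan is to pass to the \emph{iteration matrix} $M := -B^{-1}C$ and handle the two halves separately: global convergence for $0<\omega<2$ drops out of Lemma \ref{lem_posdef}, while failure of global convergence for $\omega\notin(0,2)$ follows from a one-line determinant computation. First I would fix notation by writing the symmetric coefficient matrix as $A = L + D + L^t$, with $L$ its strictly lower triangular part, $D := \textnormal{diag}(a_{1 1},\dots,a_{m m})$ its diagonal part, and $L^t$ its strictly upper triangular part. Since $A$ is positive definite, $a_{i i} = e_i^t A e_i > 0$, so $D$ is positive definite; moreover $B := L + D/\omega$ is lower triangular with nonzero diagonal entries $a_{i i}/\omega$ whenever $\omega\neq 0$, hence invertible, and the iteration is well defined with $C := A - B = L^t + (1 - 1/\omega)D$. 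The solution $x^{*}$ of $Ax=b$ is the unique fixed point of $\varphi$, so the error $e_n := x_n - x^{*}$ obeys $e_n = \varphi(x_{n-1}) - \varphi(x^{*}) = M\,e_{n-1}$, i.e. $e_n = M^n e_0$; thus the iteration converges for every $x_0$ and every $b$ precisely when $M^n\to O$, equivalently when the spectral radius $\rho(M)$ is less than $1$.

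For $0<\omega<2$ the key identity is
\begin{equation*}
B^t - C = \Bigl(L^t + \tfrac{1}{\omega}D\Bigr) - \Bigl(L^t + \bigl(1-\tfrac{1}{\omega}\bigr)D\Bigr) = \frac{2-\omega}{\omega}\,D ,
\end{equation*}
whose right-hand side is positive definite because $D$ is and $(2-\omega)/\omega>0$. Lemma \ref{lem_posdef} then yields $\|B^{-1}Cx\|_A < \|x\|_A$, hence $\|Mx\|_A < \|x\|_A$, for every $x\neq o$ (with $\|\cdot\|_A$ as in Proposition \ref{prop_euclideanNorm}). Since the sphere $\{x : \|x\|_A = 1\}$ is compact and $x\mapsto\|Mx\|_A$ continuous, $\mu := \max\{\|Mx\|_A : \|x\|_A=1\}$ is attained and satisfies $\mu<1$; therefore $\|M^n e_0\|_A \le \mu^n\|e_0\|_A \to 0$ for every $e_0$, which is exactly the claimed global convergence.

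For the converse I would compute the determinant of $M$. As $B$ and $C$ are triangular, $|B| = \omega^{-m}\prod_i a_{i i}$ and $|C| = (1-1/\omega)^m\prod_i a_{i i} = \omega^{-m}(\omega-1)^m\prod_i a_{i i}$, so $|M| = (-1)^m|C|/|B| = (-1)^m(\omega-1)^m = (1-\omega)^m$, and hence $\prod_i|\lambda_i(M)| = |1-\omega|^m$. If $\omega\le 0$ or $\omega\ge 2$ (the value $\omega=0$ being degenerate, since the iteration is then undefined), then $|1-\omega|\ge 1$, forcing some eigenvalue of $M$ to have modulus $\ge 1$, i.e. $\rho(M)\ge 1$; consequently $M^n$ does not tend to the zero matrix and the iteration fails to converge for a suitable starting vector, so it is not globally convergent (compare \cite{SerreD}, thm.12.1). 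The one genuine obstacle is bookkeeping: one must keep the splitting $A = L + D + L^t$, the definition $B = L + D/\omega$, and the factor $1/\omega$ perfectly straight so that the clean formula $B^t - C = \tfrac{2-\omega}{\omega}D$ really comes out; once it does, Lemma \ref{lem_posdef} carries all the analytic weight, and only the routine step of upgrading the pointwise inequality $\|Mx\|_A<\|x\|_A$ to a uniform contraction constant $\mu<1$ (via compactness of the ellipsoid $\|x\|_A=1$) deserves an explicit sentence.
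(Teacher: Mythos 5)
Your proof of the first half is essentially the paper's: you compute $B^t - C = \tfrac{2-\omega}{\omega}D$ and hand the analytic work to Lemma \ref{lem_posdef}, then pass from the pointwise inequality $\|Mx\|_A < \|x\|_A$ to a uniform contraction constant via compactness of $\{\|x\|_A = 1\}$. The paper wraps exactly this last step inside Corollary \ref{cor_Banach} (with Remark \ref{rem_normMax}a) supplying the ``max over the sphere is attained'' fact), so you are just unpacking the same machinery. One small cosmetic difference: the paper's formula reads $B^t - C = R + D/\omega - (R + D - D/\omega)$ with $R = L^t$ the strictly upper-triangular part, which is your identity in different notation.

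For the converse, however, you and the paper genuinely diverge. The paper is content to cite \cite{SerreD}, prop.12.2; you instead give the standard Kahan-type determinant argument: since $B$ and $C$ are triangular, $|M| = |{-B^{-1}C}| = (1-\omega)^m$, so $\prod_i|\lambda_i(M)| = |1-\omega|^m \ge 1$ when $\omega\notin(0,2)$, forcing $\rho(M)\ge 1$ and hence $M^n\not\to O$, which defeats global convergence for a suitable error $e_0$. This is correct, elementary, and self-contained, so it is a real improvement over the bare citation; it also makes transparent why the endpoints $0$ and $2$ are the thresholds. Your parenthetical about $\omega=0$ is appropriate since $B=L+D/\omega$ is undefined there. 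The only thing worth adding in a polished write-up is a sentence justifying the standard fact that $\rho(M)\ge 1$ implies the existence of a real $e_0$ with $M^n e_0 \not\to o$ (e.g. by passing to the real Jordan form, or taking real and imaginary parts of a complex eigenvector), but this is a routine gap and does not affect the soundness of the argument.
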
 
\begin{proof}
With notation as above we have $B^t - C = R + D / \omega - (R + D - D / \omega) = (2 / \omega - 1) D = (2 - \omega) D / \omega$ which is positive definite if and only if $\omega \in \left] 0 , 2 \right[$ according to Remark \ref{rem_posdef}a). Now the first assertion follows from Lemma \ref{lem_posdef} and Corollary \ref{cor_Banach}. The second assertion is \cite{SerreD}, prop.12.2.
\end{proof}

\begin{ex}\label{ex_GaussSeidel}
Approximate the solution $x$ of the \textit{normal equation} $A^t A x = A^t b$, where $A$ and $b$ are defined in Example \ref{ex_lsf}, with help of two classical Gauss-Seidel iterations with starting point $(0,2)^t$.
\end{ex}

\subsection{Perturbation of eigenvalues of symmetric matrices}\label{subsec_eigen}
As mentioned in Remark \ref{rem_matrixnorm}b) the eigenvalues $\lambda \in \mathbb{C}$ of an $A \in \textnormal{Sym}_n(\mathbb{R})$ are even real, i.e. for an eigenvector $x \in \mathbb{C}^n \setminus \lbrace o \rbrace$ with $A x = \lambda x$ for some $\lambda \in \mathbb{C}$ the imaginary part of $\lambda$ vanishes. This follows from Remark \ref{rem_orthogonalisation_real} which says that there is some $Q \in \textnormal{O}_n(\mathbb{R})$ with $Q^t A Q = D := \textnormal{diag}(\lambda_1,...,\lambda_n)$ for some $\lambda_1,...,\lambda_n \in \mathbb{R}$. By definition of $\textnormal{O}_n$ in Example \ref{ex_orthogGroup} the equation is equivalent with $A Q = Q D$ which shows that the $j$-th column of $Q$ is an eigenvector of $A$ with eigenvalue $\lambda_j$. By suitable permutation of these columns we can order the eigenvalues by magnitude: $\lambda_1 \le ... \le \lambda_n$. In this ordering we set $\lambda(A) := (\lambda_1,...,\lambda_n)$. Now we look at the change of $\lambda(A)$ in dependence on additive change of $A$ by another symmetric matrix $E$. Then $B := A + E$ is also symmetric. According to Hoffman and Wielandt (s. \cite{HJ}, cor.6.3.8) we have

\begin{thm}\label{thm_spectrumDiff}
For $A , B \in \textnormal{Sym}_n(\mathbb{R})$ it holds $\| \lambda(A) - \lambda(B) \|_2 \le \| A - B \|_2$.
\end{thm}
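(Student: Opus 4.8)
The plan is to reduce the inequality, using the orthogonal invariance of the Euclidean (Frobenius) matrix norm $\|\cdot\|_2$, to a linear optimization over doubly stochastic matrices, and then to combine Birkhoff's theorem with a rearrangement argument.

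First I would diagonalise simultaneously in the following sense. By Remark \ref{rem_orthogonalisation} there are orthogonal matrices $U,V \in \textnormal{O}_n(\mathbb{R})$ with $A = U D_A U^t$ and $B = V D_B V^t$, where $D_A = \textnormal{diag}(\alpha_1,\dots,\alpha_n)$ and $D_B = \textnormal{diag}(\beta_1,\dots,\beta_n)$; permuting the columns of $U$ and of $V$ we may assume $\alpha_1 \le \dots \le \alpha_n$ and $\beta_1 \le \dots \le \beta_n$, so that $\lambda(A)=(\alpha_1,\dots,\alpha_n)$ and $\lambda(B)=(\beta_1,\dots,\beta_n)$ in the ordering fixed in subsection \ref{subsec_eigen}. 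Since $\|M\|_2^2 = \textnormal{tr}(M^t M)$ and the trace is invariant under cyclic shifts (Example \ref{ex_det_tr}b)), one gets $\|PMQ\|_2 = \|M\|_2$ for orthogonal $P,Q$; applying this with $P = U^t$, $Q = V$ to $M := A-B = U D_A U^t - V D_B V^t$ and writing $W := U^t V \in \textnormal{O}_n(\mathbb{R})$ yields $\|A-B\|_2 = \|D_A W - W D_B\|_2$.

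Next I would expand this norm entrywise. The $(i,j)$ entry of $D_A W - W D_B$ equals $(\alpha_i - \beta_j) w_{ij}$, so
\begin{equation*}
\|A-B\|_2^2 = \sum_{i,j=1}^n s_{ij}(\alpha_i - \beta_j)^2, \qquad s_{ij} := w_{ij}^2 .
\end{equation*}
Because $W$ is orthogonal its rows and columns are unit vectors, so $s_{ij} \ge 0$ and $\sum_i s_{ij} = \sum_j s_{ij} = 1$; that is, $S=(s_{ij})$ is doubly stochastic. Hence $\|A-B\|_2^2$ is at least the minimum of the linear functional $S \mapsto \sum_{i,j} s_{ij}(\alpha_i - \beta_j)^2$ over the convex polytope of all doubly stochastic $n\times n$ matrices. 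By Birkhoff's theorem the extreme points of that polytope are exactly the permutation matrices, and a linear functional on a polytope attains its minimum at an extreme point; therefore the minimum equals $\sum_{i=1}^n (\alpha_i - \beta_{\pi(i)})^2$ for some permutation $\pi$ of $\mathbb{N}_n$.

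Finally I would invoke a rearrangement step: whenever $i<j$ but $\pi(i)>\pi(j)$, swapping the two assignments does not increase the sum, since $(\alpha_j-\alpha_i)\ge 0$ and $(\beta_{\pi(i)}-\beta_{\pi(j)})\ge 0$ give, after expanding the squares, $(\alpha_i-\beta_{\pi(j)})^2+(\alpha_j-\beta_{\pi(i)})^2 \le (\alpha_i-\beta_{\pi(i)})^2+(\alpha_j-\beta_{\pi(j)})^2$ (the difference being $-2(\alpha_j-\alpha_i)(\beta_{\pi(i)}-\beta_{\pi(j)})$). Iterating finitely many such swaps turns $\pi$ into the identity without increasing the sum, so $\sum_i (\alpha_i-\beta_i)^2 \le \sum_i (\alpha_i-\beta_{\pi(i)})^2$. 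Combining everything, $\|A-B\|_2^2 \ge \sum_i (\alpha_i-\beta_i)^2 = \|\lambda(A)-\lambda(B)\|_2^2$, which is the claim. The one genuinely external input — and the main obstacle if one insists on a self-contained account — is Birkhoff's theorem on the extreme points of the doubly stochastic polytope; the rest (orthogonal invariance of the Frobenius norm, the entrywise expansion, the rearrangement inequality) is routine. One could bypass the citation by proving directly that some optimal $S$ has $0/1$ entries, via cancelling an alternating cycle in the support of a minimiser, but that is essentially Birkhoff's theorem again.
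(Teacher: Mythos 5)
The paper does not supply its own argument for Theorem~\ref{thm_spectrumDiff}; it merely cites Horn--Johnson (cor.~6.3.8). Your proof is the standard Hoffman--Wielandt argument -- orthogonal invariance of the Frobenius norm reduces $\|A-B\|^2$ to $\sum_{i,j} w_{ij}^2(\alpha_i-\beta_j)^2$ with $S=(w_{ij}^2)$ doubly stochastic, Birkhoff's theorem pushes the minimum of this linear functional to a permutation matrix, and the rearrangement swap (your inequality $(\alpha_i-\beta_{\pi(j)})^2+(\alpha_j-\beta_{\pi(i)})^2 \le (\alpha_i-\beta_{\pi(i)})^2+(\alpha_j-\beta_{\pi(j)})^2$ with difference $-2(\alpha_j-\alpha_i)(\beta_{\pi(i)}-\beta_{\pi(j)})\le 0$) sorts $\pi$ to the identity. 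Every step checks out, and it is essentially the proof the paper is delegating to \cite{HJ}, so this is a self-contained replacement for the citation rather than a genuinely different route.

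One thing worth flagging, since it is in fact a latent defect of the \emph{statement} in this paper rather than of your proof: you explicitly take $\|M\|_2^2 = \textnormal{tr}(M^t M)$, i.e.\ the Frobenius norm, and that is indeed the norm for which Hoffman--Wielandt holds. But the paper's own Definition~\ref{def_matrixnorm} and Remark~\ref{rem_matrixnorm}b) declare $\|\cdot\|_2$ to be the \emph{induced} (spectral) norm $\sqrt{\max|\lambda(A^tA)|}$. With that reading the theorem is simply false: take $A=E_n$, $B=0$, then $\|\lambda(A)-\lambda(B)\|_2=\sqrt{n}$ while the spectral norm of $A-B$ is $1$. (The paper's example following the theorem also miscomputes $\|\lambda(A)-\lambda(B)\|_2$ as $\varepsilon$ instead of $\varepsilon\sqrt{2}$, which is what you get -- and what matches $\|E\|_F$ -- so the equality claim there only holds Frobenius-to-Frobenius.) So you silently, and correctly, re-interpreted $\|\cdot\|_2$ on the matrix side as the Frobenius norm; it would be worth saying so out loud, since it contradicts the paper's own earlier convention. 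Your caveat about Birkhoff's theorem being an external input is fair -- the paper's appendix doesn't contain it -- but since the paper itself outsources the whole theorem to \cite{HJ}, relying on one classical polytope fact is a strict improvement in self-containedness.
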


\begin{ex}
It holds $\lambda(A) = (1 , 1)$ and $\lambda(A+E) = (1-\varepsilon , 1 + \varepsilon)$ for
\begin{equation*}
A := \left(\begin{matrix} 1 &\ 0 \\ 0 &\ 1 \end{matrix}\right) , E := \left(\begin{matrix} 0 &\ \varepsilon \\ \varepsilon &\ 0 \end{matrix}\right) , \varepsilon > 0 .
\end{equation*}
Hence, for $B := A + E$ it holds $\| \lambda(A) - \lambda(B) \|_2 = \varepsilon$ and $\| A - B \|_2 = \| E \|_2 = \varepsilon$. So, in this example both sides of the inequality in  Theorem \ref{thm_spectrumDiff} are equal.
\end{ex}

\begin{cor}
For $A , E \in \textnormal{Sym}_n(\mathbb{R})$ and an eigenvalue $\tilde{\lambda} \in \mathbb{R}$ of $A + E$ there is an eigenvalue $\lambda \in \mathbb{R}$ of $A$ with $|\tilde{\lambda} - \lambda| \le \| E \|_2$.
\end{cor}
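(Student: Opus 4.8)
The plan is to derive this directly from Theorem \ref{thm_spectrumDiff}. First I would set $B := A + E$; since both $A$ and $E$ are symmetric, so is $B$, and hence the theorem applies to the pair $A, B$. This yields
\begin{equation*}
\| \lambda(A) - \lambda(A + E) \|_2 \le \| A - (A+E) \|_2 = \| E \|_2 .
\end{equation*}

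Next I would locate $\tilde\lambda$ inside the ordered spectrum. By the discussion preceding Theorem \ref{thm_spectrumDiff}, the entries of $\lambda(A+E) = (\lambda_1(A+E), \dots, \lambda_n(A+E))$ are precisely the eigenvalues of $A+E$, listed with multiplicity and ordered by magnitude. Hence there is an index $j \in \mathbb{N}_n$ with $\tilde\lambda = \lambda_j(A+E)$. I then put $\lambda := \lambda_j(A)$, which is an eigenvalue of $A$ by the same discussion.

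Finally, since the absolute value of any single coordinate of a real vector is at most its Euclidean norm, I obtain
\begin{equation*}
|\tilde\lambda - \lambda| = |\lambda_j(A+E) - \lambda_j(A)| \le \| \lambda(A+E) - \lambda(A) \|_2 \le \| E \|_2 ,
\end{equation*}
which is the claim. There is no serious obstacle here; the only points to keep in mind are that the Hoffman--Wielandt bound of Theorem \ref{thm_spectrumDiff} already orders both spectra by magnitude, so comparing the $j$-th entries of $\lambda(A)$ and $\lambda(A+E)$ is legitimate, and that estimating a single coordinate of a vector by its $\ell^2$-norm only weakens the inequality.
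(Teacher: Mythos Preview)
Your proof is correct and follows essentially the same route as the paper: both arguments pick the index $j$ with $\tilde\lambda = \lambda_j(A+E)$, bound $|\lambda_j(A+E)-\lambda_j(A)|$ by $\|\lambda(A+E)-\lambda(A)\|_2$, and then invoke Theorem~\ref{thm_spectrumDiff}.
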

\begin{proof}
For the eigenvalues $\lambda_1 \le ... \le \lambda_n$ of $A$ and the eigenvalues $\tilde{\lambda}_1 \le ... \le \tilde{\lambda}_n$ of $B := A + E$ it holds $|\tilde{\lambda}_j - \lambda_j| \le \|(\tilde{\lambda}_1 - \lambda_1 , ... , \tilde{\lambda}_n - \lambda_n)\|_2 = \| \lambda(B) - \lambda(A) \|_2$ for all $j \in \mathbb{N}_n$. In particular there is some eigenvalue $\lambda$ of $A$ with $|\tilde{\lambda} - \lambda| \le \| \lambda(B) - \lambda(A) \|_2$. Now Theorem \ref{thm_spectrumDiff} implies the assertion.
\end{proof}

An allied result can be shown independently (cf. \cite{HJ}, thm.6.3.14).

\begin{prop}
For  $A \in \textnormal{Sym}_n(\mathbb{R}) , x \in \mathbb{R}^n , \mu \in \mathbb{R}$ there is some eigenvalue $\lambda \in \mathbb{R}$ of $A$ with $|\lambda - \mu| \| x \|_2 \le \| A x - \mu x \|_2$.
\end{prop}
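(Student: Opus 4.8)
The plan is to diagonalise $A$ orthogonally and reduce to the trivial diagonal case. By Remark \ref{rem_orthogonalisation} there is a $Q \in \textnormal{O}_n(\mathbb{R})$ with $Q^t A Q = D := \textnormal{diag}(\lambda_1,\dots,\lambda_n)$, and, exactly as explained in subsection \ref{subsec_eigen}, the $\lambda_i$ are precisely the (real) eigenvalues of $A$, since $A Q = Q D$ exhibits the $j$-th column of $Q$ as an eigenvector for $\lambda_j$ and, $Q$ being invertible, every eigenvalue occurs among them. Set $y := Q^t x$. Because $Q^t Q = E_n$, the linear map $z \mapsto Q z$ preserves the euclidean norm, so $\|y\|_2 = \|x\|_2$ and $\|A x - \mu x\|_2 = \|Q(D - \mu E_n) Q^t x\|_2 = \|(D - \mu E_n) y\|_2$.

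Next I would compute $\|(D - \mu E_n) y\|_2^2 = \sum_{i=1}^n (\lambda_i - \mu)^2 y_i^2$. Choose $\lambda := \lambda_k$ with $|\lambda_k - \mu| = \min_{1 \le i \le n} |\lambda_i - \mu|$. Then each summand satisfies $(\lambda_i - \mu)^2 y_i^2 \ge (\lambda - \mu)^2 y_i^2$, whence $\|(D - \mu E_n) y\|_2^2 \ge (\lambda - \mu)^2 \sum_{i=1}^n y_i^2 = (\lambda - \mu)^2 \|y\|_2^2 = (\lambda - \mu)^2 \|x\|_2^2$. Taking square roots yields $|\lambda - \mu|\,\|x\|_2 \le \|A x - \mu x\|_2$, which is the assertion. (In the degenerate case $x = o$ both sides vanish, and since $n \ge 1$ there is at least one eigenvalue $\lambda$ of $A$ to pick.)

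There is essentially no obstacle: the only points to keep in mind are that Remark \ref{rem_orthogonalisation} genuinely supplies an \emph{orthogonal} diagonalising matrix over $\mathbb{R}$ (it does, via \cite{SerreD}, cor.5.3), and that norm-preservation is used twice — once to transfer $\|x\|_2$ to $\|y\|_2$ and once to transfer $\|Ax - \mu x\|_2$ to $\|(D-\mu E_n)y\|_2$. One could alternatively package the estimate as $\min_{i} |\lambda_i - \mu| \le \|(D-\mu E_n)y\|_2/\|y\|_2$ for $y \ne o$, but the direct computation above is the most economical route.
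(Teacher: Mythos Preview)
Your proof is correct. It shares the orthogonal diagonalisation step with the paper's argument but then diverges: you work forward in coordinates, bounding $\|(D-\mu E_n)y\|_2^2=\sum_i(\lambda_i-\mu)^2y_i^2$ from below by $(\min_i|\lambda_i-\mu|)^2\|y\|_2^2$, whereas the paper works backward by writing $x=Q(D-\mu E_n)^{-1}Q^t r$ with $r:=Ax-\mu x$ and invoking the operator-norm identity $\|(D-\mu E_n)^{-1}\|_2=(\min_j|\lambda_j-\mu|)^{-1}$. Your route is slightly more economical: it needs no case distinction for $\mu$ equal to an eigenvalue (the paper has to exclude that case to invert $D-\mu E_n$, treating it separately as trivial), and it bypasses the matrix-norm machinery of Remark~\ref{rem_matrixnorm}. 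The paper's version, on the other hand, makes the connection to the spectral norm of the resolvent explicit, which is the form one often wants in perturbation arguments.
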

\begin{proof}
We may assume without loss of generality that $\mu$ is different from every eigenvalue $\lambda_j$ of $A$. Then $D - \mu E_n$ is invertible for $D := \textnormal{diag}(\lambda_1,...,\lambda_n)$. So, according to Remark \ref{rem_matrixnorm}a) and Proposition \ref{prop_orthogonal} we have
\begin{equation*}
\|x\|_2 = \| Q (D - \mu E_n)^{-1} Q^t r \|_2 \le \|(D - \mu E_n)^{-1}\|_2 \|r\|_2
\end{equation*}
with $r := A x - \mu x$ and $Q \in \textnormal{O}_n(\mathbb{R})$ with $Q^t A Q = D$. The eigenvalues of $D - \mu E_n$ are the numbers $\lambda_j - \mu \ne 0$. Hence the eigenvalues of $(D - \mu E_n)^{-1}$ are the numbers $(\lambda_j - \mu)^{-1}$. It follows (s. Remark \ref{ex_matrixnorm}b)!)
\begin{equation*}
\|(D - \mu E_n)^{-1}\|_2 = \max \lbrace |(\lambda_j - \mu)^{-1}| : j \in \mathbb{N} \rbrace = \left(\min \lbrace |\lambda_j - \mu| : j \in \mathbb{N} \rbrace \right)^{-1} .
\end{equation*}
Now, the above equation implies the assertion.
\end{proof}

\begin{ex}
Find the eigenvalue $\lambda$ of the Proposition for
\begin{equation*}
A := \left( \begin{matrix} 2 &\ 3 \\ 3 &\ 3 \end{matrix} \right) , x := (1,1)^t , \mu := 5.5
\end{equation*}
\end{ex}

\section{Geometric applications}
The following two subsections represent applications of symmetric matrices to geometry. They are not well-known although the topics reach far into the ancient history of geometry. Theorem \ref{thm_euclDist} deals with the symmetric matrix $A^t A$ for an arbitrary real matrix $A$. Theorem \ref{thm_area} concerns plane quadrics with external symmetry centre from which we know by section \ref{sec_quadrics} that they are defined by symmetric, real $2 \times 2$-matrices.

\subsection{Euclidean distance via determinants}
In many application fields it is a fundamental task to determine the euclidean distance $d$ between a point $b \in \mathbb{R}^m$ and the linear subspace $\langle a_1 , ... , a_n \rangle \subseteq \mathbb{R}^m$ generated by vectors $a_1 , ... , a_n \in \mathbb{R}^m$:
\begin{equation*}
	d = \min \left\lbrace \| a - b \| : a \in \langle a_1 , ... , a_n \rangle \right\rbrace
\end{equation*}
For the matrix $A$ with columns $a_j$ it means $d = \| A x - b \|$ for a solution $x$ of the problem in subsection \ref{subsec_lsf}.

\begin{lem}\label{lem_euclDist}
	For $A \in \mathbb{R}^{m \times n}$ it holds $\textnormal{rk}(A^t A) = \textnormal{rk}(A)$ and $|A^t A| \ge 0$. We have $|A^t A| = 0$ if and only if $\textnormal{rk}(A) < n$.
\end{lem}
\begin{proof}
	If it holds $A^t A x = o$ for some $x \in \mathbb{R}^n$ the vector $A x$ is orthogonal to all columns of $A$. But $A x$ is a linear combination of those columns. It follows $A x = o$. This proves that the linear space of solutions $x$ of $A x = o$ equals the linear space of solutions of $A^t A x = o$. In particular these linear spaces have same dimension $k$. So the fundamental dimension formula of linear algebra tells us $n - \textnormal{rk}(A) = k = n - \textnormal{rk}(A^t A)$. This implies the first assertion. In case $m < n$ it follows that $A^t A$ does not have full rank. So in this case the determinant vanishes by Remark \ref{rem_module}c). In case $m \ge n$ we use a QR-decomposition of $A$ to see that $A^t A = R^t R$ for some $n \times n$-matrix $R$ with $\textnormal{rk}(R) = \textnormal{rk}(A)$.\footnote{Take the first $n$ rows of $R$ in Remark \ref{rem_QRdecomposition}.} It follows $|A^t A| = |R|^2 \ge 0$ and equality if and only if the rank is not full.
\end{proof}

The following is \cite{KahlLossVal}, Thm. 1, proven via QR-decomposition (s. Remark \ref{rem_QRdecomposition}).

\begin{thm}\label{thm_euclDist}
	For the euclidean distance $d$ between a point $b \in \mathbb{R}^m$ and the subspace generated by the columns of a matrix $A \in \mathbb{R}^{m \times n}$ it holds
	\begin{equation*}
		d \sqrt{|A^t A|} = \sqrt{|(A|b)^t (A|b)|} .
	\end{equation*}
	Here $(A|b) \in \mathbb{R}^{m \times (n+1)}$ is the matrix $A$ extended by $b$ as an extra column.
\end{thm}

\begin{cor}\label{cor_eucl}
	The euclidean distance between a point $b \in \mathbb{R}^m$ and the subspace generated by the columns of a matrix $A \in \mathbb{R}^{m \times n}$ of full rank $n$ is
	\begin{equation*}
		\sqrt{|(A|b)^t (A|b)| / |A^t A|} .
	\end{equation*}
\end{cor}
\begin{proof}
	By Lemma \ref{lem_euclDist} the matrix $A^t A$ has full rank, too. Hence the determinant of $A^t A$ differs from zero. So the assertion follows from Theorem \ref{thm_euclDist}.
\end{proof}

\begin{ex}\label{ex_eucl}
	a) With help of the \textit{Lagrangian identity}
	\begin{equation*}
		\|x\|^2 \|y\|^2 - (x \circ y)^2 = \|x \times y\|^2  \textnormal{ for } x, y \in \mathbb{R}^3
	\end{equation*}
	we may derive from Corollary \ref{cor_eucl} the well-known term $\|a \times b\| / \|a\|$ for the distance between $b \in  \mathbb{R}^3$ and the line $\langle a \rangle$ generated by $a \in \mathbb{R}^3 \setminus \lbrace o \rbrace$ and the term $|(a_1 \times a_2) \circ b| / \|a_1 \times a_2\|$ for the distance between $b$ and the plane generated by linearly independent $a_1 , a_2 \in \mathbb{R}^3$.\footnote{The numerator of the latter term equals the absolute value of the determinant of the matrix with columns $a_1 , a_2 , b$.}
	
	b) For $A \in \mathbb{R}^{(n+1) \times n}$ let $A_i \in \mathbb{R}^{n \times n}$ be the matrix that evolves from $A$ by deleting the $i$-th row. Show that the vector $b := \left((-1)^i |A_i| \right)_{i \in \mathbb{N}_{n+1}}$ is orthogonal to the columns $a_j$ of $A$ and that $\|b\| = \sqrt{|A^t A|}$, i.e.
	\begin{equation*}
		b \circ a_j = 0 \textnormal{ for all } j \in \mathbb{N}_n \textnormal{ and } \sum_{i=1}^{n+1} |A_i|^2 = |A^t A| .
	\end{equation*}
	Hint: Consider $|(A|a_j)|$ via development by the last column and $|(A|b)|$ in Theorem \ref{thm_euclDist}.
\end{ex}

\subsection{Plane area measurement}
In geodesy lengths and angles are measured in order to derive more entities like heights, areas, volumes etc. We concentrate on plane areas.\footnote{Volumes are also treated in \cite{KahlQuadSect}.} A common method for approximating plane areas with curved boundary is \textit{triangulisation}, i.e. summing up triangle areas that cover the area 'as good as possible'. This kind of first order approximation can be improved to a second order approximation by choosing sectors at centre of quadrics: Just take a 'central point of view' in the plane region to be measured and sum up the sector areas under the angular fields that cover the region. With the origin being a fixed centre of symmetry a plane quadric is uniquely determined by three pairwise linearly independent vectors as points of the quadric; s. Theorem \ref{thm_sectorCoef}.

\begin{thm}\label{thm_area}
	Let $a := (a_1,a_2), b := (b_1,b_2), (c_1,c_2) \in \mathbb{R}^2$ pairwise linearly independent vectors lying - as points - on a plane quadric externally centred at the origin. For the triangle area $\Delta$ between $a$ and $b$ and the analytic function $f:\left]-1 , \infty\right[ \to \mathbb{R}_0^{+}$ defined by
	\begin{equation*}
		f(t) := \left\lbrace \begin{matrix} \arccos(t)/ \sqrt{1-t^2} &\ \textnormal{ for } |t| < 1 \\ 1 &\ 	\textnormal{ for } t=1 \\ \textnormal{arcosh}(t)/\sqrt{t^2-1} &\ \textnormal{ for } t > 1 \end{matrix}\right.
	\end{equation*}
	with $\delta := \left(\gamma^2 - \alpha^2 - \beta^2\right) / (2 \alpha \beta)$ and
	\begin{equation*}
		\alpha := \left| \begin{matrix} b_1 &\ b_2 \\ c_1 &\ c_2 \end{matrix}\right| , \beta := \left| \begin{matrix} c_1 &\ c_2 \\ a_1 &\ a_2 \end{matrix}\right| , \gamma := \left| \begin{matrix} a_1 &\ a_2 \\ b_1 &\ b_2 \end{matrix}\right|
	\end{equation*}
	the sector area between $a$ and $b$ equals $\Delta f(\delta)$.
\end{thm}

\includegraphics[width=11.6cm,height=8.2cm]{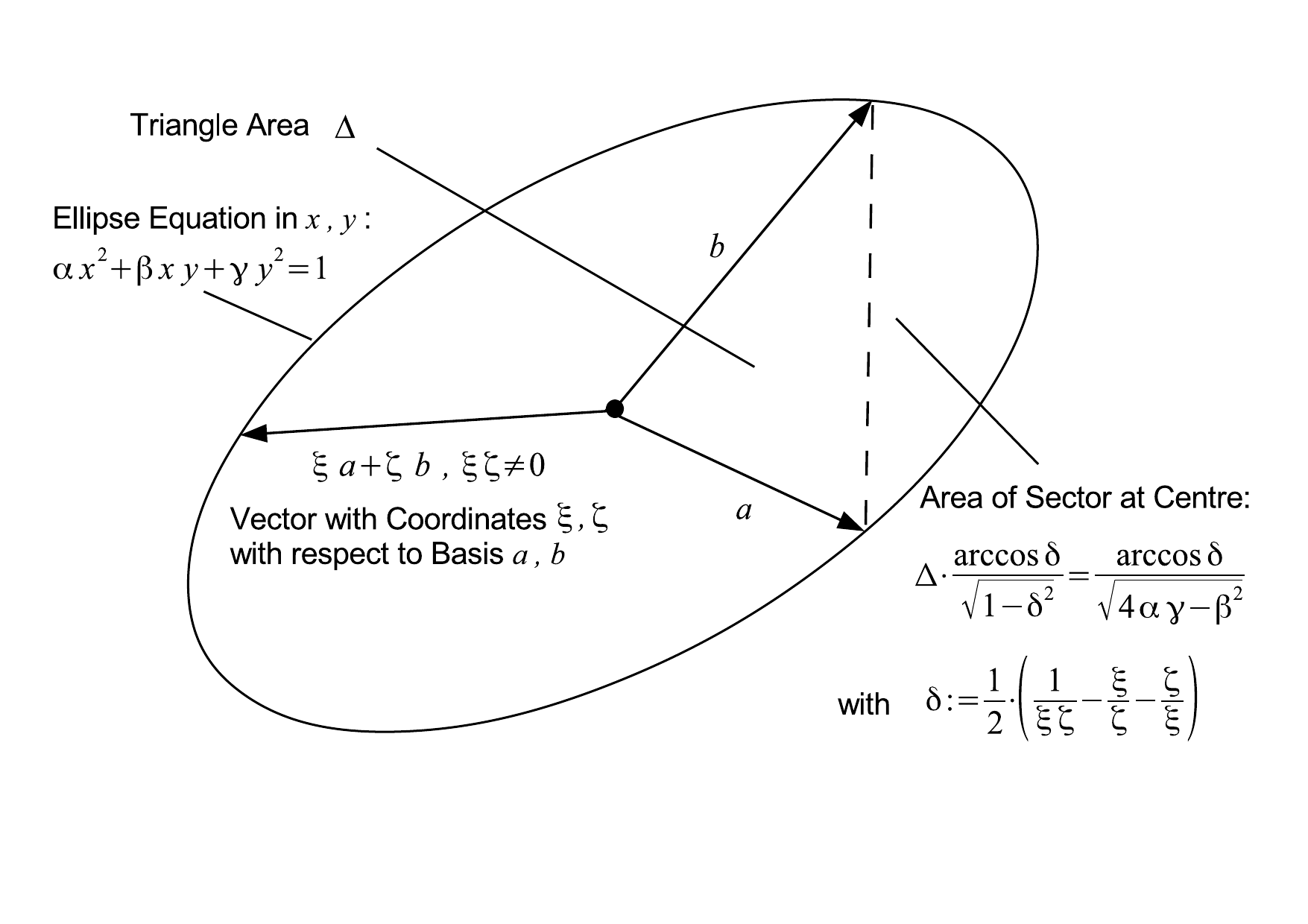}

For the elliptic case\footnote{The hyperbolic case $\delta > 1$ is treated analogously; s. \cite{KahlQuadSect}, sect.3!.} $|\delta| < 1$ the proof of this area formula is sketched in the figure below: It relies on the analytical fact that an area changes under a transformation by the absolute value of the functional determinant and on a linear algebraic formula for the coordinates $\xi, \zeta$ of $c$ with respect to the basis $a,b$ in dependence of the determinants $\alpha,\beta,\gamma$.

\begin{center}
\includegraphics[width=11.6cm,height=8.2cm]{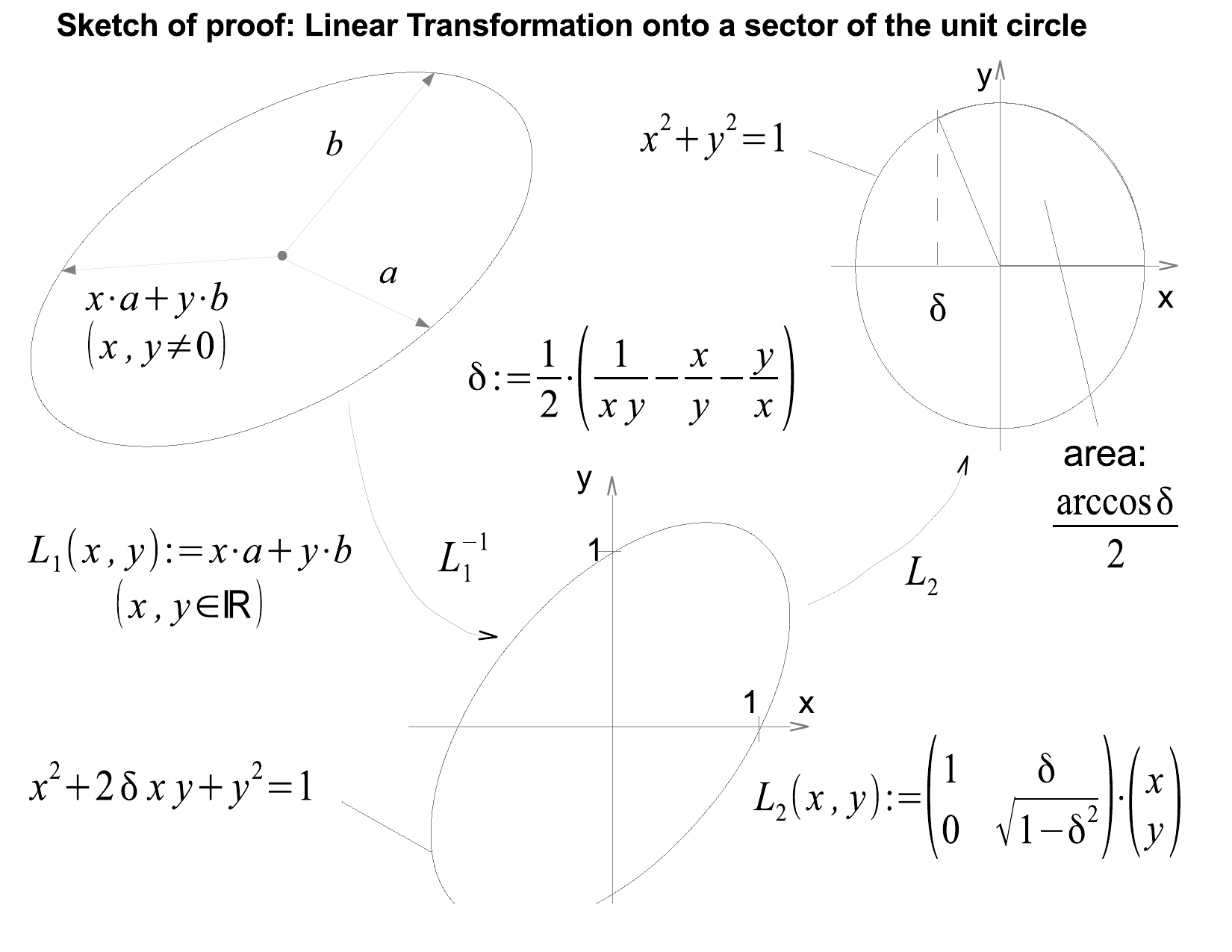}
\end{center}

\begin{rem}
In this proof of the area formula the sector (in the first quadrant) $\lbrace (x,y) \in \mathbb{R}^2 | x,y \ge 0 , x^2 + 2 \delta x y + y^2 = 1 \rbrace$ is used. By rotating this area around the centre by an angle of $\pi/4$ the bounding arc becomes a function of $x$, namely
\begin{equation*}
x \mapsto \sqrt{\frac{1 + x^2 (\delta-1)}{\delta+1}} , \frac{-1}{\sqrt{2}} \le x \le \frac{1}{\sqrt{2}} .
\end{equation*}
So the measure of that area can be computed by integrating this function. By help of $L_1$ and some complex analysis it follows that $f$ is analytically continuable in $1$:
\begin{equation*}
f(t) = \sum\limits_{n=0}^{\infty} a_n (t-1)^n , |t-1| < r
\end{equation*}
with Taylor-coefficients $a_n \in \mathbb{R}$ and a radius $r > 0$ of convergence. On the other hand $f$ fulfills the differential equation $(t^2 - 1) y'(t) + t y(t) = 1$ in $y$ for $|t| < 1$ with initial condition $y(1)=1$. By setting in the above power series it turns out that there is only one analytic solution and
\begin{equation*}
a_n = (-1)^n n! / \prod\limits_{k=1}^{n} (2 k + 1) .
\end{equation*}
Hence the radius of convergence is $r=2$ and we have
\begin{equation*}
f(t) = \sum\limits_{n=0}^{\infty} \prod\limits_{k=1}^{n} \frac{-k}{2 k + 1} (t-1)^n , -1 < t < 3 .
\end{equation*}
So we can evaluate $f$ efficiently with high precision around 1. And for good approximation of a plane area we need small angles of the angular fields that comprise the area, so that we evaluate $f(\delta)$ for arguments $\delta$ nearby $1$ only. When $\varepsilon > 0$ is the given fault tolerance we obtain the error estimation
\begin{equation}\label{eq_formFactor}
\left| f(\delta) - \sum\limits_{m=0}^{n} (1-\delta)^m \prod\limits_{k=1}^{m} \frac{k}{2 k + 1} \right| < \varepsilon \textnormal{ for } n \ge \frac{\ln \left(\varepsilon (1-|\delta-1|/2)\right)}{\ln \left(|\delta-1|/2)\right)} - 1
\end{equation}
by help of the geometric series.
\end{rem}

\begin{ex}\label{ex_sectorArea}
Measure (with compass and ruler) and calculate the area of the elliptic sector region in the figure below Theorem \ref{thm_area}. Use formula \ref{eq_formFactor}, let's say for $\varepsilon := 10^{-2}$.
\end{ex}

From the area formula of a quadric sector at centre follows a generalisation of the concept 'angle' (s. \cite{KahlQuadSect}, sect.4 for details!): For the \textit{sector coefficient}
\begin{equation*}
\delta := \delta(a,b;c) := \frac{1}{2} \left(\frac{1}{x y} - \frac{x}{y} - \frac{y}{x}\right)
\end{equation*} 
of linearly independent $a,b \in \mathbb{R}^n$ and a vector $c$ with $c = x a + y b$ for some $x,y \ne 0$ the \textit{angle}
\begin{equation*}
\angle(a,b;c) := \left\lbrace \begin{matrix} \arccos(\delta) \textnormal{ in case } |\delta| < 1 \\ \textnormal{arcosh}(\delta) \textnormal{ in case } \delta \ge 1 \end{matrix} \right.
\end{equation*}
\textit{between $a$ and $b$ with respect to $c$} fulfills (s. \cite{KahlQuadSect}, cor.4.2) in case of\footnote{Otherwise take $-c$ instead of $c$. Also the condition $\delta > -1$ (of boundedness of the sector region in question) can be achieved by suitable permutation of the three points $a,b,c$: Just take two points $a,b$ of the same component of connectedness of the quadric.} $-c$ \textit{lying between} $a$ and $b$, i.e.  $-c = x a + y b$ for some $x,y > 0$, the equation
\begin{equation*}
\angle(a,-c;b) + \angle(-c,b;a) = \angle(a,b;\pm c)
\end{equation*}
and in (the elliptic) case $|\delta| < 1$ also
\begin{equation*}
\angle(a,b;c) + \angle(b,c;a) + \angle(c,a;b) = 2 \pi .
\end{equation*}
In case $a,b,c$ lying on a circle centred at the origin the positive number $\angle(a,b;c)$ is the usual angle between $a$ and $b$. In case $a,b,c$ lying on a line (not through the origin), i.e. $\delta = 1$, this angle is zero. In general, $\angle(a,b;c)$ is the sector area between $a$ and $b$ times $\sqrt{|\beta^2 - 4 \alpha \gamma|}$ where $\alpha x^2 + \beta x y + \gamma y^2 = 1$ is the defining equation of the quadric that is determined by $a,b,c$.

\begin{ex}
Compute $\angle((2,-1),(2,3);(-3,0))$. Compare the result with the corresponding value of Example \ref{ex_sectorArea}.
\end{ex}

\section{Statistical application: loss value and correlation of multiple linear regression}
In \textit{multiple linear regression} so called \textit{regression coefficients} $\alpha_0, \alpha_1 , ... , \alpha_n$ of the \textit{fitting hyperplane} (in $\mathbb{R}^{n+1}$) $y = \alpha_0 + \alpha_1 x_1 + ... + \alpha_n x_n$ as a function of variables $x_1 , ... , x_n \in \mathbb{R}$ are computed from given (\textit{empirical}) data points
\begin{equation*}
(x_{1 1} , ... , x_{1 n} , y_1) , ... , (x_{m 1} , ... , x_{m n} , y_m) \in \mathbb{R}^{n+1} , m \in \mathbb{N}
\end{equation*}
s.t. the \textit{loss value}
\begin{equation*}
d := \left(\sum_{i=1}^{m} (\alpha_0 + \alpha_1 x_{i 1} + ... + \alpha_n x_{i n} - y_i)^2\right)^{1/2}
\end{equation*}
is at minimum. For the matrix $(1|X)$ that we obtain from $X := (x_{i j})_{i \in \mathbb{N}_m, j \in \mathbb{N}_n}$ by prepending $(1 , ... , 1)^t \in \mathbb{R}^m$ as an extra column (of index $0$) we have $d = \|(1|X) a - y\|$ with $a := (\alpha_0, \alpha_1 , ... , \alpha_n)^t$ and $y := (y_1 , ... , y_m)^t$. I.e.: $a$ must be a solution of the 'least squares fit'-problem of subsection \ref{subsec_lsf}, and the corresponding value of $d$ is nothing else than the euclidean distance between $y$ and the linear space generated by the columns of $(1|X)$. In statistics it is common to express empirical values of expectation with the help of the arithmetic mean $\bar{y} := (y_1 + ... + y_m)/m$ of a (data) vector like $y$ above. We denote by $\hat{y} := (y_1 - \bar{y}, ... , y_m - \bar{y})^t$ the \textit{centering of} $y$ and by $\hat{X}$ the $m \times n$-matrix obtained from $X$ by centering all its columns. Then $\textnormal{cov}(X) := (\hat{X}^t \hat{X})/(m-1)$ is the \textit{sample covariance matrix of the data matrix} $X$. It serves as an estimator of the \textit{covariance matrix of the random vector} $(X_1 , ... , X_n)$ whose $m$ samples are given by $X$, row by row. With the additional random variable $Y$ whose samples are represented by $y$ the \textit{mean squared loss value} $d^2/(m-1)$ of $(X|y)$ is an estimator of the expected value of the random variable $(Y - \alpha_0 - \alpha_1 X_1 - ... - \alpha_n X_n)^2$. Due to \cite{KahlLossVal}, Thm. 3 we have the following formula.

\begin{thm}
	The loss value $d$ of the data matrix $(X|y)$ is
	\begin{equation*}
		d = \sqrt{\left|\left(\hat{X}|\hat{y}\right)^t \left(\hat{X}|\hat{y}\right)\right| / \left|\left(\hat{X}\right)^t \hat{X}\right|}
	\end{equation*}
	in case $\textnormal{rk}(\hat{X}) = n$, i.e. $\textnormal{rk}(1|X) = n+1$.\footnote{This condition of full rank is common in practice where $m$ is often much bigger than $n$.}
\end{thm}

\begin{ex}
	Compute the loss value of the data matrix with four samples
	\begin{equation*}
		\left(\begin{matrix}
			26 &\ 943 &\ 303 \\ 45 &\ 880 &\ 263 \\ 30 &\ 835 &\ 369 \\ 17 &\ 850 &\ 408
		\end{matrix}\right) .
	\end{equation*}
\end{ex}

\begin{rem}
	In terms of sample covariance matrices the mean squared loss value of $(X|y)$ is
	\begin{equation*}
		d^2/(m-1) = |\textnormal{cov}(X|y)| / |\textnormal{cov}(X)| .
	\end{equation*}
	The \textit{sample variance} $\textnormal{v}(y) := \textnormal{cov}(y)$ of a (column) vector $y$ vanishes iff $\hat{y} = o$. So in case $\hat{y} \ne o$ the \textit{multiple correlation coefficient}
	\begin{equation*}
	\rho := \sqrt{1 - |\textnormal{cov}(X|y)| / (|\textnormal{cov}(X)| \textnormal{v}(y))}
	\end{equation*}
	between $y$ and $X$ is well-defined. It holds $\rho = \sqrt{1 - d^2 / (\hat{y}^t \hat{y})} \in [0 , 1]$.
\end{rem}

\section{Cryptographic application: efficient group composition}\label{sec_crypto}
In public key cryptography the major tasks are encryption of rather short secret information (like a secret symmetric key), agreement of a secret (symmetric) key and digital signature. In any case the fundamental function is $(g,n) \mapsto g^n := g \cdot ... \cdot g$ for some group element $g$ of high order and some $n \in \mathbb{N}$. Hereby the base $g$ is a public system parameter. It should be easy to evaluate the function in order to be practical. But for security reasons it must be hard to compute $n$ from $g$ and $g^n$ ('Discrete Logarithm Problem'). In this section we consider the group $Cl(\Delta)$ described in subsection \ref{subsec_integral} for negative discriminants $\Delta$. Due to a theorem of Siegel \cite{Siegel} the digit number of its order $h(\Delta)$ is about half of that of $\Delta$. Currently, a discriminant of $128$ byte length is assumed to be secure enough if the system parameter $g \in Cl(\Delta)$ generates a group of order not much smaller than $h(\Delta)$.  For illustrating how to compute $g^n$ in that group, first remind that each $g \in Cl(\Delta)$ is uniquely represented by a reduced form $[\alpha,\beta,\gamma]$. By regarding $\Delta$ as a system parameter it suffices to store $(\alpha,\beta)$ since $\gamma = (\beta^2-\Delta)/(4 \alpha)$ is determined by the other entities.

\begin{rem}
	At this point it's time for a summary of the composition algorithm resulting from Lemma \ref{lem_composition} and Remark \ref{rem_reduction}. As input we take $(\alpha,\beta), (\alpha',\beta') \in Cl(\Delta)$. The algorithm will overwrite $(\alpha,\beta)$ several times. The corresponding third coefficient will be denoted by $\gamma$ as explained above. At the end $(\alpha,\beta)$ will be the composition of the two input group elements.
	\begin{itemize}
		\item compute the greatest divisor $x$ of $\alpha'$ coprime with $\gamma$
		\item compute the greatest divisor $y$ of $\alpha'$ coprime with $\alpha x$
		\item choose $w , z \in \mathbb{Z}$ s.t. $w x - y z = 1$
		\item substitute $(\alpha,\beta)$ by $(\alpha x^2 + \beta x y + \gamma y^2 , 2 \alpha x z + \beta (w x + y z) + 2 \gamma w y)$
		\item choose $n \in \mathbb{Z}$ s.t. $2 \alpha n \equiv \beta' - \beta \mod{\alpha'}$
		\item substitute $(\alpha,\beta)$ by $(\alpha \alpha' , \beta + 2 \alpha n)$
		\item while $[\alpha,\beta,\gamma]$ is not reduced:
		\subitem compute the greatest integer $n \le (\beta+\gamma)/(2 \gamma)$
		\subitem substitute $(\alpha,\beta)$ by $(\gamma,2 \gamma n - \beta)$
	\end{itemize}
\end{rem}

As an example we represent the Diffie-Hellman key exchange (s. \cite{BV}, algo.12.1) with very small numbers (too small for cryptographic security).

\begin{ex}
We take $g := (2,1) \in Cl(-167)$ (s. Example \ref{ex_classComposition}) for the system parameter. Both parties choose their own secret natural number\footnote{in real life at least of $16$ byte length}, say $a:=4$ and $b:=7$. Then each party computes $g^a = (3,1)$ and $g^b = (3,-1)$, respectively. Then they send their results to each other. Now, both can compute their common (secret) key $(g^a)^b = g^{a b} = (g^b)^a = (6,-1)$.
\end{ex}

\section{Appendix: some analytic and algebraic basics}\label{sec_appendix}
This section presents some standard facts of analysis and algebra.

\subsection{Basic Analysis}\label{subsec_analysis}
This subsection is not meant to be a 'crash course' on calculus. It stresses the fundamental concept of norm (of a matrix) which is used for declaring convergence of sequences in vectorspaces like $\mathbb{R}^n$. In this subsection vectors are identified with column vectors, e.g. $o := (0,...,0)^t$.

\begin{defn}\label{def_norm}
A function $\| \cdot \| : V \to \mathbb{R}_0^{+}:= \lbrace x \in \mathbb{R} : x \ge 0 \rbrace$ is called a \textit{norm} on a vectorspace $V$ over $\mathbb{R}$ when 
\begin{itemize}
\item $\| x \| = 0 \: \Rightarrow x = o$ \ \ \ \ \ (\textit{non-degeneracy})
\item $\| \lambda x \| = |\lambda| \| x \|$ \ \ \ \ \ \ \ \ \ (\textit{homogeneity})
\item $ \| x + y \| \le  \| x \| +  \| y \|$ \ (\textit{triangle inequality})
\end{itemize}
for all $x,y \in V , \lambda \in \mathbb{R}$. Then $V$ (or more exactly: $(V,\| \cdot \|)$) is called a \textit{normed space}. A \textit{sequence} $x : \mathbb{N} \to V$ of vectors $x_n := x(n)$ \textit{converges} to a \textit{limit} vector $\xi \in V$ when for all $\varepsilon > 0$ there is a $k \in \mathbb{N}$ s.t. $\| x_n - \xi \| < \varepsilon$ for all $n \ge k$. A sequence that converges to zero is called a \textit{zero sequence}. A sequence $x : \mathbb{N} \to V$ is called \textit{Cauchy-convergent} or \textit{fundamental} when for all $\varepsilon > 0$ there is a $k \in \mathbb{N}$ s.t. $\| x_n - x_m \| < \varepsilon$ for all $m,n \ge k$. For normed spaces $V , W$ a function $f : M \to W$ is called \textit{continuous at a point} $\xi \in M \subseteq V$ when for every sequence $x : \mathbb{N} \to M$ that converges to $\xi$ the sequence $f \circ x : \mathbb{N} \to W$ converges. When $f$ is continuous at all points of its definition set it is called \textit{continuous}. A subset $M$ of a normed space is called \textit{bounded} when there is a constant $\kappa$ s.t. $\| x \| < \kappa$ for all $x \in M$. It is called \textit{closed} when every convergent sequence $x : \mathbb{N} \to M$ possesses a limit in $M$. A subset of a finite-dimensional normed space is called \textit{compact} when it is bounded and closed. A subset of a normed space is called \textit{open} when it is the complement of a closed subset. For a norm $\| \cdot \|$ on $\mathbb{R}^{n+1}$ ($n \in \mathbb{N}_0$) the set $S_n := \lbrace x \in \mathbb{R}^{n+1} : \| x \| = 1 \rbrace$ is called the ($n$-{\it dimensional}) \textit{unit sphere}.
\end{defn}

\begin{ex}\label{ex_norm}
a) The absolute value or modulus $| \cdot |$ as a function on $\mathbb{R}$ defines a norm (s. Example \ref{ex_ideal}b)). A norm function $\| \cdot \| : V \to \mathbb{R}$ is continuous with respect to the norm $| \cdot |$ on $\mathbb{R}$ because the triangular inequality implies $\left| \|x_n\| - \|\xi\|\right| \le \| x_n - \xi \|$ for $x_n, \xi \in V$.

b) A function  $f : M \to W$ between normed spaces $V \supseteq M$ and $W$ with a \textit{Lipschitz} (1832-1903) constant $\lambda \in \mathbb{R}$ s.t. $\|f(x)-f(y)\| \le \lambda \|x-y\|$ for all $x , y \in M$ is continuous. That is clear by the definitions.

c) For $1 \le p \le \infty$ the function $\|(x_1,...,x_n)\|_p := (|x_1|^p + ... + |x_n|^p)^{1/p}$ defines a norm, called $p$-\textit{norm}, on $\mathbb{R}^n$ according to Minkowski's inequality (s. \cite{SerreD}, prop.7.1). In case $p = \infty$ it is called also the \textit{maximum norm}: $\|(x_1,...,x_n)\|_\infty = \max \lbrace |x_1| , ... , |x_n| \rbrace$. In case $p=2$ it is called the \textit{euclidean norm}.

d) The unit sphere is compact.
\end{ex}

\begin{rem}\label{rem_convergence}
a) A limit vector $\lim\limits_{n \to \infty} x_n$ is uniquely determined by its sequence $(x_n)_n$ because of non-degeneracy and the triangle inequality.

b) Every convergent sequence is fundamental. But not vice versa; E.g.: The sequence $x:\mathbb{N}_0 \to \mathbb{Q}$ recursively defined by $x_n := x_n/2 + 1/x_n , n \in \mathbb{N} , x_0 := 1$ is fundamental but not convergent (with respect to $|\cdot|$; s. Example \ref{ex_norm}a)!). When it is regarded as a sequence of real numbers then it is convergent (with limit $\sqrt{2} \in \mathbb{R} \setminus \mathbb{Q}$).

c) When a function $f:M \to W$ on a subset $M$ of a normed space $V$ fulfills the property of continuity at a point $\xi \in V \setminus M$ then it is called \textit{continuously continuable at/in} $\xi$. Then for every series $x:\mathbb{N} \to M$ that converges to $\xi$ the limit of $f \circ x$ in the normed space $W$ is the same. It is denoted by $\lim\limits_{x \to \xi} f(x)$. When we say that this \textit{limit exists} we mean the continuous continuability. So a function $f$ is continuous at $\xi$ iff $\lim\limits_{x \to \xi} f(x)$ exists and equals $f(\xi)$.

d) An analogon of the $p$-norm of Example \ref{ex_norm}c) can be used to define a norm on a cartesian product of normed spaces $V,W,...$: $\|(x,y,...)\|_p := (\|x\|^p + \|y\|^p + ...)^{1/p}$ for $x \in V, y \in W , ...$.
\end{rem}

\begin{defn}\label{def_matrixnorm}
For $A \in \mathbb{R}^{m \times n}$ the least upper bound $\| A \|$ of $\lbrace \| A x \| : x \in S_{n-1} \rbrace$ is called the ({\it induced matrix}) \textit{norm} of $A$.
\end{defn}

\begin{rem}\label{rem_matrixnorm}
a) The induced matrix norm is a norm. Non-degeneracy and homogeneity are clear. The triangular inequality follows from $\|(A + B) x\| = \|A x + B x\| \le \|A x\| + \|B x\| \le \|A\| + \|B\|$ for all $A , B \in \mathbb{R}^{m \times n}, x \in S_{n-1}$. A matrix norm is \textit{compatible} with the vector norm that induces it: $\|A x\| \le \|A\| \|x\|$ since for $x \ne o$ it holds $\|A x\| / \|x\| = \|A x / \|x\|\| \le \|A\|$. This implies $\|A B\| \le \|A\| \|B\|$ (\textit{sub-multiplicativity}) for $A \in \mathbb{R}^{k \times m}, B \in \mathbb{R}^{m \times n}$ because of $\|A B x\| \le \|A\| \|B x\| \le \|A\| \|B\|$ for $x \in S_{n-1}$.

b) For $p \in \lbrace 1 , 2 , \infty \rbrace$ the $p$-norm of Example \ref{ex_norm}c) induces the matrix norm (s. \cite{SerreD}, ch.7.1.4, examples)\footnote{There, all assertions are shown for square matrices only. But the proofs remain true for non-square matrices, too.}
\begin{equation*}
\|A\|_1 = \max \left\lbrace \sum\limits_{i=1}^{m} |a_{i j}| : j \in \mathbb{N}_n \right\rbrace ,
\end{equation*}
\begin{equation*}
\|A\|_\infty = \max \left\lbrace \sum\limits_{j=1}^{n} |a_{i j}| : i \in \mathbb{N}_m \right\rbrace ,
\end{equation*}
\begin{equation*}
\|A\|_2 = \sqrt{\max \lbrace |\lambda| : \lambda \in \mathbb{C} , A^t A x = \lambda x \textnormal{ for some } x \ne o \rbrace}
\end{equation*}
for $A = (a_{i j}) \in \mathbb{R}^{m \times n}$.\footnote{For existence of the maximum see Remark \ref{rem_normMax}a)!} In particular, we have
\begin{equation*}
\|A\|_2 = \max \lbrace|\lambda| : \lambda \in \mathbb{R} , A x = \lambda x \textnormal{ for some } x \ne o \rbrace
\end{equation*}
for $A \in \textnormal{Sym}_n(\mathbb{R})$. That we may restrict to real numbers $\lambda$ is a consequence of Remark \ref{rem_orthogonalisation_real} and the fact that $B^{-1} A B$ is diagonal if and only if the columns of $B \in \textnormal{GL}_n(\mathbb{R})$ are \textit{eigenvectors} $x$ ($\ne o$ because of $|B| \ne 0$) of $A$, i.e. $ A x = \lambda x$ for some  \textit{eigenvalue} $\lambda \in \mathbb{R}$.
\end{rem}

\begin{ex}\label{ex_matrixnorm}
The non-symmetric matrix
\begin{equation*}
A := \left(\begin{matrix} 1 &\ 1 \\ 0 &\ 1 \end{matrix}\right) = \left(\begin{matrix} a &\ -b \\ b &\ a \end{matrix}\right) \left(\begin{matrix} c &\ 0 \\ 0 &\ 1/c \end{matrix}\right) \left(\begin{matrix} b &\ a \\ -a &\ b \end{matrix}\right)
\end{equation*}
with $c := (1 + \sqrt{5})/2 , a := \sqrt{c / \sqrt{5}} , b := 1 / \sqrt{c \sqrt{5}}$ has euclidean norm $\|A\|_2 = c$ (the 'golden ratio') which is bigger than
\begin{equation*}
\frac{3}{2} = \max \lbrace |x^2 + x y + y^2| : x,y \in \mathbb{R} , x^2 + y^2 = 1 \rbrace .
\end{equation*}
The two non-diagonal matrices in the above \textit{singular value} decomposition of $A$ are (orthogonal) rotation matrices like in Remark \ref{rem_rotation} because of $a^2 + b^2 = 1$. Thus $\|A\|_2 = c$ can be seen also by the following Proposition that characterises orthogonal matrices over $\mathbb{R}$ with help of the euclidean norm $\| \cdot \| := \| \cdot \|_2$.
\end{ex}

\begin{prop}\label{prop_orthogonal}
A quadratic matrix $Q$ over $\mathbb{R}$ is orthogonal if and only if $\|Q x\| = \|x\|$ for all  $x \in \mathbb{R}^n$.
\end{prop}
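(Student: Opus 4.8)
The plan is to reduce everything to the identity $\|x\|^2 = x^t x = x \circ x$ for the scalar product introduced after Definition \ref{def_matrixOps}, together with the polarisation trick already used in Definition \ref{def_quadrForm}. Recall that ``orthogonal'' means $Q^t Q = E_n$ (Example \ref{ex_orthogGroup}); note also that for the standard quadratic form $q(x) := x^t x = x_1^2 + \cdots + x_n^2$ on $\mathbb{R}^n$ the symmetric matrix $Q^t Q$ is exactly the matrix attached to $x \mapsto q(Qx)$, since $q(Qx) = (Qx)^t(Qx) = x^t (Q^t Q) x$. So $Q^t Q = E_n$ is equivalent, via Proposition \ref{prop_quadraticforms}, to $q(Qx) = q(x)$ for all $x$. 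The Proposition thus says that the automorphs of the standard form are precisely the norm-preserving linear maps, and the two directions are one-line computations.

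For the implication ``$\Rightarrow$'' I would assume $Q^t Q = E_n$ and compute, for arbitrary $x \in \mathbb{R}^n$, that $\|Qx\|^2 = (Qx)^t (Qx) = x^t Q^t Q x = x^t x = \|x\|^2$; taking non-negative square roots yields $\|Qx\| = \|x\|$.

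For the implication ``$\Leftarrow$'' I would set $B := Q^t Q$, which is symmetric because $(Q^t Q)^t = Q^t Q$, and observe that $x^t B x = (Qx)^t (Qx) = \|Qx\|^2 = \|x\|^2 = x^t E_n x$ for every $x \in \mathbb{R}^n$; hence the quadratic forms attached to $B$ and to $E_n$ coincide, so $B = E_n$ by the bijectivity of Proposition \ref{prop_quadraticforms}. Equivalently and more explicitly, one polarises: using symmetry of $B$ one gets $x^t B y = \frac12 ( (x+y)^t B (x+y) - x^t B x - y^t B y ) = \frac12 ( \|x+y\|^2 - \|x\|^2 - \|y\|^2 ) = x^t y$ for all $x, y \in \mathbb{R}^n$, and substituting the canonical unit vectors $x = e_i$, $y = e_j$ reads off the $(i,j)$-entry $b_{ij} = \delta_{ij}$, i.e. $Q^t Q = E_n$.

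There is essentially no obstacle here; the only point worth flagging is that the division by $2$ in the polarisation step is legitimate precisely because the ground field is $\mathbb{R}$, where $1+1 \neq 0$, exactly as in Definition \ref{def_quadrForm}. Everything else is contained in the two short computations above.
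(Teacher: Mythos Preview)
Your proof is correct and follows essentially the same route as the paper: both directions use $\|Qx\|^2 = x^t Q^t Q x$, and for the reverse implication both polarise the symmetric matrix $Q^t Q$ and read off its entries on canonical unit vectors. Your invocation of Proposition~\ref{prop_quadraticforms} is a clean shortcut for the same polarisation argument the paper writes out explicitly.
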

\begin{proof}
If $Q^t Q = E_n$ then $\|Q x\|^2 = x^t Q^t Q x = x^t x = \|x\|^2$ for all $x$. This proofs one direction. For $S = (s_{i j}) := Q^t Q$ the latter equations show $s_{i i} = 1$ for all $i \in \mathbb{N}_n$. It holds
\begin{equation*}
2 x^t S y = (x+y)^t S (x+y) - x^t S x - y^t S y
\end{equation*}
for all $x,y \in \mathbb{R}^n$ (cf. Remark \ref{rem_polarform}). By hypothesis, the right side of the equation equals $\|x + y\|^2 - \|x\|^2 - \|y\|^2$. When we choose $x:=e_i$ and $y:=e_j$ with $i \ne j$ (as orthogonal unit vectors) it vanishes according to the theorem of Pythagoras. This shows $s_{i j} = e_i^t S e_j = 0$ 
\end{proof}

The following proposition shows that all norms are \textit{equivalent} in some sense.

\begin{prop}\label{prop_equivNorms}
For norms $N_1,N_2 : V \to \mathbb{R}_0^{+}$ on a finite dimensional vectorspace $V$ there are constants $\kappa_1,\kappa_2 \in \mathbb{R}$ s.t. $N_1 < \kappa_1 N_2$ and $N_2 < \kappa_2 N_1$.
\end{prop}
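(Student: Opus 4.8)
The plan is to reduce the general case to a comparison with a single reference norm—say the maximum norm $\|\cdot\|_\infty$ with respect to a fixed basis $v_1,\dots,v_n$ of $V$—since equivalence of norms is an equivalence relation (transitivity lets us glue $N_1 \sim \|\cdot\|_\infty \sim N_2$). So it suffices to show: for any norm $N$ on $V$ there are constants $c,C>0$ with $c\,\|x\|_\infty \le N(x) \le C\,\|x\|_\infty$ for all $x$.

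First I would handle the easy inequality $N(x)\le C\,\|x\|_\infty$. Writing $x = \sum_{i=1}^n \xi_i v_i$, the triangle inequality and homogeneity give $N(x) \le \sum_i |\xi_i|\,N(v_i) \le \left(\sum_i N(v_i)\right)\|x\|_\infty$, so $C := \sum_{i=1}^n N(v_i)$ works. A consequence of this bound, together with the reverse triangle inequality $|N(x)-N(y)|\le N(x-y)$ (as noted for norm functions in Example \ref{ex_norm}a)), is that $N$ is continuous as a function on $(V,\|\cdot\|_\infty)$: indeed $|N(x_k)-N(\xi)|\le N(x_k-\xi)\le C\,\|x_k-\xi\|_\infty$, so $N$ is even Lipschitz with respect to $\|\cdot\|_\infty$, hence continuous in the sense of Example \ref{ex_norm}b).

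The harder inequality $c\,\|x\|_\infty\le N(x)$ is where the real content lies, and the main obstacle is that it genuinely requires compactness—this is the one place the finite-dimensionality hypothesis cannot be avoided. I would argue as follows. The unit sphere $S := \{x\in V : \|x\|_\infty = 1\}$ is compact, by Example \ref{ex_norm}d) transported through the coordinate identification $V\cong\mathbb{R}^n$ (a bounded closed set stays bounded and closed). The continuous function $N$ restricted to $S$ attains a minimum value $c := \min\{N(x):x\in S\}$—here one invokes the standard fact that a continuous real-valued function on a compact set attains its infimum; if the excerpt does not supply this as a named result, I would instead extract it directly: take a sequence $x_k\in S$ with $N(x_k)\to \inf_S N$, pass to a convergent subsequence $x_k\to x_\ast\in S$ by compactness, and conclude $N(x_\ast)=\inf_S N$ by continuity of $N$. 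Since $x_\ast\ne o$ (it lies on $S$), non-degeneracy forces $c = N(x_\ast) > 0$.

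Finally I would assemble the pieces. For $x\ne o$, the vector $x/\|x\|_\infty$ lies in $S$, so $N\!\left(x/\|x\|_\infty\right)\ge c$, and homogeneity gives $N(x)\ge c\,\|x\|_\infty$; for $x=o$ both sides vanish. Thus $c\,\|x\|_\infty \le N(x)\le C\,\|x\|_\infty$ for all $x\in V$. Applying this to both $N_1$ and $N_2$ and chaining the inequalities yields $N_1(x) \le C_1\,\|x\|_\infty \le (C_1/c_2)\,N_2(x)$ and symmetrically $N_2(x)\le (C_2/c_1)\,N_1(x)$; taking $\kappa_1 := C_1/c_2$ and $\kappa_2 := C_2/c_1$ (and enlarging them slightly, or replacing $\le$ by $<$ on the trivial vector $o$ by noting the strict inequality is only needed away from a single rescaling, or simply adding $1$ to each constant) gives the stated $N_1 < \kappa_1 N_2$ and $N_2 < \kappa_2 N_1$. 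The one subtlety to flag is the passage to a convergent subsequence on $S$: this is exactly the Bolzano–Weierstrass property in $\mathbb{R}^n$ and is the crux of why the theorem fails in infinite dimensions.
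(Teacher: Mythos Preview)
Your argument is correct and is the standard proof of norm equivalence. The paper itself does not give a proof at all: it simply cites \cite{SerreD}, prop.~7.3. So there is nothing to compare against beyond noting that your approach is exactly the classical one (and almost certainly what Serre does too).

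Two minor remarks. First, the extremum step you hedge on is in fact available in the paper as Theorem~\ref{thm_compact}, so you can invoke it directly rather than re-deriving it via Bolzano--Weierstrass. Second, you are right to flag the strict inequality: as literally stated, $N_1 < \kappa_1 N_2$ fails at $x=o$ since both sides vanish. This is a defect of the proposition's wording, not of your proof; the intended content is the non-strict inequality $N_1 \le \kappa_1 N_2$, which is what you establish.
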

\begin{proof}
s. \cite{SerreD}, prop.7.3!
\end{proof}

\begin{rem}\label{rem_completeness}
a) A sequence of vectors $x_k = (x_{k 1},...,x_{k n}) \in \mathbb{R}^n$ converges if and only if for every $j \in \mathbb{N}_n$ the sequence of coordinates $x_{k j}$ converges. This follows from Proposition \ref{prop_equivNorms} and the above examples: For all $\xi = (\xi_1,...,\xi_n) \in \mathbb{R}^n$ there are $\kappa_1,\kappa_2 \in \mathbb{R}$ s.t.
\begin{equation*}
|x_{k j}-\xi_j| \le \kappa_1 \|x_k - \xi\| \le \kappa_2 \|x_k - \xi\|_\infty .
\end{equation*}

b) An essential property of the field of real numbers is its \textit{completeness}: Every Cauchy-convergent sequence in $\mathbb{R}$ converges. By Remark a) this assertion generalises to $\mathbb{R}^n$. Thus the euclidean space is a \textit{Banach space} as any complete normed space is called. Also $\mathbb{R}^{m \times n}$ is complete with respect to any matrix norm since convergence of a sequence of matrices is equivalent with convergence of all corresponding sequences of entries. This follows by Remark \ref{rem_matrixnorm} and Proposition \ref{prop_equivNorms} with help of the norm $(a_{i j}) \mapsto \max \lbrace |a_{i j}| : i \in \mathbb{N}_m , j \in \mathbb{N}_n \rbrace$ on $\mathbb{R}^{m \times n}$.

c) For an $A \in \mathbb{R}^{n \times n}$ with induced norm $\|A\| < 1$ the matrix $E_n - A$ is invertible with
\begin{equation*}
(E_n - A)^{-1} = \sum\limits_{k=0}^{\infty} A^k .
\end{equation*}
The convergence of the latter series follows from Remark b) and
\begin{equation*}
\left \| \sum\limits_{k=l}^{m} A^k \right \| \le \sum\limits_{k=l}^{m} \|A^k\| \le \sum\limits_{k=l}^{m} \|A\|^k
\end{equation*}
for $l,m \in \mathbb{N}$. The equation follows from
\begin{equation*}
(E_n - A) \sum\limits_{k=0}^{m} A^k = E_n - A^{m+1} , m \in \mathbb{N} .
\end{equation*}
\end{rem}

The following theorem is standard in any textbooks about analysis, e.g. \cite{Rudin}.

\begin{thm}\label{thm_compact}
The image set $f(C)$ of a continuous function $f : C \to \mathbb{R}^m$ on a compact set $C \subset \mathbb{R}^n$ is compact. In case $m=1$ it has a maximum and a minimum. And in this case $f(\left[\alpha_1 , \beta_1 \right] \times ... \times \left[\alpha_n , \beta_n \right])$ is a compact interval for real numbers $\alpha_j < \beta_j , j \in \mathbb{N}_n$.
\end{thm}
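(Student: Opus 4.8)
The plan is to deduce everything from the sequential characterisation of compactness in $\mathbb{R}^n$: a subset is bounded and closed if and only if every sequence in it has a subsequence converging to a point of the set. The nontrivial half of this equivalence is the Bolzano--Weierstrass theorem, which I would invoke as a standard consequence of the completeness of $\mathbb{R}$ (Remark \ref{rem_completeness}b)) together with the coordinatewise convergence criterion of Remark \ref{rem_completeness}a). Continuity of $f$ is already available in exactly the sequential form needed, by Definition \ref{def_norm}.

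First I would show $f(C)$ is bounded. If it were not, there would be a sequence $(x_k)$ in $C$ with $\|f(x_k)\| \to \infty$; since $C$ is bounded, Bolzano--Weierstrass yields a subsequence $x_{k_j} \to \xi$, and $\xi \in C$ because $C$ is closed, so continuity forces $f(x_{k_j}) \to f(\xi)$ and hence $(\|f(x_{k_j})\|)$ is bounded, a contradiction. Next, to see that $f(C)$ is closed, take a convergent sequence $(y_k)$ in $f(C)$, say $y_k \to \eta$, and write $y_k = f(x_k)$ with $x_k \in C$; as before a subsequence $x_{k_j}$ converges to some $\xi \in C$, whence $f(x_{k_j}) \to f(\xi)$, while $f(x_{k_j}) = y_{k_j} \to \eta$, so $\eta = f(\xi) \in f(C)$ by uniqueness of limits (Remark \ref{rem_convergence}a)). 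Thus $f(C)$ is bounded and closed, i.e. compact. (If $C = \emptyset$ the statement is vacuous; otherwise $f(C) \neq \emptyset$.)

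For $m=1$ the set $f(C) \subseteq \mathbb{R}$ is bounded and non-empty, so $s := \sup f(C)$ exists; choosing $y_k \in f(C)$ with $y_k \to s$ and using that $f(C)$ is closed gives $s \in f(C)$, so the maximum is attained, and the minimum is handled symmetrically (or by applying the result to $-f$). For the box $B := [\alpha_1,\beta_1] \times \cdots \times [\alpha_n,\beta_n]$ I note that $B$ is bounded, and closed because if a sequence in $B$ converges then by Remark \ref{rem_completeness}a) each coordinate sequence converges with limit in the closed interval $[\alpha_j,\beta_j]$; hence $B$ is compact and, by what was just shown, $f(B)$ has a minimum $\mu$ and maximum $M$, so $f(B) \subseteq [\mu, M]$. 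To get equality I would pick $p,q \in B$ with $f(p)=\mu$, $f(q)=M$, join them by the segment $t \mapsto (1-t)p + tq$ (which stays in $B$ by convexity), compose with $f$ to obtain a continuous map $[0,1] \to \mathbb{R}$, and apply the one-dimensional intermediate value theorem to conclude that every value between $\mu$ and $M$ is taken; thus $f(B) = [\mu,M]$, a compact interval.

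The main obstacle is not the sequence-chasing above but the two tools it rests on: the argument genuinely needs the Bolzano--Weierstrass theorem (equivalently, sequential compactness of bounded closed subsets of $\mathbb{R}^n$) and, for the last clause, the intermediate value theorem. As the paper treats the present statement as standard, I would cite both from a standard reference such as \cite{Rudin} rather than reprove them; with those in hand the rest is routine.
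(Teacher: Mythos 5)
Your argument is correct, and it is more than the paper itself offers: the paper does not prove Theorem~\ref{thm_compact} at all, but simply labels it as standard and refers the reader to \cite{Rudin}. You reconstruct a standard proof from the definitions the paper does provide (sequential continuity, the coordinate-wise convergence criterion of Remark \ref{rem_completeness}a), completeness from Remark \ref{rem_completeness}b), uniqueness of limits from Remark \ref{rem_convergence}a)), deferring only Bolzano--Weierstrass and the one-dimensional intermediate value theorem to \cite{Rudin}. Two minor things worth flagging. First, you correctly observe that the argument really does hinge on Bolzano--Weierstrass, which is \emph{not} derivable from completeness alone in the sense of Remark \ref{rem_completeness}b) (fundamental $\Rightarrow$ convergent) without an additional extraction of a Cauchy subsequence from a bounded one; so ``invoke as a consequence of completeness'' is a slight understatement, and citing it outright, as you ultimately do, is the right call. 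Second, for the last clause you use convexity of the box and compose $f$ with the segment to get a continuous function of one variable; this is clean and entirely in the spirit of the paper's later use of line segments in Theorem \ref{thm_meanValue}. Net effect: your proposal supplies an honest proof where the paper settles for a citation, at the modest cost of two imported standard lemmas, and there is no gap in what you wrote.
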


The latter assertion is well known as the 'intermediate value theorem'.

\begin{rem}\label{rem_normMax}
a) As a consequence of the second assertion of the Theorem and Example \ref{ex_norm}d) we have $\|A\| = \max \lbrace \| A x \| : x \in S_{n-1} \rbrace$ for all $A \in \mathbb{R}^{m \times n}$.

b) Because of $\|x+y\|_2^2 = \|x\|_2^2 + \|y\|_2^2 + 2 x^t y$ the euclidean norm fulfills the \textit{Cauchy-Schwarz inequality}\footnote{a special case of \textit{H\"older's inequality} (s. \cite{SerreD}, prop.7.1)} $|x^t y| \le \|x\|_2 \|y\|_2$ for all $x,y \in \mathbb{R}^n$. Because of compatibility (s. Remark \ref{rem_matrixnorm}), for $A \in \mathbb{R}^{n \times n}$ it follows $|x^t A x| \le \|A x\|_2 \le \|A\|_2$ when $\|x\|_2 = \|x^t\|_2 = 1$. This shows
\begin{equation*}
\|A\|_2 \ge \max \lbrace |x^t A x| : x \in S_{n-1} \rbrace ,
\end{equation*}
whereby the maximum exists again because of the theorem. Equality does not hold in general as the following example will show. But in case $A \in \textnormal{Sym}_n(\mathbb{R})$ we have equality. This follows from Remark \ref{rem_orthogonalisation_real} which says that there is a $Q \in \textnormal{O}_n(\mathbb{R})$ s.t. $Q^t A Q$ is a diagonal matrix $\textnormal{diag}(\lambda_1,...,\lambda_n)$ for some $\lambda_j \in \mathbb{R}$. Hence for the columns $q_j$ of $Q$ it holds $q_j^t A q_j = \lambda_j , j \in \mathbb{N}_n$. Then
\begin{equation*}
\|A\|_2 = \max \lbrace |x^t A x| : x \in S_{n-1} \rbrace
\end{equation*}
follows according to Remark \ref{ex_matrixnorm}b).
\end{rem}

\begin{ex}\label{ex_continuous}
A (multi-)linear map $V \times ... \times V \to W$ (s. Definition \ref{def_module}) of normed spaces $V,W$ of finite dimension is continuous. In particular it holds
\begin{itemize}
\item $\lim\limits_{n \to \infty} l(x_n) = l \left(\lim\limits_{n \to \infty} x_n \right)$ for a linear map $l:V \to W$ and a convergent sequence $x : \mathbb{N} \to V$
\item $\lim\limits_{n \to \infty} (x_n^t y_n) = \left(\lim\limits_{n \to \infty} x_n \right)^t \left(\lim\limits_{n \to \infty} y_n \right)$ for convergent $x,y : \mathbb{N} \to V$
\end{itemize}
As an exercise verify these two special cases for $V = \mathbb{R}^n , W = \mathbb{R}^m$. Hint: Use the compatibility of the matrix norm (s. Remark \ref{rem_matrixnorm}a)!) for the former formula and the Cauchy-Schwarz inequality (s. Remark \ref{rem_normMax}b)!) for the latter formula.
\end{ex}

A natural generalisation of the euclidean norm is given by some special kind of quadratic matrices.

\begin{defn}\label{def_posdef}
A real quadratic matrix $A$ is called \textit{positive definite} when $x^t A x > 0$ for all column vectors $x \ne o$. It is called \textit{negative definite} when $-A$ is positive definite. Analogously one defines (positive and negative) \textit{semidefiniteness} with '$\ge$' instead of '$>$'.
\end{defn}

\begin{ex}\label{ex_posdef}
For an $A \in \mathbb{R}^{m \times n}$ of full rank $n \le m$ the (symmetric) matrix $A^t A$ is positive definite because the columns of $A$ are linearly independent which implies $A x \ne o$ for every $x \in \mathbb{R}^n \setminus \lbrace o \rbrace$, hence $0 < \|A x\|_2^2 = x^t A^t A x$. In any case $A^t A$ is positive semidefinite.
\end{ex}

\begin{prop}\label{prop_euclideanNorm}
For a positive definite matrix $A \in \mathbb{R}^{n \times n}$ the function
\begin{equation*}
\|x\|_A := \sqrt{x^t A x}
\end{equation*}
of column vectors $x \in \mathbb{R}^n$ defines a vector norm.
\end{prop}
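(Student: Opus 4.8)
The plan is to verify directly the three defining properties of a norm from Definition \ref{def_norm}, after first noting that $\|x\|_A$ is a well-defined element of $\mathbb{R}_0^{+}$: positive definiteness of $A$ gives $x^t A x > 0$ for $x \ne o$ and $o^t A o = 0$, so the radicand is always nonnegative. A convenient preliminary reduction is to replace $A$ by its symmetric part $\tfrac{1}{2}(A + A^t)$: since $x^t A x$ is a scalar it equals its transpose $x^t A^t x$, so this substitution changes neither the function $\|\cdot\|_A$ nor the hypothesis of positive definiteness, and henceforth I may assume $A = A^t$ and work with the symmetric bilinear form $\varphi(x,y) := x^t A y$, which satisfies $\varphi(x,x) = \|x\|_A^2$.

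With this in hand, non-degeneracy is immediate: $\|x\|_A = 0$ forces $x^t A x = 0$, hence $x = o$ by positive definiteness. Homogeneity is the one-line computation $\|\lambda x\|_A = \sqrt{\lambda^2\, x^t A x} = |\lambda|\,\|x\|_A$. The remaining axiom, the triangle inequality, is the heart of the matter, and I would deduce it from a Cauchy--Schwarz inequality $|\varphi(x,y)| \le \|x\|_A\,\|y\|_A$. To prove the latter I would examine the scalar function $t \mapsto \varphi(x + t y, x + t y) = \|y\|_A^2\, t^2 + 2\,\varphi(x,y)\, t + \|x\|_A^2$, which is nonnegative for every real $t$; if $y = o$ the inequality is trivial, and otherwise a nonnegative real quadratic with positive leading coefficient has discriminant $\le 0$, i.e.\ $\varphi(x,y)^2 \le \|x\|_A^2\,\|y\|_A^2$. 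Then $\|x+y\|_A^2 = \|x\|_A^2 + 2\,\varphi(x,y) + \|y\|_A^2 \le (\|x\|_A + \|y\|_A)^2$, and taking square roots of both (nonnegative) sides gives $\|x+y\|_A \le \|x\|_A + \|y\|_A$.

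An alternative route, should one prefer to reuse earlier machinery, is spectral: by Remark \ref{rem_orthogonalisation} there is an orthogonal $Q$ with $Q^t A Q = \textnormal{diag}(\lambda_1,\dots,\lambda_n)$, and positive definiteness forces each $\lambda_j$ to be positive, since $\lambda_j$ equals $y^t A y$ for $y$ the ($j$-th column of $Q$, which is nonzero). Hence $A = B^t B$ for the invertible matrix $B := \textnormal{diag}(\sqrt{\lambda_1},\dots,\sqrt{\lambda_n})\, Q^t$, so that $\|x\|_A = \|B x\|_2$. Because $B$ is an isomorphism of $\mathbb{R}^n$, all three norm axioms for $\|\cdot\|_A$ then follow at once from those of the euclidean norm $\|\cdot\|_2$ (Example \ref{ex_norm}c)).

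I expect the only genuine obstacle to be the Cauchy--Schwarz step underlying the triangle inequality; well-definedness, non-degeneracy and homogeneity are each immediate from positive definiteness, and in the spectral variant even the triangle inequality is inherited for free.
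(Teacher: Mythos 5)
Your proposal is correct, and your main line shares the paper's strategy: symmetrize $A$ to $\tfrac{1}{2}(A+A^t)$ (which leaves both $\|\cdot\|_A$ and positive definiteness unchanged) and reduce the triangle inequality to a Cauchy--Schwarz bound on $|x^t A y|$. The difference is that you prove the \emph{correct} Cauchy--Schwarz inequality for this purpose, namely $|\varphi(x,y)| \le \|x\|_A\,\|y\|_A$, via the discriminant of the nonnegative quadratic $t \mapsto \varphi(x+ty,\,x+ty)$; the paper instead invokes the euclidean Cauchy--Schwarz of Remark~\ref{rem_normMax}b) to bound $|x^t A y|$ by $\|x\|_2\,\|Ay\|_2$ and then treats that quantity as if it were $\|x\|_A\,\|y\|_A$, which it is not in general (e.g.\ $A=\textnormal{diag}(1,4)$, $x=e_1$, $y=e_2$ gives $\|x\|_2\,\|Ay\|_2 = 4$ but $\|x\|_A\,\|y\|_A = 2$, so the paper's final ``$=\,(\|x\|_A+\|y\|_A)^2$'' step does not hold as written). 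Your discriminant step is precisely what is needed to close that gap. The spectral alternative you sketch --- factor $A = B^t B$ with $B$ invertible via Remark~\ref{rem_orthogonalisation}, so $\|x\|_A = \|Bx\|_2$ and all three axioms transfer directly from the euclidean norm --- is a genuinely different and very clean route; it trades the Cauchy--Schwarz computation for an appeal to the spectral theorem and leaves essentially nothing further to verify.
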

\begin{proof}
The triangle inequality follows from the Cauchy-Schwarz inequality (s.  Remark \ref{rem_normMax}b): $\|x+y\|_A^2 \le \|x\|_A^2 + \|y\|_A^2 + 2 |x^t A y| \le \|x\|_A^2 + \|y\|_A^2 + 2 \|x\|_2 \|A y\|_2 = (\|x\|_A + \|y\|_A)^2$ for all $x,y \in \mathbb{R}^n$. For the latter equation we have assumed without loss of generality that $A$ is symmetric since $(A + A^t)/2$ is so in general. The other two norm properties are easy to show.
\end{proof}

\begin{rem}\label{rem_posdef}
a) A positive definite matrix $A$ is invertible since otherwise there would be a vector $x \ne o$ with $A x = o$ and so $x^t A x = x^t o = 0$. And its main diagonal elements are positive: Just choose the canonical unit vectors for $x$ in the definition.

b) For quadratic matrices $B,C$ s.t. $B+C$ is symmetric the quadratic matrix $B^t - C = B^t + B - (B + C)$ is also symmetric.

c) A matrix $A \in \mathbb{R}^{n \times n}$ is positive definite when $x^t A x > 0$ for all $x \in S_{n-1}$. This is clear by the definition of multiplication with a scalar $\lambda \ne 0$ and the fact $\lambda^2 > 0$. It is positive definite when it fulfills the inequality on an arbitrary open set containing the zero vector. This follows by the topologic property of an open set $D$ of a normed space $V$ that for every $x_0 \in D$ there is a $\delta > 0$ s.t. $\lbrace x \in V : \|x - x_0\| \le \delta \rbrace \subset D$.
\end{rem}

The following fixed point theorem of S. Banach (1892-1945) is useful for iterative approximation methods.

\begin{thm}\label{thm_Banach}
A function $\varphi : C \to C$ on a closed set $C \subseteq \mathbb{R}^m$ with a Lipschitz constant $\kappa < 1$ has a unique \textit{fixed point} $\xi \in C$, i.e. $\varphi(\xi)=\xi$. And for all \textit{starting points} $x_0 \in C$ we have the inequalities
\begin{equation*}
\|x_n-\xi\| \le \left \lbrace \begin{matrix} \kappa^n \|x_0-\xi\| \hfill\null \\ \frac{\kappa^n}{1-\kappa} \|x_1-x_0\| \hfill\null \; \textnormal{ (a-priori-estimation)} \\ \frac{\kappa}{1-\kappa} \|x_n-x_{n-1}\| \hfill\null \; \textnormal{ (a-posteriori-estimation)} \end{matrix} \right.
\end{equation*}
whereby $x_n := \varphi(x_{n-1}) , n \in \mathbb{N}$. In particular, the sequence of iteratives $x_n$ converges to the fixed point.
\end{thm}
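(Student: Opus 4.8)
The plan is to run the classical argument of Banach: first show that the sequence of iterates $(x_n)$ is fundamental, then invoke completeness to get a limit, check that this limit lies in $C$ and is a fixed point, establish uniqueness, and finally read off the three error bounds.

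First I would estimate consecutive differences. Since $\varphi$ has Lipschitz constant $\kappa$, induction on $n$ gives $\|x_{k+1}-x_k\| \le \kappa^{k}\|x_1-x_0\|$. Summing a geometric series, for $m>n$ one obtains $\|x_m-x_n\| \le \sum_{k=n}^{m-1}\|x_{k+1}-x_k\| \le \frac{\kappa^{n}}{1-\kappa}\|x_1-x_0\|$, which tends to $0$ as $n\to\infty$; hence $(x_n)$ is Cauchy-convergent. By completeness of $\mathbb{R}^m$ (Remark \ref{rem_completeness}b) the sequence converges to some $\xi\in\mathbb{R}^m$, and since $C$ is closed we get $\xi\in C$. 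A Lipschitz map is continuous (Example \ref{ex_norm}b), so $\varphi(\xi)=\varphi(\lim_{n}x_n)=\lim_{n}\varphi(x_n)=\lim_{n}x_{n+1}=\xi$, i.e.\ $\xi$ is a fixed point. Uniqueness is immediate: two fixed points $\xi,\xi'$ satisfy $\|\xi-\xi'\|=\|\varphi(\xi)-\varphi(\xi')\|\le\kappa\|\xi-\xi'\|$, which forces $\|\xi-\xi'\|=0$ because $\kappa<1$, hence $\xi=\xi'$ by non-degeneracy of the norm.

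For the estimates, the first one follows by the same induction applied to $\|x_n-\xi\|=\|\varphi(x_{n-1})-\varphi(\xi)\|\le\kappa\|x_{n-1}-\xi\|\le\dots\le\kappa^{n}\|x_0-\xi\|$. The a-priori and a-posteriori bounds come from letting $m\to\infty$ in the two tail estimates $\|x_m-x_n\|\le\frac{\kappa^{n}}{1-\kappa}\|x_1-x_0\|$ and $\|x_m-x_n\|\le\frac{\kappa}{1-\kappa}\|x_n-x_{n-1}\|$ (the latter obtained by bounding $\|x_{k+1}-x_k\|\le\kappa^{k-n+1}\|x_n-x_{n-1}\|$ for $k\ge n$ and summing), using continuity of the norm (Example \ref{ex_norm}a) to pass the limit through the inequality. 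Convergence of $(x_n)$ to $\xi$ is then contained in any one of these bounds.

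The only step requiring genuine care rather than being purely formal is the Cauchy property: one must control a full tail $\|x_m-x_n\|$, not just a single step, and this is exactly where the hypothesis $\kappa<1$ is used, through summability of $\sum_{k}\kappa^{k}$. Everything else — existence of the limit, its membership in $C$, the fixed-point equation, uniqueness, and the three estimates — is routine once that bound and the completeness of $\mathbb{R}^m$ are available.
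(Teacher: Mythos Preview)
Your proof is correct and follows essentially the same route as the paper: consecutive-step estimate by induction, geometric summation to get the Cauchy property, completeness plus closedness of $C$ to obtain the limit in $C$, continuity of $\varphi$ for the fixed-point equation, and the standard contraction argument for uniqueness. The only minor deviation is that the paper obtains the a-posteriori estimate directly from $\|x_n-\xi\|\le\kappa(\|x_{n-1}-x_n\|+\|x_n-\xi\|)$ and then deduces the a-priori bound by induction, whereas you pass to the limit in the Cauchy tail bounds; both are standard and equally valid.
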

\begin{proof}
By induction on $n \in \mathbb{N}$ we obtain $\|x_n-x_{n-1}\| \le \kappa^n \|x_1-x_0\|$. It follows $\|x_{n+k}-x_n\| \le (\kappa^{n+k-1} + ... + \kappa^n) \|x_1-x_0\|  \le \frac{\kappa^n}{1-\kappa} \|x_1-x_0\|$ for all $k,n \in \mathbb{N}$. Because of $\kappa < 1$ this shows Cauchy-convergence, whence convergence according to Remark \ref{rem_completeness}b). Since $C$ is complete the limit $\xi$ is an element of $C$. Since $\varphi$ is continuous due to Example \ref{ex_norm}b) it holds $\varphi(\xi) = \xi$. For another fixed point $\tilde{\xi}$ we have $\|\tilde{\xi} - \xi\| = \|\varphi(\tilde{\xi})-\varphi(\xi)\| \le \kappa \|\tilde{\xi} - \xi\|$, hence $\tilde{\xi} = \xi$ because of $\kappa < 1$. The a-posteriori-estimation follows from $\|x_n-\xi\| \le \kappa \|x_{n-1}-\xi\| \le \kappa (\|x_{n-1}-x_n\| + \|x_n-\xi\|)$. Therefrom follows the a-priori-estimation by induction on $n$. The first inequality follows easily by induction on $n$, too.
\end{proof}

\begin{cor}\label{cor_Banach}
For a real quadratic matrix $A \in \mathbb{R}^{m \times m}$ of norm less than one and a real column vector $b \in \mathbb{R}^{m \times 1}$ the affine function $\varphi(x) := A x + b$ defines an iteration  $x_n := \varphi(x_{n-1}) , n \in \mathbb{N}$ that converges for every starting point $x_0$ to a point independent of $x_0$.
\end{cor}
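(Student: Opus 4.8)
The plan is to recognize the statement as an immediate application of Banach's fixed point theorem (Theorem \ref{thm_Banach}) with the closed set taken to be all of $\mathbb{R}^m$. First I would observe that $C := \mathbb{R}^m$ is trivially a closed subset of $\mathbb{R}^m$ (it is its own closure), and that $\varphi$ maps $\mathbb{R}^m$ into $\mathbb{R}^m$, so the domain hypothesis of Theorem \ref{thm_Banach} is satisfied without any work.

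Next I would check that $\varphi$ is Lipschitz with constant $\kappa := \|A\| < 1$. For $x,y \in \mathbb{R}^m$ one has $\varphi(x) - \varphi(y) = (Ax + b) - (Ay + b) = A(x-y)$, so by compatibility of the induced matrix norm with the vector norm inducing it (Remark \ref{rem_matrixnorm}a), $\|\varphi(x) - \varphi(y)\| = \|A(x-y)\| \le \|A\|\,\|x-y\| = \kappa\,\|x-y\|$. Since $\|A\| < 1$ by hypothesis, $\varphi$ is a contraction.

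Then Theorem \ref{thm_Banach} applies verbatim: it produces a unique fixed point $\xi \in \mathbb{R}^m$ with $\varphi(\xi) = \xi$, and it asserts that for every starting point $x_0$ the sequence defined by $x_n := \varphi(x_{n-1})$ converges to $\xi$. In particular the limit is $\xi$, which does not depend on $x_0$; this is precisely the claim.

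There is essentially no obstacle; the only two points deserving a moment's attention are that the Lipschitz constant must be the \emph{induced} matrix norm (so that the compatibility inequality $\|Ax\| \le \|A\|\,\|x\|$ is available) and that the domain $\mathbb{R}^m$ is closed, so that the form of Banach's theorem stated above — which requires a closed domain — applies directly. If one wished, the a-priori and a-posteriori estimates of Theorem \ref{thm_Banach} could be quoted here as well, but they are not needed for the assertion.
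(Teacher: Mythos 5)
Your proof is correct and follows exactly the same route as the paper: take $C = \mathbb{R}^m$, verify the Lipschitz bound $\|\varphi(x)-\varphi(y)\| = \|A(x-y)\| \le \|A\|\|x-y\|$ via compatibility of the induced matrix norm (Remark \ref{rem_matrixnorm}a), and invoke Theorem \ref{thm_Banach} with $\kappa := \|A\| < 1$. Nothing to add.
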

\begin{proof}
For $x,y \in \mathbb{R}^m$ we have $\| \varphi(y)-\varphi(x) \| = \| A(y-x) \| \le \|A\| \|y-x\|$ due to compatibility of the matrix norm (s. Remark \ref{rem_matrixnorm}). So with $C:= \mathbb{R}^m$ and $\kappa := \|A\|$ the presuppositions of Theorem \ref{thm_Banach} are fulfilled.
\end{proof}

\begin{ex}\label{ex_iteration}
The function $\varphi(x,y):= y(-3/25,-41/50)+(1/5,11/10)$ defines a globally convergent iteration on $\mathbb{R}^2$. Why?
\end{ex}

In the final part of this subsection we recall some facts of differentiation theory. We restrict to the euclidean space for sake of simplicity.

\begin{defn}\label{def_diff}
A function $f:M \to \mathbb{R}$ is called \textit{differentiable at} $\xi \in M \subseteq \mathbb{R}$ when the \textit{derivative}
\begin{equation*}
f'(\xi) := \lim\limits_{x \to \xi} \frac{f(x)-f(\xi)}{x-\xi}
\end{equation*}
of $f$ in $\xi$ exists (s. Remark \ref{rem_convergence}c)!). It is called \textit{differentiable} when it is so at all points of $M$. Then the function $f':M \to \mathbb{R}$ might be differentiable again, and so on. By this way there can be defined recursively the $n$-\textit{th derivative} $f^{(n)} := (f^{(n-1)})', n \in \mathbb{N}$ with $f^{(0)}:=f$. A function $f:M \to \mathbb{R}$ is called \textit{partially differentiable at} $\xi =(\xi_1,...,\xi_n) \in M \subseteq \mathbb{R}^n$ when all the (\textit{partial}) derivatives $\partial f / \partial x_j (\xi)$ of the functions $x \mapsto f(\xi_1,...,\xi_{j-1},x,\xi_{j+1},...,\xi_n) , j \in \mathbb{N}_n$ at $\xi_j$ exist. The (row) vector $\nabla f(\xi) := \left(\partial f / \partial x_1 (\xi) , ... , \partial f / \partial x_n (\xi) \right)$ is called the \textit{gradient} of $f$ at $\xi$. A function $f=(f_1,...,f_m)^t:M \to \mathbb{R}^m$ is called \textit{partially differentiable at} $\xi =(\xi_1,...,\xi_n) \in M \subseteq \mathbb{R}^n$ when all its (real valued) \textit{components} $f_i , i \in \mathbb{N}_m$ are so. Then the matrix $\nabla f(\xi) := (\partial f_i / \partial x_j (\xi))_{i \in \mathbb{N}_m , j \in \mathbb{N}_n}$ with gradient $\nabla f_i(\xi)$ as $i$-th row is called the \textit{Jacobian} (\textit{matrix}) of $f$ at $\xi$. The matrix $\textnormal{H}f := (\frac{\partial^2 f}{\partial x_i \partial x_j})_{i,j \in \mathbb{N}_n} = \nabla(\nabla f)^t$ of the twofold partial derivatives is called the \textit{Hessian} (\textit{matrix}) of a real-valued function $f$.
\end{defn}

\begin{ex}\label{ex_Jacobian}
For $A \in \mathbb{R}^{m \times n} , b \in \mathbb{R}^{m \times 1}$ the affine function $f(x) := Ax+b$ is partially differentiable (overall in $\mathbb{R}^n$) with constant Jacobian $\nabla f = A$.
\end{ex}

\begin{defn}\label{def_localExtremum}
A point $x_0$ of an open set $U$ is called a \textit{local minimum point} or \textit{local maximum point} of a function $f:U \to \mathbb{R}$ when there is a $\delta > 0$ s.t. $f(x) \ge f(x_0)$ or $f(x) \le f(x_0)$, respectively, for all $x \in U$ with $\|x - x_0\| < \delta$. A local etremum point is called \textit{isolated} when its defining inequality is strict for $x \ne x_0$.
\end{defn}

The following three facts are fundamental for applications of differentiation theory. The first one is from Fermat, the second one from Cauchy, the third one from Lagrange.

\begin{prop}\label{prop_meanValue}
a) For a local extremum point $\xi$ of a differentiable function $f:]a,b[ \to \mathbb{R}$ it holds $f'(\xi)=0$.

b) A continuous function $f:[a,b] \to \mathbb{R}$ that is differentiable in $]a,b[$ there is some $\xi \in ]a,b[$ with $f(b)-f(a)=f'(\xi)(b-a)$.

c) For an $n$-times differentiable function $f:[a,b] \to \mathbb{R}$ and $x,x_0 \in [a,b]$ there is a number $\xi$ between $x$ and $x_0$ s.t.
\begin{equation*}
f(x) = \sum\limits_{k=0}^{n-1} \frac{f^{(k)}(x_0)}{k!} (x-x_0)^k + \frac{f^{(n)}(\xi)}{n!}(x-x_0)^n .
\end{equation*}
\end{prop}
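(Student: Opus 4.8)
The plan is to prove the three parts in their logical order, each serving the next. For part a) I would argue via the sign of difference quotients: assume without loss of generality that $\xi$ is a local minimum point (otherwise replace $f$ by $-f$), so that $f(x)\ge f(\xi)$ for all $x\in{}]a,b[$ with $|x-\xi|$ sufficiently small. For such $x>\xi$ the quotient $\bigl(f(x)-f(\xi)\bigr)/(x-\xi)$ is $\ge 0$, whereas for such $x<\xi$ it is $\le 0$. Since $f$ is differentiable at $\xi$ (cf.\ Definition \ref{def_diff}), the limit of this quotient as $x\to\xi$ exists and equals $f'(\xi)$ irrespective of the side from which $x$ approaches, so $f'(\xi)\ge 0$ and $f'(\xi)\le 0$ hold simultaneously, whence $f'(\xi)=0$.

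For part b) I would first dispose of the special case $f(a)=f(b)$ (Rolle's theorem). Since $[a,b]$ is compact, $f$ attains both a maximum and a minimum on it by Theorem \ref{thm_compact}. If both are attained only at the endpoints, then $f$ is constant and $f'$ vanishes identically; otherwise an extremum is attained at an interior point $\xi$, and part a) gives $f'(\xi)=0$. For the general statement I then apply this special case to
\[ g(x):=f(x)-f(a)-\frac{f(b)-f(a)}{b-a}\,(x-a), \]
which is continuous on $[a,b]$, differentiable on $]a,b[$, and satisfies $g(a)=g(b)=0$; a point $\xi$ with $g'(\xi)=0$ then satisfies $f'(\xi)=\bigl(f(b)-f(a)\bigr)/(b-a)$.

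For part c) I may assume $x\neq x_0$, the case $x=x_0$ being immediate. I define the number $M$ by the requirement
\[ f(x)=\sum_{k=0}^{n-1}\frac{f^{(k)}(x_0)}{k!}\,(x-x_0)^k+M\,(x-x_0)^n, \]
and consider, for $t$ in the closed interval with endpoints $x_0$ and $x$, the auxiliary function
\[ g(t):=f(x)-\sum_{k=0}^{n-1}\frac{f^{(k)}(t)}{k!}\,(x-t)^k-M\,(x-t)^n . \]
Here $g(x)=0$ holds trivially and $g(x_0)=0$ by the choice of $M$; moreover $g$ is differentiable, since $f^{(0)},\dots,f^{(n-1)}$ are differentiable by hypothesis (and therefore continuous). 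Rolle's theorem — the special case of part b) just established, valid on an interval regardless of the orientation of its endpoints — furnishes a $\xi$ strictly between $x_0$ and $x$ with $g'(\xi)=0$. On differentiating $g$ the sum telescopes: the derivative of $-\tfrac{f^{(k)}(t)}{k!}(x-t)^k$ is $-\tfrac{f^{(k+1)}(t)}{k!}(x-t)^k+\tfrac{f^{(k)}(t)}{(k-1)!}(x-t)^{k-1}$, and summing over $k=0,\dots,n-1$ all terms cancel except the surviving term $-\tfrac{f^{(n)}(t)}{(n-1)!}(x-t)^{n-1}$; together with the derivative $nM(x-t)^{n-1}$ of the last summand this yields $g'(t)=\bigl(nM-\tfrac{f^{(n)}(t)}{(n-1)!}\bigr)(x-t)^{n-1}$. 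Since $x-\xi\neq 0$, the equation $g'(\xi)=0$ forces $M=f^{(n)}(\xi)/n!$, which is exactly the claimed Taylor formula.

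The only genuine obstacle is the index bookkeeping in the telescoping step of part c): one must take care at $k=0$, where the term $\tfrac{f^{(k)}(t)}{(k-1)!}(x-t)^{k-1}$ is absent, and at $k=n-1$, which supplies the surviving term $-\tfrac{f^{(n)}(t)}{(n-1)!}(x-t)^{n-1}$. Everything else reduces to the extreme value theorem (Theorem \ref{thm_compact}) and the definition of the derivative, both available from the excerpt.
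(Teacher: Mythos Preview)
Your arguments for a) and b) are essentially the paper's: the same sign-of-difference-quotient reasoning for a), and for b) the same reduction to Rolle's theorem via subtraction of a linear function (the paper's auxiliary $g(x)=(b-a)f(x)-(f(b)-f(a))x$ differs from yours only by an affine rescaling).

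For c) you take a genuinely different route. The paper sets $r:=f-p$ with $p$ the Taylor polynomial of degree $n-1$, observes $r^{(k)}(x_0)=0$ for $k=0,\dots,n-1$, and then applies part b) $n$ times in succession to obtain a chain
\[
\frac{r(x)}{(x-x_0)^n}=\frac{r'(\xi_1)}{n(\xi_1-x_0)^{n-1}}=\cdots=\frac{r^{(n)}(\xi_n)}{n!},
\]
each step consuming one vanishing derivative at $x_0$. You instead fix $M$ so that the formula holds, build an auxiliary function of the \emph{base point} $t$ rather than of $x$, and apply Rolle just once because the derivative telescopes. Your approach is cleaner---a single application of Rolle, no sequence $\xi_1,\dots,\xi_n$ to track---whereas the paper's iterated argument makes more visible how each order of differentiability is used up and why exactly $n$-fold differentiability is the right hypothesis. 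Both proofs are standard and both are correct.
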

\begin{proof}
a) Without loss of generality we assume $\xi$ being a local maximum point. It follows $(f(\xi + h)-f(\xi))/h \le 0 \textnormal{ or } \ge 0$ for $0 < h < \delta$ or $0 > h > - \delta$, respectively. This implies
\begin{equation*}
f'(\xi) = \lim\limits_{h \to 0} \frac{f(\xi + h)-f(\xi)}{h} = 0 .
\end{equation*}

b) The function $g(x):=(b-a)f(x)-(f(b)-f(a))x$ has two extremum points in $[a,b]$ due to Theorem \ref{thm_compact}. If they are equal to $a$ and $b$, respectively, $g$ is constant on $[a,b]$, hence $g'(x)=0$ for all $x \in [a,b]$. Otherwise there is some local extremum point $\xi \in ]a,b[$ of $g$. This implies $g'(\xi)=0$ due to a). Hence in any case there is some $\xi \in ]a,b[$ with $0 = (b-a)f'(\xi) - (f(b)-f(a))$.

c) For the function $r(x):=f(x)-p_{n}(x)$ with $p_n$ denoting the polynomial function in the formula it holds $r^{(k)}(x_0)=0$ for $k \in \lbrace 0,1,...,n-1 \rbrace$. With help of b) this implies that $\xi \mapsto (x-x_0)^{n-k} r^{(k)}(\xi) - r^{(k)}(x)(\xi-x_0)^{n-k}$ has a zero between $x$ and $x_0$. Since we may assume $x\ne x_0$ without loss of generality it follows
\begin{equation*}
\frac{r(x)}{(x-x_0)^n} = \frac{r'(\xi_1)}{n(\xi_1-x_0)^{n-1}} = ...  = \frac{r^{(n)}(\xi_n)}{n!}
\end{equation*}
for some $\xi_1,...,\xi_n$ between $x$ and $x_0$. Because of $p_n^{(n)} \equiv 0$ this shows the assertion with $\xi := \xi_n$.
\end{proof}

\begin{rem}\label{rem_differentiation}
a) A real valued function $f:M \to \mathbb{R}$ of one real variable $x \in M \subseteq \mathbb{R}$ that is differentiable (at $\xi \in M$) is continuous (at $\xi$) because
\begin{equation*}
0 = f'(\xi) \lim\limits_{x \to \xi}(x - \xi) =  \lim\limits_{x \to \xi} (f(x)-f(\xi)) =  \lim\limits_{x \to \xi} f(x) - f(\xi) .
\end{equation*}

b) From Proposition \ref{prop_meanValue}b) it can be derived that for a partially differentiable function $f : U \to \mathbb{R}^m$ on an open set $U \subseteq \mathbb{R}^n$ with continuity of $\nabla f$ at a point $\xi \in U$ it holds
\begin{equation}\label{eq_diff}
\lim\limits_{x \to \xi} \frac{\| f(x)-f(\xi)-\nabla f(\xi)(x-\xi) \|}{\| x-\xi \|} = 0 .
\end{equation}
In particular, then also $f$ is continuous at $\xi$.
\end{rem}

\begin{defn}
A function that fulfills Equation \ref{eq_diff} is called (\textit{totally}) \textit{differentiable} at $\xi$.
\end{defn}

The condition of continuous partial derivatives in Remark \ref{rem_differentiation}b) is not superfluous as shown by the following example.

\begin{ex}\label{ex_partDiff}
The function $f:\mathbb{R}^2 \to \mathbb{R}$ defined by $f(0,0):=0$ and
\begin{equation*}
f(x,y) := \frac{x y}{x^2+y^2} \textnormal{ for } (x,y) \ne (0,0)
\end{equation*}
is not continuous at $(0,0)$ as shown by the sequence $n \mapsto (1/n,1/n)$. But $f$ is partially differentiable. Show that $(0,0)$ is not a local extremum point in spite of $\nabla f(0,0)=(0,0)$ (s. the next Theorem!).
\end{ex}

\begin{thm}\label{thm_meanValue}
For a partially differentiable function $f : U \to \mathbb{R}$ on an open set $U \subseteq \mathbb{R}^n$ and a local extremum point $\xi \in U$ of $f$ it holds $\nabla f(\xi) = o^t$. If $f$ is continuous at two different points $a,b \in U$ with line segment $L := \lbrace (1-\lambda)a + \lambda b : 0 < \lambda < 1 \rbrace \subset U$ and if $f$ is totally differentiable on $L$ it holds
\begin{equation*}
f(b)-f(a)=\nabla f(\xi) (b-a)
\end{equation*}
for some $\xi \in L$.\footnote{This is called the 'mean value theorem'. For $n=1$, i.e. $U \subseteq \mathbb{R}$, it is the assertion of Proposition \ref{prop_meanValue}b) again.} If $f$ is even two times differentiable on $L$ it holds
\begin{equation*}
f(b) = f(a) + \nabla f(a)(b-a) + \frac{1}{2}(b-a)^t \textnormal{H}f(\xi) (b-a)
\end{equation*}
for some $\xi \in L$.\footnote{This formula yields a criterion on local extrema of $f$; s. Proposition \ref{prop_hessian}! For $U \subseteq \mathbb{R}$ it is the assertion of Proposition \ref{prop_meanValue}c) with $n:=2$.}
\end{thm}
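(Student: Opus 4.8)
The plan is to reduce all three assertions to the one-variable statements of Proposition \ref{prop_meanValue}, which is precisely why that proposition was placed just before. For the first assertion, fix $j \in \mathbb{N}_n$ and consider the function $\lambda \mapsto f(\xi_1,\dots,\xi_{j-1},\lambda,\xi_{j+1},\dots,\xi_n)$ of one real variable. Since $U$ is open and $\xi$ is a local extremum point of $f$, this function is defined on an open interval around $\xi_j$ and has a local extremum there at $\xi_j$; hence its derivative, which by definition equals $\partial f/\partial x_j(\xi)$, vanishes by Proposition \ref{prop_meanValue}a). As $j$ was arbitrary, $\nabla f(\xi)=o^t$.

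For the mean value assertion, I would introduce the parametrisation $\gamma(\lambda):=a+\lambda(b-a)$ of the closed segment from $a$ to $b$ and set $g:=f\circ\gamma:[0,1]\to\mathbb{R}$. On the open interval $(0,1)$ the map $g$ is continuous because total differentiability implies continuity (Remark \ref{rem_differentiation}b)), and at the endpoints $0$ and $1$ continuity follows from the hypothesised continuity of $f$ at $a$ and $b$ together with continuity of $\gamma$; thus $g$ is continuous on $[0,1]$. For $\lambda_0\in(0,1)$, writing $p:=\gamma(\lambda_0)\in L$ and using $\gamma(\lambda)-p=(\lambda-\lambda_0)(b-a)$, Equation \ref{eq_diff} for $f$ at $p$ gives $|g(\lambda)-g(\lambda_0)-\nabla f(p)(b-a)(\lambda-\lambda_0)|=o(|\lambda-\lambda_0|)$, so $g$ is differentiable at $\lambda_0$ with $g'(\lambda_0)=\nabla f(p)(b-a)$. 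Proposition \ref{prop_meanValue}b) applied to $g$ now yields some $\lambda_0\in(0,1)$ with $g(1)-g(0)=g'(\lambda_0)$, which is the claimed identity with $\xi:=\gamma(\lambda_0)$.

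For the Taylor-type assertion I would keep the same $g$ and differentiate once more: from $g'(\lambda)=\nabla f(\gamma(\lambda))(b-a)=\sum_{j=1}^{n}\frac{\partial f}{\partial x_j}(\gamma(\lambda))\,(b_j-a_j)$ and the twofold differentiability of $f$ on $L$, the chain-rule computation above applied to each $\partial f/\partial x_j$ gives $g''(\lambda)=\sum_{i,j=1}^{n}\frac{\partial^2 f}{\partial x_i\partial x_j}(\gamma(\lambda))\,(b_i-a_i)(b_j-a_j)=(b-a)^t\,\textnormal{H}f(\gamma(\lambda))\,(b-a)$ for $\lambda\in(0,1)$ (no symmetry of $\textnormal{H}f$ is needed here). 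Then Proposition \ref{prop_meanValue}c) with $n:=2$, $x:=1$ and $x_0:=0$ produces some $\lambda_0$ between $0$ and $1$ with $g(1)=g(0)+g'(0)+\frac{1}{2}g''(\lambda_0)$, which is the asserted formula with $\xi:=\gamma(\lambda_0)$.

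The main obstacle is not any single computation but the differentiability bookkeeping: one must read ``(twice) differentiable on $L$'' as total differentiability, so that the chain rule through $\gamma$ is legitimate and $g'(0)=\nabla f(a)(b-a)$ makes sense; one must ensure that the one-sided derivative $g'(0)$ exists, which forces $f$ to be (totally) differentiable at $a$, consistent with $\nabla f(a)$ appearing in the statement; and one must check that Proposition \ref{prop_meanValue}b),c) can be invoked with only interior differentiability of $g$ together with continuity up to the endpoints. Once these hypotheses are pinned down, everything reduces to the routine one-dimensional arguments above.
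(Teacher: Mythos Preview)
Your proposal is correct and follows exactly the approach the paper intends: the paper's own proof is the single sentence ``The assertions follow from Proposition \ref{prop_meanValue}a),b),c), respectively,'' and you have spelled out precisely how each reduction to one variable via the parametrisation $\gamma(\lambda)=a+\lambda(b-a)$ works. Your closing remarks about the differentiability bookkeeping (in particular the need for total differentiability at $a$ so that $g'(0)=\nabla f(a)(b-a)$ exists) are well taken and in fact sharper than the paper's own treatment.
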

\begin{proof}
The assertions follow from Proposition \ref{prop_meanValue}a),b),c), respectively.
\end{proof}

\subsection{Basic Algebra}\label{subsec_algebra}
We recall some basic algebraic notions and facts as usual in any elementary textbook about algebra, like e.g. \cite{vdWaerden1}.

\begin{defn}\label{def_equivClass}
A family of non-empty subsets $M_j, j \in J$ of a set $M$ with
\begin{equation*}
M = \underset{j\in J} \cup M_j \textnormal{ and } M_i \cap M_j = \emptyset
\end{equation*}
is called a \textit{disjoint union} of $M$. An $M_j$ is called an \textit{equivalence class} of $M$.\footnote{The union of the $M_j \times M_j$ is called an \textit{equivalence relation} of $M$.}
\end{defn}

\begin{ex}
For $n \in \mathbb{N}$ the sets $M_j := \lbrace j + m n : m \in \mathbb{N} \rbrace$ for $j \in \mathbb{N}_n$ define $n$ equivalence classes of $\mathbb{N}$. In case $n := 12$ they represent the hours of an analogue clock.
\end{ex}

\begin{defn}\label{def_group}
A function $f : G \times G \to G$ is called a \textit{group} (of set $G$) when
\begin{itemize}
\item it is \textit{associative}, i.e. $f \left(x , f(y,z)\right) = f \left(f(x,y),z \right)$ for all $x,y,z \in G$;
\item there is a \textit{neutral element} $e \in G$, i.e. $f(x,e) = x$ for all $x \in G$;
\item every $x \in G$ has an \textit{inverse} $y \in G$, i.e. $f(x,y)$ is neutral.
\end{itemize}
It is called \textit{commutative} or also \textit{abelian} when $f(x,y)=f(y,x)$ for all $x,y \in G$. If not being mistaken we use the notation $x y := f(x,y)$ and just write $G$ instead of $f : G \times G \to G$.
\end{defn}

\begin{prop}\label{prop_group}
In a group $G$ there is exactly one neutral element $e$. It holds $ex=x$ for all $x \in G$.\footnote{These two assertions follow from the first two group properties.} There is exactly one inverse $x^{-1} := y$ of $x \in G$. It fulfills the identity $x^{-1}x=e$. In case $x y = x$ or $x y = y$ it follows $y = e$ or $x = e$, respectively.
\end{prop}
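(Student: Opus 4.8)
The plan is to bootstrap from the one-sided axioms of Definition \ref{def_group} to the full two-sided statements, in the classical order. Throughout I write $xy$ for $f(x,y)$, I use associativity without comment, I fix the right-neutral element $e$ from the second axiom ($xe=x$ for all $x$), and I read ``$y$ is an inverse of $x$'' as $xy=e$ for that $e$ (as provided by the third axiom).

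The first step, and the only one that needs a genuine idea, is that a right inverse is automatically a left inverse. Given $x$, pick $y$ with $xy=e$ and then pick $z$ with $yz=e$; then
\[ yx=(yx)e=(yx)(yz)=y(xy)z=yez=(ye)z=yz=e , \]
using $xy=e$, $ye=y$ and $yz=e$. Hence $yx=xy=e$. Once this is available, $e$ is also left-neutral: for any $x$, choosing $y$ with $xy=yx=e$ gives $ex=(xy)x=x(yx)=xe=x$, which is the asserted identity $ex=x$.

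Next I would settle the uniqueness claims. If $e'$ is any element with $xe'=x$ for all $x$, then $ee'=e$ on the one hand and $ee'=e'$ by left-neutrality of $e$ on the other, so $e'=e$: the neutral element is unique. For inverses, if $ax=e$ and $xb=e$ then $a=ae=a(xb)=(ax)b=eb=b$; applied to any two inverses of a given $x$ this shows $x^{-1}:=y$ is unique and coincides with every one-sided inverse, and by the first step it satisfies $x^{-1}x=e$.

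Finally the two cancellation statements drop out by left- or right-multiplying with $x^{-1}$. If $xy=x$, then $y=ey=(x^{-1}x)y=x^{-1}(xy)=x^{-1}x=e$; if $xy=y$, then $x=xe=x(yy^{-1})=(xy)y^{-1}=yy^{-1}=e$. No serious obstacle arises here: the entire argument is a chain of rearrangements, and the single delicate point is the very first computation showing that one-sided inverses are two-sided, since left-neutrality, uniqueness of the neutral element, and uniqueness of inverses all rest on it.
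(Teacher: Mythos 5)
Your proof is correct and follows essentially the same route as the paper's: both hinge on the computation $yx=(yx)(yz)=y(xy)z=yz=e$ showing that a right inverse is automatically a left inverse, after which left-neutrality ($ex=x$), uniqueness of $e$ and of inverses, and the two cancellation rules all drop out by the same short rearrangements. The only cosmetic difference is that the paper names $d:=xy$ and $e:=yz$ as two a priori distinct neutral elements and shows $d=e$ along the way, whereas you fix $e$ from the second axiom at the outset and read the inverse axiom as producing $xy=e$ for that specific $e$.
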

\begin{proof}
Because of associativity we may omit brackets. Then for $x,y,z \in G$ and neutral elements $d,e \in G$ with $x y = d$ and $y z = e$ it follows $y x = y x e = y x y z = y d z = y z = e$, hence $d x = x y x = x e = x$ and, in particular, $e = d e = d$. For another inverse $\tilde{y}$ of $x$ it follows $\tilde{y} = e \tilde{y} = y x \tilde{y} = y e = y$. The identity $x y = x$ implies $y = e y = z x y = z x = e$ for the inverse $z$ of $x$. The other case of the last assertion follows analogously.
\end{proof}

\begin{ex}
Show that the third condition of the definition (about inverses) can not be formulated with $f(y,x)$ instead of $f(x,y)$. Hint: Consider the \textit{projection} $f(x , y) := x$.
\end{ex}

\begin{defn}\label{def_subgroup}
For a group $f : G \times G \to G$ and a subset $H$ of $G$ with $f(H \times H) \subseteq H$ the restriction $\tilde{f}$ of $f$ to $H \times H$ is called a \textit{subgroup} of $f$ when $\tilde{f} : H \times H \to H$ is a group. For a subgroup $H$ of $G$ and an element $g \in G$ the set $gH := \lbrace f(g,h) : h \in H \rbrace$ and $Hg := \lbrace f(h,g) : h \in H \rbrace$ is called a \textit{left coset} and \textit{right coset}, respectively, of $g$ with respect to $H$. A subgroup $H$ of $G$ is called \textit{normal} when $gH=Hg$ for all $g \in G$.
\end{defn}

\begin{prop}\label{prop_subgroup}
A non-empty set $H \subseteq G$ is a subgroup of a group $f : G \times G \to G$ if and only if $f(x,y^{-1}) \in H$ for all $x,y \in H$. Then the set of all left cosets defines a set of equivalence classes of $G$. The same holds for the right cosets. In case of a finite group $G$ all these cosets have the same number of elements, namely the number of elements of $H$. For a normal subgroup $H$ of $G$ the map $(gH,hH) \mapsto ghH$ defines a group of the set $G/H$ of left cosets, the so-called factor group of $G$ by $H$.
\end{prop}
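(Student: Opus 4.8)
The plan is to dispatch the five assertions of the proposition one after another, leaning throughout on the cancellation facts of Proposition \ref{prop_group}.

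First I would prove the subgroup criterion. For the ``only if'' direction, the one subtlety is that Definition \ref{def_subgroup} demands only that the restriction $\tilde f$ be \emph{some} group, so I must first identify its neutral element and its inverses with those of $G$: the neutral $e_H$ of $\tilde f$ satisfies $e_H e_H = e_H$ in $G$, hence $e_H = e$ by Proposition \ref{prop_group}, and similarly the $\tilde f$-inverse of $y$ equals $y^{-1}$ by uniqueness of inverses; closure of $H$ then yields $f(x,y^{-1}) \in H$. For the ``if'' direction, picking any $x \in H$ (possible since $H \ne \emptyset$) gives $e = f(x,x^{-1}) \in H$, then $f(e,y^{-1}) = y^{-1} \in H$ for all $y \in H$, and then $f(x,y) = f(x,(y^{-1})^{-1}) \in H$; with associativity inherited, $\tilde f : H \times H \to H$ is a group, so $H$ is a subgroup.

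Next come the coset statements. Each $gH$ contains $g = f(g,e)$, so the left cosets are nonempty and cover $G$; and if $gH \cap g'H \ne \emptyset$, say $f(g,h) = f(g',h')$, then $g = f\bigl(g',f(h',h^{-1})\bigr)$ with $f(h',h^{-1}) \in H$, giving $gH \subseteq g'H$ and, symmetrically, equality — so distinct left cosets are disjoint and the set of them is a disjoint union of $G$ in the sense of Definition \ref{def_equivClass}. The right-coset case is the mirror image. For a finite $G$, the map $h \mapsto f(g,h)$ is a surjection $H \to gH$ by definition and an injection because $f(g,h_1) = f(g,h_2)$ forces $h_1 = f\bigl(g^{-1},f(g,h_1)\bigr) = f\bigl(g^{-1},f(g,h_2)\bigr) = h_2$; hence $|gH| = |H|$, and likewise $|Hg| = |H|$.

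Finally, for a normal $H$ I would verify that $(gH,hH) \mapsto ghH$ is well defined: if $g_1 \in gH$ and $h_1 \in hH$, write $g_1 = f(g,a)$ and $h_1 = f(h,b)$ with $a,b \in H$, use normality to rewrite $f(a,h) = f(h,c)$ for some $c \in H$, and conclude $g_1 h_1 \in ghH$, hence $g_1 h_1 H = ghH$ by symmetry. After that, associativity is inherited from $G$, the class $eH = H$ is neutral, and $g^{-1}H$ inverts $gH$, so $G/H$ is a group. I expect this well-definedness step to be the only genuinely delicate point — it is precisely where normality enters — while everything else is bookkeeping with the cancellation lemmas of Proposition \ref{prop_group}.
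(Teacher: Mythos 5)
Your proof is correct and follows essentially the same route as the paper's: subgroup criterion in both directions via Proposition \ref{prop_group}, the bijection $h \mapsto f(g,h)$ for the cardinality claim, the standard partition argument for cosets, and well-definedness of the quotient multiplication via normality. The one place you are more explicit than the paper is the ``only if'' direction, where you first identify the neutral element and inverses of $\tilde f$ with those of $G$ before invoking closure — the paper silently assumes this identification, so your extra care is a small improvement rather than a different approach.
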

\begin{proof}
If $H \subseteq G$ is a group then for every $y \in H$ the inverse $y^{-1} \in G$ is also an element of $H$. This shows already one direction of the first assertion. By assumption of the other direction the neutral element $e = x x^{-1}$ and the inverse $x^{-1} = e x^{-1}$ are in $H$ for all $x \in H$. In particular, we have also $x y = x (y^{-1})^{-1} \in H$ for all $x,y \in H$ by the assumption. This proves the other direction. For every $g \in G$ the function $h \mapsto g h$ defines a bijection between $H$ and the left coset $g H$ due to Proposition \ref{prop_group}. This proves the last assertion. For $h_1,h_2 \in H$ with $g_1 h_1 = g_2 h_2$ it holds $g_1 = g_2 h_2 h_1^{-1}$. This implies $g_1 H = g_2 H$ by the latter bijection. So different left cosets are even disjoint. And that the union of the left cosets is $G$ follows from $g \in g H$ for every $g \in G$ since $H$ contains the neutral element. An analogous argumentation holds for right cosets. For proving the last assertion we presuppose $gH=\tilde{g}H , hH=\tilde{h}H$ for $g,\tilde{g},h,\tilde{h} \in G$. Then, by assumption, for every $k \in H$ there are $l,m,n,p \in H$ s.t.$g h k = g \tilde{h} l = g m \tilde{h} = \tilde{g} n \tilde{h} = \tilde{g} \tilde{h} p$. This shows $ghH \subseteq \tilde{g}\tilde{h}H$. By symmetry of argumentation the other inclusion follows also. Hence the map is well-defined. That $G/H$ is a group with neutral element $H$ and inverse element $g^{-1}H$ of $gH$ is easy to verify.
\end{proof}

\begin{rem}\label{rem_subgroup}
A consequence of this proposition is: In case of a group $G$ with a finite number $|G|$ of elements the number of left cosets equals the number of right cosets and equals $|G| / |H|$ for every subgroup $H$ of $G$. It is called the \textit{index} of $H$ in $G$. In particular: $|H|$ divides $|G|$, and in case of a normal subgroup $H$ of $G$ the index equals $|G/H|$.
\end{rem}

\begin{ex}\label{ex_cyclic}
In a finite group $G$ every element $g \in G$ \textit{generates} a \textit{cyclic} subgroup $\lbrace g^n : n \in \mathbb{N} \rbrace$ of $G$ whereby $g^n := g g ... g$. Its number $\textnormal{ord}(g)$ of elements equals the smallest $n \in \mathbb{N}$ s.t. $g^n=e$ is the neutral element $e$. The \textit{order} $\textnormal{ord}(g)$ of $g$ divides the \textit{order} $|G|$ of $G$ and every other $n \in \mathbb{N}$ with $g^n=e$. Especially, it holds $g^{|G|}=e$ for all $g \in G$.
\end{ex}

\begin{rem}\label{rem_cyclic_criterion}
A sufficient condition for a finite group $G$ being cyclic is that for all divisors $n \in \mathbb{N}$ of $|G|$ the equation $g^n=e$ has at most $n$ solutions $g \in G$. For a proof s. \cite{Lorenz}, ch.9, sect.3, p.100/101!
\end{rem}

The left (or right) cosets are equivalence classes of the underlying group. This will turn out by help of the following general notion. (S. Example \ref{ex_subgroupAction})

\begin{defn}\label{def_action}
A group $G$ with neutral element $e$ \textit{acts} on a set $M$ \textit{from left} when there is a function $a : G \times M \to M$ whose values $g m := a(g,m)$ fulfill
\begin{itemize}
\item $e m = m$ for all $m \in M$ ,
\item $(g h) m = g (h m)$ for all $g,h \in G$ and $m \in M$ .
\end{itemize}
The set $G m := \lbrace g m : g \in G \rbrace$ is called the \textit{orbit} and $G_m := \lbrace g \in G : g m = m \rbrace$ the \textit{fix group} of $m \in M$. Analogous is the \textit{group action from right} with \textit{orbits} $m G$ and \textit{fix groups} ${_m}G$.
\end{defn}

\begin{rem}\label{rem_action}
a) The orbits are equivalence classes of $M$ because their union is obviously $M$ and $g m = h n$ implies $m = g^{-1} h n$, hence $G m = G n$, for $g,h \in G , m,n \in M$.

b) According to Proposition \ref{prop_subgroup} a fix group is indeed a group since $g m = m$ implies $g^{-1}m = g^{-1} (g m) = (g^{-1}g) m = m$ for all $g \in G , m \in M$.

c) For a fixed $m \in M$ the identity $g m = h m$ is equivalent with $g^{-1} h \in G_m$. By Remark b) and Proposition \ref{prop_subgroup} the latter is equivalent with the identity $g G_m = h G_m$ of left cosets. This shows that $g m \mapsto g G_m$ is a well-defined bijection from the orbit $G m$ onto the set of left cosets of $G_m$. Hence, in case of finiteness, the number of elements of an orbit $Gm$ equals the index of the fix group $G_m$ in $G$.\footnote{Together with Remark a) this yields a \textit{class formula} for the number of elements of $M$.}
\end{rem}

\begin{ex}\label{ex_subgroupAction}
Every subgroup $H$ of a group $G$, written multiplicatively, acts on $G$ from left by $a(h,g) := h g$ and from right by $a(g,h) := g h$ for $g \in G, h\in H$. Its orbits are the right cosets and left cosets, respectively.
\end{ex}

\begin{defn}\label{def_homomorphism}
For two groups $f : G \times G \to G$ and $g : H \times H \to H$ a function $h : G \to H$ is called a (\textit{group}) \textit{homomorphism} when $h(f(x,y)) = g(h(x),h(y))$ for all $x,y \in G$. A bijective homomorphism $h : G \to H$ is called \textit{isomorphism}. Then $G$ and $H$ are called \textit{isomorphic}. An injective homomorphism is called \textit{monomorphism}. A surjective homomorphism is called \textit{epimorphism}.
\end{defn}

\begin{ex}
a) The natural logarithm $\ln : \mathbb{R}^{+} \to \mathbb{R}$ is an isomorphism with respect to multiplication in $ \mathbb{R}^{+}$ and addition in $\mathbb{R}$.

b) For a normal subgroup $H$ of a group $G$ the \textit{canonical projection} $\pi : G \to G/H$ defined by $\pi(g) := gH$ is a surjective homomorphism with preimage $\pi^{-1}(H)=H$ of $H$.
\end{ex}

\begin{prop}\label{prop_homomorphism}
For a group homomorphism $h : G \to H$ and a subgroup $F$ of $G$ or $H$ the image $h(F)$ or preimage $h^{-1}(F)$ is a subgroup of $H$ or $G$, respectively. A homomorphism is injective if the preimage $\ker h := h^{-1}(e)$ of the neutral element $e \in H$ consists of the neutral element of $G$ only. For a normal subgroup $F$ of $H$ the group $h^{-1}(F)$ is normal in $G$. The preimage map $z \mapsto h^{-1}(z)$ defines an isomorphism between $h(G)$ and $G/\ker h$. Its inverse maps $x \ker h$ to $h(x)$ for all $x \in G$.
\end{prop}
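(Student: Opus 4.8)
The plan is to reduce everything to the subgroup criterion of Proposition \ref{prop_subgroup} (a non-empty subset closed under $(x,y)\mapsto xy^{-1}$ is a subgroup) together with the two elementary identities $h(e)=e$ and $h(x^{-1})=h(x)^{-1}$, where $e$ denotes the relevant neutral element. First I would record these: applying $h$ to $ee=e$ and cancelling via Proposition \ref{prop_group} gives $h(e)=e$, and then $h(x)h(x^{-1})=h(xx^{-1})=h(e)=e$ identifies $h(x^{-1})$ as $h(x)^{-1}$.

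For the subgroup assertions I would check the criterion from Proposition \ref{prop_subgroup}. Given a subgroup $F$ of $G$ and $a,b\in F$, we have $h(a)h(b)^{-1}=h(ab^{-1})\in h(F)$ since $ab^{-1}\in F$, and $h(F)$ is non-empty because $e=h(e)\in h(F)$; hence $h(F)$ is a subgroup of $H$. Dually, for a subgroup $F$ of $H$ and $a,b\in h^{-1}(F)$ one gets $h(ab^{-1})=h(a)h(b)^{-1}\in F$, so $ab^{-1}\in h^{-1}(F)$, which is non-empty as $h(e)=e\in F$; hence $h^{-1}(F)$ is a subgroup of $G$. For injectivity: if $\ker h=\{e\}$ and $h(x)=h(y)$, then $h(xy^{-1})=e$ forces $xy^{-1}\in\ker h$, so $x=y$; conversely $e\in\ker h$ always, and injectivity makes it the only element. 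For normality of $N:=h^{-1}(F)$ when $F$ is normal in $H$: for $g\in G$ and $n\in N$ I would compute $h(gng^{-1})=h(g)h(n)h(g)^{-1}$, which lies in $F$ because $h(n)\in F$ and $h(g)F=Fh(g)$ rearranges to $h(g)Fh(g)^{-1}=F$; thus $gng^{-1}\in N$, whence $gn=(gng^{-1})g$ shows $gN\subseteq Ng$, and the symmetric argument with $g^{-1}$ gives $Ng\subseteq gN$, so $gN=Ng$.

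Finally, for the isomorphism I would invoke the facts just proved: the trivial subgroup $\{e\}$ is normal in $H$, so $\ker h=h^{-1}(\{e\})$ is normal in $G$, and $G/\ker h$ is a group by Proposition \ref{prop_subgroup}. The key observation is that for $z\in h(G)$ and any $x_0$ with $h(x_0)=z$, the fibre $h^{-1}(z)$ equals the coset $x_0\ker h$, since $h(x)=h(x_0)$ is equivalent to $x_0^{-1}x\in\ker h$; consequently $z\mapsto h^{-1}(z)$ is a well-defined map $h(G)\to G/\ker h$. The candidate inverse $x\ker h\mapsto h(x)$ is well-defined for the same reason, and it is a homomorphism because $h(xy)=h(x)h(y)$; the two maps are mutually inverse bijections by construction, so each is an isomorphism. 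The only point that requires genuine care — and hence the main obstacle, modest as it is — is keeping the well-definedness bookkeeping on cosets straight; everything else is a direct application of the subgroup criterion.
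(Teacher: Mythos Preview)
Your proof is correct and follows essentially the same route as the paper's: the subgroup criterion from Proposition~\ref{prop_subgroup} for images and preimages, the kernel argument for injectivity, conjugation for normality of $h^{-1}(F)$, and the fibre identification $h^{-1}(h(x))=x\ker h$ for the isomorphism. The paper merely abbreviates the first part as ``easy to verify'' and builds the isomorphism starting from $x\ker h\mapsto h(x)$ rather than from $z\mapsto h^{-1}(z)$, but the content is the same.
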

\begin{proof}
The first assertion is easy to verify. For proof of the second assertion we assume $h(x)=h(y)$ for $x,y \in G$ and a homomorphism $h : G \to H$ between groups $f : G \times G \to G$ and $g : H \times H \to H$. Then $h(f(x,y^{-1})) = g(h(x),h(y^{-1})) = g(h(x),h(y)^{-1})$ is the neutral element of $H$. Therefore, by assumption $f(x,y^{-1})$ is the neutral element of $G$. This implies $x = y$ due to Proposition \ref{prop_group}, whence injectivity. For the last assertion we assume $h(x) \in F$ for some $x \in G$. Then it holds $h(yxy^{-1}) = h(y)h(x)h(y)^{-1} \in h(y) F h(y)^{-1} = F$, i.e. $yxy^{-1} \in h^{-1}(F)$, for all $y \in G$. Hereby we use $x y$ as composition of $x,y \in G$ or $H$. Thus it is proven $y h^{-1}(F) = h^{-1}(F) y$ for all $y \in G$, i.e. normality of $h^{-1}(F)$. Since $\lbrace e \rbrace$ is a normal subgroup of $H$ the factor group $G/\ker h$ exists. By definition of $\ker h$ it holds $h(x \ker h) = h(x)$ for all $x \in G$. If $x \ker h = y \ker h$ for $x,y \in G$ it holds $ x = y z$ for some $z \in \ker h$, hence $h(x) = h(y)h(z) = h(y)$. So, the map $x \ker h \mapsto h(x)$ is well-defined. It is a homorphism since $h$ is so. Since $\ker h$ is the neutral element of $G/\ker h$ it is injective. Because of $x \ker h = h^{-1}(h(x))$ this finishes the proof.
\end{proof}

\begin{rem}\label{rem_cyclic}
For $k,l \in \mathbb{N}_n$ we set $f(k,l) := k+l$ in case $k+l \le n$ and $f(k,l) := k+l-n$ otherwise. Then $f : \mathbb{N}_n \times \mathbb{N}_n \to \mathbb{N}_n$ is a group with neutral element $n$. It is cyclic with generator $1$. Thus there exists a cyclic group with $n \in \mathbb{N}$ elements. For another group $G = \lbrace g , g^2 , ... , g^n \rbrace$ with $n$ elements the function $h(k) := g^k$ defines a homomorphism $h : \mathbb{N}_n \to G$ because $h(n)=g^n$ is the neutral element of $G$ and therefore $h(f(k,l)) = g^{k+l} = g^k g^l$ for all $k,l \in \mathbb{N}_n$. Since the powers $g^k \in G$ are pairwise different the preimage of $g^n$ is only $n$. Hence $h$ is injective. Surjectivity is clear by definition of $h$. Thus, up to isomorphy, there is only one cyclic group $C_n$ with $n$ elements. The subgroups of $C_n$ are, up to isomorphy, all the $C_m$ for the divisors $m$ of $n$. Because in case $n = k m$ for $k,m \in \mathbb{N}_n$ the element $h := g^k$ generates a subgroup of $G$ with $m$ elements. Due to Proposition \ref{prop_subgroup} this shows the assertion.
\end{rem}

\begin{ex}
Every group of prime $p$ elements is isomorphic to $C_p$.
\end{ex}

\begin{rem}\label{rem_groupProduct}
For two groups $G,H$ the operation $(g,h)(g',h'):=(gg',hh')$ defines a group of the cartesian product $G \times H$ of set $G$ with set $H$.
\end{rem}

\begin{defn}\label{def_groupProduct}
The group of $G \times H$ like in this remark is called the \textit{direct product} of $G$ with $H$ and briefly denoted by $G \times H$.
\end{defn}

\begin{ex}
With neutral element $0$ and the other element $1$, called \textit{bits}, of $C_2 = \lbrace 0 , 1 \rbrace$ the eight-fold direct product $C_2^8 := C_2 \times ... \times C_2$ of $C_2$ is a group whose elements are called \textit{bytes}. Its neutral element is the \textit{zero-byte} $(0,0,0,0,0,0,0,0)$. Its group operation is called \textit{exclusive-or} with the property $1+1 = 0$ in each coordinate.
\end{ex}

\begin{defn}\label{def_ring}
A function $(f,g) : K \times K \to K \times K$ is called a (\textit{commutative}) \textit{field} when $f$ is a commutative group, $g$ restricted to $K^{\times} \times K^{\times}$ is also a commutative group and for all $x,y,z \in K$ we have \textit{distributivity} $g \left(f(x,y),z\right) = f\left(g(x,z),g(y,z)\right)$. Hereby we set $K^{\times} := K \setminus \lbrace 0 \rbrace$ where $0$ denotes the neutral element \textit{zero} of $f$. Then $f$ is called \textit{addition} and $g$ \textit{multiplication}. We write $x+y := f(x,y)$ and $x y := g(x,y)$.\footnote{So the axiom of distributivity reads $(x+y)z = x z + y z$, following the convention that multiplication precedes addition.} If all axioms of a field are fulfilled but the existence of inverses with respect to multiplication we call $(f,g)$ a \textit{commutative ring}. When its multiplication is not commutative but $g \left(z,f(x,y)\right) = f\left(g(z,x),g(z,y)\right)$ for all $x,y,z$ we just call it a (\textit{non-commutative}) \textit{ring}. Its neutral element $1$ with respect to multiplication is called \textit{one}. By abuse of notation we denote a ring by its set symbol $R$. An element $x \in R$ \textit{divides} an element $y \in R$ when there is some $z \in R$ with $x z = y$. Then $x$ is called a (\textit{left}) \textit{divisor} of $y$. In case an element divides $1$ it is called a \textit{unit}.\footnote{Remark \ref{rem_ring}a) will show that a unit is also a \textit{right} divisor of $1$.} An element $x \ne 0$ is called a \textit{zero divisor} when there is some $y \in R \setminus \lbrace 0 \rbrace$ with $x y = 0$ or $y x = 0$. A commutative ring is called an \textit{integral domain} when it does not have any zero divisors. A function $h$ from a ring $R$ to another ring is called a (\textit{ring}) \textit{homomorphism} when $h(x+y)=h(x)+h(y)$ and $h(x y) = h(x) h(y)$ for all $x,y \in R$. A bijective homomorphism is called an \textit{isomorphism}. 
\end{defn}

\begin{ex}\label{ex_intDomain}
a) The usual addition and multiplication in $\mathbb{Z}$ makes $\mathbb{Z}$ to an integral domain with neutral elements $0$ and $1$, respectively. Why is it not a field? The determinant function induces a group epimorphism from $\textnormal{GL}_n(\mathbb{Z})$ to $\lbrace \pm 1 \rbrace$. Hence $\textnormal{SL}_n(\mathbb{Z})$ has index two in $\textnormal{GL}_n(\mathbb{Z})$ according to Proposition \ref{prop_homomorphism}.

b) For the field $\mathbb{Q}$ of rational numbers (constructed out of $\mathbb{Z}$ via \textit{quotients}; s. Proposition \ref{prop_quotField}) the set of fundamental sequences $x:\mathbb{N} \to \mathbb{Q}$ (s. Definition \ref{def_norm}) becomes an integral domain with addition $(x+y)_n := x_n + y_n$ and multiplication $(x y)_n := x_n y_n$. Its zero and one element is the constant zero and one sequence, respectively.

c) For a ring $R$ the set $R^{n \times n}$ of all $n \times n$-matrices becomes a ring via matrix addition and multiplication. Since $R^{2 \times 2}$ is non-commutative (s. Example \ref{ex_nonCommutativity}) it is not an integral domain. Furthermore it has zero divisors:
\begin{equation*}
\left(\begin{matrix} 0 &\ 1 \\ 0 &\ 0 \end{matrix}\right) \left(\begin{matrix} 1 &\ 0 \\ 0 &\ 0 \end{matrix}\right) = \left(\begin{matrix} 0 &\ 0 \\ 0 &\ 0 \end{matrix}\right) .
\end{equation*}
This example shows that $x y = 0$ for ring elements $x , y$ does not imply $y x = 0$.
\end{ex}

\begin{rem}\label{rem_ring}
a) In a ring $R$ it holds $0 x = 0 = x 0$ for all $x \in R$ because for an additive inverse $-y$ of $y \in R$ it holds $-y x = -(y x)$ and therefore $0 x = (y - y) x = y x - y x = 0$. In a ring $x y = 1$ implies $y x = 1$. This follows by the same argumentation as in the proof of Proposition \ref{prop_group}.

b) A field $\mathbb{K}$ is an integral domain since $x y = 0$ for some $x \in K , y \in K^{\times}$ implies $x = x y y^{-1} = 0$ due to Remark a). And every non-zero element of $\mathbb{K}$ is a unit.

c) A function from a field $\mathbb{K}$ to a ring is a homomorphism if and only if it is the zero function or a group homomorphism from $\mathbb{K}$ with respect to addition and from $\mathbb{K}^{\times}$ with respect to multiplication. In the latter case the image of the homomorphism is a field isomorphic to $\mathbb{K}$. This follows from Proposition \ref{prop_homomorphism} and the fact that a non-zero homomorphism $h$ from a field $\mathbb{K}$ maps every $x \in \mathbb{K}^{\times}$ to a non-zero element since otherwise $h(x y) = h(x) h(y) = 0$ for all $y \in \mathbb{K}$ due to Remark a).

d) The set $R^{\times}$ of units of a ring $R$, e.g. $\textnormal{GL}_n(\mathbb{R}) = (R^{n \times n})^{\times}$ (s. Proposition \ref{prop_GL}), represents a group with respect to multiplication. None of its elements is a zero divisor. Because of uniqueness of inverses (s. Proposition \ref{prop_group}) one writes $1/r$ or $r^{-1}$ for an element $\varepsilon \in R$ s.t. $r \varepsilon = 1$.

e) For a ring homomorphism $h : R \to S$ the image set $h(R^{\times})$ is a subgroup of $S^{\times}$. To see this first realise $h(1) = 1$ since $h(1)h(r) = h(1 r) = h(r)$ for $r \in R$. And for $e \in R^{\times}$ there is some $e' \in R^{\times}$ with $e e' = 1$ whence $h(e)h(e') = h(e e') = h(1) = 1$. This shows $h(e) \in S^{\times}$. And that $h(R^{\times})$ is a group follows from Proposition \ref{prop_homomorphism}.

f) For a ring homomorphism $h : R \to R$ (called \textit{endomorphic}) of a commutative ring $R$ the function $N(x) := x h(x)$ fulfills
\begin{equation*}
N(x y) = x y h(x) h(y) = N(x) N(y)
\end{equation*}
for all $x , y \in R$. The argumentation of Remark e) shows $N(R^{\times}) \subset R^{\times}$. And every $x \in R$ with $N(x) = N(u)$ for some $u \in R^{\times}$ is also a unit. Because $x h(x) = u h(u)$ implies $x h(x) h(u)^{-1} u^{-1} = 1$. So we have $R^{\times} = N^{-1}(N(R^{\times}))$.

g) For two rings $R,S$ the two operations $(r,s)+(r',s'):=(r+r',s+s')$ and $(r,s)(r',s'):=(rr',ss')$ define a ring of the cartesian product $R \times S$.
\end{rem}

\begin{defn}\label{def_ringProduct}
The latter ring $R \times S$ is called the \textit{direct product} of ring $R$ with ring $S$.
\end{defn}

\begin{ex}\label{ex_ringProduct}
For two rings $R,S$ it holds $(R \times S)^{\times} = R^{\times} \times S^{\times}$.
\end{ex}

\begin{defn}\label{def_module}
For a ring $(R,+,\cdot)$ and a commutative group $(M,+)$ a function $(r,m) \mapsto rm$ from $R \times M$ to $M$ is called a \textit{module over} $R$ or an $R$-\textit{module} (symbolised by $M$) when it holds
\begin{equation*} (r+s)m=rm+sm , r(m+n)=rm+rn,r(sm)=(rs)m,1m=m \end{equation*} for all $r,s \in R , m,n \in M$.\footnote{When $M$ is even a ring with additional property $r(mn)=(rm)n$ for all $r \in R, m,n \in M$ then it is called an \textit{algebra}.} In case $R$ is a field $M$ is called an $R$-\textit{vectorspace}. A group homomorphism $l:M \to N$ between $R$-modules $M,N$ is called \textit{linear} when $l(\lambda m) = \lambda l(m)$ for all $m \in M , \lambda \in R$. For $R$-modules $M,N$ a map $M \times ... \times M \to N$ is called \textit{multilinear} when it is linear in each variable. A multlinear function $M \times M \to N$ is also called \textit{bilinear}. In case of $N = R$ a linear/bilinear/multilinear function is called a \textit{linear/bilinear/multilinear form}. A subset of an $R$-module $M$ is called an $R$-\textit{submodule of} $M$ when it is an $R$-module. When the ring $R$ is clear from context we just say \textit{submodule}. Analogously a \textit{subspace} of a vectorspace is defined.
\end{defn}

\begin{ex}\label{ex_module}
a) Every ring is a module over itself.

b) With $(x_1,...,x_n)+(y_1,...,y_n):=(x_1+y_1,...,x_n+y_n)$ and $\lambda (x_1,...,x_n) := (\lambda x_1,...,\lambda x_n)$ for $\lambda,x_i,y_i \in R$ the set $R^n$ of $n$-tuples becomes a module over any ring $R$. For $A \in R^{m \times n}$ the function $x \mapsto A x , x \in R^{n \times 1}$ defines a linear map from $R^n$ to $R^m$. 
\end{ex}

The following fact shows that $\mathbb{Q}$, constructed in the usual way from the integral domain $\mathbb{Z}$, is a field that contains $\mathbb{Z}$.

\begin{prop}\label{prop_quotField}
For an integral domain $\mathbb{O}$ the set $\mathbb{K}$ of sets $a/b := \lbrace (c,d) \in \mathbb{O} \times \mathbb{O} : a d = b c , d \ne 0 \rbrace$ for $a,b \in \mathbb{O}$ with $b \ne 0$ becomes a field with addition $a/b + c/d := (a d + b c)/(b d)$ and multiplication $a/b \cdot c/d := (a c)/(b d)$. The function $a \mapsto a/1$ defines an injective homomorphism $\mathbb{O} \to \mathbb{K}$.
\end{prop}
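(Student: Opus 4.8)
The plan is to recognise this as the classical field-of-fractions construction and to verify the axioms of Definition \ref{def_ring} in the usual order, noting that the integral-domain hypothesis is exactly what is needed at the two delicate points: transitivity of the underlying relation, and well-definedness of the two operations.

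First I would set $P := \lbrace (a,b) \in \mathbb{O} \times \mathbb{O} : b \ne 0 \rbrace$ and show that $(a,b) \sim (c,d) :\Leftrightarrow ad = bc$ is an equivalence relation on $P$, so that the sets $a/b$ are genuine equivalence classes in the sense of Definition \ref{def_equivClass} and $\mathbb{K}$ is exactly the set of them. Reflexivity and symmetry are immediate from commutativity of $\mathbb{O}$. For transitivity, from $ad = bc$ and $cf = de$ one gets $adf = bcf = bde$, i.e. $d(af - be) = 0$; since $d \ne 0$ and $\mathbb{O}$ has no zero divisors, $af = be$, which is $(a,b) \sim (e,f)$.

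Next I would check that $+$ and $\cdot$ are well defined on $\mathbb{K}$. One first notes $bd \ne 0$ whenever $b,d \ne 0$ (again absence of zero divisors), so the formulas indeed produce elements of $\mathbb{K}$. Then, replacing $(a,b)$ by an equivalent $(a',b')$ and $(c,d)$ by $(c',d')$, the identities $ab' = a'b$ and $cd' = c'd$ give $(ad+bc)(b'd') = (a'd'+b'c')(bd)$ and $(ac)(b'd') = (a'c')(bd)$ by a one-line rearrangement, so both operations depend only on the classes. After this, the field axioms are routine consequences of the corresponding laws in $\mathbb{O}$: associativity, commutativity and distributivity by direct expansion (clearing the harmless extra factor coming from the common denominator); the additive neutral element is $0/1$ and the inverse of $a/b$ is $(-a)/b$; the multiplicative neutral element is $1/1$. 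One observes $a/b = 0/1 \Leftrightarrow a = 0$, so $\mathbb{K}^{\times}$ consists precisely of the $a/b$ with $a \ne 0$, and for such an element $b/a$ is a multiplicative inverse since $(ab)/(ba) = 1/1$.

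Finally I would verify that $\iota : \mathbb{O} \to \mathbb{K}$, $a \mapsto a/1$, is a ring homomorphism — $(a+b)/1 = a/1 + b/1$ and $(ab)/1 = (a/1)(b/1)$ are immediate from the definitions — and that it is injective, since $\iota(a) = \iota(b)$ means $a\cdot 1 = 1\cdot b$, i.e. $a = b$. The main obstacle is not conceptual but bookkeeping: one must invoke the integral-domain hypothesis at every place where a product of nonzero elements has to be nonzero (transitivity of $\sim$, the existence of $bd$, well-definedness of the operations) and never secretly divide inside $\mathbb{O}$; once that discipline is maintained the rest is mechanical.
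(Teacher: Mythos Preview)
Your proposal is correct and follows essentially the same route as the paper: both reduce the result to showing that the sets $a/b$ are genuine equivalence classes (with transitivity being the one place where the absence of zero divisors is really used) and then declare the remaining verifications routine. The paper's proof is considerably terser --- it checks the two implications $a/b = c/d \Leftrightarrow ad = bc$ (the nontrivial direction being your transitivity argument, phrased as a set inclusion) and then simply says ``the well-definition of addition and multiplication and the other assertion follow easily'' --- so your write-up is a fully fleshed-out version of the same argument.
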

\begin{proof}
The equation $a/c = b/d$ implies $ad=bc$ because of $cd=dc$. This shows already the last assertion. And vice versa: $a d = b c$ for $b,d \ne 0$ implies $a/c = b/d$. For proving this we suppose additionly $a \tilde d = b \tilde c$. Then it follows $b (c \tilde{d} - d \tilde{c}) = a d \tilde{d} - d a \tilde{d} = 0$. This implies $c \tilde{d} = d \tilde{c}$ because there are no zero divisors. So we have shown $a/c \subseteq b/d$. The other inclusion follows analogously. Now, the well-definition of addition and multiplication and the other assertion follow easily.
\end{proof}

\begin{defn}\label{def_quotField}
Field $\mathbb{K}$ in the Proposition is called the \textit{quotient field} of $\mathbb{O}$.
\end{defn}

\begin{ex}\label{ex_polynomials}
For an integral domain $\mathbb{O}$ the set $\mathbb{O}[x]$ of all \textit{polynomials} $a:\mathbb{N}_0 \to \mathbb{O}$, i.e. $a(n)=0$ for all but finitely many $n \in \mathbb{N}$, is an integral domain with addition $(a+b)(n):=a(n)+b(n)$ and multiplication (as \textit{convolution} of sequences)\footnote{The zero-element is the zero function $(0 , 0 , ...)$ and the one-element $(1 , 0 , 0 , ...)$.}
\begin{equation*}
(a b)(n) := \sum\limits_{j=0}^{n} a(j)b(n-j) .
\end{equation*}
For the quotient field $\mathbb{K}$ of $\mathbb{O}$ the quotient field $\mathbb{K}(x)$ of $\mathbb{K}[x]$ is isomorphic to the quotient field of $\mathbb{O}[x]$. It is called the field of \textit{rational functions} (with coefficients in $\mathbb{K}$). In general, its elements must not be confused with functions
\begin{equation*}
f_{a,b}(x):=\frac{a_0 + a_1 x + ... + a_n x^n}{b_0 + b_1 x + ... + b_n x^n}
\end{equation*}
where $a=(a_k)_{k \in \mathbb{N}_0}, b=(b_k)_{k \in \mathbb{N}_0} \in \mathbb{K}[x]$ with $a_k=b_k=0$ for $k > n \in \mathbb{N}_0$ and where the polynomial function in the denominator is not the zero-function.\footnote{Then $f$ is defined for all bot finitely many $x \in \mathbb{K}$.} But for infinite integral domains $\mathbb{O}$ the function $a/b \mapsto f_{a,b}$ defines an isomorphism between the rational functions and those functions (with the usual addition and multiplication). Hereby, $\mathbb{O}[x]$ is mapped onto the integral domain of polynomial functions with coefficients in $\mathbb{O}$. To see this one has to realise that a non-zero polynomial function over an integral domain has only finitely many zeroes.
\end{ex}

\begin{defn}\label{def_ideal}
An additive subgroup $I$ of a commutative Ring $R$ (with one) is called an \textit{ideal} when $R I := \lbrace \alpha \beta : \alpha \in R, \beta \in I \rbrace \subseteq I$.\footnote{The fundamental concept of an ideal stems from R. Dedekind (1831-1916). It is of special importance for number theory.} We denote by $(\beta_1,...,\beta_k) := \lbrace \alpha_1 \beta_1 + ... + \alpha_k \beta_k : \alpha_i \in R \rbrace$ the ideal \textit{generated by} the elements $\beta_i \in R , i \in \mathbb{N}_k$. An ideal $(\beta)$ generated by a single element $\beta \in R$ is called \textit{principal}. An integral domain is called a \textit{principal ideal domain} when all its ideals are principal. An element $\delta \ne 0$ of a commutative Ring $R$ is called a \textit{greatest common divisor} of given elements of $R$ when every common divisor of those elements divides $\delta$. In case $\delta$ is a unit those elements are called \textit{coprime}.
\end{defn}

\begin{rem}\label{rem_ideal}
a) Every field $\mathbb{K}$ is a principal ideal domain with $(0)$ and $(1)$ as its only ideals. That is clear since every $\kappa \in \mathbb{K} \setminus \lbrace 0 \rbrace$ is a unit of $\mathbb{K}$, implying $(\kappa) = (1)$.

b) Two greatest common divisors $\delta,\delta'$ differ only by a unit as a factor because $\delta = \alpha \delta' = \alpha \beta \delta$ for some $\alpha,\beta$ imply $\alpha \beta = 1$. Hence for coprime elements every common divisor is a unit.

c) For an ideal $I$ of a commutative ring $R$ the set $R/I$ of equivalence classes $\alpha + I := \lbrace \alpha + \beta : \beta \in I \rbrace$ is a commutative ring with respect to addition $(\alpha + I) + (\beta + I) := (\alpha + \beta)+I$ and multiplication $(\alpha + I)(\beta + I) := (\alpha \beta)+I$. It is called the \textit{factor ring} or \textit{quotient ring} of $R$ by $I$. (cf. Proposition \ref{prop_subgroup}) The map $\alpha \mapsto \alpha + I$ defines a ring epimorphism $R \to R/I$, called the \textit{canonical projection}. The notation $\alpha \equiv \beta \mod I$ means $\alpha + I = \beta + I$, i.e. $\alpha - \beta \in I$. In case $I = (\gamma)$ we just write $\alpha \equiv \beta \mod \gamma$ which means that $\gamma$ is a divisor of $\alpha - \beta$.

d) For a homomorphism $h:R \to S$ between commutative rings $R,S$ the \textit{kernel} $\ker h := \lbrace \alpha \in R : h(\alpha)=0 \rbrace$ is an ideal of $R$. The map $\alpha + \ker h \mapsto h(\alpha)$ defines an isomorphism between $R / \ker h$ and $h(R)$. (cf. Proposition \ref{prop_homomorphism})
\end{rem}

\begin{ex}\label{ex_ideal}
a) The \textit{sum} $I+J:= \lbrace \alpha+\beta : \alpha \in I , \beta \in J \rbrace$, the \textit{product}
\begin{equation*}
I J := \left\lbrace \sum\limits_{k=1}^{n} \alpha_k \beta_k : n \in \mathbb{N} , \alpha_k \in I , \beta_k \in J \right\rbrace
\end{equation*}
and the intersection $I \cap J$ \textit{of ideals} $I,J$ are also ideals with
\begin{equation}\label{eq_ideals}
(I \cap J) (I+ J) \subseteq I J \subseteq I \cap J .
\end{equation}
If all these ideals are principal it holds even $(I \cap J)(I+ J) = I J$. That the condition is not superfluous is shown by the example $I:=(x),J:=(2) \subset \mathbb{Z}[x]$. Show also $(\alpha,\beta) =(\alpha)+(\beta)$ for elements $\alpha,\beta$ of a commutative ring and that $(a) \subseteq (b)$ implies the existence of an element $c$ with $(a) = (b)(c)$ in a principal ideal domain.

b) For the integral domain $R$ of fundamental sequences of rational numbers (s. Example \ref{ex_intDomain}b)) the set $I$ of rational zero sequences is an ideal of $R$. The quotient ring $\mathbb{R} := R / I$ is even a field. Its elements are called \textit{real numbers}. A real number $\rho := q+I$ ($q \in R$) is called \textit{positive}, in symbols: $\rho > 0$, when there is a positive rational number $\varepsilon$ s.t. $q_n < \varepsilon$ for atmost finitely many $n \in \mathbb{N}$. Show that this property does not depend on the choice of  the fundamental sequence $q$. In case $\rho \ne 0$ is not positive we call it \textit{negative}, in symbols $\rho < 0$. In case $\rho = 0$ or $\rho > 0$ one writes $\rho \ge 0$ (analogously: $\rho \le 0$). Show that the function
\begin{equation*}
|\rho| := \left \lbrace \begin{matrix}\rho &\ \textnormal{ in case } \rho \ge 0 \\ -\rho &\ \textnormal{ otherwise}\end{matrix} \right.
\end{equation*}
defines a modulus function $\mathbb{R} \to \mathbb{R}_0^{+}$, i.e. it is non-negative and fulfills the three norm properties of Definition \ref{def_norm}.
\end{ex}

The following will be referred to as the \textit{Theorem of B\'ezout}.

\begin{thm}\label{thm_bezout}
For elements $\alpha,\beta,\gamma,\delta \in R$ of a commutative ring $R$ with $(\alpha)+(\beta)=(\delta)$ the element $\gamma \delta$ is a greatest common divisor of $\alpha \gamma$ and $\beta \gamma$, and every greatest common divisor of $\alpha,\beta$ equals $\delta$ up to a unit factor. An analogue statement holds for a finite sum of principal ideals.
\end{thm}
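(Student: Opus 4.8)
The plan is to unpack the hypothesis $(\alpha)+(\beta)=(\delta)$ into two elementary divisibility facts and then read off both assertions directly. First I would note that, since the sum of ideals satisfies $(\alpha)+(\beta)=\{a\alpha+b\beta:a,b\in R\}$ (Example \ref{ex_ideal}a)), the containment $\delta\in(\alpha)+(\beta)$ yields elements $a,b\in R$ with $\delta=a\alpha+b\beta$; and the reverse containments $\alpha,\beta\in(\delta)$ yield $\delta\mid\alpha$ and $\delta\mid\beta$, say $\alpha=\delta\alpha'$ and $\beta=\delta\beta'$.

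For the first claim I would multiply by $\gamma$: from $\alpha\gamma=(\gamma\delta)\alpha'$ and $\beta\gamma=(\gamma\delta)\beta'$ we see that $\gamma\delta$ is a common divisor of $\alpha\gamma$ and $\beta\gamma$. Conversely, if some $e$ divides both $\alpha\gamma$ and $\beta\gamma$, then it divides every $R$-linear combination of them, in particular $\gamma\delta=a(\alpha\gamma)+b(\beta\gamma)$; hence $\gamma\delta$ is a greatest common divisor of $\alpha\gamma$ and $\beta\gamma$. For the second claim I would specialise $\gamma=1$ in the above, obtaining that $\delta$ itself is a greatest common divisor of $\alpha$ and $\beta$; then Remark \ref{rem_ideal}b) tells us that any greatest common divisor of $\alpha,\beta$ agrees with $\delta$ up to a unit factor.

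Finally, for a finite sum $(\alpha_1)+\dots+(\alpha_k)=(\delta)$ I would run the identical argument with $\delta=\sum_{i=1}^{k}a_i\alpha_i$ and $\alpha_i=\delta\alpha_i'$, so that $\alpha_i\gamma=(\gamma\delta)\alpha_i'$ and $\gamma\delta=\sum_{i=1}^{k}a_i(\alpha_i\gamma)$ give both divisibility directions; alternatively one can induct on $k$ via $(\alpha_1,\dots,\alpha_k)=(\alpha_1,\dots,\alpha_{k-1})+(\alpha_k)$. In either case Remark \ref{rem_ideal}b) again supplies the uniqueness up to units.

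I do not expect a serious obstacle here: the content is just the translation between "ideal generated by a set" and "common divisor of that set", together with the standard fact that a common divisor divides all $R$-linear combinations. The only delicate point is the degenerate case $\gamma\delta=0$, which the definition of a greatest common divisor formally excludes via its "$\ne 0$" clause, and, more generally, the zero-divisor phenomena lurking behind Remark \ref{rem_ideal}b); these are harmless under the nondegeneracy assumptions implicit in the applications (e.g. $R$ an integral domain with $\gamma,\delta\ne 0$), and I would merely remark on this rather than belabour it.
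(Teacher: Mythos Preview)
Your argument is correct and essentially identical to the paper's: both extract from $(\alpha)+(\beta)=(\delta)$ that $\delta\mid\alpha,\beta$ and $\delta=a\alpha+b\beta$, then multiply through by $\gamma$ (the paper phrases this as multiplying the ideal equation by $(\gamma)$) and invoke Remark~\ref{rem_ideal}b) for uniqueness up to units. The only cosmetic difference is that you handle the $\gamma$-version first and specialize to $\gamma=1$, while the paper does the reverse; your remark on the degenerate case $\gamma\delta=0$ is a reasonable aside that the paper omits.
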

\begin{proof}
The assumption implies $(\alpha),(\beta) \subseteq (\delta)$, hence $\alpha = \lambda \delta, \beta = \mu \delta$ for some $\lambda,\mu \in R$. So $\delta$ is a common divisor of $\alpha,\beta$. Because of $\delta \in (\alpha)+(\beta)$ another divisor of $\alpha,\beta$ divides $\delta$. Therefore $\delta$ is even a greatest common divisor of $\alpha,\beta$. By multiplying the given equation by $(\gamma)$ we obtain $(\alpha \gamma)+(\beta \gamma)=(\gamma \delta)$. Then the first assertion follows by same reasoning. The second assertion is due to Remark \ref{rem_ideal}b). The last assertion follows by induction on the number of summands.
\end{proof}

\begin{cor}
For coprime elements $\alpha,\beta$ of a principal ideal domain $\mathbb{O}$ and an element $\gamma \in \mathbb{O}$ s.t. $\alpha$ divides $\beta \gamma$ the element $\alpha$ must divide already $\gamma$.
\end{cor}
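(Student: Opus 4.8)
The plan is to reduce everything to a Bézout representation $1 = \lambda\alpha + \mu\beta$ and then multiply by $\gamma$. First I would unwind the definition of coprimality (Definition \ref{def_ideal}): saying that $\alpha,\beta$ are coprime means that a greatest common divisor of $\alpha$ and $\beta$ is a unit. By the Theorem of B\'ezout (Theorem \ref{thm_bezout}), applied to the ideal $(\alpha)+(\beta)=(\delta)$ — which is principal since $\mathbb{O}$ is a principal ideal domain — the element $\delta$ is such a greatest common divisor, hence a unit, so $(\alpha)+(\beta)=(\delta)=(1)=\mathbb{O}$. In particular $1\in(\alpha)+(\beta)$, i.e. there exist $\lambda,\mu\in\mathbb{O}$ with $\lambda\alpha+\mu\beta=1$.

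Next I would multiply this identity by $\gamma$ to get $\lambda\alpha\gamma+\mu\beta\gamma=\gamma$. Now $\alpha$ divides $\lambda\alpha\gamma$ trivially, and $\alpha$ divides $\mu\beta\gamma$ because $\alpha$ divides $\beta\gamma$ by hypothesis; hence $\alpha$ divides the sum $\lambda\alpha\gamma+\mu\beta\gamma=\gamma$, which is the claim.

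There is essentially no obstacle here: the only thing to be a little careful about is the bridge between the ``greatest common divisor is a unit'' formulation of coprimality and the ideal equation $(\alpha)+(\beta)=\mathbb{O}$, which is exactly what Theorem \ref{thm_bezout} (together with Remark \ref{rem_ideal}a), b)) supplies. The rest is the one-line divisibility computation above, so the write-up will be short.
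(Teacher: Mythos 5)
Your proof is correct and follows essentially the same route as the paper: obtain a Bézout identity $\lambda\alpha+\mu\beta=1$ from the theorem (the paper leaves the bridge from ``gcd is a unit'' to the ideal equation $(\alpha)+(\beta)=\mathbb{O}$ more implicit than you do), then multiply by $\gamma$ and read off divisibility. No differences of substance.
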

\begin{proof}
Due to the theorem there are $\lambda,\mu \in \mathbb{O}$ s.t. $\lambda \alpha + \mu \beta = 1$, hence $\gamma = \lambda \gamma \alpha + \mu \beta \gamma$. This shows the assertion.
\end{proof}

The following proposition deals with so-called "euclidean domains".

\begin{prop}\label{prop_euclideanDomain}
An integral domain $\mathbb{O}$ with a function $d : \mathbb{O} \setminus \lbrace 0 \rbrace \to \mathbb{N}_0$ s.t. for all $\alpha,\beta \in \mathbb{O} \setminus \lbrace 0 \rbrace$ there are $\gamma,\delta \in \mathbb{O}$ with $\alpha = \beta \gamma + \delta$ and $\delta=0$ or $d(\delta)<d(\beta)$ is principal. More precise: A non-zero ideal of such an integral domain is generated by an element with minimal value of $d$.
\end{prop}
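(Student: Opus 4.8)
The plan is to exploit the well-ordering of $\mathbb{N}_0$ applied to the $d$-values of a non-zero ideal. Let $I$ be a non-zero ideal of $\mathbb{O}$. First I would note that $\lbrace d(\alpha) : \alpha \in I \setminus \lbrace 0 \rbrace \rbrace$ is a non-empty subset of $\mathbb{N}_0$, hence has a least element; choose $\beta \in I \setminus \lbrace 0 \rbrace$ attaining it, so that $d(\beta) \le d(\alpha)$ for every $\alpha \in I \setminus \lbrace 0 \rbrace$.

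Next I would show $I = (\beta)$. The inclusion $(\beta) \subseteq I$ is immediate, since $I$ is an ideal containing $\beta$. For the reverse inclusion take an arbitrary $\alpha \in I$. If $\alpha = 0$ then $\alpha \in (\beta)$ trivially; otherwise the division hypothesis yields $\gamma,\delta \in \mathbb{O}$ with $\alpha = \beta \gamma + \delta$ where $\delta = 0$ or $d(\delta) < d(\beta)$. Since $\delta = \alpha - \beta \gamma$ is a difference of two elements of $I$, it lies in $I$. If $\delta \ne 0$, then $d(\delta) < d(\beta)$ would contradict the minimality of $d(\beta)$ among $d$-values of non-zero elements of $I$; hence $\delta = 0$ and $\alpha = \beta \gamma \in (\beta)$. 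Thus $I \subseteq (\beta)$, so $I = (\beta)$ is principal. As $I$ was an arbitrary non-zero ideal and principality of $(0)$ is trivial, $\mathbb{O}$ is a principal ideal domain, and the element $\beta$ constructed above realises the sharper claim that the generator can be taken with minimal value of $d$.

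There is essentially no obstacle here; the only points that warrant a moment's care are that the argument invokes nothing beyond the existence of a minimum of a non-empty subset of $\mathbb{N}_0$ (no metric, no extra commutativity beyond that of an integral domain), and that the case $\alpha = 0$ must be split off explicitly because the division property is postulated only for non-zero arguments.
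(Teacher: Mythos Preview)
Your proof is correct and follows essentially the same approach as the paper's: choose $\beta \in I \setminus \{0\}$ with minimal $d$-value, apply the division property to an arbitrary $\alpha \in I$, and use minimality to force the remainder $\delta = \alpha - \beta\gamma \in I$ to vanish. Your version is slightly more careful in explicitly splitting off the case $\alpha = 0$ (since the division hypothesis only covers non-zero arguments), which the paper glosses over.
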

\begin{proof}
For an ideal $I \ne (0)$ of $\mathbb{O}$ choose an element $\beta \in I \setminus \lbrace 0 \rbrace$ with minimal $d(\beta)$. For every $\alpha \in I$ there are $\gamma,\delta \in \mathbb{O}$ like in the assertion. Since $\delta = \alpha - \beta \gamma \in I$ it must hold $\delta = 0$ because of the minimality. This shows $I = (\beta)$.
\end{proof}

\begin{ex}\label{ex_euclideanDomain}
a) The most popular principal ideal domain is $\mathbb{Z}$. This can be shown fairly easy by help of integer division with remainder whereby $d$ in Proposition \ref{prop_euclideanDomain} is chosen as the absolute value function. According to B\'ezout's theorem it holds  $(\beta_1,...,\beta_k) = (\textnormal{gcd}(\beta_1,...,\beta_k))$ for integers  $\beta_1,...,\beta_k$ not all zero. Hereby 'gcd' means the greatest natural number dividing the given arguments.

b) For a field $\mathbb{K}$ the \textit{degree} $d(a) := \max \lbrace n \in \mathbb{N}_0 : a_n \ne 0 \rbrace$ of a non-zero polynomial $a =(a_n)_n \in \mathbb{K}[x]$ (s. Example \ref{ex_polynomials}) can be used for polynomial division, thus fulfilling the presuppositions of Proposition \ref{prop_euclideanDomain}. So $\mathbb{K}[x]$ is a principal ideal domain, and it follows that a polynomial equation
\begin{equation*}
a_n x^n + a_{n-1} x^{n-1} + ... + a_1 x + a_0 = 0
\end{equation*}
of degree $n \in \mathbb{N}_0$ in $x \in \mathbb{K}$ has at most $n$ solutions. From Remark \ref{rem_cyclic_criterion} it follows that for a finite field $\mathbb{K}$, like e.g. $\mathbb{K} = \mathbb{Z} / p \mathbb{Z}$ for a prime number $p \in \mathbb{Z}$, the multiplicative group $\mathbb{K} \setminus \lbrace 0 \rbrace$ is cyclic.

c) Also the ring $\mathbb{Z}[i] = \lbrace x + i y \, | \, x,y \in \mathbb{Z} \rbrace$ of 'Gaussian integers' is a principal ideal domain. This can be shown analogously to Example a) whereby the remainder $\delta$ of the integer division of $\alpha$ by $\beta$ fulfills $|\delta| \le |\beta| / \sqrt{2}$.
\end{ex}

The following will be referred to as the \textit{Chinese Remainder Theorem}.

\begin{thm}\label{thm_crt}
For ideals $I,J$ of a commutative ring $R$ with $I+J=R$ the function $h(\alpha):=(\alpha+I,\alpha+J)$ defines a surjective homomorphism $h : R \to R/I \times R/J$ with kernel $IJ$. In particular, $R/(IJ)$ is isomorphic to $R/I \times R/J$. An analogue statement holds for a finite product of quotient rings.
\end{thm}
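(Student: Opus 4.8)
The plan is to verify in turn that $h$ is a ring homomorphism, to compute its kernel, and to prove surjectivity; the isomorphism $R/(IJ)\cong R/I\times R/J$ then drops out of the ring isomorphism theorem in Remark \ref{rem_ideal}d).

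That $h$ is a ring homomorphism is immediate from the definitions of the operations on the quotient rings $R/I$ and $R/J$ (Remark \ref{rem_ideal}c)) and on the direct product $R/I\times R/J$ (Remark \ref{rem_ring}f)): both coordinates of $h(\alpha+\beta)$ and of $h(\alpha\beta)$ are formed entrywise, so $h(\alpha+\beta)=h(\alpha)+h(\beta)$ and $h(\alpha\beta)=h(\alpha)h(\beta)$. For the kernel, note that $\alpha\in\ker h$ exactly when $\alpha+I=I$ and $\alpha+J=J$, i.e. when $\alpha\in I\cap J$; hence $\ker h=I\cap J$. Now the hypothesis $I+J=R$ enters through the inclusions \eqref{eq_ideals}, which give $(I\cap J)(I+J)\subseteq IJ\subseteq I\cap J$. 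Since $I+J=R$ and $I\cap J$ is an ideal, the left-hand side is $(I\cap J)R=I\cap J$, so we obtain $I\cap J\subseteq IJ\subseteq I\cap J$, whence $\ker h=IJ$.

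Surjectivity is where $I+J=R$ is used a second time: choose $e\in I$ and $f\in J$ with $e+f=1$. Given a target $(\alpha+I,\beta+J)$, put $\gamma:=\beta e+\alpha f$. Then $\gamma-\alpha=\beta e-\alpha e\in I$ because $e\in I$, and likewise $\gamma-\beta=\alpha f-\beta f\in J$ because $f\in J$; therefore $h(\gamma)=(\alpha+I,\beta+J)$. Thus $h$ is onto, and Remark \ref{rem_ideal}d) applied to $h$ yields the isomorphism $R/\ker h=R/(IJ)\cong h(R)=R/I\times R/J$.

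For the analogue with a finite product, I would argue by induction on the number $n$ of pairwise comaximal ideals $I_1,\dots,I_n$. The only additional point is that $I_1$ is comaximal with $I_2\cdots I_n$: multiplying out the relations $R=I_1+I_k$ for $k=2,\dots,n$ shows $I_1+I_2\cdots I_n=R$. Since $I_2,\dots,I_n$ are again pairwise comaximal, the induction hypothesis gives $R/(I_2\cdots I_n)\cong R/I_2\times\cdots\times R/I_n$, and the two-ideal case applied to the comaximal pair $I_1,\,I_2\cdots I_n$ then gives $R/(I_1\cdots I_n)\cong R/I_1\times R/(I_2\cdots I_n)$, which combine to the claim. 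The only mild obstacle I anticipate is this last bookkeeping — checking that $I_1+I_2\cdots I_n=R$ and that the induction hypothesis really applies to $I_2,\dots,I_n$ — but that is routine ideal arithmetic.
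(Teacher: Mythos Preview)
Your proof is correct and follows essentially the same approach as the paper: both verify the homomorphism property, compute $\ker h=I\cap J=IJ$ via \eqref{eq_ideals}, construct an explicit preimage using a decomposition $1=e+f$ with $e\in I$, $f\in J$, and invoke Remark~\ref{rem_ideal}d). You give more detail on the induction step (the comaximality of $I_1$ with $I_2\cdots I_n$), which the paper leaves implicit.
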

\begin{proof}
According to Remark \ref{rem_ideal}c) $h$ is a homomorphism of commutative rings. By assumption there are $\alpha \in I , \beta \in J$ with $\alpha + \beta = 1$. Then for all $\gamma,\delta \in R$ it holds $\gamma \alpha + \delta \beta - \delta = \gamma \alpha - \delta \alpha = (\gamma - \delta) \alpha \in I$ and, analogously, $\gamma \alpha + \delta \beta - \gamma \in J$. That means $\gamma \alpha + \delta \beta \in (\delta + I) \cap (\gamma + J)$, hence $h(\gamma \alpha + \delta \beta) = (\delta + I , \gamma + J)$. This shows surjectivity. Because of Equation \eqref{eq_ideals} it holds $\ker h = I \cap J = I J$. So the second assertion follows from Remark \ref{rem_ideal}d). The last assertion follows by induction on the number of factors.
\end{proof}

\begin{ex}\label{ex_crt}
For coprime numbers $p,q \in \mathbb{N}$ the class $p + (q)$ is a unit in $\mathbb{Z}/(q)$ due to B\'ezout's theorem. When we denote by $p^{-1} \mod{q}$ an element of its inverse then for all $a,b \in \mathbb{Z}$ the integer $c := a + p \left((b-a) p^{-1} \mod{q} \right)$ fulfills $c + (p) = a +(p)$ and $c + (q) = b + (q)$.
\end{ex}

\begin{defn}\label{def_prime}
An element $a \notin R^{\times}$ of a commutative ring $R$ is called \textit{irreducible} when $a = b c$ for $b,c \in R$ implies $b \in R^{\times}$ or $c \in R^{\times}$. It is called \textit{prime} when for all $b,c \in R$ the divisibility of $b c$ by $a$ implies that $a$ divides $b$ or $c$.
\end{defn}

\begin{rem}\label{rem_prime}
a) A field does not have any irreducible elements due to Remark \ref{rem_ring}b) and $0 = 0 \cdot 0$. But in a ring zero is always a prime since it is divisible by every ring element.

b) A non-zero prime $\pi$ of an integral domain $\mathbb{O}$ is always irreducible since $1 \pi = \pi = \beta \gamma$ for $\beta,\gamma \in \mathbb{O}$ shows that $\pi$ is a divisor of $\beta$ or $\gamma$ by assumption; say $\beta = \alpha \pi$ for some $\alpha \in \mathbb{O}$. Then we have $\pi = \alpha \pi \gamma$. Since there are no zero divisors we can cancel out $\pi \ne 0$ and obtain $1 = \alpha \gamma$ which shows that $\gamma$ is a unit.
\end{rem}

\begin{prop}\label{prop_pid}
In a principal ideal domain the irreducible elements are exactly the non-zero primes.
\end{prop}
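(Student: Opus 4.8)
The plan is to establish the two halves of the ``exactly'' separately. One half is essentially already available: Remark \ref{rem_prime}b) shows that every non-zero prime of an integral domain, hence of a principal ideal domain $\mathbb{O}$, is irreducible; and conversely an irreducible element of $\mathbb{O}$ cannot be $0$, since $0 = 0 \cdot 0$ exhibits $0$ as a product of two elements neither of which lies in $\mathbb{O}^{\times}$. So the substantive claim to be proved is that an irreducible $\pi \in \mathbb{O}$ is prime.

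To prove this, I would let $\beta, \gamma \in \mathbb{O}$ with $\pi \mid \beta\gamma$ and show $\pi \mid \beta$ or $\pi \mid \gamma$. The decisive move---and the only place the principal-ideal hypothesis enters---is to write the ideal $(\pi) + (\beta)$ as a single principal ideal $(\delta)$. From $(\pi) \subseteq (\delta)$ one gets $\pi = \delta\varepsilon$ for some $\varepsilon \in \mathbb{O}$, and irreducibility of $\pi$ then forces $\delta \in \mathbb{O}^{\times}$ or $\varepsilon \in \mathbb{O}^{\times}$. If $\varepsilon \in \mathbb{O}^{\times}$, then $(\delta) = (\pi)$, so $\beta \in (\pi) + (\beta) = (\pi)$, i.e. $\pi \mid \beta$. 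If instead $\delta \in \mathbb{O}^{\times}$, then $(\pi) + (\beta) = \mathbb{O}$, so $\pi$ and $\beta$ are coprime, and the Corollary to B\'ezout's Theorem (Theorem \ref{thm_bezout}), applied to $\pi \mid \beta\gamma$, yields $\pi \mid \gamma$. Either way $\pi$ divides one of the factors, so $\pi$ is prime, which combined with the first paragraph gives the proposition.

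I do not anticipate a genuine obstacle: the argument is a short case distinction resting on the definition of a principal ideal domain and on B\'ezout. The only points that deserve a little care are the observation that ``irreducible'' already excludes $\pi = 0$ (so that Remark \ref{rem_prime}b) supplies the converse with no extra hypothesis), and the routine translation between the ideal-theoretic statements $(\pi) \subseteq (\delta)$, $(\delta) = (\pi)$, $(\pi)+(\beta) = \mathbb{O}$ and the divisibility and coprimality statements they encode, which is immediate from Example \ref{ex_ideal}a) and Remark \ref{rem_ideal}.
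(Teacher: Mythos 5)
Your proof is correct and takes essentially the same route as the paper: both discard $\pi = 0$ via $0 = 0 \cdot 0$, invoke Remark \ref{rem_prime}b) for the easy direction, and then use the PID hypothesis to write $(\pi)+(\beta)$ as a principal ideal so that irreducibility together with B\'ezout forces $\pi$ to divide one of the factors of a product it divides. The paper states the key step a bit more tersely (assume $\pi \nmid \beta$, conclude coprimality directly from irreducibility, then apply B\'ezout) instead of your explicit two-case split on which factor of $\pi = \delta\varepsilon$ is a unit, but the underlying argument is the same.
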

\begin{proof}
Because $0=0 \cdot 0$ is not a unit an irreducible element is not zero. It remains to show that an irreducible element $\pi$ of a principal ideal domain $\mathbb{O}$ is prime. When $\pi$ does not divide $\beta \in \mathbb{O}$ then $\pi$ and $\beta$ are coprime because of irreducibility. Then by B\'ezout's theorem there are $\lambda,\mu \in \mathbb{O}$ s.t. $\lambda \pi + \mu \beta = 1$, hence $\gamma = \gamma \lambda \pi + \mu \beta \gamma$ for arbitrary $\gamma \in \mathbb{O}$. If $\pi$ divides $\beta \gamma$ then also $\gamma$.
\end{proof}

\begin{ex}
a) The irreducible elements of the principal ideal domain $\mathbb{Z}$ (s. Example \ref{ex_euclideanDomain}a)!) are those integers $p$ that have exactly two positive divisors, namely one and $|p|$. So, according to Remark \ref{rem_prime}b) and Proposition \ref{prop_pid} an element of $\mathbb{Z}$ is prime if and only if it is either zero or $\pm p$ for a positive irreducible integer $p$.

b) In the principal ideal domain $\mathbb{Z}[i] = \lbrace x + i y \, | \, x,y \in \mathbb{Z} \rbrace$ (s. Example \ref{ex_euclideanDomain}c)!) an element $x + i y$ whose \textit{norm} $x^2+y^2$ is a natural prime number is irreducible. Such a natural prime (as a norm value) is either $2$ or congruent to $1$ modulo $4$. Any other irreducible element $p u$ is a natural prime number $p \equiv 3 \mod 4$ times a unit $u \in \mathbb{Z}[i]^{\times} = \lbrace -1 , 1 , -i , i \rbrace$.
\end{ex}

The following is called the \textit{Fundamental Theorem of Number Theory}.

\begin{thm}\label{thm_fundamentalNTh}
In a principal ideal domain every non-zero element is a unit or a product of finitely many primes. This product is unique up to a unit factor and up to the order of prime factors.
\end{thm}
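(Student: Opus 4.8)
The plan is to split the statement into an existence part (every non-zero non-unit is a product of finitely many primes) and a uniqueness part, using throughout Proposition \ref{prop_pid}, by which in a principal ideal domain the non-zero primes are exactly the irreducible elements.

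For existence I would first record the ascending chain condition on principal ideals: the union of an ascending chain $(a_1) \subseteq (a_2) \subseteq \cdots$ is again an ideal (routine, the chain being totally ordered), hence principal, say $(b)$; then $b \in (a_n)$ for some $n$, so $(b) \subseteq (a_n)$ and the chain is constant from $a_n$ on. Consequently every non-empty family of ideals has a maximal element, for a strictly ascending chain within it would contradict the above. Now suppose, for contradiction, that the set $S$ of non-zero non-units admitting no factorization into finitely many irreducibles is non-empty, and choose $a \in S$ with $(a)$ maximal among $\{(x) : x \in S\}$. Then $a$ is not a unit and not irreducible (an irreducible element is a product of one irreducible, so lies outside $S$), hence $a = bc$ with $b,c$ both non-units; moreover $(a) \subsetneq (b)$ and $(a) \subsetneq (c)$, the inclusions being strict since $(a) = (b)$ would give $b = ad$, whence $a = adc$ and $dc = 1$ by cancellation in the integral domain, contradicting that $c$ is a non-unit. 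By maximality $b \notin S$ and $c \notin S$, so each of $b,c$, being a non-unit, is a finite product of irreducibles; then so is $a = bc$, contradicting $a \in S$. Hence $S = \emptyset$, and by Proposition \ref{prop_pid} the irreducible factors are non-zero primes, which together with the alternative "unit" proves the first sentence.

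For uniqueness I would induct on $r$ in an equality $p_1 \cdots p_r = q_1 \cdots q_s$ of two factorizations into non-zero primes (no factor can be the prime $0$, since the common element is non-zero). For $r = 1$: as $p_1$ is irreducible and each $q_j$ a non-unit, $s$ cannot exceed $1$, so $s = 1$ and $p_1 = q_1$. For the step, $p_1$ divides $q_1 \cdots q_s$ and is prime, so by Definition \ref{def_prime} applied repeatedly it divides some $q_j$; after reindexing $q_1 = c\,p_1$, and since $q_1$ is irreducible and $p_1$ a non-unit, $c$ is a unit. Cancelling $p_1$ gives $p_2 \cdots p_r = (c q_2)\, q_3 \cdots q_s$, where $c q_2$ is again a non-zero prime (an associate of a prime is prime). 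The induction hypothesis yields $r - 1 = s - 1$ and, after reordering, that each right-hand factor is an associate of the corresponding $p_i$; combined with $q_1 = c p_1$ this gives $r = s$ and that the $q_i$ coincide with the $p_i$ up to order and unit factors.

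The genuinely routine parts are the check that an ascending union of ideals is an ideal and the unit bookkeeping ("associate of a prime is prime", cancellation in an integral domain). The one step that deserves care is the existence argument, i.e. extracting a factorization into irreducibles via the maximal-counterexample / ascending-chain argument; once that is secured, primality (Proposition \ref{prop_pid}) reduces the uniqueness to the classical divisibility argument.
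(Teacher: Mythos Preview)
Your proof is correct and follows essentially the same route as the paper: both establish the ascending chain condition on principal ideals and use it to force a factorization into irreducibles (you spell out the maximal-counterexample argument that the paper compresses into the single phrase ``this implies that a non-zero element is a unit or a product of $n$ irreducible divisors''), then invoke Proposition~\ref{prop_pid}, and both prove uniqueness by the standard prime-divides-a-factor and cancel-and-induct argument.
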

\begin{proof}
For a sequence $(\alpha_n)_{n \in \mathbb{N}}$ of elements of a principal ideal domain $\mathbb{O}$ with $(\alpha_n) \subseteq (\alpha_{n+1})$ for all $n \in \mathbb{N}$ the union $I$ of all $(\alpha_n)$ is also an ideal of $\mathbb{O}$. Hence by assumption there is some $\beta \in \mathbb{O}$ s.t. $I = (\beta)$. By definition of $I$ there is some $n \in \mathbb{N}$ with $\beta \in (\alpha_n)$. It follows $I = (\alpha_n)$. This implies that a non-zero element is a unit or a product of $n \in \mathbb{N}$ irreducible divisors $\pi_1,...,\pi_n$. Due to Proposition \ref{prop_pid} this shows the first assertion. Now assume that such a product is divisible by a prime $\pi$. Then $\pi$ divides at least one of the $\pi_k$. Because $\pi_k$ is irreducible it holds $\pi_k = \varepsilon \pi$ for a unit $\varepsilon$. So cancelling out $\pi$ from the product yields the product of $n-1$ of the factors $\pi_k$ up to a unit factor. Hence the uniqueness follows by induction on $n$.
\end{proof}

\begin{ex}\label{ex_nonPID}
The following examples a) and b) show that the integral domain $\mathbb{O} := \left\lbrace x + i \sqrt{5} y : x,y \in \mathbb{Z} \right\rbrace \subset \mathbb{C}$ is not principal due to Proposition \ref{prop_pid}.

a) Show that $2$ is irreducible in $\mathbb{O}$.

b) Show that $2$ is not prime in $\mathbb{O}$. Hint: $2 \cdot 3 = (1 + i \sqrt{5})(1 - i \sqrt{5})$.

c) Conclude that $2$ is not a product of primes of $\mathbb{O}$.

d) Show that the ideal $(2 , 1 + i \sqrt{5}) \subset \mathbb{O}$ is not principal.
\end{ex}

\begin{rem}\label{rem_fundamentalThm}
As a consequence of Theorem \ref{thm_crt} and Theorem \ref{thm_fundamentalNTh} every non-zero and non-unit integer $m$ induces a canonical isomorphism between $\mathbb{Z}/(m)$ and $\mathbb{Z}/(p_1^{e_1}) \times ... \times \mathbb{Z}/(p_k^{e_k})$ for some primes $p_1,...,p_k$ and natural numbers $k,e_1,...,e_k$ that are determined by $m = \pm p_1^{e_1} ... p_k^{e_k}$. And this ring isomorphism induces a group isomorphism between the corresponding unit groups due to Example \ref{ex_ringProduct}. For an odd prime $p \in \mathbb{N}$ and an exponent $e > 1$ the only natural numbers that are smaller than $p^e$ and not divisible by $p$ are the numbers $k p + 1, k p + 2, ... , k p + p - 1$ for $k \in \lbrace 0, 1 , ... , p^{e-2} \rbrace$. So the unit group $\left(\mathbb{Z}/(p^e)\right)^{\times}$ has $(p-1)p^{e-1}$ elements. According to Example \ref{ex_euclideanDomain}b) there exists a number $g \in \mathbb{Z}$ s.t. $\textnormal{ord}(g \mod p) = p-1$. If $g^{p-1} \equiv 1 \mod p^2$ then $(g+p)^{p-1} \equiv g^{p-1} + (p-1) g^{p-2} p \equiv 1 + a p \mod p^2$ for an integer $a$ not divisible by $p$. So we may assume without loss of generality $g^{p-1} \equiv 1 + a p \mod p^2$ for such an $a$. Then  $g^{(p-1)p^{e-2}} \equiv 1 + a p^{e-1} \mod p^e$ is not congruent to $1$ modulo $p^e$, nor is $g^d$ for any other proper divisor $d$ of $(p-1)p^{e-1}$. Thus we have shown that $\left(\mathbb{Z}/(p^e)\right)^{\times}$ is isomorphic to the cyclic group $C_{(p-1)p^{e-1}}$.
\end{rem}

\begin{ex}\label{ex_RSA}
For two different prime numbers $p,q \in \mathbb{N}$ the unit group of $\mathbb{Z}/(pq)$ is isomorphic to the direct product of the two cyclic groups $C_{p-1}$ and $C_{q-1}$. In case $p$ and $q$ are secret they can be used for RSA, a very popular asymmetric key scheme which was invented first (but not yet published) around 1970 by members of the British secret service GCHQ. Ignoring implementation and protocol details the RSA scheme consists of a non-secret 'public key' $(e,n := pq)$ and a secret 'private key' $(d,n)$ with the property $ed \equiv 1 \mod{\varphi}$ where $\varphi$ denotes a multiple of the least common multiple of $p-1$ and $q-1$, like e.g. $\varphi = (p-1)(q-1)$. Show that then $(m^e)^d = m^{ed} = m$ for all $m \in \mathbb{Z}$ (coprime with $n$).
\end{ex}

Due to Remark \ref{rem_ideal}c) for every polynomial $q$ of $n \in \mathbb{N}$ variables with integral coefficients a solution $(x_1,...,x_n) \in \mathbb{Z}^n$ of the \textit{diophantine equation} $q(x_1,...,x_n)=0$ implies solvability of the congruence $q(x_1,...,x_n) \equiv 0 \mod{m}$ for every module $m \in \mathbb{N}$. In order to investigate the latter congruences K. Hensel invented the $p$-adic numbers in 1897 (s. e.g. \cite{Hensel}). Its axiomatic characterisation and properties are collected in \cite{Cassels}, ch.3. (See there for more details!)

\begin{defn}\label{def_local}
For a prime number $p \in \mathbb{N} \subset \mathbb{Z}$ and a sequence $(a_0,a_1,...)$ of integers with $0 \le a_i < p$ the sequence\footnote{similarly constructed as a series out of a sequence} $(a_0,a_0 + a_1 p, a_0 + a_1 p + a_2 p^2,...)$ is called a $p$\textit{-adic integer}.
\end{defn}

\begin{rem}\label{rem_local}
a) The set $\mathbb{Z}_p$ of all $p$-adic integers is an integral domain under coordinate-wise addition and multiplication whereby the $j$-th coordinate of the operation result must be reduced modulo $p^{j+1}$ for all $j \in \mathbb{N}_0$. The integers are embedded via $z := (z \mod{p} , z \mod{p^2} , ...)$ for all $z \in \mathbb{Z}$. A $p$-adic integer $(a_0,...)$ ($0 \le a_0 < p$) is a unit if and only if $a_0 \ne 0$.

b) The set $\mathbb{Q}_p$ of $p$\textit{-adic numbers} $p^m \alpha$ with $m \in \mathbb{Z}$ and $\alpha \in \mathbb{Z}_p$ is a (\textit{local}) field isomorphic to the quotient field of $\mathbb{Z}_p$. It contains $\mathbb{Q}$ since it contains $\mathbb{Z}_p$ which contains $\mathbb{Z}$.

c) Every non-zero $p$-adic number $\kappa$ has a unique representation $p^m \varepsilon$ for some $m \in \mathbb{Z}$ and $\varepsilon \in \mathbb{Z}_p^{\times}$. The exponent $\nu(\kappa) := m$ has the three properties:
\begin{itemize}
\item $\nu(\kappa \lambda) = \nu(\kappa) + \nu(\lambda)$
\item $\nu(\kappa + \lambda) \ge \min(\nu(\kappa),\nu(\lambda))$
\item $\nu(\kappa + \lambda) = \min(\nu(\kappa),\nu(\lambda)) \textnormal{ in case } \nu(\kappa) \ne \nu(\lambda)$
\end{itemize}
for all $\kappa, \lambda \in \mathbb{Q}_p^{\times}$. With $\nu(0):= \infty$ it holds $\mathbb{Z}_p = \lbrace \kappa \in \mathbb{Q}_p : \nu(\kappa) \ge 0 \rbrace$. For the (\textit{non-archimedean}) \textit{valuation} $|\kappa|_p := p^{-\nu(\kappa)}$ the above properties imply the norm (or modulus) properties of Definition \ref{def_norm} with $V := \mathbb{Q}_p$ and $|0|_p := 0$. According to \cite{Cassels}, ch.3, lem.1.2 the field $\mathbb{Q}_p$ is complete (in the sense of Remark \ref{rem_completeness}b)) with respect to that norm. And from Theorem \ref{thm_fundamentalNTh} it follows
\begin{equation*}
\prod\limits_{p \in \mathbb{P} \cup \lbrace \infty \rbrace}^{} |r|_p = 1
\end{equation*}
for every $r \in \mathbb{Q}^{\times}$ whereby $\mathbb{P}$ denotes the set of natural primes and $|\cdot|_\infty$ the usual modulus function.

d) For every non-zero ideal $I$ of $\mathbb{Z}_p$ there is an element $\mu \in I$ with minimal exponent $m := \nu(\mu) \in \mathbb{N}_0$. And then it holds $I = (\mu) = (p^m) = (p)^m$. In particular $\mathbb{Z}_p$ is a principal ideal domain.
\end{rem}

\begin{thm}\label{thm_congruence}
For $p \in \mathbb{P}$ and a polynomial $q$ of $n \in \mathbb{N}$ variables with integral coefficients the congruence $q(x_1,...,x_n) \equiv 0 \mod{p^k}$ is solvable in $\mathbb{Z}^n$ for every $k \in \mathbb{N}$ if and only if $q(x_1,...,x_n)=0$ is solvable in $\mathbb{Z}_p^n$.
\end{thm}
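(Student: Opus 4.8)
The plan is to recognise the ring $\mathbb{Z}_p$ of Definition \ref{def_local} as the set of \emph{coherent sequences} $(c_1,c_2,\ldots)$ with $c_k \in \mathbb{Z}/(p^k)$ and $c_{k+1}\equiv c_k \bmod p^k$ — the $k$-th partial sum $a_0 + a_1 p + \cdots + a_{k-1}p^{k-1}$ of a $p$-adic integer being exactly $c_k$ — and to use that the ring operations of $\mathbb{Z}_p$ are performed coordinatewise, with reduction modulo $p^k$ in the $k$-th slot (Remark \ref{rem_local}a)). Consequently, since $q$ has integral coefficients, for $\xi_1,\ldots,\xi_n \in \mathbb{Z}_p$ the $k$-th coordinate of $q(\xi_1,\ldots,\xi_n)$ equals $q$ evaluated at the $k$-th coordinates of the $\xi_i$, read modulo $p^k$. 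Hence a zero $(\xi_1,\ldots,\xi_n)$ of $q$ in $\mathbb{Z}_p^n$ is literally the same datum as a coherent sequence $(x^{(k)})_{k\ge 1}$ with $x^{(k)} \in (\mathbb{Z}/(p^k))^n$, $q(x^{(k)}) = 0$ in $\mathbb{Z}/(p^k)$, and $x^{(k+1)} \equiv x^{(k)} \bmod p^k$.

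With this translation the forward implication is immediate: from $\xi \in \mathbb{Z}_p^n$ with $q(\xi) = 0$, its coordinates $x^{(k)}$ give solutions of $q \equiv 0 \bmod p^k$ for all $k$.

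For the converse I would run a finiteness (compactness) argument on the solution sets $S_k := \{ x \in (\mathbb{Z}/(p^k))^n : q(x) = 0 \text{ in } \mathbb{Z}/(p^k)\}$. Each $S_k$ is finite, and nonempty by hypothesis, and reduction modulo $p^k$ maps $S_{k+1}$ into $S_k$. Writing $r_{k,m}\colon S_m \to S_k$ for these reductions, set $S_k^\ast := \bigcap_{m\ge k} r_{k,m}(S_m)$; the chain of images is descending inside the finite set $S_k$, hence stabilises, so $S_k^\ast$ is a nonempty subset of $S_k$. The one point that needs care is that $r_{k,k+1}$ maps $S_{k+1}^\ast$ \emph{onto} $S_k^\ast$: for $a \in S_k^\ast$ the sets $r_{k+1,m}(S_m)\cap r_{k,k+1}^{-1}(a)$ are nonempty (because $a \in r_{k,m}(S_m) = r_{k,k+1}(r_{k+1,m}(S_m))$) and decrease with $m$, so their intersection is nonempty and any element of it lies in $S_{k+1}^\ast$ and reduces to $a$. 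Granting that, choose $x^{(1)} \in S_1^\ast$ and lift successively along these surjections to obtain a coherent sequence $x^{(k)} \in S_k^\ast$; by the first paragraph this assembles to a zero of $q$ in $\mathbb{Z}_p^n$.

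The main obstacle is precisely this surjectivity step. One must resist the tempting but false shortcut of lifting an arbitrary solution mod $p^k$ to one mod $p^{k+1}$: that can fail, and it is only the stabilised sets $S_k^\ast$ that form an honest tower with surjective transition maps. (Equivalently the argument is König's lemma on the finitely branching tree of partial solutions; since that lemma is not set up earlier I would give the inverse-limit form above.) Everything else — the coordinatewise bookkeeping of the first paragraph and the easy direction — is routine.
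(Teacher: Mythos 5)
Your argument is correct. The paper supplies no proof of its own here; it only cites \cite{BS}, ch.\,1.5, thm.\,1, so the comparison is with that reference rather than with an in-text argument. Your identification of $\mathbb{Z}_p$ with coherent sequences $(c_1,c_2,\ldots)$, $c_k \in \mathbb{Z}/(p^k)$ with $c_{k+1}\equiv c_k \bmod p^k$, is the content of Definition \ref{def_local} and Remark \ref{rem_local}a), and both the easy direction and the reduction of the converse to the surjectivity of $r_{k,k+1}: S_{k+1}^{*}\to S_k^{*}$ are sound. Borevich and Shafarevich run the converse by iterated pigeonhole: from the given solutions $x^{(k)}$ one extracts an infinite subfamily agreeing mod $p$, refines it to an infinite subfamily agreeing mod $p^2$, and so on, assembling the coherent tuple directly. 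Your stabilization $S_k^{*}:=\bigcap_{m\ge k} r_{k,m}(S_m)$ packages the same finiteness principle (a tower of nonempty finite sets with compatible transition maps admits a coherent thread) in the language of stable images rather than nested subsequences, and you are right to insist on surjectivity onto $S_k^{*}$ rather than attempting to lift an arbitrary element of $S_k$; that is precisely the step a naive argument misses. The two write-ups prove the same thing and differ only in presentation.
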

\begin{proof}
See \cite{BS}, ch.1.5, thm.1!
\end{proof}

\begin{cor}\label{cor_congruence}
For $p \in \mathbb{P}$ and a quadratic form $q$ of $n \in \mathbb{N}$ variables with integral coefficients the equation $q(x_1,...,x_n)=0$ has a non-trivial solution in $\mathbb{Z}_p^n$ if and only if for every $k \in \mathbb{N}$ the congruence $q(x_1,...,x_n) \equiv 0 \mod{p^k}$ has a solution $(x_1,...,x_n) \in \mathbb{Z}^n$ with $p$ not dividing $x_j$ for some $j \in \mathbb{N}_n$.
\end{cor}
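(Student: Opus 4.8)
The plan is to derive the corollary from Theorem \ref{thm_congruence} by two reductions: in one direction, trading the primitivity of the congruential solutions for ordinary solvability of a dehomogenised polynomial, and in the other, rescaling a given $p$-adic zero of $q$ to a primitive one.

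First I would treat the implication from congruences to $\mathbb{Z}_p$. Assume that for every $k \in \mathbb{N}$ there is $x^{(k)} \in \mathbb{Z}^n$ with $q(x^{(k)}) \equiv 0 \mod{p^k}$ and $p \nmid x^{(k)}_{j(k)}$ for some $j(k) \in \mathbb{N}_n$. Since $\mathbb{N}_n$ is finite while $\mathbb{N}$ is infinite, some fixed index occurs as $j(k)$ for infinitely many $k$; renumbering the variables I assume it is the first. For such a $k$ pick an integer $u$ with $u x^{(k)}_1 \equiv 1 \mod{p^k}$ (possible as $\gcd(x^{(k)}_1,p)=1$) and replace $x^{(k)}$ by the componentwise residues of $u x^{(k)}$ modulo $p^k$; by homogeneity $q(u x^{(k)}) = u^2 q(x^{(k)})$, so still $q(x^{(k)}) \equiv 0 \mod{p^k}$, and now $x^{(k)}_1 \equiv 1 \mod{p^k}$. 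Hence the polynomial $\tilde q(y_2,\ldots,y_n) := q(1,y_2,\ldots,y_n)$, again with integral coefficients, satisfies $\tilde q(x^{(k)}_2,\ldots,x^{(k)}_n) \equiv q(x^{(k)}) \equiv 0 \mod{p^k}$, because a polynomial with integral coefficients sends inputs congruent modulo $p^k$ to outputs congruent modulo $p^k$. Since the set of admissible $k$ is infinite, a solution modulo $p^{k'}$ with $k' \ge k$ reduces to one modulo $p^k$, so $\tilde q \equiv 0 \mod{p^k}$ is solvable in $\mathbb{Z}^{n-1}$ for \emph{every} $k \in \mathbb{N}$. Theorem \ref{thm_congruence} applied to $\tilde q$ then furnishes $(c_2,\ldots,c_n) \in \mathbb{Z}_p^{n-1}$ with $\tilde q(c_2,\ldots,c_n)=0$, so that $(1,c_2,\ldots,c_n) \in \mathbb{Z}_p^n$ is a non-trivial zero of $q$. (For $n=1$ the hypothesis forces the single coefficient of $q$ to be divisible by every $p^k$, hence to vanish, and $x_1 := 1$ works.)

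For the converse let $a \in \mathbb{Z}_p^n$ with $q(a)=0$ and $a \ne o$. By Remark \ref{rem_local}c) the integer $m := \min\{\nu(a_1),\ldots,\nu(a_n)\}$ is finite and non-negative, so $a' := p^{-m} a$ lies in $\mathbb{Z}_p^n$, satisfies $q(a') = p^{-2m} q(a) = 0$, and has some coordinate $a'_j \in \mathbb{Z}_p^{\times}$. For each $k$ choose integers $x^{(k)}_i \equiv a'_i \mod{p^k}$ (possible since $\mathbb{Z}$ is dense in $\mathbb{Z}_p$); then $q(x^{(k)}) \equiv q(a') = 0 \mod{p^k}$, i.e. $p^k \mid q(x^{(k)})$ in $\mathbb{Z}$, while $x^{(k)}_j \equiv a'_j \not\equiv 0 \mod{p}$. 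Thus $x^{(k)}$ is a solution of $q \equiv 0 \mod{p^k}$ of the required primitive type.

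I do not expect a deep obstacle. The only points demanding care are the step from "primitive solutions exist for infinitely many $k$" (all the pigeonhole argument yields) to genuine solvability for \emph{every} $k$ of the auxiliary equation $\tilde q = 0$, and the bookkeeping that normalising the witnessing coordinate to $1$ and substituting $x_1 = 1$ both keeps the coefficients integral and is compatible with the homogeneity relation $q(\lambda x) = \lambda^2 q(x)$ exploited in the rescaling; the rest is routine.
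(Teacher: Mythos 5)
Your argument is correct, and it is more than the paper offers: the paper disposes of this corollary with the bare citation ``See \cite{BS}, ch.1.5, thm.2!'', whereas you actually derive it from Theorem \ref{thm_congruence}. Both directions check out. In the forward direction the pigeonhole step, the rescaling by a unit $u$ with $u x^{(k)}_1\equiv 1\pmod{p^k}$ (legitimate because $q(ux)=u^2 q(x)$), the passage to the dehomogenised polynomial $\tilde q(y_2,\dots,y_n):=q(1,y_2,\dots,y_n)$ (still with integral coefficients), and the observation that solvability modulo $p^{k'}$ for $k'\ge k$ implies solvability modulo $p^k$, together do convert ``primitive solutions for infinitely many $k$'' into ``solutions of $\tilde q\equiv 0$ for all $k$'', so Theorem \ref{thm_congruence} applies; prepending the coordinate $1$ then gives a visibly non-trivial zero in $\mathbb{Z}_p^n$, and your $n=1$ side-remark closes the degenerate case. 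In the converse direction, normalising a non-trivial zero $a$ to $a'=p^{-m}a$ with $m=\min_i\nu(a_i)$ (finite and $\ge 0$ by Remark \ref{rem_local}c)) makes some coordinate a $p$-adic unit, and truncating modulo $p^k$ yields primitive integer witnesses for each $k$ since a polynomial with integral ($p$-adic) coefficients respects congruences. The only point I would flag is cosmetic: in the forward direction you only need that some one index works for infinitely many $k$; once you have the pigeonhole index, a reader might briefly worry that you are using the primitivity of $x^{(k)}$ for \emph{all} $k$ rather than for the selected infinite subset, so a one-line reminder that the downward reduction of solvability of $\tilde q\equiv 0$ is what bridges the remaining $k$ would help. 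Substantively there is no gap.
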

\begin{proof}
See \cite{BS}, ch.1.5, thm.2!
\end{proof}

\begin{rem}
Remark \ref{rem_fundamentalThm} and the latter Theorem (or its Corollary about non-trivial solutions) show that $q(x_1,...,x_n) \equiv 0 \mod{m}$ is solvable in $\mathbb{Z}^n$ for every module $m \in \mathbb{N}$ if and only if $q(x_1,...,x_n)=0$ is solvable in $\mathbb{Z}_p^n$ for every prime $p > 0$.
\end{rem}

\begin{ex}
The congruence $x^2 \equiv 2 \mod{3}$ has no integral solution. Hence there is no $x \in \mathbb{Z}_3$ with $x^2 = 2$.
\end{ex}

% ------------------------------------------------------------------------
%\backmatter

% ------------------------------------------------------------------------

\begin{thebibliography}{1}
\bibitem{AltenEtAl} Alten, H.-W. et al.: {\it 4000 Jahre Algebra}, 2. korr. Nachdruck. Springer, Berlin (2008)
\bibitem{Apollonios} Apollonios von Perge: {\it Conica}. Deutsch von A. Czwalina, M\"unchen (1926)
\bibitem{Audin} Audin, M.: {\it Geometry}. Springer, Berlin (2003)
\bibitem{Bhargava} Bhargava, M.: {\it Higher composition laws}. Ph. D. thesis, Princeton (2001)
\bibitem{BS} Borewics, S.I., Safarevic, I.R.: {\it Zahlentheorie}. Birkh\"auser, Basel (1966)
\bibitem{BV} Buchmann, J., Vollmer, U.: {\it Binary Quadratic Forms: An Algorithmic Approach}. Springer, Berlin (2007)
\bibitem{Cassels} Cassels, J.W.S.: {\it Rational Quadratic Forms}. Academic Press, London (1978)
\bibitem{Cohn} Cohn, H.: {\it A Classical Invitation to Algebraic Numbers and Class Fields}. Springer, Berlin (1978)
\bibitem{DD} Dirichlet, P.G.L., Dedekind, R.: {\it Vorlesungen \"uber Zahlentheorie} (von Dirichlet). Vieweg, Braunschweig (1863)
\bibitem{Eichler} Eichler, M.: {\it Quadratische Formen und orthogonale Gruppen} (Grundlehren d. Math. Wiss. {\bf 63}). Springer, Berlin (1952)
\bibitem{Gauss} Gauss, C.F.: {\it Disquisitiones Arithmeticae}, English Edition. Springer, Berlin (1966)
\bibitem{GG} v.z.Gathen, J., Gerhard, J.: {\it Modern Computer Algebra}, Cambridge Univ. Press (2003) 2. Edition
\bibitem{Hasse} Hasse, H.: {\it \"Uber die Darstellbarkeit von Zahlen durch quadratische Formen im K\"orper der rationalen Zahlen}. J. reine angew. Math. {\bf 152}, 129-148 (1923)
\bibitem{Hensel} Hensel, K.: {\it Theorie der Algebraischen Zahlen}, Band 1. Leipzig (1908)
\bibitem{HJ} Horn, R.A., Johnson, C.R.: {\it Matrix Analysis}. Cambridge Univ. Press (1985)
\bibitem{Householder} Householder, A.S.: {\it The theory of matrices in numerical analysis}. Blaisdell, New York (1964)
\bibitem{KahlDiplThesis} Kahl, H.: {\it Indefinite Bin\"are Quadratische Formen}. Diploma thesis. W\"urzburg Univ. (1994)
\bibitem{KahlGenusNumber} Kahl, H.: {\it Die Geschlechterzahl beliebiger Ordnungen in reell-quadratischen Zahlk\"orpern}. Archiv d. Math {\bf 66}, 187-193 (1996)
\bibitem{KahlLossVal} Kahl, H.: {\it The loss value of multilinear regression}. https://arxiv.org/abs/2204.02686
\bibitem{KahlQuadSect} Kahl, H.: {\it Measuring quadric sectors at centre}. Mitt. Math. Ges. Hamburg {\bf 38}, 5-21 (2018)\footnote{s. also https://arxiv.org/abs/1007.0152}
\bibitem{KSW} Karzel, H., S\"orensen, K., Windelberg, D.: {\it Einf\"uhrung in die Geometrie}. Vandenhoeck \& Ruprecht, G\"ottingen (1973)
\bibitem{Kneser} Kneser, M.: {\it Composition of binary quadratic forms}. J. of Number Th. {\bf 15}, 406-413 (1982)
\bibitem{Lagrange} Lagrange, J.L.: {\it Oeuvres}, Vol. 3. Gauthier-Villars, Paris (1869)
\bibitem{Lorenz} Lorenz, F.: {\it Einf\"uhrung in die Algebra, Teil I}. BI-Wissenschaftsverlag, Mannheim (1987)
\bibitem{Minkowski} Minkowski, H.: {\it \"Uber die Bedingungen, unter welchen zwei quadratische Formen mit rationalen Koeffizienten ineinander transformiert werden k\"onnen}. J. reine angew. Math. {\bf 106}, 5-26 (1890)
\bibitem{O'Meara} O'Meara, O.T.: {\it Introduction to Quadratic Forms}, Reprint of the 1973 ed. (Grundlehren d. Math. Wiss. {\bf 117}). Springer, Berlin (2000)
\bibitem{Rudin} Rudin, W.: {\it Principles of Mathematical Analysis}, 3. ed. McGraw-Hill, New York (1976)
\bibitem{Scriba} Scriba, C.J., Schreiber, P.: {\it 5000 Jahre Geometrie}, 3. Auflage. Springer, Berlin (2010)
\bibitem{SerreD} Serre, D.: {\it Matrices}, 2. Edition. Springer, Berlin (2010)
\bibitem{Seysen} Seysen, M.: {\it A fast implementation of the Monster group: The Monster has been tamed}, J. of Comp. Algebra {\bf 9}, 100012 (2024)
\bibitem{Siegel} Siegel, C.L.: {\it Analytische Zahlentheorie I, II}. Mimeogr. Lect. Notes. G\"ottingen (1963)
\bibitem{Taussky} Taussky, O.: {\it Composition of binary integral quadratic forms via $2 \times 2$ matrices and composition of matrix classes}. Linear and Multilinear Algebra {\bf 10}, 309-318. Gordon and Breach, Great Britain (1981)
\bibitem{vdWaerden1} van der Waerden, B.L.: {\it Algebra I}, 4. Auflage (Grundlehren d. Math. Wiss. {\bf 23}). Springer, Berlin (1955)
\bibitem{vdWaerden2} van der Waerden, B.L.: {\it Algebra II}, 4. Auflage (Grundlehren d. Math. Wiss. {\bf 34}). Springer, Berlin (1959)
\bibitem{Witt} Witt, E.: {\it Theorie der quadratischen Formen in beliebigen K\"orpern}. J. reine angew. Math. {\bf 176}, 31-44 (1937)
\bibitem{Zagier} Zagier, D.B.: {\it Zetafunktionen und quadratische K\"orper}. Springer, Berlin (1981)
\end{thebibliography}
\end{document}